\numberwithin{equation}{section}
\newcommand\map{\varg} 
\newcommand\shinvmap{K} 
\newcommand\homap{H} 
\newcommand\shinvhomap{J} 
\newcommand{\Eset}{\mathsf{E}}
\newcommand{\Fset}{\mathsf{F}}
\newcommand{\Nset}{\mathbb{N}}
\newcommand{\Rset}{\mathbb{R}} 
\newcommand{\Zset}{\mathbb{Z}} 
\def\rmd{\mathrm{d}}
\def\rme{\mathrm{e}}
\def\rmi{\mathrm{i}}
\def\constant{\mathrm{cst}}
\newcommand{\pr}{\mathbb{P}} 
\newcommand{\esp}{\mathbb{E}}
\newcommand{\var}{\operatorname{var}} 
\newcommand{\law}{\mathcal{L}}
\newcommand{\eqdistr}{\stackrel{d}{=}}
\newcommand{\convdistr}{\stackrel{d}{\longrightarrow}} 
\newcommand{\convprob}{\stackrel{P}{\longrightarrow}} 
\newcommand{\convvague}{\stackrel{v}{\longrightarrow}}
\newcommand{\convweak}{\stackrel{w}{\Longrightarrow}} 
\newcommand{\fidi}{\ \stackrel{\mathrm{fi.di.}}{\longrightarrow}\ }
\newcommandx{\norm}[2][1=]{\left|#2\right|_{#1}}
\newcommandx{\lpnorm}[3][1=,3=]{\left\|#2\right\|_{#1}^{#3}}
\newcommandx{\supnorm}[3][1=,3=]{\left\|#2\right\|_{#1}^{#3}}
\def\id{\mathrm{Id}}
\newcommand\shift{B}
\newcommand\exc{\mathcal{E}}
\newcommand\nualpha{\nu_\alpha}
\newcommand\tailmeasure{\boldsymbol{\nu}}
\newcommand\tailmeasurestar{\boldsymbol{\nu}^*}
\newcommand\tildetailmeasurestar{\widetilde{\boldsymbol{\nu}}^*}
\newcommand\candidate{\vartheta}
\newcommand\spaceD{\mathcal{D}}
\newcommand\spaceDtilde{\widetilde{\mathcal{D}}}
\newcommand\dtilde{\tilde{d}}
\newcommand\unzerospaceDtilde{\spaceDtilde_0\setminus\{\bszero\}}
\newcommand\1[1]{\mathbbm{1}_{#1}}
\newcommand\ind[1]{\mathbbm{1}{\left\{#1\right\}}}
\def\bszero{\boldsymbol{0}}
\newcommand{\bsQ}{\boldsymbol{Q}}
\newcommand{\bsx}{\boldsymbol{x}}
\newcommand{\bsy}{\boldsymbol{y}}
\newcommand{\bsW}{\boldsymbol{W}}
\newcommand{\bsX}{\boldsymbol{X}}
\newcommand{\bsY}{\boldsymbol{Y}}
\newcommand{\bsZ}{\boldsymbol{Z}}
\newcommand{\bseta}{\boldsymbol{\eta}}
\newcommand{\bsTheta}{\boldsymbol{\Theta}}
\newcommand{\bszeta}{\boldsymbol{\zeta}}
\def\mca{\mathcal{A}}
\def\mcb{\mathcal{B}}
\def\mce{\mathcal{E}}
\def\mcf{\mathcal{F}}
\def\mch{\mathcal{H}}
\def\mci{\mathcal{I}}
\def\mcj{\mathcal{J}}
\def\mcm{\mathcal{M}}
\def\mcp{\mathcal{P}}
\def\mct{\mathcal{T}}
\def\cadlag{c\`adl\`ag}
\def\eg{e.g.}
\def\ie{i.e.}
\def\wrt{with respect to}
\def\ifft{if and only if}
\def\iid{i.i.d.}
\def\nondecreasing{non-decreasing}
\def\nonincreasing{non-increasing}
\def\nonnegative{non-negative}
\def\shiftinvariant{shift-invariant}
\def\shiftinvariance{shift-invariance}
\def\leb{\mathrm{Leb}}
\newtheorem{theorem}{Theorem}[section]
\newtheorem{lemma}[theorem]{Lemma}
\newtheorem{corollary}[theorem]{Corollary}
\newtheorem{proposition}[theorem]{Proposition}
\newtheorem{hypothesis}[theorem]{Assumption}
\newtheorem{definition}[theorem]{Definition}
\theoremstyle{remark}
\newtheorem{remark}[theorem]{Remark}
\newtheorem{example}[theorem]{Example}
\crefname{hypothesis}{Assumption}{Assumptions}
\crefname{equation}{}{}
\crefname{enumi}{}{}
\crefname{example}{Example}{Examples}
\begin{document}

\title{The tail process and tail measure of continuous time regularly varying stochastic processes}

\author{Philippe Soulier\thanks{Equipe Modal'X and LABEX MMEDII, Universit\'e Paris Nanterre, 92000
    Nanterre, France, philippe.soulier@parisnanterre.fr}}

\maketitle

\begin{abstract}
  The goal of this paper is to investigate the tools of extreme value theory originally introduced
  for discrete time stationary stochastic processes (time series), namely the tail process and the
  tail measure, in the framework of continuous time stochastic processes with paths in the space
  $\spaceD$ of \cadlag\ functions indexed by $\Rset$, endowed with Skorohod's $J_1$ topology. We
  prove that the essential properties of these objects are preserved, with some minor (though
  interesting) differences arising. We first obtain structural results which provide representation
  for homogeneous \shiftinvariant\ measures on $\spaceD$ and then study regular variation of random
  elements in $\spaceD$. We give practical conditions and study several examples, recovering and
  extending known results.
\end{abstract}

Keywords: Regularly varying stochastic processes; Tail process; Tail measure; Point process of
clusters; Extremal index

\clearpage

\tableofcontents 

\clearpage

\section{Introduction}

The goal of this paper is to study the extreme value theory of continuous time regularly varying
processes stochastic processes in the light of the recent developments of this theory for regularly
varying time series (that is stochastic processes indexed by $\Zset$ with values in $\Rset^d$),
which we briefly recall.

A time series $\bsX=\{X_j,j\in\Zset\}$ is said to be regularly varying with tail index $\alpha$ if
all its finite dimensional distributions are regularly varying with the same index of regular
variation $\alpha$ and under the same scaling. This means that the finite dimensional distributions
are in the domain of attraction of a mutivariate Fr\'echet distribution with the same tail index and
under the same scaling.  The extremal behaviour of such a stationary time series is now well
understood and characterized by either one of two objects: the tail process, introduced by
\cite{basrak:segers:2009} and the tail measure introduced in the unpublished manuscript
\cite{owada:samorodnitsky:2012}.

The tail process, which will be denoted by $\bsY$ throughout this paper, describes the asymptotic
behaviour of a stationary time series given an extreme value at time zero and provides convenient
representations of the limiting quantities which arise in the statistics of extremes for time
series. The tail process can be formally defined for a non stationary time series but contains too
little information to be useful.

Alternatively, a time series $\bsX$ can be considered as a random element of the space
$(\Rset^d)^\Zset$ endowed with the product topology which makes it Polish. Regular variation in
$(\Rset^d)^\Zset$ can be defined using the theory of vague convergence of \cite{kallenberg:2017}
which will be described more precisely in \Cref{sec:representation,app:vague}.  In that framework a
time series is regularly varying if there exists a non zero measure $\tailmeasure$ on
$(\Rset^d)^\Zset$, and a scaling sequence $a_n$ such that $n\pr(a_n^{-1}\bsX\in A)$ converges to
$\tailmeasure(A)$ for all Borel sets $A\subset(\Rset^d)^\Zset$ which are contained in the complement
of an open neighborhood of the null sequence $\bszero$, and are continuity sets
of~$\tailmeasure$. In the terminology of extreme value theory for finite dimensional random vectors,
$\tailmeasure$ is the exponent measure of~$\bsX$. Following \cite{owada:samorodnitsky:2012}, we will
call it the tail measure of the time series $\bsX$. If the time series is stationary, then its tail
measure is \shiftinvariant. The finite dimensional projections of the tail measure are the exponent
measures of the finite dimensional distributions of the time series $\bsX$ and characterize the tail
measure. Therefore, both definitions of regular variation of a time series are equivalent and since
the tail process is defined by the finite dimensional distributions, the tail measure entirely
determines the tail process.  The converse was proved by \cite{planinic:soulier:2018} and
\cite{dombry:hashorva:soulier:2018}: the tail measure can be recovered from the tail process. This
is essentially due to homogeneity and shift-invariance of the tail measure.

The tail process and tail measure of a stationary time series have been exhaustively studied and are
extremely useful tools to understand the extremal behavior of a time series and describe the
asymptotic distributions of the partial maximum process, the partial sum process (when the tail
index is in $(0,2)$), and many statistics such as estimators of the tail index, the extremal index,
and other extremal characteristics. A thorough treatment of the subject is given in
\cite{kulik:soulier:2020}.

The extreme value theory of continuous time stochastic processes is a very ancient and still active
field of research. An important part of it is dedicated to Gaussian and related processes such as
diffusion processes. See for instance the monographs \cite{leadbetter:lindgren:rootzen:1983},
\cite{berman:1992} and more recently \cite{azais:wschebor:2009}. There are some early references on
extremes of continuous time regularly varying processes such as \cite{rootzen:1978} which deals with
moving averages with stable innovations in discrete and continuous time, but the bulk of the
literature seems to be more recent. See for instance among many other,
\cite{samorodnitsky:2004:maxima} (stable processes), \cite{fasen:2005} (moving averages driven by a
L\'evy process), \cite{fasen:kluppelberg:2007} (mixed moving averages), \cite{wang:stoev:2010}
(max-stable processes). There is one important difference between the Gaussian and related processes
first mentioned and regularly varying processes. The former are typically extremally independent,
that is the extremal behavior of their finite dimensional distributions is in first approximation
the same as that of a vector with independent components, whereas the extremal behaviour of the
latter typically inherits some form of serial dependence. Therefore, the tail process and tail
measure are useless for the former class of processes (or rather for regularly varying
transformations) but can be considered for the latter.

The main purpose of this work is to extend the theory of the tail process and the tail measure
established for stationary time series to regularly varying stationary continuous time stochastic
process. Stationarity is a restriction, but it is a usual assumption, especially in view of
statistical applications, and the tail process is only of interest in the context of stationarity.

The tail measure and the tail process of an $\Rset^d$-valued regularly varying stochastic process
$\bsX=\{\bsX_t,t\in\mct\}$ indexed by an arbitrary index set $\mct$ can be defined exactly as in
discrete time. If the finite dimensional distributions of the process are regularly varying, they
admit an exponent measure and the family of these exponent measures satisfy a consistency
property. These exponent measures are not finite but \cite{owada:samorodnitsky:2012} proved that
there exists a measure $\tailmeasure$ on $(\Rset^d)^\mct$ endowed with the product topology, whose
finite dimensional projections are the exponent measures.  As previously, the tail process can be
defined as the weak limit of the finite dimensional distributions of $\bsX$ given that
$\norm{\bsX_0}>x$, as $x\to\infty$.

However, using this definition, no information is given on the paths of the tail process nor on the
support of the tail measure. In full generality, the tail measure need not even be $\sigma$-finite,
see \cite[Proposition~2.4]{owada:samorodnitsky:2012}.  For processes indexed by $\Rset$, a natural
framework is to consider only processes with almost surely \cadlag\ paths, that is random element in
the space $\spaceD(\Rset,\Rset^d)$ (hereafter simply written $\spaceD$) endowed with the $J_1$
topology, which is a Polish space. In this framework, it is then natural to define the regular
variation of random element in $\spaceD$ as the convergence of the measure
$T\pr(a_T^{-1}\bsX\in \cdot)$ to a measure $\tailmeasure$ in the following sense: for all Borel sets
$A$ which are continuity sets of $\tailmeasure$ and are separated from the null map $\bszero$, that
is sets which are contained in the complement of a neighborhood of $\bszero$, it holds that
$\lim_{n\to\infty} T\pr(\bsX\in a_TA) = \tailmeasure(A)$. This mode of convergence is simply called
vague convergence in \cite{kallenberg:2017}. This concept of regular variation in Polish spaces was
originally developed in \cite{hult:lindskog:2006} and regular variation of \cadlag\ stochastic
processes was first considered in \cite{hult:lindskog:2005}, only for processes indexed by $[0,1]$,
that is random elements in $\spaceD([0,1],\Rset^d)$.

The tail measure $\tailmeasure$ is an exponent measure and as such must be homogeneous. In addition,
if the process $\bsX$ is stationary, the tail measure is \shiftinvariant. As already mentioned, its
finite dimensional projections are the exponent measures of the finite dimensional distributions
of~$\bsX$. It is also immediate that the distribution of the tail process is the tail measure
restricted to the set of functions $f\in\spaceD$ such that $|f(0)|>1$.

Taking these properties as definitions, the tail measure and tail process can be studied without
reference to an underlying stochastic process. This is done in \Cref{sec:representation} whose main
purpose is to extend the structural results obtained for tail measures on $(\Rset^d)^\Zset$ by
\cite{planinic:soulier:2018} to tail measures on $\spaceD$, defined as \shiftinvariant\ homogeneous
measures, finite on sets separated from $\bszero$.

The main result of \Cref{sec:representation} is \Cref{theo:Y-determines-nu} which states that,
similarly to the discrete time case, the tail process determines the tail measure, and a tail
measure always has a spectral representation, that is a pseudo polar decomposition \wrt\ the
semi-norm $f\mapsto|f(0)|$ on $\spaceD$.

Then we obtain in \Cref{theo:equivalences-dissipative} necessary and sufficient conditions for
mixed-moving average representations of the tail measure. This result subsumes those originally
obtained for max-stable processes by \cite{dombry:kabluchko:2016} (in particular their Theorem~3),
where no reference is made to the tail measure, although the link is implicit since the tail measure
of a max-stable process determines its distribution. These results can also be expressed in the
language of ergodic theory, in terms of dissipative and conservative flow representations. We will
not pursue this direction in this paper to keep it at a reasonable length. See
\cite{wang:roy:stoev:2013} for similar results for sum-stable processes whose distribution is also
determined by the tail measure.

\Cref{theo:Y-determines-nu} and \Cref{theo:equivalences-dissipative} are mutatis mutandis the same
as in discrete time. One important difference between discrete and continuous time is the role of
certain maps, called anchoring maps by \cite{basrak:planinic:2021}, one of which being the
infargmax functional which finds the first time where the maximum of a sequence is achieved (see
\Cref{xmpl:infargmax-general} for a precise definition).  These maps play a crucial role in the
study of the tail process in discrete time. In particular, and under certain conditions,
$\pr(\mci(\bsY)=0)$ is positive and independent of the anchoring map $\mci$, and the tail measure
can be expressed in terms of the tail process conditioned on $\mci(\bsY)=0$. See
\cite[Section~3.3]{planinic:soulier:2018}. This probability is denoted by $\candidate$ and called
the candidate extremal index, since it is related to the classical extremal index which will be
discussed hereafter.

In continous time, the event $\mci(\bsY)=0$ has in general a zero probability, and conditioning is
more difficult to handle.  It turns out instead of conditioning on $\mci(\bsY)=0$, a change of
measure with density $\exc^{-1}(\bsY)$, where $\exc$ is the exceedance functional, defined for a
measurable function $f:\Rset\to\Rset^d$ by $\exc(f) = \int_{-\infty}^\infty \ind{|f(t)|>1}\rmd t$
yields a representation of the tail measure in terms of the tail process.  We will also see in
\Cref{sec:anchoring-maps,sec:examples} that anchoring maps may behave very differently than in
discrete time and that conditioning on different anchoring maps may produce different
results.

\Cref{sec:representation} is concluded with certain identities for quantities which appear as limits
of certain statistics of regularly varying processes. Depending on the method used to obtain these
limits, they can be expressed in terms of the different objects related to the tail process. It is
therefore convenient and important to know that these expressions are equal and that the summability
or integrability conditions that guarantee their existence are equivalent. The usefulness of these
identities will be illustrated in \Cref{sec:illustration}.

All the results of \Cref{sec:representation}, in addition to be of intrinsic interest, are important
to understand the extremal behaviour of regularly varying stochastic processes, and more
particularly so for max-stable and sum-stable processes whose distribution is entirely determined by
the tail measure, or equivalently the tail process. Thus they are also necessary preliminaries to
the proper investigation of regularly varying stochastic processes, done in
\Cref{sec:regvarinD}. However, \cref{sec:spectral-representation} can be skipped by readers more
interested in applications.

As already mentioned, in discrete time, the two definitions of regular variation of a time series,
either by means of finite dimensional distributions or by considering the time series as a random
element of the sequence space are equivalent. This is obviously not the case in continuous time.
Thus our first task is to relate finite dimensional convergence and convergence in $\spaceD$. This
is done in \Cref{theo:rv-in-D-equivalence}.  This result extends those of
\cite{hult:lindskog:2005,hult:lindskog:2006} which dealt only with $\spaceD([0,1])$. It states a
necessary and sufficient condition for regular variation in $\spaceD$ in terms of convergence of
finite dimensional distributions and a tightness criterion which extends the usual one in terms of
the $J_1$-modulus of continuity.  The proof of the direct implication is omitted since it is an
immediate adaptation of the proof of the corresponding result in
\cite[Theorem~10]{hult:lindskog:2005}. However, for the converse implication, we take advantage of
the results of \Cref{sec:representation} to obtain a more constructive proof.  Importantly, we also
obtain in \Cref{theo:rv-in-D-equivalence} that regular variation in $\spaceD$ implies the weak
convergence in $\spaceD$ of the process $\bsX$, conditioned on $|X_0|>x$ when $x\to \infty$, to its
tail process $\bsY$ which is thus a random element in $\spaceD$.

From there on, we are able to easily extend to continuous time processes the main results of the
extreme value theory for discrete time series previously developed by means of the tail process in a
series of papers ranging from \cite{basrak:segers:2009} to \cite{basrak:planinic:soulier:2018}. The
most important object that we consider is related to the point process of exceedances which measures
the time spent by the time series above a high threshold, introduced in discrete time as early as
\cite{resnick:1986}. In continuous time, it was studied under the name excursion random measure by
\cite{hsing:leadbetter:1998} which builds on the seminal paper \cite{davis:hsing:1995} dealing with
discrete time processes. Under the mixing condition \Cref{eq:laplace-blocks} which is related to the
well-known condition $D$ of \cite{leadbetter:1974}, and under condition \Cref{eq:anticlustering}
which yields the limit of excursions over a high level within a small portion of the path (first
used in \cite{davis:hsing:1995}), we obtain in \Cref{theo:equivalence-ppconv} the weak convergence
of a generalization of the excursion random measure to a Poisson point process on (a subspace of)
$\spaceD$.  {It must be noted that the anticlustering condition implies that the tail
  measure has a mixed-moving average representation as in \cref{sec:mma-representations}, so our
  results exclude processes with extremal long memory, that is such that $\candidate=0$.}

The convergence of the excursion random measure has many applications, of which we cite only one,
related to the convergence of the sample maxima. Recall that for an \iid\ sequence with regularly
varying marginal distribution, if $a_T$ is the quantile of order $1-T^{-1}$, then
$a_T\max(X_1,\dots,X_T)$ ($T$ being restricted to integer values) converges weakly to a Fr\'echet
distribution, say $F_\alpha$. For a stationary sequence, this convergence may hold to
$F_\alpha^\theta$, where $\theta\in[0,1]$ is called the extremal index. Exact computation of the
extremal index is not often easy or possible, but the tail process provides several convenient
representations of the extremal index. In continuous time, it is still possible to define the
extremal index as a real number $\theta$ such that $a_T^{-1} \sup_{0\leq s \leq T} X_s$ converges
weakly to $F_\alpha^\theta$. The essential difference is that in continuous time, the extremal
index, if it exists is not confined to $[0,1]$ but can take any value in $[0,\infty]$. The case
$\theta=1$ in discrete time or $\theta=\infty$ in continuous time corresponds to extremal
independence; the case $\theta=0$ is often called long range dependence in the extremes (different
from other notions of long range dependence).

The conditions of \Cref{theo:equivalence-ppconv} may be difficult to check. It is easily seen that
they hold for $m$-dependent processes, that is processes such that past and future separated by $m$
are independent. In \Cref{theo:ppconv-approx}, we show that if a process admits a suitable sequence
of $m$-dependent approximations, then the conclusions of \Cref{theo:equivalence-ppconv} hold.

As a consequence of \Cref{theo:equivalence-ppconv,theo:ppconv-approx}, we prove the existence of the
extremal index in $(0,\infty)$ and obtain representations in terms of the tail process. Some of
these representations had been obtained in the context of max-stable processes by
\cite{debicki:hashorva:2020}.

Here we must stress that we only consider regularly varying processes which are not extremally
independent, \ie\ whose extremal behaviour has kept some form of temporal dependence. In particular,
the assumptions of \Cref{theo:equivalence-ppconv} exclude both extremal independence, \ie\
$\theta=\infty$ and long range dependence, \ie\ $\theta=0$. Convergence of the point process of
clusters necessitates different normalization in both cases and much more sophisticated
techniques. See for instance \cite[Section~8]{roy:2017}, \cite{owada:samorodnitsky:2015} and
\cite{lacaux:samorodnitsky:2016} for examples of processes which exhibit extremal long memory.

We conclude this paper with several illustrative but relatively simple examples in
\Cref{sec:examples}.  We start with max-stable (\Cref{sec:maxstable}) and sum-stable
(\Cref{sec:sumstable}) processes, for which we recover the know results of the literature and also
prove the convergence of the point process of exceedances.  Then we study a general class of
functional moving averages in \Cref{sec:functional-ma}, the simplest example of which is the
well-known shot noise process (\Cref{sec:shot-noise}).

\subsection*{Notation}
We will use the usual letters $f$, $g$, etc.  to denote functions and also boldface letters such as
$\bsx$, $\bsy$, depending on the context.  We use indifferently $\bsy_t$ of $\bsy(t)$ for the value
of $\bsy$ at $t\in\Rset$. We use capitalized boldface ($\bsX$, $\bsY$ \dots) for stochastic
processes indexed by $\Rset$ (or any subset).

The space $\spaceD(\Rset,\Rset^d)$ is the space of \cadlag\ functions defined on $\Rset$ and when
there is no risk of confusion, we simply write $\spaceD$.  The null function is denoted by
$\bszero$.  Given an arbitrary norm $\norm{\cdot}$ on $\Rset^d$, we define
$\spaceD_0=\{\bsy\in\spaceD:\lim_{|t|\to\infty}\norm{\bsy_t}=0\}$ and
$\spaceD_\alpha=\{\bsy\in\spaceD:\int_{-\infty}^\infty \norm{\bsy_t}^\alpha \rmd t < \infty\}$. 

We will also use the following notation. For a function $\bsy$ defined on $\Rset$ and $a<b$, we
write~$\bsy_{a,b}$ for the restriction of $\bsy$ to the interval $[a,b)$. With an abuse of notation,
it will denote either a function defined on $[a,b)$ or the function defined on $\Rset$ which is
equal to $\bsy$ on $[a,b)$ and vanishes outside $[a,b)$. We further define
$\bsy^* = \supnorm[\infty]{\bsy} = \sup_{t\in\Rset} \norm{\bsy_t}$,
$\bsy_{a,b}^* = \sup_{a \leq t \leq b} \norm{\bsy_t}$.  For a measurable $\bsy$ and $p>0$, we set
$\lpnorm[p]{\bsy}[p] = \int_{-\infty}^\infty |\bsy(t)|^p \rmd t$.

The backshift operator is  defined by $\shift^t f= f(\cdot-t)$ for all functions
$f$ defined on $\Rset$.

\section{Representations of tail  measures on $\spaceD$}
\label{sec:representation}
Let the space $\spaceD(\Rset,\Rset^d)$, hereafter simply $\spaceD$, be endowed by the $J_1$ topology
and the related Borel $\sigma$-field. See \Cref{sec:J1} for its definition and basic properties.  We
say that a subset $A$ of $\spaceD$ is separated from $\bszero$ if it is included in the complement
of an open neighborhood of the null map~$\bszero$. This is equivalent to the existence of real
numbers $a\leq b$ (depending on $A$) such that
\begin{align}
  \label{eq:bafz-in-D}
  \inf_{\bsy\in A} \sup_{a \leq t \leq b} \norm{\bsy_t} >0 \; .
\end{align}
See \cref{sec:J1} for a proof. We will denote by $\mcb_0$ the class of sets separated from
$\bszero$. A Borel measure $\mu$ on $\spaceD$ will be said $\mcb_0$-boundedly finite if
$\mu(\{\bszero\})=0$ and $\mu(A)<\infty$ for all Borel sets $A$ in $\mcb_0$.  The measurable sets in
$\mcb_0$ are measure determining for $\mcb_0$-boundedly finite measures. Thus, a $\mcb_0$-boundedly
finite measure is determined by the values $\tailmeasure(H)$ for all bounded or \nonnegative\
measurable maps $H$ with support in $\mcb_0$.  See \cite[Theorem~4.11]{kallenberg:2017} or
\cite[Theorem~4.1]{basrak:planinic:2019}.
\begin{definition}
  \label{def:tailmeasureonD}
  A tail measure on $\spaceD$ endowed with its Borel $\sigma$-field is a $\mcb_0$-boundedly finite
  Borel measure $\tailmeasure$ such that
  \begin{enumerate}[(i)]
  \item $\tailmeasure(\{\bszero\})=0$;
  \item \label{item:standardization-tailmeasure}
    $\tailmeasure(\{\bsy\in\spaceD:\norm{\bsy_0}>1\})=1$;
  \item there exists $\alpha>0$ such that $\tailmeasure(tA) = t^{-\alpha} \tailmeasure(A)$ for all
    Borel subsets $A$ of $\spaceD$.
  \end{enumerate}
\end{definition}
The positive number $\alpha$ will be called the tail index of $\tailmeasure$.  Since a tail measure
$\tailmeasure$ is boundedly finite, it holds that
$\tailmeasure(\{\bsy\in\spaceD:\norm{\bsy_t}>1\}) < \infty$ for all $t\in\Rset$ and $\tailmeasure$
is $\sigma$-finite.

By assumption, the measure $\tailmeasure$ restricted to $\{\bsy\in\spaceD:\norm{\bsy_0}>1\}$ is a
probability measure, so we can consider a $\spaceD$-valued random element $\bsY$ defined on a
probability space $(\Omega,\mcf,\pr)$ with distribution
$\tailmeasure(\cdot\cap\{\bsy\in\spaceD:\norm{\bsy_0}>1\})$, which will be called the tail process
associated to $\tailmeasure$.  The homogeneity of $\tailmeasure$ implies that $\norm{\bsY_0}$ has a
Pareto distribution with tail index $\alpha$ and is independent of the process $\bsTheta$ defined by
$\bsTheta = \norm{\bsY_0}^{-1} \bsY$, called the spectral tail process associated to $\tailmeasure$.
The spectral tail process can be viewed as a spectral decomposition of the tail measure with respect
to the pseudo-norm on $\spaceD$ defined by $\bsy\mapsto\norm{\bsy_0}$. That is, for any
\nonnegative\ or bounded measurable map $\map$, 
\begin{align}
  \label{eq:spectral-spectral}
  \int_{\spaceD} \map(\bsy) \ind{\norm{\bsy_0}>0} \tailmeasure(\rmd\bsy) = \int_0^\infty \esp[\map(u\bsTheta)]  \alpha u^{-\alpha-1}\rmd u \; .
\end{align}
See \cite[Section~5.2.1]{kulik:soulier:2020}.  

When $\tailmeasure$ is shift-invariant, the tail process inherits  a very important property.
\begin{lemma}
  \label{lem:tcf}
  Let $\tailmeasure$ be a \shiftinvariant\ tail measure on $\spaceD$ with tail index $\alpha$ and
  associated tail and spectral tail processes $\bsY$ and $\bsTheta$.  For every \nonnegative\
  measurable map $\map$ on $\spaceD$ and $x>0$, 
  \begin{align}
    \label{eq:TCF-Y}
    \esp[\map(\bsY) \ind{\norm{\bsY_t}>x}] & = x^{-\alpha}\esp[\map(x\shift^t\bsY) \ind{\norm{x\bsY_{-t}}>1}] \; , \\
    \label{eq:TCF-Theta}
    \esp[\map(\norm{\bsTheta_t}^{-1}\bsTheta) \norm{\bsTheta_t}^\alpha] & = \esp[\map(\shift^t\bsTheta) \ind{\norm{\bsTheta_{-t}}\ne0}]  \; .
  \end{align}
\end{lemma}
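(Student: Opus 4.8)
The plan is to obtain both identities from the two structural properties of $\tailmeasure$ in \Cref{def:tailmeasureonD} --- homogeneity of index $\alpha$ and \shiftinvariance\ --- together with the polar decomposition $\bsY=\norm{\bsY_0}\,\bsTheta$ recalled above, in which $\norm{\bsY_0}$ has a Pareto($\alpha$) law, is independent of $\bsTheta$, and $\norm{\bsTheta_0}=1$ almost surely. I shall use repeatedly the two elementary reformulations, valid for every \nonnegative\ measurable $G$ on $\spaceD$: \shiftinvariance\ reads $\int G(\shift^t\bsy)\,\tailmeasure(\rmd\bsy)=\int G(\bsy)\,\tailmeasure(\rmd\bsy)$, and homogeneity reads $\int G(x\bsy)\,\tailmeasure(\rmd\bsy)=x^{\alpha}\int G(\bsy)\,\tailmeasure(\rmd\bsy)$ for every $x>0$. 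Since the law of $\bsY$ is $\tailmeasure$ restricted to $\{\bsy:\norm{\bsy_0}>1\}$, each expectation below is an integral against this restricted measure.

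For \eqref{eq:TCF-Y} I would start from its right-hand side, namely $x^{-\alpha}\int\map(x\shift^t\bsy)\,\ind{\norm{x\bsy_{-t}}>1}\,\ind{\norm{\bsy_0}>1}\,\tailmeasure(\rmd\bsy)$. Setting $\boldsymbol{z}=\shift^t\bsy$, so that $\boldsymbol{z}_0=\bsy_{-t}$ and $\boldsymbol{z}_t=\bsy_0$, the integrand equals $F(\shift^t\bsy)$ with $F(\boldsymbol{z})=\map(x\boldsymbol{z})\,\ind{\norm{x\boldsymbol{z}_0}>1}\,\ind{\norm{\boldsymbol{z}_t}>1}$, and \shiftinvariance\ removes the shift. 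Writing in turn $F(\boldsymbol{z})=\Phi(x\boldsymbol{z})$ with $\Phi(\boldsymbol{w})=\map(\boldsymbol{w})\,\ind{\norm{\boldsymbol{w}_0}>1}\,\ind{\norm{\boldsymbol{w}_t}>x}$ and invoking homogeneity produces the factor $x^{\alpha}$, which cancels the prefactor $x^{-\alpha}$ and leaves $\int\Phi(\boldsymbol{w})\,\tailmeasure(\rmd\boldsymbol{w})=\esp[\map(\bsY)\,\ind{\norm{\bsY_t}>x}]$, the left-hand side. The only delicate point is the consistent tracking of the time indices and of the homogeneity exponent.

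For \eqref{eq:TCF-Theta} I would first record an anchored version of the spectral decomposition \eqref{eq:spectral-spectral}. Since $(\shift^t\bsy)_t=\bsy_0$, \shiftinvariance\ gives $\int\Phi(\bsy)\,\ind{\norm{\bsy_t}>0}\,\tailmeasure(\rmd\bsy)=\int\Phi(\shift^t\bsy)\,\ind{\norm{\bsy_0}>0}\,\tailmeasure(\rmd\bsy)$, and applying \eqref{eq:spectral-spectral} to the right-hand side (together with the fact that the shift commutes with scaling) yields $\int\Phi(\bsy)\,\ind{\norm{\bsy_t}>0}\,\tailmeasure(\rmd\bsy)=\int_0^\infty\esp[\Phi(u\,\shift^t\bsTheta)]\,\alpha u^{-\alpha-1}\,\rmd u$. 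On the other hand, from $\bsTheta=\norm{\bsY_0}^{-1}\bsY$ one has $\esp[g(\bsTheta)]=\int_{\{\norm{\bsy_0}>1\}}g(\norm{\bsy_0}^{-1}\bsy)\,\tailmeasure(\rmd\bsy)$; taking $g(\bstheta)=\map(\norm{\bstheta_t}^{-1}\bstheta)\,\norm{\bstheta_t}^{\alpha}$ and using that $\bsy\mapsto\norm{\bsy_t}^{-1}\bsy$ is invariant under positive scaling rewrites the left-hand side of \eqref{eq:TCF-Theta} as $\int\Phi_0(\bsy)\,\tailmeasure(\rmd\bsy)$ with $\Phi_0(\bsy)=\map(\norm{\bsy_t}^{-1}\bsy)\,\norm{\bsy_0}^{-\alpha}\,\norm{\bsy_t}^{\alpha}\,\ind{\norm{\bsy_0}>1}$. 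Feeding $\Phi_0$ into the anchored formula and evaluating $\Phi_0(u\,\shift^t\bsTheta)$ with $\norm{\bsTheta_0}=1$ collapses the scale-invariant factor to $\map(\shift^t\bsTheta)$ and, after the powers of $u$ cancel, leaves $\map(\shift^t\bsTheta)\,\norm{\bsTheta_{-t}}^{-\alpha}\,\ind{u\norm{\bsTheta_{-t}}>1}$; a Tonelli exchange and the elementary identity $\int_{1/r}^{\infty}\alpha u^{-\alpha-1}\,\rmd u=r^{\alpha}$ then cancel $\norm{\bsTheta_{-t}}^{-\alpha}$ against $\norm{\bsTheta_{-t}}^{\alpha}$, yielding $\esp[\map(\shift^t\bsTheta)\,\ind{\norm{\bsTheta_{-t}}\ne0}]$, the right-hand side of \eqref{eq:TCF-Theta}.

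I expect the second identity to be the main obstacle. The tempting shortcut of deducing it from \eqref{eq:TCF-Y} by inserting the scale-invariant map $\bsy\mapsto\map(\norm{\bsy_t}^{-1}\bsy)$ and integrating the threshold $x$ against $\alpha x^{\alpha-1}\,\rmd x$ fails, because it reintroduces $\norm{\bsY_0}$ raised to the power $\alpha$, whose expectation under the Pareto($\alpha$) law is infinite. Routing the argument through the spectral decomposition keeps the radial integral in the form $\int_0^\infty(\cdots)\,\alpha u^{-\alpha-1}\,\rmd u$, where the two powers $\norm{\bsTheta_{-t}}^{\pm\alpha}$ cancel and integrability is automatic. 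The remaining care is bookkeeping: fixing the conventions for the normalisations $\norm{\cdot}^{-1}$ on the null sets $\{\norm{\bsy_t}=0\}$ and $\{\norm{\bsTheta_{-t}}=0\}$, and checking that the associated factors vanish there, so that every interchange of integrals is legitimate by Tonelli's theorem.
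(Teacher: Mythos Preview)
Your proof of \eqref{eq:TCF-Y} is correct and mirrors the paper's exactly: the paper writes the left-hand side as a $\tailmeasure$-integral, applies shift-invariance and homogeneity in one substitution $\bsy\mapsto x\shift^t\bsy$, and reads off the right-hand side; you run the same substitution in reverse. There is no real difference.

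For \eqref{eq:TCF-Theta} you go further than the paper. The paper does not prove \eqref{eq:TCF-Theta} at all: it asserts that the two identities are equivalent and refers to the discrete-time literature. Your argument via the anchored version of the spectral decomposition \eqref{eq:spectral-spectral} is a correct and self-contained derivation; the computation of $\Phi_0(u\,\shift^t\bsTheta)$ and the cancellation of $\norm{\bsTheta_{-t}}^{\pm\alpha}$ after the $u$-integral are both right, and your handling of the null set $\{\norm{\bsTheta_{-t}}=0\}$ matches the convention stated just after the lemma. Your diagnosis of why the naive route (apply \eqref{eq:TCF-Y} to a $0$-homogeneous map and integrate the Pareto variable out) runs into the divergence $\esp[\norm{\bsY_0}^\alpha]=\infty$ is also on point. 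So your treatment of \eqref{eq:TCF-Theta} is a genuine addition relative to the paper, not a different proof of the same displayed argument.
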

In the left-hand side of \cref{eq:TCF-Theta}, the quantity inside the expectation is understood as
zero when \mbox{$\norm{\bsTheta_t}=0$}.  These two properties are equivalent and the form \eqref{eq:TCF-Theta}
was originally obtained in the context of discrete time stationary time series and called the time
change formula by \cite{basrak:segers:2009}. The version \Cref{eq:TCF-Y} was obtained by
\cite{planinic:soulier:2018}. The proof of the result in continuous time is exactly the same as in
discrete time but we give the three-lines proof of \Cref{eq:TCF-Y} for completeness.
\begin{proof}
  By the definition of $\bsY$, the shift-invariance and homogeneity of $\tailmeasure$, we have
  \begin{align*}
    \esp[\map(\bsY) \ind{\norm{\bsY_t}>x}] 
    & = \int_{\spaceD} \map(\bsy) \ind{\norm{\bsy_t}>x} \ind{\norm{\bsy_0}>1} \tailmeasure(\rmd \bsy) \\
    & = x^{-\alpha} \int_{\spaceD} \map(x\shift^t\bsy) \ind{\norm{\bsy_0}>1} \ind{\norm{x\bsy_{-t}}>1} \tailmeasure(\rmd \bsy) \\
    & = x^{-\alpha}\esp[\map(x\shift^t\bsY) \ind{\norm{x\bsY_{-t}}>1}] \; .
  \end{align*}
\end{proof}

\subsection{Spectral representation}
\label{sec:spectral-representation}
The main result of this section is a representation theorem for \shiftinvariant\ tail measures. It
extends or complements several results of the literature. It extends
\cite[Theorem~2.4]{dombry:hashorva:soulier:2018} to the continuous time case and provides a
constructive proof (in a restricted context) to \cite[Proposition~2.8]{evans:molchanov:2018} which
deals with homogeneous measures in abstract cones.  For $r\in\Rset$, a map $\homap$ on a $\spaceD$
is said to be $r$-homogeneous if $\homap(t\bsy) = t^r\homap(\bsy)$ for all $t>0$ and
$\bsy\in\spaceD$.  For any random element $X$, we will use the notation $\esp[\delta_X]$
for the measure defined by $\esp[\delta_X](f)=\esp[f(X)]$ for $f$ a measurable map in the relevant
space.
\begin{theorem}
  \label{theo:Y-determines-nu}
  A Borel measure $\tailmeasure$ on $\spaceD$ with tail index $\alpha$ is a \shiftinvariant\ tail
  measure \ifft\ there exists a $\spaceD$-valued process $\bsZ$, called a spectral process for
  $\tailmeasure$, such that $\pr(\bsZ=\bszero)=0$, $\esp[\norm{\bsZ_0}^\alpha]=1$, 
  \begin{gather}
    \label{eq:spectral-representation}
    \tailmeasure = \int_0^\infty \esp[\delta_{u\bsZ}] \alpha u^{-\alpha-1}\rmd u \; , 
  \end{gather}
  and for all $a<b$, $t\in\Rset$ and bounded measurable $0$-homogeneous maps $\homap$ on $\spaceD$,
  \begin{gather}
    \label{eq:local-boundedness-Z}
    0 < \esp\left[\sup_{a \leq s \leq b} \norm{\bsZ_s}^\alpha \right] < \infty \; ,   \\
    \label{eq:tilt-shift}
    \esp[\norm{\bsZ_t}^\alpha \homap(\bsZ)] =     \esp[\norm{\bsZ_0}^\alpha \homap(\shift^t\bsZ)] \; .
  \end{gather}  
  Furthermore, a \shiftinvariant\ tail measure is entirely determined by its tail process $\bsY$
  whose distribution $\pr_{\bsY}$ is related to any spectral process $\bsZ$ by
  \begin{align}
    \label{eq:Y-in-terms-of-Z}
    \pr_{\bsY} = \esp\left[\norm{\bsZ_0}^\alpha \delta_{\frac{Y\bsZ}{\norm{\bsZ_0}^\alpha}}\right] \; ,
  \end{align}
  where $Y$ is a Pareto random variable with tail index $\alpha$.
\end{theorem}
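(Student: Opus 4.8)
The plan is to prove both implications together, organising everything around a single \emph{anchored} representation of $\tailmeasure$ extracted from \eqref{eq:spectral-representation}. Consider first the direction $(\Leftarrow)$. Given $\bsZ$ with the stated properties I define $\tailmeasure$ by \eqref{eq:spectral-representation} and check \Cref{def:tailmeasureonD} by elementary manipulations of the Pareto integral: the substitution $v=c/u$ turns $\int_0^\infty\pr(u\sup_{a\le s\le b}\norm{\bsZ_s}>c)\,\alpha u^{-\alpha-1}\rmd u$ into $c^{-\alpha}\esp[\sup_{a\le s\le b}\norm{\bsZ_s}^\alpha]$, so the upper bound in \eqref{eq:local-boundedness-Z} gives $\mcb_0$-bounded finiteness; the same computation with the gauge $\norm{\bsy_0}$ gives the standardisation $\tailmeasure(\norm{\bsy_0}>1)=\esp[\norm{\bsZ_0}^\alpha]=1$; homogeneity is the change of variable $u\mapsto u/t$; and $\tailmeasure(\{\bszero\})=0$ follows from $\pr(\bsZ=\bszero)=0$.

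The crux of this direction is shift-invariance. I would first record the anchored formula: for every $s\in\Rset$ and nonnegative measurable $G$, the change of variable $r=u\norm{\bsZ_s}$ in \eqref{eq:spectral-representation} yields
\[
\int_\spaceD G(\bsy)\,\ind{\norm{\bsy_s}>1}\,\tailmeasure(\rmd\bsy)=\esp\!\left[\norm{\bsZ_s}^\alpha\,G\!\left(Y\bsZ/\norm{\bsZ_s}\right)\right],
\]
with $Y$ Pareto independent of $\bsZ$, the right-hand side being zero on $\{\norm{\bsZ_s}=0\}$; taking $s=0$ is exactly the relation \eqref{eq:Y-in-terms-of-Z} for $\pr_{\bsY}$, and taking a bounded $0$-homogeneous integrand $\homap$ (for which the Pareto factor disappears) gives the identity $\esp[\norm{\bsZ_s}^\alpha\homap(\bsZ)]=\int_\spaceD\homap(\bsy)\,\ind{\norm{\bsy_s}>1}\,\tailmeasure(\rmd\bsy)$. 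Since $\{\bsy\ne\bszero\}=\bigcup_{s\in\mathbb{Q}}\{\norm{\bsy_s}>0\}$ by right-continuity, a covering/monotone-class argument (lowering the threshold by homogeneity, then using \cite[Theorem~4.1]{basrak:planinic:2019}) reduces shift-invariance to the equalities $\int G(\shift^t\bsy)\ind{\norm{\bsy_{s-t}}>1}\tailmeasure(\rmd\bsy)=\int G(\bsy)\ind{\norm{\bsy_s}>1}\tailmeasure(\rmd\bsy)$. Rewriting the right-hand side by the anchored formula at $s$ and the left-hand side by the anchored formula at $s-t$ applied to $G\circ\shift^t$, and then invoking \eqref{eq:tilt-shift} on each side — with shift $s$ and the map $\bsy\mapsto G(Y\bsy/\norm{\bsy_s})$ for the right, and with shift $s-t$ and $\bsy\mapsto G(Y\shift^t\bsy/\norm{\bsy_{s-t}})$ for the left — collapses both, via $\shift^t\shift^{s-t}=\shift^s$ and $(\shift^s\bsZ)_s=(\shift^{s-t}\bsZ)_{s-t}=\bsZ_0$, to the common value $\esp[\norm{\bsZ_0}^\alpha\,G(Y\shift^s\bsZ/\norm{\bsZ_0})]$, which closes the argument.

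For $(\Rightarrow)$ I would produce $\bsZ$ constructively, which is what upgrades \cite[Proposition~2.8]{evans:molchanov:2018}. The obstruction is that no global gauge (such as $\bsy^*$) is $\tailmeasure$-finite on $\mcb_0$-bounded sets, because shift-invariance spreads the mass over all of $\Rset$. I would instead fix weights $c_n>0$ with $\sum_n c_n^\alpha<\infty$ and use $N(\bsy)=\sup_{n\in\Zset}c_n\sup_{n\le t<n+1}\norm{\bsy_t}$, which is $1$-homogeneous, positive off $\bszero$, and, by shift-invariance and subadditivity, satisfies $\tailmeasure(\{N>1\})\le p\sum_n c_n^\alpha<\infty$ with $p=\tailmeasure(\sup_{0\le t<1}\norm{\bsy_t}>1)$. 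The standard polar decomposition of $\tailmeasure$ with respect to $N$ gives a finite spectral measure on $\{N=1\}$, and rescaling by $\tailmeasure(\{N>1\})^{1/\alpha}$ yields a $\spaceD$-valued $\bsZ$ obeying \eqref{eq:spectral-representation} with $\pr(\bsZ=\bszero)=0$, while $\esp[\norm{\bsZ_0}^\alpha]=\tailmeasure(\norm{\bsy_0}>1)=1$ is automatic from item~\eqref{item:standardization-tailmeasure}. Then \eqref{eq:local-boundedness-Z} follows from the anchored identity $\esp[\sup_{a\le s\le b}\norm{\bsZ_s}^\alpha]=\tailmeasure(\sup_{a\le s\le b}\norm{\bsy_s}>1)$, finite because the set is in $\mcb_0$ and positive by shift-invariance and $\tailmeasure(\norm{\bsy_0}>1)=1$; and \eqref{eq:tilt-shift} is read off $\esp[\norm{\bsZ_s}^\alpha\homap(\bsZ)]=\int\homap\,\ind{\norm{\cdot_s}>1}\rmd\tailmeasure$ by applying the \emph{shift-invariance of $\tailmeasure$} to $\bsy\mapsto\homap(\bsy)\ind{\norm{\bsy_t}>1}$, which turns $\int\homap\,\ind{\norm{\cdot_t}>1}\rmd\tailmeasure$ into $\int(\homap\circ\shift^t)\,\ind{\norm{\cdot_0}>1}\rmd\tailmeasure=\esp[\norm{\bsZ_0}^\alpha\homap(\shift^t\bsZ)]$. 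Finally, determination by $\bsY$ holds because $\pr_{\bsY}$ fixes $\esp[\norm{\bsZ_0}^\alpha\homap(\bsZ)]$ for all $0$-homogeneous $\homap$, hence through \eqref{eq:tilt-shift} the anchored integrals at every $s\in\mathbb{Q}$, which are measure-determining.

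I expect the main obstacle to be the shift-invariance step of $(\Leftarrow)$: the passage from a measure identity to anchored test functions must be done carefully (removing the null map, lowering thresholds by homogeneity before the countable cover is available), and the two applications of \eqref{eq:tilt-shift} have to be bookkept precisely since the shift moves the anchoring time. The construction of the gauge $N$ in $(\Rightarrow)$ is the other delicate point, but once $\tailmeasure(\{N>1\})<\infty$ is secured it feeds into a routine polar decomposition.
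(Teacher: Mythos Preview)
Your argument is correct, but the $(\Rightarrow)$ direction follows a genuinely different route from the paper's. The paper does not perform an abstract polar decomposition via a gauge; instead it \emph{constructs $\bsZ$ directly from the tail process $\bsY$}: fixing a bounded probability density $f$ on $\Rset$ and an independent random variable $T\sim f$, it sets $\bsZ=(\mcj(\shift^T\bsY))^{-1/\alpha}\shift^T\bsY$ with $\mcj(\bsy)=\int f(t)\norm{\bsy_t}^\alpha\rmd t$, and then verifies \eqref{eq:spectral-representation} via a direct computation based on the time-change formula \eqref{eq:TCF-Y} and an exceedance-ratio trick. This makes the claim ``$\bsY$ determines $\tailmeasure$'' immediate, since $\bsZ$ is literally a measurable function of $\bsY$ alone; you instead recover determination separately and abstractly through the anchored integrals at rational anchors. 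Your weighted-sup gauge $N(\bsy)=\sup_n c_n\sup_{n\le t<n+1}\norm{\bsy_t}$ is a clean device to force finiteness of the spectral measure despite the unbounded time axis, and yields a modular proof (standard polar decomposition once the gauge is secured); the paper's construction, by contrast, produces an \emph{explicit} spectral process in terms of $\bsY$, a formula that is reused downstream (e.g.\ in \Cref{coro:equivalence-Y-nu}). On $(\Leftarrow)$ the paper asserts the result in one line, so your detailed verification of shift-invariance via the two-sided application of \eqref{eq:tilt-shift} is more informative than what appears there; your covering/monotone-class step is terse but can be made rigorous exactly as you indicate.
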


\begin{proof} 
  If $\tailmeasure$ is defined by \Cref{eq:spectral-representation} with $\bsZ$ satisfying the
  stated properties, then it is a \shiftinvariant\ tail measure.  We prove the converse. Let
  $\tailmeasure$ be a \shiftinvariant\ tail measure and let $f$ be a fixed bounded positive
  continuous function on $\Rset$ such that $\int_{-\infty}^\infty f(t) \rmd t = 1$.  For
  $\bsy\in\spaceD$, define $\mcj(\bsy) = \int_{-\infty}^\infty f(t) \norm{\bsy_t}^\alpha \rmd t$.
  By the time change formula \Cref{eq:TCF-Theta}, we have
  $\esp[\norm{\bsTheta_t}^\alpha] = \pr(\bsTheta_{-t}\ne\bszero)\leq1$ for all $t\in\Rset$, thus
  $\int_{-\infty}^\infty \esp[\norm{\bsTheta_{s-t}}^\alpha] f(s) \rmd s<\infty$ for all $t$ by
  assumption on $f$.

  Let $T$ be a real-valued random variable, independent of $\bsY$ with density $f$.  The previous
  property can be expressed as $\esp[\mcj(\shift^T\bsTheta)]<\infty$. Thus
  $\pr(\mcj(\shift^T\bsY)<\infty)=1$ and since $\bsY$ is \cadlag\ and $|\bsY_0|>1$,
  $\pr(\mcj(\bsY)>0)=1$.  Therefore, we can define a process $\bsZ$ by
  \begin{align}
    \label{eq:construction-spectralprocess}
    \bsZ = (\mcj(\shift^T\bsY))^{-1/\alpha} \shift^T\bsY =    (\mcj(\shift^T\bsTheta))^{-1/\alpha} \shift^T\bsTheta \; .
  \end{align} 
  Since $\pr(\norm{\bsTheta_0}=1)=1$, it holds that $\pr(\bsZ=\bszero)=0$.  Let the measure in the
  right-hand side of \Cref{eq:spectral-representation} be denoted by~$\nu_f$. By the first part of
  the proof, $\nu_f$ is a tail measure on $\spaceD$, hence is $\sigma$-finite. Let $\map$ be a
  \nonnegative\ measurable map and $\epsilon>0$.  Applying the time change formula \Cref{eq:TCF-Y} and
  the homogeneity of the functional $\mcj$, we obtain, for an arbitrary $t\in\Rset$,
  \begin{align*} 
    \int_{\spaceD} 
    & \map(\bsy)\ind{\norm{\bsy_t}>\epsilon}\nu_f(\rmd\bsy) \\ 
    & = \int_0^\infty \esp[\map(u\bsZ)\ind{u\norm{\bsZ_t}>\epsilon}] \alpha u^{-\alpha-1} \rmd u \\ 
    & = \int_{-\infty}^\infty
      \int_0^\infty \esp\left[\map\left(\frac{u\shift^s\bsY)} {\mcj^{1/\alpha}(\shift^s\bsY)}\right)
      \ind{\frac{u\norm{\bsY_{t-s}}} {\mcj^{1/\alpha}(\shift^s\bsY)}>\epsilon} \right] \alpha u^{-\alpha-1} \rmd u f(s) \rmd s
    \\
    & = \epsilon^{-\alpha} \int_{-\infty}^\infty \int_0^\infty \esp\left[ \frac{\map(\epsilon
      u\shift^{s-t}\shift^t\bsY) \ind{u\norm{\bsY_{t-s}}>1}} {{\mcj(\shift^{s-t}\shift^t\bsY)}} \right]
      \alpha u^{-\alpha-1} \rmd u f(s) \rmd s \\ 
    & = \epsilon^{-\alpha} \int_{-\infty}^\infty \int_0^\infty \esp\left[
      \frac{\map(\epsilon \shift^t\bsY) \ind{\norm{\bsY_{s-t}}>u}} {{\mcj(\shift^t\bsY)}} \right] \alpha
      u^{\alpha-1} \rmd u f(s) \rmd s \\ 
    & = \epsilon^{-\alpha} \int_{-\infty}^\infty \esp\left[
      \frac{\map(\epsilon \shift^t\bsY) \norm{\bsY_{s-t}}^\alpha} {{\mcj(\shift^t\bsY)}} \right] f(s) \rmd s 
      = \epsilon^{-\alpha} \esp[\map(\epsilon \shift^t\bsY)] \\
    &  = \epsilon^{-\alpha} \int_{\spaceD} \map(\epsilon \shift^t\bsy) \ind{\norm{\bsy_0}>1} \tailmeasure(\rmd\bsy)  
      = \int_{\spaceD} \map(\bsy) \ind{\norm{\bsy_t}>\epsilon} \tailmeasure(\rmd\bsy) \; .
  \end{align*} 
  Let $\epsilon>0$ and $\map$ be a non-negative measurable map on $\spaceD$ such that $\map(\bsy) = 0$ if
  $\bsy^*\leq\epsilon$. Then the previous identity yields 
  \begin{align*} 
    \nu_f(\map) & = \int_{\spaceD} \frac{\map(\bsy)\exc_f(\epsilon^{-1}\bsy)}{\exc_f(\epsilon^{-1}\bsy)} \nu_f(\rmd \bsy) 
              = \int_{-\infty}^\infty \int_{\spaceD} \frac{\map(\bsy)}{\exc_f(\epsilon^{-1}\bsy)}
               \ind{\norm{\bsy_t}>\epsilon} \nu_f(\rmd \bsy) \rmd t  \\
             & = \int_{-\infty}^\infty \int_{\spaceD} \frac{\map(\bsy)}{\exc_f(\epsilon^{-1}\bsy)}
               \ind{\norm{\bsy_t}>\epsilon} \tailmeasure(\rmd \bsy) \rmd t  
              = \int_{\spaceD} \map(\bsy) \frac{\exc_f(\epsilon^{-1}\bsy)}
               {\exc_f(\epsilon^{-1}\bsy)} \tailmeasure(\rmd \bsy) = \tailmeasure(H) \; .
  \end{align*} 
  As already noted, the class of such maps $\map$ is measure determining, thus we have proved
  \Cref{eq:spectral-representation}. Since $\bsZ$ depends only on the tail process $\bsY$, this
  shows that the tail measure is completely determined by its tail process.

  We now prove the other stated properties of $\bsZ$.  For all $a<b$,
  \begin{align*} 
    \esp\left[\sup_{a \leq s \leq b} \norm{\bsZ_s}^\alpha \right] 
    & = \int_0^\infty \pr(u \bsZ_{a,b}^*>1) \alpha u^{-\alpha-1} \rmd u \\ 
    & = \tailmeasure \left(\left\{\bsy\in\spaceD: \bsy_{a,b}^* >
      1\right\}\right) < \infty \; ,
  \end{align*} 
  since the set $\left\{\bsy\in\spaceD: \bsy_{a,b}^* > 1\right\}$ is separated from $\bszero$, hence
  has finite $\tailmeasure$-measure. This proves \Cref{eq:local-boundedness-Z}. 

  Finally, for any bounded measurable $0$-homogeneous map $\homap$ on $\spaceD$ and $t\in\Rset$, we have
  by \Cref{eq:spectral-representation},
  \begin{align*}
    \esp[\norm{\bsZ_t}^\alpha \homap(\bsZ)] 
    & = \int_0^\infty \esp[\homap(\bsZ)\ind{r\norm{\bsZ_t}>1} \alpha r^{-\alpha-1} \rmd r \\ 
    & =  \int_{\spaceD} \homap(\bsy)\ind{\norm{\bsy_t}>1} \tailmeasure(\rmd \bsy) \\
    & =  \int_{\spaceD} \homap(\shift^t\bsy)\ind{\norm{\bsy_0}>1} \tailmeasure(\rmd \bsy) = \esp[\homap(\shift^t\bsY)] \; .
  \end{align*}
  In the last line, we used the \shiftinvariance\ of $\tailmeasure$ and the definition of the tail
  process. For $t=0$, this yields \Cref{eq:Y-in-terms-of-Z}. Replacing $\homap$ by $\homap\circ \shift^t$
  yields \Cref{eq:tilt-shift}.
\end{proof}
One important difference between the tail process and spectral processes related to a tail measure
is that the former is unique in distribution and the latter is not.  The terminology is a bit
confusing since in general, a spectral tail process $\bsTheta$ is not a spectral process. The only
case where a spectral tail process is also a spectral process is when $\pr(\bsTheta_t=0)=0$ for all
$t$; in that case the time change formula \Cref{eq:TCF-Theta} is equivalent to \Cref{eq:tilt-shift}.

A tail process $\bsY$ satisfies $\pr(\norm{\bsY_0}>1)=1$ and the time change formula
\Cref{eq:TCF-Y}. As a consequence of \Cref{theo:Y-determines-nu}, we show that there is a
one-to-one correspondence between $\spaceD$-valued processes which satisfy these two properties and
an additional boundedness condition and \shiftinvariant\ tail measures on $\spaceD$.
\begin{corollary}
  \label{coro:equivalence-Y-nu}
  Let $\bsY$ be a random element in $\spaceD$ such that
  \begin{itemize}
  \item  $\pr(|\bsY_0|>1)=1$;
  \item the time change formula \Cref{eq:TCF-Y} holds;
  \item for all $a<b$, 
    \begin{align}
      \label{eq:condition-bnddlfnt}
      \int_a^b \esp \left[ \frac1{\int_a^b \ind{\norm{\bsY_{t-s}}>1} \rmd t} \right] \rmd s < \infty \; .
    \end{align}
  \end{itemize}
  Then there exists a unique \shiftinvariant\ tail measure $\tailmeasure$ such that the distribution
  of $\bsY$ is~$\tailmeasure$ restricted to the set $\{\bsy\in\spaceD:|\bsy_0|>1\}$.
\end{corollary}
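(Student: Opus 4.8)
The plan is to run the construction in the proof of \Cref{theo:Y-determines-nu} in reverse: build a candidate spectral process $\bsZ$ out of $\bsY$, set $\tailmeasure:=\int_0^\infty\esp[\delta_{u\bsZ}]\alpha u^{-\alpha-1}\rmd u$, and then verify that $\bsZ$ meets all the hypotheses of that theorem, so that its easy (``if'') direction identifies $\tailmeasure$ as a \shiftinvariant\ tail measure. First I would record that the time change formula \eqref{eq:TCF-Y} already forces $\bsY$ to have the structure of a tail process: taking $\map\equiv1$ and $t=0$ gives $\pr(\norm{\bsY_0}>x)=x^{-\alpha}$ for $x\geq1$, while taking $\map(\bsy)=g(\bsy/\norm{\bsy_0})$ with $g$ bounded measurable shows that $\norm{\bsY_0}$ is independent of $\bsTheta=\norm{\bsY_0}^{-1}\bsY$; consequently the equivalent form \eqref{eq:TCF-Theta} holds and $\esp[\norm{\bsTheta_t}^\alpha]=\pr(\bsTheta_{-t}\neq\bszero)\leq1$. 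With this moment bound in hand, fixing a bounded positive continuous density $f$, setting $\mcj(\bsy)=\int f(t)\norm{\bsy_t}^\alpha\rmd t$ and letting $T$ have density $f$ independently of $\bsY$, the process $\bsZ=\mcj(\shift^T\bsY)^{-1/\alpha}\shift^T\bsY$ is well defined exactly as in the proof of \Cref{theo:Y-determines-nu} (one has $\esp[\mcj(\shift^T\bsTheta)]\leq1$, hence $\mcj(\shift^T\bsY)\in(0,\infty)$ almost surely, and $\pr(\bsZ=\bszero)=0$).

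The key observation is that the chain of equalities in the proof of \Cref{theo:Y-determines-nu}, up to the expression $\epsilon^{-\alpha}\esp[\map(\epsilon\shift^t\bsY)]$, uses only the time change formula \eqref{eq:TCF-Y} and the homogeneity of $\mcj$, and never the a priori existence of a tail measure. Re-reading that computation therefore establishes, with $\tailmeasure$ now standing for the measure just defined, the identity $\int_{\spaceD}\map(\bsy)\ind{\norm{\bsy_t}>\epsilon}\tailmeasure(\rmd\bsy)=\epsilon^{-\alpha}\esp[\map(\epsilon\shift^t\bsY)]$ for every \nonnegative\ measurable $\map$, every $t\in\Rset$ and every $\epsilon>0$. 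Specialising this identity yields almost all the required properties for free: with $t=0$ and $\epsilon=1$ it reads $\int\map(\bsy)\ind{\norm{\bsy_0}>1}\tailmeasure(\rmd\bsy)=\esp[\map(\bsY)]$, which is simultaneously the normalisation $\tailmeasure(\{\norm{\bsy_0}>1\})=1$ (take $\map\equiv1$) and the statement that $\tailmeasure$ restricted to $\{\norm{\bsy_0}>1\}$ is the law of $\bsY$; with $\map\equiv1$ and $\epsilon=1$ it gives $\esp[\norm{\bsZ_t}^\alpha]=1$ for all $t$, in particular $\esp[\norm{\bsZ_0}^\alpha]=1$; and, exactly as at the end of the proof of \Cref{theo:Y-determines-nu}, applying it with $\homap$ and with $\homap\circ\shift^t$ yields the tilt-shift relation \eqref{eq:tilt-shift}. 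Homogeneity of $\tailmeasure$ is automatic from the spectral representation. Thus the only hypothesis of \Cref{theo:Y-determines-nu} not yet verified is the local boundedness \eqref{eq:local-boundedness-Z}, equivalently the $\mcb_0$-bounded finiteness of $\tailmeasure$.

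This is where condition \eqref{eq:condition-bnddlfnt} enters, and it is the main obstacle. Integrating the identity above with $\epsilon=1$ over $t\in[a,b]$ and using Tonelli turns its left-hand side into $\int_{\spaceD}\map(\bsy)\big(\int_a^b\ind{\norm{\bsy_t}>1}\rmd t\big)\tailmeasure(\rmd\bsy)$; choosing $\map(\bsy)=\big(\int_a^b\ind{\norm{\bsy_t}>1}\rmd t\big)^{-1}$ on the set where this window exceedance time is positive, and using that $\norm{\bsY_0}>1$ together with right-continuity makes that time positive almost surely for $t\in(a,b)$, I obtain $\tailmeasure\big(\{\bsy:\int_a^b\ind{\norm{\bsy_t}>1}\rmd t>0\}\big)=\int_a^b\esp\big[\big(\int_a^b\ind{\norm{\bsY_{t-s}}>1}\rmd t\big)^{-1}\big]\rmd s$, which is finite precisely by \eqref{eq:condition-bnddlfnt}. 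It then remains to pass from these ``positive window-exceedance-time'' sets to an arbitrary set separated from $\bszero$, that is, one contained in some $\{\bsy:\sup_{a\leq t\leq b}\norm{\bsy_t}\geq\delta\}$. Here I would use the \cadlag\ property to argue that $\sup_{a\leq t\leq b}\norm{\bsy_t}\geq\delta$ forces a strictly positive amount of time above any level $\delta'<\delta$ on a slightly enlarged window $[a,b']$, and then rescale by $\delta'$ and invoke homogeneity to reduce the level to $1$, so that \eqref{eq:condition-bnddlfnt} on the window $[a,b']$ bounds $\tailmeasure$ on the given set. The delicate point, which I expect to require the most care, is exactly this passage between ``$\sup\geq\delta$'' and ``positive time above $\delta'$'': for a \cadlag\ function a supremum may only be approached in a limit, but right- or left-continuity at the approximating point always produces a full neighbourhood of exceedances of $\delta'$, and the bookkeeping with the enlarged window and the rescaling must be carried out carefully.

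Finally, once \eqref{eq:local-boundedness-Z} is established, the ``if'' direction of \Cref{theo:Y-determines-nu} guarantees that $\tailmeasure$ is a \shiftinvariant\ tail measure, and the second paragraph already shows that its tail process is $\bsY$. Uniqueness is then immediate: by \Cref{theo:Y-determines-nu} a \shiftinvariant\ tail measure is entirely determined by its tail process, so any two such measures restricting to the law of $\bsY$ on $\{\bsy\in\spaceD:\norm{\bsy_0}>1\}$ must coincide.
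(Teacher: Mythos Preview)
Your proposal is correct and follows essentially the same route as the paper: build $\bsZ$ from $\bsY$ via \eqref{eq:construction-spectralprocess}, define $\tailmeasure$ by the spectral representation, recover the law of $\bsY$ on $\{\norm{\bsy_0}>1\}$ from the time change formula, and use the exceedance functional $\exc_{a,b}$ together with \eqref{eq:condition-bnddlfnt} to obtain bounded finiteness. The paper avoids your ``delicate point'' by computing $\esp[(\bsZ_{a,b}^*)^\alpha]$ directly---writing $(\bsZ_{a,b}^*)^\alpha$ as $\int_0^\infty \ind{r\shift^T\bsY_{a,b}^*>1}\,\mcj(\shift^T\bsY)^{-1}\,\alpha r^{-\alpha-1}\rmd r$ and then inserting $\exc_{a,b}(r\shift^T\bsY)/\exc_{a,b}(r\shift^T\bsY)$---so that the time change formula lands exactly on the integral in \eqref{eq:condition-bnddlfnt}; since $\esp[(\bsZ_{a,b}^*)^\alpha]=\tailmeasure(\{\bsy_{a,b}^*>1\})$ already, no separate comparison between $\{\bsy_{a,b}^*>\epsilon\}$ and $\{\exc_{a',b'}>0\}$ is needed.
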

Note that \cref{eq:condition-bnddlfnt} is also a necessary condition since if $\bsY$ is the tail
process associated to a \shiftinvariant\ tail measure $\tailmeasure$, then for $s\in[a,b)$,
$\pr(\int_a^b \ind{\norm{\bsY_{t-s}}>1}\rmd t >0)=1$ and
\begin{align*}
  \int_a^b \esp \left[ \frac1{\int_a^b \ind{\norm{\bsY_{t-s}}>1} \rmd t} \right] \rmd s
  & = \int_{\spaceD}   \int_a^b \frac{\ind{\norm{\bsy_0}>1}}{\int_a^b \ind{\norm{\bsy_{t-s}}>1} \rmd t}  \rmd s \,
     \ind{\int_a^b\ind{\norm{\bsy_{t-s}}>1}\rmd t > 0} \tailmeasure(\rmd\bsy)  \\
  &  = \int_{\spaceD} \ind{\int_a^b\ind{\norm{\bsy_s}>1}\rmd s>0} \tailmeasure(\rmd\bsy) 
    = \int_{\spaceD} \ind{\sup_{a\leq s \leq b}{\norm{\bsy_s}}>1} \tailmeasure(\rmd\bsy) \; .
\end{align*}
The last quantity is finite since a tail measure is boundedly finite by definition.
\begin{proof}
  The properties of $\bsY$ used to define the process $\bsZ$ in
  \Cref{eq:construction-spectralprocess} are those assumed here so we can define $\bsZ$ and a
  measure $\tailmeasure$ on $\spaceD$ by \Cref{eq:spectral-representation}. This measure is
  homogeneous and \shiftinvariant\ by construction and we must prove that $\bsY$ is the tail process
  associated to~$\tailmeasure$ and that $\tailmeasure$ is boundedly finite on
  $\spaceD\setminus\{\bszero\}$. By definition and by the time change formula, we have, for any
  bounded measurable map $\map$,
  \begin{align*}
    \tailmeasure(\map\ind{\norm{\bsy_0}>1})
    & = \int_0^\infty \esp\left[ \frac{\map(r\shift^T\bsY)\ind{r\norm{\bsY_{-T}}>1}}{\mcj(\shift^T\bsY)} \right] \alpha r^{-\alpha-1} \rmd r \\
    & = \int_0^\infty \esp\left[ \frac{\map(\bsY)\ind{\norm{\bsY_{T}}>r}}{\mcj(\bsY)} \right] \alpha r^{\alpha-1} \rmd r \\
    & = \esp\left[ \frac{\map(\bsY)\norm{\bsY_{T}}^\alpha}{\mcj(\bsY)} \right] 
      = \int_{-\infty}^\infty \esp\left[ \frac{\map(\bsY)\norm{\bsY_{t}}^\alpha}{\mcj(\bsY)} \right] f(t) \rmd t = \esp[\map(\bsY)] \; . 
  \end{align*}
  We now prove that $\tailmeasure$ is boundedly finite. As already noted, this is equivalent to
  proving that $\esp[(\bsZ_{a,b}^*)^\alpha] <\infty$ for all $a<b$.  Define the map $\exc_{a,b}$ on
  $\spaceD$ of exceedances between $a$ and~$b$ by
  $\exc_{a,b}(\bsy) = \int_a^b \ind{\norm{\bsy_s}>1} \rmd s$.  By definition of $\bsZ$, we have
  \begin{align*}
     \esp[(\bsZ_{a,b}^*)^\alpha] 
    &  = \esp\left[ \frac{(\shift^T\bsY_{a,b}^*)^\alpha}{\mcj(\shift^T\bsY)} \right]  
     = \int_0^\infty \esp\left[ \frac{\ind{r\shift^T\bsY_{a,b}^*>1}}{\mcj(\shift^T\bsY)} \right]  \alpha r^{-\alpha-1} \rmd r \\
    & = \int_a^b \int_0^\infty  \esp\left[ \frac{\ind{r\bsY_{s-T}>1}}{\mcj(\shift^T\bsY)\exc_{a,b}(r\shift^{T}\bsY)} \right]
      \alpha r^{-\alpha-1} \rmd r \rmd s \; .
  \end{align*}
  Applying now the time change formula \Cref{eq:TCF-Y} and the definition of $T$ yields 
  \begin{align*}       
     \esp[(\bsZ_{a,b}^*)^\alpha] 
    & =\int_a^b \int_0^\infty \esp\left[ \frac{\ind{\bsY_{T-s}>r}}{\mcj(\shift^s\bsY)\exc_{a,b}(\shift^{s}\bsY)} \right] 
      \alpha r^{\alpha-1} \rmd r \rmd s 
     = \int_a^b \esp \left[ \frac{1}{\exc_{a,b}(\shift^s\bsY)} \right] \rmd s \; .
  \end{align*}
  The last term is finite by assumption, thus $\tailmeasure$ is boundedly finite on $\spaceD_0$.
\end{proof}
Before turning to other representations of tail measures, we state and prove a simple lemma which
will nevertheless be very important.  Define the exceedance functional $\mce$ (or occupation time of
the interval~$(1,\infty)$) on $\spaceD$ by
\begin{align*}
  \exc(\bsy) = \int_{-\infty}^\infty  \ind{\norm{\bsy_t}>1} \rmd t  \; .
\end{align*}
This map is well defined and takes value in $[0,\infty]$. If $\bsY$ is a tail process in $\spaceD$,
then $\pr(\exc(\bsY)>0)=1$. In discrete time, the exceedance functional $\exc_d$ is defined with the
integral replaced by a series, and since a tail process $\bsY$ satisfies $|\bsY_0|>1$ almost surely,
it holds that $\exc_d(\bsY)\geq1$ almost surely, hence $\esp[\exc_d^{-1}(\bsY)]\leq1$. In continuous
time no such trivial bound holds. However, it still holds that the expectation of the inverse of the
exceedance functional is finite.
\begin{lemma}
  \label{lem:expinvexcfini}
  Let $\tailmeasure$ be a \shiftinvariant\ tail measure on $\spaceD$ and $\bsY$ be its tail process. Then
  \begin{align}
    \label{eq:eureka}
    \esp \left[ \frac1{\exc(\bsY)} \right] 
    = \lim_{T\to\infty}  \frac1T \tailmeasure \left( \left\{\bsy\in\spaceD: \sup_{0\leq t\leq T} |\bsy_t| > 1 \right\}\right) < \infty \; .
  \end{align}
\end{lemma}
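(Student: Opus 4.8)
The plan is to reduce everything to the exceedance occupation set and exploit \shiftinvariance\ through a mass-transport (averaging) identity. Write $\exc_{a,b}(\bsy)=\int_a^b\ind{\norm{\bsy_s}>1}\rmd s$, so that $\exc=\exc_{-\infty,\infty}$. First I would establish, for every finite interval $[a,b]$, the exact identity
\[
  \tailmeasure(\{\bsy:\exc_{a,b}(\bsy)>0\})=\int_a^b\esp\!\left[\frac{1}{\exc_{a-t,b-t}(\bsY)}\right]\rmd t \; .
\]
This follows by writing $\ind{\exc_{a,b}(\bsy)>0}=\int_a^b\exc_{a,b}(\bsy)^{-1}\ind{\norm{\bsy_t}>1}\rmd t$ on $\{\exc_{a,b}>0\}$, integrating against $\tailmeasure$, using Fubini, and then shifting each inner integral by $t$: since $\exc_{a,b}(\shift^t\bsy)=\exc_{a-t,b-t}(\bsy)$ and $\shift^t$ preserves $\tailmeasure$, the term $\int\exc_{a,b}(\bsy)^{-1}\ind{\norm{\bsy_t}>1}\tailmeasure(\rmd\bsy)$ becomes $\int\exc_{a-t,b-t}(\bsy)^{-1}\ind{\norm{\bsy_0}>1}\tailmeasure(\rmd\bsy)=\esp[\exc_{a-t,b-t}(\bsY)^{-1}]$, because $\tailmeasure$ restricted to $\{\norm{\bsy_0}>1\}$ is the law of $\bsY$. (Applying the same averaging to $\exc^{-1}$ instead of to the indicator gives the companion identity $\esp[1/\exc(\bsY)]=T^{-1}\int\exc_{0,T}(\bsy)\,\exc(\bsy)^{-1}\tailmeasure(\rmd\bsy)$, valid for every $T$.)

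Taking $[a,b]=[0,T]$ and bounding $\exc_{-t,T-t}(\bsY)\le\exc(\bsY)$ pointwise gives $\frac1T\tailmeasure(\{\exc_{0,T}>0\})\ge\esp[1/\exc(\bsY)]$. Since $\{\exc_{0,T}>0\}\subseteq\{\bsy_{0,T}^*>1\}$ is separated from $\bszero$, its $\tailmeasure$-measure is finite, so dividing by $T$ already yields $\esp[1/\exc(\bsY)]\le T^{-1}\tailmeasure(\{\bsy_{0,T}^*>1\})<\infty$. This simultaneously establishes the finiteness asserted in the lemma and the bound $\liminf_T\ge\esp[1/\exc(\bsY)]$.

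The delicate point, and the main obstacle, is the matching upper bound $\limsup_T T^{-1}\tailmeasure(\{\exc_{0,T}>0\})\le\esp[1/\exc(\bsY)]$. In discrete time $\exc\ge1$, so $1/\exc_{-t,T-t}$ is bounded and dominated convergence closes the argument at once; in continuous time $\exc$ can be arbitrarily small, and even the symmetric truncations $\esp[1/\exc_{-M,M}(\bsY)]$ need not be finite, so no uniform domination is available. The remedy is to truncate with \emph{asymmetric} windows. By the identity at scale $s$, $\int_0^s\esp[1/\exc_{-t,s-t}(\bsY)]\rmd t=\tailmeasure(\{\exc_{0,s}>0\})<\infty$, so the integrand is finite for a.e. $t\in(0,s)$; hence for every $s$ there is a window $[-a,b]$ with $a+b=s$, both $a$ and $b$ as large as we wish, and $c:=\esp[1/\exc_{-a,b}(\bsY)]<\infty$. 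Fixing such a good window and splitting $\int_0^T=\int_0^a+\int_a^{T-b}+\int_{T-b}^T$ in the identity, the middle part is $\le c$ because $\exc_{-t,T-t}(\bsY)\ge\exc_{-a,b}(\bsY)$ on $[a,T-b]$, whereas each boundary part is bounded, uniformly in $T\ge a+b$, by a fixed tail-measure quantity (namely $\tailmeasure(\{\exc_{0,a+b}>0\})$ and $\tailmeasure(\{\exc_{-(a+b),0}>0\})$, obtained by applying the identity on $[0,a+b]$ and on $[-(a+b),0]$). After dividing by $T$ the two boundary parts vanish, which gives $\limsup_T T^{-1}\tailmeasure(\{\exc_{0,T}>0\})\le\esp[1/\exc_{-a,b}(\bsY)]$ for every good window.

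Finally I would let the good windows grow. Choosing a nested increasing sequence of good windows $[-a_k,b_k]$ with $a_k,b_k\to\infty$ (possible because the admissible $t$ have full Lebesgue measure at every scale), monotone convergence gives $\esp[1/\exc_{-a_k,b_k}(\bsY)]\downarrow\esp[1/\exc(\bsY)]$, so $\limsup_T\le\esp[1/\exc(\bsY)]$. Combined with the lower bound this proves $\lim_T T^{-1}\tailmeasure(\{\exc_{0,T}>0\})=\esp[1/\exc(\bsY)]$. The statement for the supremum set then follows from the sandwich $\{\exc_{0,T}>0\}\subseteq\{\bsy_{0,T}^*>1\}\subseteq\{\exc_{0,T+1}>0\}$, where the second inclusion uses that right-continuity turns any exceedance in $[0,T]$ into positive occupation in $[0,T+1]$, together with $\frac{T+1}{T}\to1$.
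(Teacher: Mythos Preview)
Your proof is correct and rests on exactly the same mass-transport identity as the paper's. The difference lies only in how the limit $T\to\infty$ is extracted.

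The paper reparametrizes by $s=t/T\in[0,1]$, so that the identity reads
\[
  \frac{1}{T}\tailmeasure(\{\exc_{0,T}>0\})
  \;=\;
  \int_0^1 \esp\!\left[\frac{1}{\exc_{-Ts,\,T(1-s)}(\bsY)}\right]\rmd s .
\]
For each fixed $s\in(0,1)$ the integrand is \emph{decreasing} in $T$, and the value at $T=1$ is integrable in $s$ precisely because the same identity at $T=1$ says its integral equals $\tailmeasure(\{\exc_{0,1}>0\})<\infty$. Dominated convergence then yields the limit in one stroke, with no need to select ``good'' asymmetric windows, split off boundary pieces, or build a nested exhaustion.

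Your route---pick $[-a,b]$ with $\esp[1/\exc_{-a,b}(\bsY)]<\infty$, split $\int_0^T$ into bulk plus $O(1)$ boundary, then let $a,b\to\infty$ along good windows---is a valid workaround for the lack of a uniform bound, but it rediscovers by hand what the scaling parametrization delivers automatically. The trick is worth internalizing: when an averaged quantity $T^{-1}\int_0^T f_T(t)\,\rmd t$ depends on $t$ only through $t/T$ in a monotone fashion, rescaling to $[0,1]$ often replaces an ad~hoc splitting argument by a clean dominated-convergence step.

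One small point in your favour: your sandwich $\{\exc_{0,T}>0\}\subseteq\{\bsy_{0,T}^*>1\}\subseteq\{\exc_{0,T+1}>0\}$ (via right-continuity) is a clean way to pass from the occupation set to the supremum set; the paper's computation effectively identifies the two without comment.
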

\begin{proof}
  Since $\bsY$ is \cadlag\ and $\pr(\norm{\bsY_0}>1)=1$, for all $s\in[0,T)$,
  $\pr(\bsY_{-s,T-s}^*>1)=1$. Thus, by definition of the tail process, the shift-invariance of
  $\tailmeasure$ and by Fubini theorem, we have
  \begin{align*}
    \int_0^1 \esp 
    & \left[ \frac1{\int_{-Ts}^{T(1-s)} \ind{|\bsY_t|>1} \rmd t } \right] \rmd s \\
    &  = \frac1T\int_0^T \esp \left[ \frac1{\int_{-s}^{T-s} \ind{|\bsY_t|>1} \rmd t } \right] \rmd s  \\
    &  = \frac1T\int_0^T \esp \left[ \frac1{\int_{-s}^{T-s} \ind{|\bsY_t|>1} \rmd t } \ind{\bsY_{-s,T-s}^*>1}\right] \rmd s  \\
    &  =  \frac1T\int_0^T \int_{\spaceD} \frac{\ind{|\bsy_0|>1}} {\int_{-s}^{T-s} \ind{|\bsy_t|>1} \rmd t }
      \ind{\bsy_{-s,T-s}^*>1} \rmd s \,  \tailmeasure(\rmd\bsy) \\
    &  =  \frac1T\int_0^T \int_{\spaceD} \frac{\ind{|\bsy_s|>1}} {\int_{0}^{T} \ind{|\bsy_t|>1} \rmd t }
      \ind{\bsy_{0,T}^*>1}\rmd s \,  \tailmeasure(\rmd\bsy) \\
    & = \frac1T \tailmeasure \left( \left\{\bsy\in\spaceD: \sup_{0\leq t\leq T} |\bsy_t| > 1 \right\}\right)  < \infty \; .
  \end{align*}
  The last term is finite since $\tailmeasure$ is boundedly finite.  Since we have just proved that
  for $T=1$
  \begin{align*}
    \int_0^1 \esp \left[ \frac1{\int_{-s}^{(1-s)} \ind{|\bsY_t|>1} \rmd t } \right] \rmd s < \infty \; , 
  \end{align*}
  and since for each $s$ the map
  $T\mapsto \left(\int_{-Ts}^{T(1-s)} \ind{|\bsY_t|>1} \rmd t \right)^{-1}$ is decreasing \wrt\ $T$,
  we obtain by the dominated convergence theorem that
  \begin{align*}
    \esp \left[ \frac1{\exc(\bsY)} \right] 
    & = \int_0^1 \esp \left[ \lim_{T\to\infty}\frac1{\int_{-Ts}^{T(1-s)} \ind{|\bsY_t|>1} \rmd t } \right] \rmd s \\
    & = \lim_{T\to\infty} \int_0^1 \esp \left[ \frac1{\int_{-Ts}^{T(1-s)} \ind{|\bsY_t|>1} \rmd t } \right] \rmd s  
     \leq \int_0^1 \esp \left[ \frac1{\int_{-s}^{(1-s)} \ind{|\bsY_t|>1} \rmd t } \right] \rmd s < \infty \; .
  \end{align*}
\end{proof}

The quantity $\esp[\exc^{-1}(\bsY)]$ will be seen to be important enough to deserve a name.
\begin{definition}
    \label{def:def-candidate}
  Let $\tailmeasure$ be a \shiftinvariant\ tail measure on $\spaceD$. 
  The candidate extremal index $\candidate$ is defined by
  \begin{align*}
    \candidate = \esp\left[ \frac1{\exc(\bsY)} \right] \; .
  \end{align*}
\end{definition}
\Cref{lem:expinvexcfini} implies that $\candidate\in[0,\infty)$. As appear in the proof of
\cref{lem:expinvexcfini}, an expression of $\candidate$ in terms of $\tailmeasure$ only is given by
\begin{align*}
  \exc(\bsY) = \lim_{T\to\infty}    \frac1T \tailmeasure \left( \left\{\bsy\in\spaceD: \sup_{0\leq t\leq T} |y_t| > 1 \right\}\right)  \; .
\end{align*}

\subsection{Mixed moving average representations}
\label{sec:mma-representations}
We say that a tail measure $\tailmeasure$ has a mixed moving average representation if there exists
$\candidate\in(0,\infty)$ and a process $\bsQ\in\spaceD$ such that $\pr(\bsQ^*=1)=1$ and 
\begin{align}
  \label{eq:dissipativerepresentation}
  \tailmeasure = \candidate \int_{-\infty}^\infty \int_{0}^\infty \esp[\delta_{u\shift^t\bsQ}] \alpha u^{-\alpha-1} \rmd u \rmd t \; .
\end{align}
Since $\tailmeasure(\{\bsy\in\spaceD:\norm{\bsy_0}>1\})=1$, this representation entails
\begin{align}
 \label{eq:candidate-Q}
  \candidate \int_{-\infty}^\infty \esp[\norm{\bsQ_t}^\alpha]  \rmd t = 1 \; .
\end{align}
Since $\tailmeasure$ is boundedly finite, it also holds that  for all $a \leq b$,
\begin{align}
  \label{eq:Q-local-finite}
  0 <   \candidate \int_{-\infty}^\infty \esp[ (\bsQ_{a-t,b-t}^*)^\alpha] \rmd t 
  = \tailmeasure(\{\bsy\in\spaceD:\bsy_{a,b}^*>1\}) < \infty \; ,
\end{align}
The latter property implies that $\pr(\lim_{|t|\to\infty} \norm{\bsQ_t}=0)=1$. Since moreover $\bsQ$
is \cadlag\ and $\pr(\bsQ^*=1)=1$, we obtain that $\pr(0<\exc(r\bsQ)<\infty)=1$ for all
$r\geq1$. Then, defining the map $\map_e$ by $\map_e(\bsy)=\exc^{-1}(\bsy)\ind{\norm{\bsy_0}>1}$, we have
\begin{align*}
  \esp[\exc^{-1}(\bsY)] 
  & = \tailmeasure(\map_e)  
    = \candidate \int_{-\infty}^\infty \int_0^\infty \esp\left[ \frac{\ind{r\norm{\bsQ_{-t}}>1}} {\exc(r\bsQ)} \right] \alpha r^{-\alpha-1} \rmd r \rmd t \\
  & = \candidate  \int_1^\infty \esp\left[ \int_{-\infty}^\infty\frac{\ind{r\norm{\bsQ_{-t}}>1} \rmd t} {\exc(r\bsQ)} \right] \alpha r^{-\alpha-1} \rmd r
    = \candidate  \int_1^\infty \esp\left[ \frac{\exc(r\bsQ)} {\exc(r\bsQ)} \right] \alpha r^{-\alpha-1} \rmd r    = \candidate \; .
\end{align*}
Thus $\candidate$ is the candidate extremal index of \cref{def:def-candidate}, which justifies the
use of the same notation.

Conversely, if $\bsQ$ is a random element in $\spaceD$ such that $\pr(\bsQ^*=1)=1$ and
\cref{eq:Q-local-finite} holds for all $a \leq b$,
then (\ref{eq:dissipativerepresentation}), with
$\candidate = \left(\int_{-\infty}^\infty \esp[\norm{\bsQ_t}^\alpha]\rmd t\right)^{-1}$, defines a
\shiftinvariant\ tail measure which is supported on $\spaceD_0$.

The purpose of this section is to show that a tail measure supported on $\spaceD_0$ admits the
representation (\ref{eq:dissipativerepresentation}) with $\bsQ$ satisfy \Cref{eq:Q-local-finite} in
addition to $\pr(\bsQ^*=1)=1$.  We will need two preliminary lemmas.
\begin{lemma}
  \label{lem:independence-tilted}
  Assume that $\pr(\exc(\bsY)=\infty)=0$.  Let $\shinvhomap$ be a \nonnegative\ \shiftinvariant\ and
  $0$-homogeneous measurable map on $\spaceD(\Rset)$. Then, for all $x>1$,
  \begin{align}
    \label{eq:independence-tilted}
    \esp\left[\frac{\shinvhomap(\bsY)\ind{\bsY^*>x}}{\exc(\bsY)}\right] = x^{-\alpha}  \esp\left[\frac{\shinvhomap(\bsY)}{\exc(\bsY)}\right] \; .
  \end{align}
\end{lemma}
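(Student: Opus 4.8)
The statement \eqref{eq:independence-tilted} says that under the tilted measure with density $\exc^{-1}(\bsY)$, the variable $\bsY^*$ behaves like a Pareto$(\alpha)$ variable, and in particular the tail $\ind{\bsY^*>x}$ factors as $x^{-\alpha}$ times the unconditional tilted expectation of $\shinvhomap$. The natural strategy is to pass from the tail process $\bsY$ to the tail measure $\tailmeasure$ and exploit its homogeneity. Concretely, I would first rewrite the left-hand side as an integral against $\tailmeasure$. Since $\bsY$ has distribution $\tailmeasure(\cdot\cap\{\norm{\bsy_0}>1\})$, and since $\shinvhomap$, $\exc$ are $0$-homogeneous and $1$-homogeneous respectively (note $\exc(t\bsy)=\int\ind{t\norm{\bsy_s}>1}\rmd s$ depends on $t$, so $\exc$ is \emph{not} homogeneous — this is precisely the wrinkle to handle carefully), I would aim to remove the artificial role played by the coordinate $0$.

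The key idea is that the combination $\shinvhomap(\bsy)/\exc(\bsy)$, where $\shinvhomap$ is \shiftinvariant\ and the occupation-based $\exc$ is as well, is a functional to which the shift-averaging identity of \Cref{lem:expinvexcfini} applies. The plan is therefore to write, for a bounded measurable $0$-homogeneous \shiftinvariant\ map $\shinvhomap$,
\begin{align*}
  \esp\left[\frac{\shinvhomap(\bsY)\ind{\bsY^*>x}}{\exc(\bsY)}\right]
  = \int_{\spaceD} \frac{\shinvhomap(\bsy)\ind{\bsy^*>x}\ind{\norm{\bsy_0}>1}}{\exc(\bsy)}\,\tailmeasure(\rmd\bsy)\;,
\end{align*}
then use the device from the proof of \Cref{lem:expinvexcfini}: insert $1 = \exc(\bsy)^{-1}\int_{-\infty}^\infty \ind{\norm{\bsy_s}>1}\rmd s$ in reverse, i.e. replace the factor $\ind{\norm{\bsy_0}>1}$ by $\exc(\bsy)^{-1}\int\ind{\norm{\bsy_s}>1}\rmd s$ after using the shift-invariance of $\tailmeasure$, $\shinvhomap$, and $\exc$ to move the anchoring coordinate. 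This should let me trade $\ind{\norm{\bsy_0}>1}/\exc(\bsy)$ for a genuinely shift-symmetric weight and thereby decouple the scale.

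Then I would handle the scaling via homogeneity of $\tailmeasure$. After reducing to a shift-symmetric representation, the event $\{\bsy^*>x\}$ and the density $\exc^{-1}$ can be re-scaled: substituting $\bsy\mapsto x\bsy$ and using $\tailmeasure(x\,\cdot)=x^{-\alpha}\tailmeasure(\cdot)$, together with the $0$-homogeneity of $\shinvhomap$, should produce the factor $x^{-\alpha}$ and reduce the threshold back to $1$. The care needed is that $\exc(x\bsy)\ne x\exc(\bsy)$; but the \shiftinvariance\ reduction should have replaced $\exc(\bsy)$ by an expression of the form $\int \ind{\norm{\bsy_s}>1}\rmd s$ inside a ratio where the threshold $1$ rescales consistently with $\bsy\mapsto x\bsy$, so that $\exc(x\bsy)$ in the numerator and denominator cancels the scaling cleanly.

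\textbf{The main obstacle.} The delicate point is precisely the interaction between the threshold $1$ in the definition of $\exc$ and the rescaling $\bsy\mapsto x\bsy$: because $\exc$ is not homogeneous, one cannot naively pull $x$ through it. The assumption $\pr(\exc(\bsY)=\infty)=0$ is there to guarantee $0<\exc(\bsY)<\infty$ almost surely, so the ratio $\shinvhomap/\exc$ is well-defined and the Fubini/averaging manipulations are legitimate. I expect the crux to be arranging the shift-averaging so that $\exc$ at threshold $1$ appears in a self-cancelling ratio (numerator occupation time over denominator occupation time), exactly as in the computation $\exc(r\bsQ)/\exc(r\bsQ)$ in \cref{sec:mma-representations}, after which the homogeneity of $\tailmeasure$ yields the clean $x^{-\alpha}$ factor. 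Once that cancellation is set up correctly, the rest is a routine change of variables.
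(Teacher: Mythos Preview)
Your plan is essentially correct and aligns with the paper's proof, with one adjustment worth flagging. The paper does \emph{not} expand the anchor $\ind{\norm{\bsy_0}>1}$; it stays at the level of $\bsY$ and instead expands the indicator $\ind{\bsY^*>x}$. Concretely, since $\bsY$ is \cadlag, $\bsY^*>x \iff \exc(x^{-1}\bsY)>0$, so one writes
\[
  \ind{\bsY^*>x} \;=\; \frac{\exc(x^{-1}\bsY)}{\exc(x^{-1}\bsY)}
  \;=\; \frac{1}{\exc(x^{-1}\bsY)}\int_{-\infty}^\infty \ind{\norm{\bsY_t}>x}\,\rmd t \; ,
\]
expands the numerator as an integral over $t$, and applies the time change formula \eqref{eq:TCF-Y} to each $t$. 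After the shift, $\exc(x^{-1}\bsY)$ becomes $\exc(\bsY)$ and $\exc(\bsY)$ becomes $\exc(x\bsY)$; the $t$-integral then collapses back to $\exc(x\bsY)$, which cancels, leaving $x^{-\alpha}\esp[\shinvhomap(\bsY)/\exc(\bsY)]\cdot\ind{\exc(x\bsY)>0}$. Since $x>1$ and $\norm{\bsY_0}>1$ a.s., that last indicator is $1$.

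Your version via $\tailmeasure$ and its homogeneity/\shiftinvariance\ is isomorphic to this computation (the time change formula \emph{is} the \shiftinvariance\ of $\tailmeasure$ expressed at the level of $\bsY$), but note that the ratio trick must be applied to $\ind{\bsy^*>x}$ at threshold $x$, not to $\ind{\norm{\bsy_0}>1}$ at threshold~$1$; replacing the anchor by a shift-average would lead to the divergent integral $\int_\spaceD \shinvhomap(\bsy)\ind{\bsy^*>x}\,\tailmeasure(\rmd\bsy)$. The preliminary step you should expect is showing that $\pr(\exc(\bsY)=\infty)=0$ propagates to $\pr(\exc(x\bsY)=\infty)=0$ for all $x>0$, which the paper obtains by one application of Fubini and the time change formula; this is what makes the occupation-time ratio legitimate.
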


\begin{proof}
  If $\pr(\exc(\bsY)=\infty)=0$, by Fubini theorem and the time change formula \cref{eq:TCF-Y}, we
  have for all $x>0$,
  \begin{align*}
    \esp\left[\exc(x\bsY)\ind{\exc(x\bsY)=\infty} \right]
    & =    \int_{-\infty}^\infty  \esp \left[ \ind{\exc(x\bsY)=\infty}\ind{x\norm{\bsY_s}>1} \right] \rmd s \\
    & = x^\alpha \int_{-\infty}^\infty \esp \left[ \ind{\exc(\bsY)=\infty}\ind{\norm{\bsY_s}>x} \right] \rmd s = 0 \; .
  \end{align*}
  Thus $\pr(\exc(\bsY)=\infty)=0$ implies $\pr(\exc(x\bsY)=\infty)=0$ for all $x>0$.  For a \cadlag\
  function, $\bsy^*>x$ is equivalent to $\exc(x^{-1}\bsy)>0$, thus we can write
  \begin{align*}
    \ind{\bsy^*>x} = \frac{\exc(x^{-1}\bsy)}{\exc(x^{-1}\bsy)} \; .
  \end{align*}
  Thus, by Fubini theorem and the time change formula \Cref{eq:TCF-Y}, we obtain
  \begin{align*}
    \esp\left[\frac{\shinvhomap(\bsY)\ind{\bsY^*>x}}{\exc(\bsY)}\right]     
    & = \esp \left[ \frac{\shinvhomap(\bsY)} {\exc(\bsY)} \frac{\exc(x^{-1}\bsY)}{\exc(x^{-1}\bsY)} \right] \\
    &  =  \int_{-\infty}^\infty 
      \esp \left[ \frac{\shinvhomap(\bsY)} {\exc(\bsY)} \frac{\ind{\norm{\bsY_t}>x}}{\exc(x^{-1}\bsY)} \right] \rmd t 
     = x^{-\alpha} \int_{-\infty}^\infty 
      \esp \left[ \frac{\shinvhomap(\bsY) } {\exc(x\bsY)} \frac{\ind{x\norm{\bsY_{-t}}>1}}{\exc(\bsY)} \right] \rmd t \\
    & = x^{-\alpha}  \esp \left[ \frac{\shinvhomap(\bsY) } {\exc(x\bsY)} \frac{\exc(x\norm{\bsY})}{\exc(\bsY)} \right]
      = x^{-\alpha}  \esp \left[ \frac{\shinvhomap(\bsY) } {\exc(\bsY)} \ind{\exc(x\bsY)>0} \right]
      = x^{-\alpha} \esp\left[\frac{\shinvhomap(\bsY)}{\exc(\bsY)}\right] \; .
  \end{align*}
  The last equality holds since $\pr(\exc(x\bsY)>0)=1$ for all $x>1$.
\end{proof}
A set $A$ is said to be homogeneous if $tA=A$ for all $t>0$.
\begin{lemma}
  \label{lem:nunull}
  Let $\tailmeasure$ be a shift-invariant tail measure with tail process $\bsY$.  Let $A$ be a
  homogeneous and shift-invariant set. Then $\tailmeasure(A)=0\iff\pr(\bsY\in A)=0$.  If
  \Cref{eq:dissipativerepresentation} holds, then $\tailmeasure(A)=0\iff\pr(\bsQ\in A)=0$.
\end{lemma}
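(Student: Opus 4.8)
The plan is to handle the two equivalences separately, noting that three of the four implications are essentially immediate and that the only real work is in the implication $\pr(\bsY\in A)=0\Rightarrow\tailmeasure(A)=0$. Throughout I will use only the law of $\bsY$ (which is $\tailmeasure$ restricted to $\{\norm{\bsy_0}>1\}$), the homogeneity and \shiftinvariance\ of both $A$ and $\tailmeasure$, the \cadlag\ property, and $\tailmeasure(\{\bszero\})=0$; the spectral process $\bsZ$ is not needed. Since $\pr(\bsY\in A)=\tailmeasure(A\cap\{\norm{\bsy_0}>1\})\le\tailmeasure(A)$, the implication $\tailmeasure(A)=0\Rightarrow\pr(\bsY\in A)=0$ is trivial.

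For the converse I would exploit homogeneity and \shiftinvariance\ in turn. First, because $A$ is homogeneous, for every $t>0$ one has $A\cap\{\norm{\bsy_0}>t\}=t\bigl(A\cap\{\norm{\bsy_0}>1\}\bigr)$, so the $\alpha$-homogeneity of $\tailmeasure$ gives $\tailmeasure(A\cap\{\norm{\bsy_0}>t\})=t^{-\alpha}\pr(\bsY\in A)$. If $\pr(\bsY\in A)=0$ this vanishes for all $t>0$; letting $t\downarrow0$ and using continuity from below yields $\tailmeasure(A\cap\{\norm{\bsy_0}>0\})=0$. It then remains to control the functions in $A$ that vanish at the origin, and this is where \shiftinvariance\ is essential. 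Using the convention $\shift^t f=f(\cdot-t)$ one checks $\shift^q\{\norm{\bsy_0}>0\}=\{\norm{\bsy_q}>0\}$, and combining this with $\shift^q A=A$ gives $A\cap\{\norm{\bsy_q}>0\}=\shift^q\bigl(A\cap\{\norm{\bsy_0}>0\}\bigr)$. Hence the \shiftinvariance\ of $\tailmeasure$ forces $\tailmeasure(A\cap\{\norm{\bsy_q}>0\})=0$ for every $q\in\mathbb{Q}$. Since any $\bsy\ne\bszero$ is \cadlag, right-continuity forces $\norm{\bsy_q}>0$ for some rational $q$, so $A\setminus\{\bszero\}\subseteq\bigcup_{q\in\mathbb{Q}}\{\norm{\bsy_q}>0\}$; summing the countably many null terms and using $\tailmeasure(\{\bszero\})=0$ gives $\tailmeasure(A)=0$.

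For the second equivalence I would simply substitute the mixed moving average representation \eqref{eq:dissipativerepresentation}. Homogeneity of $A$ cancels the scaling $u$ and \shiftinvariance\ cancels the shift $\shift^t$, so $\pr(u\shift^t\bsQ\in A)=\pr(\bsQ\in A)$ for all $u>0$ and $t\in\Rset$. Therefore $\tailmeasure(A)=\candidate\,\pr(\bsQ\in A)\int_{-\infty}^\infty\int_0^\infty\alpha u^{-\alpha-1}\,\rmd u\,\rmd t$, where $\candidate\in(0,\infty)$ and the double integral is infinite. With the convention $\infty\cdot0=0$ this is $0$ precisely when $\pr(\bsQ\in A)=0$, and strictly positive otherwise, which is the claim.

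The step I expect to be most delicate is the passage from $\tailmeasure(A\cap\{\norm{\bsy_0}>0\})=0$ to $\tailmeasure(A)=0$: one must verify that \shiftinvariance\ of $A$ (not just homogeneity) is genuinely being used, that the identity $A\cap\{\norm{\bsy_q}>0\}=\shift^q(A\cap\{\norm{\bsy_0}>0\})$ respects the sign of $q$ and the backshift convention, and that the reduction to a \emph{countable} rational cover is legitimately supplied by right-continuity rather than by measurability alone.
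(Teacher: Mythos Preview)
Your proof is correct and uses the same ingredients as the paper's (homogeneity, \shiftinvariance, the \cadlag\ property, and $\tailmeasure(\{\bszero\})=0$), and your treatment of the second equivalence is identical to the paper's. The only difference is in how you combine the first two ingredients for the nontrivial implication: the paper applies homogeneity and \shiftinvariance\ simultaneously to obtain $\pr(\bsY\in A)=x^{-\alpha}\int \1A(\bsy)\ind{\norm{x\bsy_t}>1}\,\tailmeasure(\rmd\bsy)$, integrates over $t\in\Rset$ to produce the exceedance functional $\exc(x\bsy)$, and then lets $x\to\infty$ by monotone convergence; you instead apply homogeneity first (sending the threshold to $0$) and then \shiftinvariance\ over a countable rational grid, finishing by countable subadditivity. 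Your route avoids Fubini and the exceedance functional entirely, which is a mild simplification; the paper's route has the advantage of foreshadowing the central role of $\exc$ in the rest of the section.
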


\begin{proof}
  For any set $A$,
  $\pr(\bsY\in A) = \int_{\spaceD} \1A(\bsy)\ind{\norm{\bsy_0}>1} \tailmeasure(\rmd\bsy)$. Thus
  $\tailmeasure(A)=0$ implies $\pr(\bsY\in A)=0$.  Conversely, if $A$ is shift-invariant and
  homogeneous, the shift-invariance and homogeneity properties of $\tailmeasure$ yield, for all
  $t\in\Rset$ and $x>0$,
  \begin{align*}
    \pr(\bsY\in A) = \int_{\spaceD} \1A(\bsy)  \ind{\norm{\bsy_0}>1} \tailmeasure(\rmd \bsy) 
    = x^{-\alpha} \int_{\spaceD} \1A(\bsy) \ind{\norm{x\bsy_t}>1} \tailmeasure(\rmd \bsy) \; .
  \end{align*}
    If $\pr(\bsY\in A)$=0, this yields by integration \wrt\ $t$, for all $x>0$, 
  \begin{align*}
    0 =  \int_{\spaceD} \1A(\bsy) \exc(x\bsy) \tailmeasure(\rmd \bsy) \; .
  \end{align*}
  As $x\to+\infty$, this yields by monotone convergence,
  \begin{align*}
    0 =  \int_{\spaceD} \1A(\bsy) \left(\int_{-\infty}^\infty \ind{\norm{\bsy_t}>0}\rmd t \right) \tailmeasure(\rmd \bsy) \; .
  \end{align*}
  Thus $\bsy\in A$ implies $\int_{-\infty}^\infty \ind{\norm{\bsy_t}>0}\rmd t=0$, for
  $\tailmeasure$-almost all $\bsy$. Since $\bsy$ is a \cadlag\ function, this implies that
  $\bsy=\bszero$. Thus $A$ is a set of $\tailmeasure$-measure 0 and possibly also contains
  $\bszero$. Since $\tailmeasure(\{\bszero\})=0$, this proves that $\tailmeasure(A)=0$.

  If \Cref{eq:dissipativerepresentation} holds and $A$ is homogeneous and shift-invariant, then
  \begin{align*}
    \tailmeasure(A) & = \int_{-\infty}^\infty \int_0^\infty \pr(u \shift^t\bsQ \in A) \alpha u^{-\alpha-1} \rmd u \rmd t =
                      \infty\times\pr(\bsQ\in A) \; .
  \end{align*}
  Thus $\tailmeasure(A)=0\iff\pr(\bsQ\in A)=0$.
\end{proof}
We now prove the mentioned result. It is a complement of \cite[Theorem~3]{dombry:kabluchko:2016}
which is only concerned with max-stable processes (and fields), with a more constructive proof. It
is also similar to \cite[Theorem~3]{dombry:hashorva:soulier:2018} which deals with spectral
processes. We state our result in terms of tail processes and tail measures. We also refer to
\cite{dombry:kabluchko:2016} for the link between these representations and the
dissipative/conservative decomposition of non-singular flows. Recall that $\spaceD_0$ is the set of
\cadlag\ function $\bsy$ such that $\lim_{|t|\to\infty} \norm{\bsy_t}=0$ and $\spaceD_\alpha$ is the
set of \cadlag\ function $\bsy$ such that $\int_{-\infty}^\infty \norm{\bsy_t}\rmd t<\infty$.
\begin{theorem}
  \label{theo:equivalences-dissipative}
  Let $\tailmeasure$ be a \shiftinvariant\ tail measure with tail process $\bsY$. The following statements are
  equivalent.
  \begin{enumerate}[(i)]
  \item \label{item:dissipative} There exists a random element $\bsQ$ in $\spaceD$ such that
    $\pr(\bsQ^*=1)=1$ and  \Cref{eq:dissipativerepresentation} holds;
  \item \label{item:Yto0} $\pr(\bsY\in\spaceD_0)=1$;
  \item \label{item:integrability-Y} $\pr(\bsY\in\spaceD_\alpha)=1$; 
  \item \label{item:exceedances-Y} $\pr(\exc(\bsY)<\infty)=1$; 
  \item \label{item:nu-supported-on-D0} $\tailmeasure$ is supported on $\spaceD_0$;
  \item \label{item:nu-supported-on-Dalpha}  $\tailmeasure$ is supported on $\spaceD_\alpha$.
  \end{enumerate}
  If these conditions hold, then $\candidate=\esp[\exc^{-1}(\bsY)]$, $\candidate>0$ and the
  distribution of $\bsQ$ is given by
  \begin{align}
    \label{eq:loi-Q}
    \pr_{\bsQ} = \candidate^{-1} \esp \left[ \frac{\delta_{\frac{\bsY}{\bsY^*}}}{\exc(\bsY)} \right] \; .
  \end{align}
\end{theorem}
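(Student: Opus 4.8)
The plan is to collapse the six-fold equivalence onto the single implication \ref{item:exceedances-Y}$\Rightarrow$\ref{item:dissipative}, and to read off the final formulas from the construction used to prove it. Three of the other arrows are essentially free. Both $\spaceD_0^c$ and $\spaceD_\alpha^c$ are homogeneous and \shiftinvariant, so \Cref{lem:nunull} gives \ref{item:Yto0}$\iff$\ref{item:nu-supported-on-D0} and \ref{item:integrability-Y}$\iff$\ref{item:nu-supported-on-Dalpha}. Pointwise one has $\spaceD_\alpha\subseteq\{\exc<\infty\}$, since $\exc(\bsy)=\leb\{t:\norm{\bsy_t}>1\}\le\int_{-\infty}^\infty\norm{\bsy_t}^\alpha\rmd t$, and $\spaceD_0\subseteq\{\exc<\infty\}$, since $\norm{\bsy_t}\to0$ makes the exceedance set bounded; this yields \ref{item:Yto0}$\Rightarrow$\ref{item:exceedances-Y} and \ref{item:integrability-Y}$\Rightarrow$\ref{item:exceedances-Y}. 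Finally \ref{item:dissipative}$\Rightarrow$\ref{item:nu-supported-on-D0} and \ref{item:dissipative}$\Rightarrow$\ref{item:nu-supported-on-Dalpha} are exactly the remarks preceding the theorem: under \eqref{eq:dissipativerepresentation}, \eqref{eq:Q-local-finite} forces $\norm{\bsQ_t}\to0$ and \eqref{eq:candidate-Q} forces $\int_{-\infty}^\infty\norm{\bsQ_t}^\alpha\rmd t<\infty$ almost surely, so $\bsQ\in\spaceD_0\cap\spaceD_\alpha$ a.s.\ and the superposition $\tailmeasure$ is supported there. Thus everything reduces to \ref{item:exceedances-Y}$\Rightarrow$\ref{item:dissipative}.

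For the construction, set $\candidate=\esp[\exc^{-1}(\bsY)]$, which lies in $(0,\infty)$ by \Cref{lem:expinvexcfini} together with the fact that \ref{item:exceedances-Y} makes $\exc(\bsY)\in(0,\infty)$ a.s., so $\exc^{-1}(\bsY)>0$ a.s. Applying \Cref{lem:independence-tilted} with the constant map $\shinvhomap\equiv1$ gives $\esp[\exc^{-1}(\bsY)\ind{\bsY^*>x}]=\candidate\,x^{-\alpha}$ for $x>1$; letting $x\to\infty$ forces $\esp[\exc^{-1}(\bsY)\ind{\bsY^*=\infty}]=0$, and since $\exc^{-1}(\bsY)>0$ a.s.\ this shows $\pr(\bsY^*<\infty)=1$. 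Hence $\bsQ:=\bsY/\bsY^*$ is well defined with $\pr(\bsQ^*=1)=1$, the prescription \eqref{eq:loi-Q} defines a genuine probability law (its total mass is $\candidate^{-1}\esp[\exc^{-1}(\bsY)]=1$), and I let $\tildetailmeasurestar$ denote the measure given by the right-hand side of \eqref{eq:dissipativerepresentation}. It is homogeneous and \shiftinvariant\ by construction, and the remaining task is to prove $\tildetailmeasurestar=\tailmeasure$.

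To identify $\tildetailmeasurestar$ I would substitute \eqref{eq:loi-Q}, rescale $u\mapsto u\bsY^*$ to absorb the normalisation (producing a factor $(\bsY^*)^{-\alpha}$), and rewrite the expectation as a $\tailmeasure$-integral over the tail process. The decisive step is the substitution $\bsy=\shift^{-t}\bsy'$ inside this integral: it turns $\ind{\norm{\bsy_0}>1}$ into $\ind{\norm{\bsy'_t}>1}$, whereupon integrating over $t\in\Rset$ produces $\int_{-\infty}^\infty\ind{\norm{\bsy'_t}>1}\rmd t=\exc(\bsy')$, which cancels the $1/\exc(\bsy')$ weight and leaves $\ind{\exc(\bsy')>0}=\ind{\bsy'^*>1}$. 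Carried out for the set $\{\bsy_{a,b}^*>1\}$ and followed by the pseudo-polar decomposition \eqref{eq:spectral-spectral}, this gives
\[
  \tildetailmeasurestar(\{\bsy_{a,b}^*>1\})
  = \int_{\spaceD}\left(\frac{\bsy_{a,b}^*}{\bsy^*}\right)^{\!\alpha}\ind{\bsy^*>1}\,\tailmeasure(\rmd\bsy)
  = \esp[(\bsTheta_{a,b}^*)^\alpha]
  = \tailmeasure(\{\bsy_{a,b}^*>1\}) < \infty \; ,
\]
so $\tildetailmeasurestar$ is boundedly finite, hence a \shiftinvariant\ tail measure. The identical computation applied to a map $\map\,\ind{\norm{\bsy_0}>1}$ reduces, again via \eqref{eq:spectral-spectral}, to $\esp[\int_1^\infty\map(v\bsTheta)\alpha v^{-\alpha-1}\rmd v]=\esp[\map(\bsY)]$, showing that $\tildetailmeasurestar$ has tail process $\bsY$; \Cref{theo:Y-determines-nu} then forces $\tildetailmeasurestar=\tailmeasure$, which is \eqref{eq:dissipativerepresentation}. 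The closing assertions follow: $\candidate=\esp[\exc^{-1}(\bsY)]$ is the constant of any such representation by the computation preceding the statement, it is positive, and the law of $\bsQ$ is \eqref{eq:loi-Q} by construction, uniqueness among representations with $\pr(\bsQ^*=1)=1$ following from the same tilting identity.

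The main obstacle is the bookkeeping in this verification, and specifically the $\{\exc>0\}$, equivalently $\{\bsy^*>1\}$, restriction that the cancellation silently imposes. If one drops the factor $\ind{\bsy^*>1}$ and extends the $u$-integral over all of $\spaceD$, the resulting integrand behaves like $(\bsy^*)^{-\alpha}\norm{\bsy_0}^\alpha$ and produces a spurious divergence of $\tailmeasure$ near $\bszero$; keeping the restriction is exactly what lands the answer on the finite quantity $\esp[(\bsTheta_{a,b}^*)^\alpha]$. The engine that makes the whole argument run is \Cref{lem:independence-tilted}: under \ref{item:exceedances-Y} it renders $\bsY^*$ genuinely Pareto$(\alpha)$ under the $\exc^{-1}$-tilt, which simultaneously guarantees $\bsY^*<\infty$ and yields the exact cancellations of the powers of $\bsY^*$ and of $\bsTheta^*$ in the spectral computation.
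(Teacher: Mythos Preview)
Your reduction of the six equivalences is correct and in one respect cleaner than the paper's: you observe that $\spaceD_\alpha\subseteq\{\exc<\infty\}$ pointwise, which gives \ref{item:integrability-Y}$\Rightarrow$\ref{item:exceedances-Y} for free, whereas the paper proves \ref{item:integrability-Y}$\Rightarrow$\ref{item:dissipative} separately via a second construction based on $\lpnorm[\alpha]{\bsY}$. That extra argument is not wasted in the paper---it yields \Cref{coro:Q-Theta}---but for the equivalence alone your shortcut suffices. Your route through \ref{item:exceedances-Y}$\Rightarrow$\ref{item:dissipative} also differs from the paper's: you build $\tildetailmeasurestar$ from $\bsQ$, show it has tail process $\bsY$, check bounded finiteness, and invoke \Cref{theo:Y-determines-nu}; the paper instead starts from $\tailmeasure(\map)$, inserts $\exc/\exc$, and massages it directly into the right-hand side of \eqref{eq:dissipativerepresentation}, which has the advantage that bounded finiteness never needs separate verification.

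There is, however, a genuine gap in your bounded-finiteness step. The formula \eqref{eq:spectral-spectral} carries the restriction $\ind{\norm{\bsy_0}>0}$, so applying it to $(\bsy_{a,b}^*/\bsy^*)^\alpha\ind{\bsy^*>1}$ yields
\[
\int_\spaceD\Bigl(\tfrac{\bsy_{a,b}^*}{\bsy^*}\Bigr)^{\!\alpha}\ind{\bsy^*>1}\ind{\norm{\bsy_0}>0}\,\tailmeasure(\rmd\bsy)=\esp[(\bsTheta_{a,b}^*)^\alpha]=\tailmeasure(\{\bsy_{a,b}^*>1,\norm{\bsy_0}>0\}),
\]
not the unrestricted integral nor $\tailmeasure(\{\bsy_{a,b}^*>1\})$. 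In general $\tailmeasure(\{\bsy_{a,b}^*>1,\norm{\bsy_0}=0\})>0$, so both middle equalities in your display fail. The conclusion is nonetheless correct: use the spectral representation $\tailmeasure=\int_0^\infty\esp[\delta_{r\bsZ}]\alpha r^{-\alpha-1}\rmd r$ from \Cref{theo:Y-determines-nu} (where $\pr(\bsZ=\bszero)=0$, and $\bsZ^*<\infty$ a.s.\ follows from $\pr(\bsY^*<\infty)=1$ via \Cref{lem:nunull}) to compute both sides as $\esp[(\bsZ_{a,b}^*)^\alpha]$. A smaller omission is that your cancellation $\exc(\bsy')/\exc(\bsy')=\ind{\exc(\bsy')>0}$ tacitly assumes $\tailmeasure(\{\exc=\infty\})=0$; this follows from \ref{item:exceedances-Y} by Fubini and \shiftinvariance, since $\int_{-\infty}^\infty\tailmeasure(\{\norm{\bsy_t}>1,\exc(\bsy)=\infty\})\rmd t=0$, but it should be said.
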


\begin{proof}
  Since $\spaceD_0$ and $\spaceD_\alpha$ and their complements in $\spaceD$ are \shiftinvariant\ and
  homogeneous, \Cref{lem:nunull} implies that \Cref{item:nu-supported-on-D0}$\iff$\Cref{item:Yto0}
  and \Cref{item:nu-supported-on-Dalpha}$\iff$\Cref{item:integrability-Y}. Thus we will only prove
  the equivalence of \Cref{item:dissipative}, \Cref{item:Yto0}, \Cref{item:integrability-Y} and
  \Cref{item:exceedances-Y}.

  If (\ref{eq:dissipativerepresentation}) holds,  $\tailmeasure$ being
  boundedly finite by assumption, we have for all $a \leq b$,
  \begin{align*}
    \infty & >   \tailmeasure(\{\bsy\in\spaceD:\bsy_{a,b}^*>1\}) \\
           & = \candidate \int_{-\infty}^\infty \int_{0}^\infty \pr(u \bsQ_{a-t,b-t}^*>1) \alpha u^{-\alpha-1} \rmd u \rmd t \\
           & = \candidate \int_{-\infty}^\infty \esp[ (\bsQ_{a-t,b-t}^*)^\alpha] \rmd t \; .
  \end{align*}
  The finiteness of this integral implies that $\pr(\bsQ\in\spaceD_0\cap\spaceD_\alpha)=1$.  Thus
  \Cref{item:dissipative} implies both \Cref{item:Yto0} and \Cref{item:integrability-Y} by
  \cref{lem:nunull}. Obviously, \Cref{item:Yto0} implies \Cref{item:exceedances-Y} and we next prove
  that \Cref{item:exceedances-Y} implies \Cref{item:dissipative}.

\paragraph{Proof of \Cref{item:exceedances-Y}$\implies$\Cref{item:dissipative}}
Since $\pr(\exc(\bsY)<\infty)=1$ and $\bsY$ is \cadlag, $\candidate=\esp[\exc^{-1}(\bsY)]>0$ and
$\candidate<\infty$ by \Cref{lem:expinvexcfini}. Let~$\bsQ$ be a $\spaceD$-valued random process
whose distribution is given by \Cref{eq:loi-Q}.  Let $\map$ be a \nonnegative\ measurable map on
$\spaceD$ with support separated from $\bszero$. Then, there exists $\epsilon>0$ such that
$\map(\bsy)=0$ when $\bsy^*\leq\epsilon$.  Applying Fubini's theorem and the shift-invariance and
homogeneity of $\tailmeasure$, we obtain
  \begin{align*}
    \tailmeasure(\map) & = \epsilon^{-\alpha} \int_{\spaceD} \map(\epsilon \bsy) \tailmeasure(\rmd\bsy) 
                         = \epsilon^{-\alpha} \int_{\spaceD} \map(\epsilon \bsy) \ind{\exc(\bsy)>0} \tailmeasure(\rmd\bsy) \\
                       & = \epsilon^{-\alpha} \int_{\spaceD} \map(\epsilon\bsy)
                         \frac{\exc(\bsy)}{\exc(\bsy)} \ind{\exc(\bsy)>0} \tailmeasure(\rmd\bsy) 
                         = \epsilon^{-\alpha} \int_{-\infty}^\infty \int_{\spaceD} \map(\epsilon\bsy)
                         \frac{\ind{\norm{\bsy_t}>1}}{\exc(\bsy)} \tailmeasure(\rmd\bsy) \rmd t \\
                       & = \epsilon^{-\alpha} \int_{-\infty}^\infty \int_{\spaceD} \map(\epsilon\bsy)
                         \frac{\ind{\norm{\bsy_0}>1}}{\exc(\bsy)} \tailmeasure(\rmd\bsy) \rmd t 
                         =  \epsilon^{-\alpha}\int_{-\infty}^\infty \esp\left[\frac{\map(\epsilon B^t\bsY)}{\exc(\bsY)}\right] \rmd t \; .
  \end{align*}
  Define the map $\tilde{\map}$ on $\spaceD$ by
  $\tilde{\map}(\bsy)=\int_{-\infty}^\infty \map(\shift^t\bsy)\rmd t$. The map $\tilde{\map}$ is well
  defined but possibly infinite (since $\map$ is non-negative) and \shiftinvariant.  Applying
  \eqref{eq:independence-tilted} yields
  \begin{align*} 
    \tailmeasure(\map) 
    & = \epsilon^{-\alpha} \candidate \int_{-\infty}^\infty \esp\left[\frac{\map(\epsilon \bsY^*\frac{B^t\bsY}{\bsY^*})}{\bsY^*}\right] \rmd t \\
    & = \epsilon^{-\alpha} \candidate  \esp\left[\frac{\tilde{\map}(\epsilon \bsY^*\frac{\bsY}{\bsY^*})}{\bsY^*}\right] 
      =  \epsilon^{-\alpha}  \candidate  \int_1^\infty \esp\left[\frac{\tilde{\map}(\epsilon r\frac{\bsY}{\bsY^*})}{\exc(\bsY)}\right]
      \alpha r^{-\alpha-1} \rmd r \\
    & =  \epsilon^{-\alpha}  \candidate \int_{-\infty}^\infty \int_1^\infty \esp\left[\frac{\map(\epsilon r\frac{B^t\bsY}{\bsY^*})}{\exc(\bsY)}\right]
      \alpha r^{-\alpha-1} \rmd r \rmd t \; .
  \end{align*}
  Applying now \cref{eq:loi-Q}, we obtain
  \begin{align*}
    \tailmeasure(\map) 
    & =  \epsilon^{-\alpha}  \candidate \int_{-\infty}^\infty \int_1^\infty \esp\left[\map(\epsilon rB^t\bsQ)\right] \alpha r^{-\alpha-1} \rmd r \rmd t \\
    & =   \candidate \int_{-\infty}^\infty  \int_\epsilon^\infty \esp\left[\map( rB^t\bsQ)\right] \alpha r^{-\alpha-1} \rmd r \rmd t 
     =   \candidate \int_{-\infty}^\infty  \int_0^\infty \esp\left[\map(rB^t\bsQ)\right] \alpha r^{-\alpha-1} \rmd r \rmd t \; .
  \end{align*}
  In the last line, the lower bound of the integral is set equal to zero because $\bsQ^*=1$ by
  definition. This proves \Cref{item:dissipative}.

  We have proved that \Cref{item:dissipative}, \Cref{item:Yto0} and \Cref{item:exceedances-Y} are
  equivalent. Since \Cref{item:Yto0} implies \Cref{item:dissipative} and \Cref{item:dissipative}
  implies \Cref{item:integrability-Y}, we have also proved that \Cref{item:Yto0} implies
  \Cref{item:integrability-Y}. In discrete time, the converse is obvious but needs a proof in the
  present context. We will actually prove that \Cref{item:integrability-Y} implies
  \Cref{item:dissipative}.
  \paragraph{Proof of \Cref{item:integrability-Y}$\implies$\Cref{item:dissipative}}
  Assume that \Cref{item:integrability-Y} holds. Using the homogeneity and shift-invariance of
  $\tailmeasure$ and the fact that $\tailmeasure(\{\bszero\})=0$, we have, for any \nonnegative\
  measurable map $\map$,
  \begin{align*}
    \tailmeasure(\map) 
    & = \int_{\spaceD} \map(\bsy) \frac{\lpnorm[\alpha]{\bsy}[\alpha]}{\lpnorm[\alpha]{\bsy}[\alpha]}  \tailmeasure(\rmd \bsy)  
      = \int_{\spaceD}\int_0^\infty \int_{-\infty}^\infty \map(\bsy) \frac{\ind{u<|y_t|}}{\lpnorm[\alpha]{\bsy}[\alpha]} 
      \alpha u^{\alpha-1} \rmd u \,  \rmd t \, \tailmeasure(\rmd \bsy)   \\
    & = \int_{\spaceD}\int_0^\infty \int_{-\infty}^\infty \map(u\shift^t\bsy)
      \frac{\ind{1<|y_0|}}{S_\alpha(\bsy)}  \alpha u^{-\alpha-1} \rmd u \, \rmd t \, \tailmeasure(\rmd \bsy)    \\
    & = \int_0^\infty \int_{-\infty}^\infty \esp \left[ \frac{\map(u\shift^t\bsY)}  {S_\alpha(\bsY)} \right] \alpha u^{-\alpha-1} \rmd u \, \rmd t   \\
    & = \int_0^\infty \int_{-\infty}^\infty \esp \left[ \frac{(\bsY^*)^\alpha \map\left(\frac{u\shift^t\bsY}{\bsY^*}\right)}
      {\lpnorm[\alpha]{\bsY}[\alpha]}   \right] \alpha u^{-\alpha-1} \rmd u \, \rmd t  \; .
  \end{align*}
  Recall that $\candidate=\esp[\exc^{-1}(\bsY)]<\infty$. Thus the previous identity yields
  \begin{align*}
    \candidate 
    &  = \int_{-\infty}^\infty \int_0^\infty
      \esp \left[ \frac{(\bsY^*)^\alpha \ind{u\norm{\bsY_{-t}}>\bsY^*}}
      {\lpnorm[\alpha]{\bsY}[\alpha]\exc(u(\bsY^*)^{-1}\bsY)}   \right] \alpha u^{-\alpha-1} \rmd u \, \rmd t 
      =    \esp \left[ \frac{(\bsY^*)^\alpha}{\lpnorm[\alpha]{\bsY}[\alpha]} \right]\; .
  \end{align*}
  Thus \Cref{eq:dissipativerepresentation} holds with $\bsQ$ whose distribution is given by
  \begin{align*}
    \pr_{\bsQ} = \candidate^{-1} \esp \left[ \frac{(\bsY^*)^\alpha\delta_{\frac{\bsY}{\bsY^*}}}{\lpnorm[\alpha]{\bsY}[\alpha]} \right] \; .
  \end{align*}
  By construction, $\pr(\bsQ^*=1)=1$. Thus we have proved that \Cref{item:integrability-Y} implies
  \Cref{item:dissipative}.

This concludes the proof of \cref{theo:equivalences-dissipative}.
\end{proof}

An important consequence of \cref{theo:equivalences-dissipative} is the following equivalence:
\begin{align}
  \label{eq:conservative-nulextremalindex}
  \tailmeasure(\spaceD_0)=0 \iff \candidate = 0 \; .
\end{align}

In the course of the proof, we have obtained a representation of the tail measure in terms of the
spectral tail process.
\begin{corollary}
  \label{coro:Q-Theta}
  Let the equivalent conditions of \Cref{theo:equivalences-dissipative} hold.  Then
  \begin{align}
   \label{eq:candidate-theta}
    \candidate = \esp\left[\frac{(\bsTheta^*)^\alpha}{\lpnorm[\alpha]{\bsTheta}[\alpha]}\right] \; .
  \end{align}
  For all $\alpha$-homogeneous non-negative \shiftinvariant\ measurable maps $\shinvhomap$ on $\spaceD$,
  \begin{align}
    \label{eq:Q-Theta}
    \esp \left[ \frac{\shinvhomap(\bsTheta)} {\lpnorm[\alpha]{\bsTheta}[\alpha]} \right] = \candidate \esp[\shinvhomap(\bsQ)]  \; .
  \end{align}
  and
  \begin{align}
    \label{eq:nu-theta}
    \tailmeasure 
    &  = \int_{-\infty}^\infty \int_0^\infty
      \esp \left[ \frac{\delta_{u\shift^t\bsTheta}}{\lpnorm[\alpha]{\bsTheta}[\alpha]} \right] \alpha u^{-\alpha-1} \rmd u \rmd t  \; .
  \end{align}
\end{corollary}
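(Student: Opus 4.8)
The plan is to read off all three identities from intermediate formulas already established while proving \Cref{theo:equivalences-dissipative}, using only the scaling decomposition $\bsY=\norm{\bsY_0}\bsTheta$ (with $\norm{\bsY_0}$ Pareto with tail index $\alpha$ and independent of $\bsTheta$) together with the homogeneity degrees of the functionals involved. For \eqref{eq:candidate-theta} I would start from the identity $\candidate=\esp\bigl[(\bsY^*)^\alpha/\lpnorm[\alpha]{\bsY}[\alpha]\bigr]$ obtained in the proof of \Cref{item:integrability-Y}$\implies$\Cref{item:dissipative}. The integrand $\bsy\mapsto(\bsy^*)^\alpha/\lpnorm[\alpha]{\bsy}[\alpha]$ is $0$-homogeneous, so substituting $\bsY=\norm{\bsY_0}\bsTheta$ cancels the factor $\norm{\bsY_0}^\alpha$ and gives $(\bsY^*)^\alpha/\lpnorm[\alpha]{\bsY}[\alpha]=(\bsTheta^*)^\alpha/\lpnorm[\alpha]{\bsTheta}[\alpha]$ almost surely, whence \eqref{eq:candidate-theta}.

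For the representation \eqref{eq:nu-theta} I would take the intermediate identity derived in the same proof, namely $\tailmeasure(\map)=\int_0^\infty\int_{-\infty}^\infty\esp\bigl[\map(u\shift^t\bsY)/\lpnorm[\alpha]{\bsY}[\alpha]\bigr]\,\alpha u^{-\alpha-1}\,\rmd u\,\rmd t$, valid for every \nonnegative\ measurable $\map$. Writing again $\bsY=\norm{\bsY_0}\bsTheta$, so that $\shift^t\bsY=\norm{\bsY_0}\shift^t\bsTheta$ and $\lpnorm[\alpha]{\bsY}[\alpha]=\norm{\bsY_0}^\alpha\lpnorm[\alpha]{\bsTheta}[\alpha]$, I would perform the change of variable $w=u\norm{\bsY_0}$ in the inner integral, conditionally on $\bsTheta$ and $\norm{\bsY_0}$. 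The transformation of $\alpha u^{-\alpha-1}\,\rmd u$ produces exactly a factor $\norm{\bsY_0}^\alpha$ that cancels the one carried by $\lpnorm[\alpha]{\bsY}[\alpha]$, and $\map(u\shift^t\bsY)=\map(w\shift^t\bsTheta)$, so the integrand becomes free of $\norm{\bsY_0}$ and the Pareto expectation is trivial. This yields $\tailmeasure(\map)=\int_{-\infty}^\infty\int_0^\infty\esp\bigl[\map(u\shift^t\bsTheta)/\lpnorm[\alpha]{\bsTheta}[\alpha]\bigr]\,\alpha u^{-\alpha-1}\,\rmd u\,\rmd t$, which is \eqref{eq:nu-theta}.

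Finally, for \eqref{eq:Q-Theta} I would use the explicit law of $\bsQ$ produced in the proof, $\pr_{\bsQ}=\candidate^{-1}\esp\bigl[(\bsY^*)^\alpha\,\delta_{\bsY/\bsY^*}/\lpnorm[\alpha]{\bsY}[\alpha]\bigr]$. For an $\alpha$-homogeneous \nonnegative\ \shiftinvariant\ map $\shinvhomap$, $\alpha$-homogeneity gives $\shinvhomap(\bsY/\bsY^*)=(\bsY^*)^{-\alpha}\shinvhomap(\bsY)$ (note $\bsY^*\geq\norm{\bsY_0}>1$ almost surely), so the factor $(\bsY^*)^\alpha$ cancels and $\candidate\,\esp[\shinvhomap(\bsQ)]=\esp\bigl[\shinvhomap(\bsY)/\lpnorm[\alpha]{\bsY}[\alpha]\bigr]$; since $\shinvhomap(\cdot)/\lpnorm[\alpha]{\cdot}[\alpha]$ is $0$-homogeneous, replacing $\bsY$ by $\bsTheta$ gives \eqref{eq:Q-Theta}. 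The only point requiring care is that the representation of \Cref{theo:equivalences-dissipative} does not pin down the law of $\bsQ$ uniquely: any time shift of $\bsQ$ yields the same $\tailmeasure$, and the two formulas \eqref{eq:loi-Q} and the one above need not define the same measure. However \eqref{eq:Q-Theta} involves only the \shiftinvariant\ functional $\shinvhomap$, for which $\esp[\shinvhomap(\bsQ)]$ is insensitive to such shifts, so the identity is unambiguous and may be checked on the convenient representative above. This shift ambiguity is the main (and essentially the only) subtlety; the remainder is bookkeeping of homogeneity degrees.
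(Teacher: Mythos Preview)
Your approach is correct and coincides with the paper's: the corollary is presented there without a separate proof, being read off directly from the intermediate identities obtained in the proof of \Cref{theo:equivalences-dissipative} (the formula for $\candidate$ and for $\tailmeasure$ in the step \Cref{item:integrability-Y}$\implies$\Cref{item:dissipative}) via the $0$-homogeneity substitution $\bsY=\norm{\bsY_0}\bsTheta$, exactly as you do.

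One small correction to your closing remark. The non-uniqueness of $\bsQ$ in \eqref{eq:dissipativerepresentation} is not restricted to deterministic time shifts: the two laws $\pr_{\bsQ}$ produced in the proof (one via $\exc(\bsY)$ in \eqref{eq:loi-Q}, the other via $\lpnorm[\alpha]{\bsY}[\alpha]$) need not differ by a shift, so shift-invariance of $\shinvhomap$ alone does not settle the issue. The clean way to see that \eqref{eq:Q-Theta} holds for \emph{any} $\bsQ$ satisfying \eqref{eq:dissipativerepresentation} is to observe that, applying \eqref{eq:dissipativerepresentation} to $H(\bsy)=\shinvhomap(\bsy)\ind{\norm{\bsy_0}>1}/\lpnorm[\alpha]{\bsy}[\alpha]$ and using the $\alpha$-homogeneity and shift-invariance of $\shinvhomap$ and of $\lpnorm[\alpha]{\cdot}[\alpha]$, the double integral collapses to $\candidate\,\esp[\shinvhomap(\bsQ)]$; on the other hand $\tailmeasure(H)=\esp\bigl[\shinvhomap(\bsY)/\lpnorm[\alpha]{\bsY}[\alpha]\bigr]=\esp\bigl[\shinvhomap(\bsTheta)/\lpnorm[\alpha]{\bsTheta}[\alpha]\bigr]$. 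Thus the left side of \eqref{eq:Q-Theta} is intrinsic to $\tailmeasure$, and your computation with the ``convenient representative'' suffices.
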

Similarly to \Cref{coro:equivalence-Y-nu}, we obtain a one-to-one equivalence between
\shiftinvariant\ tail measures supported on $\spaceD_0$ and $\spaceD$-valued random elements $\bsQ$
which satisfy \Cref{eq:Q-local-finite}.
\begin{corollary}
  \label{coro:equivalence-nu-Q}
  Let $\bsQ$ be a $\spaceD$-valued random element such that $\pr(\bsQ^*=1)=1$ and
  \Cref{eq:Q-local-finite} holds for all real numbers $a \leq b$.  Then there exists a unique
  \shiftinvariant\ tail measure $\tailmeasure$ such that \Cref{eq:dissipativerepresentation}
  holds. The tail measure is supported on $\spaceD_0$ and the tail process associated to it can be
  defined in terms of $\bsQ$ by
  \begin{align}
    \label{eq:Y-in-terms-of-Q}
    \bsY = Y \frac{\shift^T\bsQ}{\norm{\bsQ_{-T}}} \; , 
  \end{align}
  where $\candidate = (\int_{-\infty}^\infty \esp[\norm{\bsQ_t}^\alpha]\rmd t)^{-1}$, $Y$ is a
  Pareto random variable with tail index $\alpha$, independent of $\bsQ$ and $T$ and $T$ is a real
  valued random variable whose joint distribution with $\bsQ$ is given by
  $\candidate \int_{-\infty}^\infty\esp[\delta_{\bsQ,t}\norm{\bsQ_{-t}}^\alpha] \rmd t$.
\end{corollary}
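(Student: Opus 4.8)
The plan is to dispatch existence, support and uniqueness as essentially bookkeeping, leaving the tail process formula \eqref{eq:Y-in-terms-of-Q} as the only computation of substance. Existence of $\tailmeasure$ and the fact that it is supported on $\spaceD_0$ are already contained in the discussion preceding \Cref{theo:equivalences-dissipative}; there I would only verify that the hypotheses make $\candidate=(\int_{-\infty}^\infty\esp[\norm{\bsQ_t}^\alpha]\rmd t)^{-1}$ a well-defined positive constant, using that \eqref{eq:Q-local-finite} applied to $[0,\epsilon]$ together with $\norm{\bsQ_{-t}}\leq\bsQ_{-t,\epsilon-t}^*$ forces $\int_{-\infty}^\infty\esp[\norm{\bsQ_t}^\alpha]\rmd t<\infty$, while $\pr(\bsQ^*=1)=1$ and the \cadlag\ property force $\int_{-\infty}^\infty\norm{\bsQ_t}^\alpha\rmd t>0$ almost surely. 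For uniqueness I would note that, $\bsQ$ being given, \eqref{eq:dissipativerepresentation} determines $\tailmeasure$ once $\candidate$ is fixed, and the standardization $\tailmeasure(\{\norm{\bsy_0}>1\})=1$ of \Cref{def:tailmeasureonD} forces through \eqref{eq:candidate-Q} the single admissible value of $\candidate$.

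The heart of the proof is \eqref{eq:Y-in-terms-of-Q}. Since the tail process has law $\tailmeasure(\cdot\cap\{\norm{\bsy_0}>1\})$, restricting \eqref{eq:dissipativerepresentation} and using $(\shift^t\bsQ)_0=\bsQ_{-t}$ yields, for nonnegative measurable $g$,
\begin{align*}
  \esp[g(\bsY)] = \candidate \int_{-\infty}^\infty \int_0^\infty \esp[g(u\shift^t\bsQ)\ind{u\norm{\bsQ_{-t}}>1}]\,\alpha u^{-\alpha-1}\rmd u\,\rmd t \; .
\end{align*}
I would then compute the law of $Y\shift^T\bsQ/\norm{\bsQ_{-T}}$ by integrating the independent Pareto density $\alpha y^{-\alpha-1}\ind{y>1}$ against the joint law $\candidate\int_{-\infty}^\infty\esp[\delta_{\bsQ,t}\norm{\bsQ_{-t}}^\alpha]\rmd t$ of $(\bsQ,T)$, which is a probability measure precisely because of the value of $\candidate$ just fixed. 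Substituting $y=u\norm{\bsQ_{-t}}$ for fixed $(\bsQ,t)$, the Jacobian $\rmd y=\norm{\bsQ_{-t}}\rmd u$ together with $y^{-\alpha-1}=u^{-\alpha-1}\norm{\bsQ_{-t}}^{-\alpha-1}$ cancels the tilting factor $\norm{\bsQ_{-t}}^\alpha$ exactly, the constraint $y>1$ becomes $u\norm{\bsQ_{-t}}>1$, and $y\shift^t\bsQ/\norm{\bsQ_{-t}}=u\shift^t\bsQ$; the outcome coincides with the display above, which is \eqref{eq:Y-in-terms-of-Q}.

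The only delicate points are preliminary. I must check that $\bsY$ is well defined, that is $\norm{\bsQ_{-T}}>0$ almost surely under the joint law of $(\bsQ,T)$; this is immediate because that law carries the density factor $\norm{\bsQ_{-t}}^\alpha$ and so assigns no mass to $\{\norm{\bsQ_{-t}}=0\}$. I must also justify the interchange of the $u$-, $t$- and expectation integrals in the substitution, which is legitimate by Tonelli's theorem since for $g\geq0$ every integrand is nonnegative and the total mass is finite.
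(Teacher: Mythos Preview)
Your proposal is correct and follows essentially the same route as the paper. The paper's proof also first checks $\pr(\norm{\bsQ_{-T}}=0)=0$ via the tilting factor, then starts from $\esp[\map(\bsY)]=\candidate\int\int\esp[\map(u\shift^t\bsQ)\ind{u\norm{\bsQ_{-t}}>1}]\alpha u^{-\alpha-1}\rmd u\,\rmd t$ and performs the same change of variable $v=u\norm{\bsQ_{-t}}$ to identify the result with $\esp[\map(Y\shift^T\bsQ/\norm{\bsQ_{-T}})]$; the existence, support and uniqueness points you spell out are left implicit there, having been covered in the discussion preceding \Cref{theo:equivalences-dissipative}.
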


\begin{proof}
  Note that there is no issue with division by zero in \Cref{eq:Y-in-terms-of-Q} since
  \begin{align*}
    \pr(|\bsQ_{-T}|=0) = \candidate \int_{-\infty}^\infty \esp[\ind{|\bsQ_{-t}|=0} |\bsQ_{-t}|^\alpha] = 0 \; .
  \end{align*}
  Let $Y$ be a Pareto random variable with tail index $\alpha$. Let $\tailmeasure$ be defined by
  \cref{eq:dissipativerepresentation} and $\bsY$ its tail process. Then, for any \nonnegative\
  measurable map $\map$,
  \begin{align*}
    \esp[\map(\bsY)] & = \candidate\int_{-\infty}^\infty \int_0^\infty \esp[\map(u\shift^t\bsQ) \ind{\norm{u\bsQ_{-t}}>1}] \alpha u^{-\alpha-1} \rmd u \rmd t \\
                  & = \candidate\int_{-\infty}^\infty  \int_1^\infty \esp[\map(v\shift^t\bsQ/\norm{\bsQ_{-t}}) \norm{\bsQ_{-t}}^\alpha]
                    \alpha v^{-\alpha-1} \rmd v  \rmd t 
                    = \candidate   \esp[\map(Y\shift^T\bsQ/\norm{\bsQ_{-T}})] \; ,  
  \end{align*}
This proves the representation \cref{eq:Y-in-terms-of-Q} of the tail process.
\end{proof}

Before turning to the next issues, we note that the representations \Cref{eq:spectral-representation}
and \Cref{eq:dissipativerepresentation} can be interpreted in terms of Poisson point processes (PPP)
on $\spaceD$. A measure $\tailmeasure$ with the representation \Cref{eq:spectral-representation} is
the mean measure of a PPP $N$ which can be expressed as
\begin{align*}
  N = \sum_{i=1}^\infty \delta_{P_i\bsZ^{(i)}} \; , 
\end{align*}
with $\sum_{i=1}^\infty \delta_{P_i,\bsZ^{(i)}}$ a Poisson point process on $(0,\infty)\times\spaceD$  with mean measure
$\nualpha\otimes\pr_{\bsZ}$, $\nualpha(\rmd u) = \alpha u^{-\alpha-1} \rmd u$ and $\pr_{\bsZ}$ is the distribution of
the process $\bsZ$. If $\tailmeasure$ admits the representation \Cref{eq:dissipativerepresentation},
then $N$ can be expressed as
\begin{align*}
  N = \sum_{i=1}^\infty \delta_{P_i\shift^{T_i}\bsQ^{(i)}} \; , 
\end{align*}
where $\sum_{i=1}^\infty \delta_{T_i,P_i,\bsQ^{(i)}}$ is a PPP on
$\Rset\times(0,\infty)\times\spaceD$ with mean measure
$\candidate\leb\otimes\nualpha\otimes\pr_{\bsQ}$ (and $\pr_{\bsQ}$ denotes the distribution of
$\bsQ$).

\subsection{Anchoring maps}
\label{sec:anchoring-maps}
Let $\mci:\spaceD\to[-\infty,\infty]$ be a measurable map such that
\begin{align*}
  \mci(\shift^t\bsy) = \mci(\bsy)+t \; ,  \ \  \mbox{ for all } t\in\Rset \; .
\end{align*}
Such maps were introduced by \cite{basrak:planinic:2021} in the context of regularly varying
random fields indexed by a lattice and called anchoring maps, with the additional condition
$|\bsy_{\mci(\bsy)}|>1$ if $\mci(\bsy)\in\Rset$. We recall their properties in the case of time
series indexed by $\Zset$. Two examples are the infargmax functional denoted by $\mci_0$ and the
first exceedance over~1, denoted by $\mci_1$, defined on $(\Rset^d)^\Zset$ with values in
$\Zset\cup\{-\infty,+\infty\}$ by
\begin{align*}
  \mci_0(\bsy) & = \inf \{j\in\Zset: \norm{\bsy_j} = \sup_{i\in\Zset} \norm{\bsy_i}\} \; , \\
  \mci_1(\bsy) & = \inf \{j\in\Zset: \norm{\bsy_j} >1 \} \; , 
\end{align*}
with the convention that $\inf\emptyset=+\infty$.  For a discrete time tail process $\bsY$, that is
a random element in $(\Rset^d)^\Zset$ (endowed with the product topology) such that
$\pr(\norm{\bsY_0}>1)=1$ and which satisfies the time change formula \Cref{eq:TCF-Y}, if
$\pr(\lim_{|j|\to\infty} \norm{\bsY_j}=0)=1$, then for any anchoring map~$\mci$,
\begin{align}
  \label{eq:identities-discrete-time}
  \candidate  = \esp [\exc^{-1}(\bsY)] = \pr(\mci(\bsY)=0) = \frac{1}{\esp[\exc(\bsY)\mid \mci(\bsY)=0]} \; .
\end{align}
See \cite{basrak:planinic:2021}, \cite{planinic:soulier:2018} and
\cite[Chapter~5]{kulik:soulier:2020}. Also, the tail measure can be expressed as in
\Cref{eq:dissipativerepresentation} (with the integral over $\Rset$ replaced by a sum indexed by
$\Zset$) in terms of a sequence $\bsQ$ whose distribution is that of $(\bsY^*)^{-1}\bsY$
conditionally on $\mci(\bsY)=0$. The goal of this section is to investigate the role of anchoring
maps in continuous time, {to give a meaning to the conditioning in the two right-most
  terms of \cref{eq:identities-discrete-time} and to find if they can still be used as definitions
  of the candidate extremal index. It turns out that there are subtle differences and that anchoring
  maps are useful only when they fulfill the additional condition \cref{eq:condition-continuite}
  below, which is always true in discrete time, but not in continuous time as will be shown in 
  \cref{xmpl:subgaussian-maxstable}.}

We assume throughout this section that that the equivalent conditions of
\Cref{theo:equivalences-dissipative} hold.  Assume that there exists an anchoring map~$\mci$ such
that $\pr(\mci(\bsY)\in\Rset)=1$. Then, for all \nonnegative\ measurable maps $\map$ defined on
$\Rset \times\spaceD$, the representation (\ref{eq:dissipativerepresentation}) and the property
$\pr(\bsQ^*=1)=1$ yield
\begin{align*}
  \esp[\map(\mci(\bsY),\bsY)] 
  & = \candidate \int_{-\infty}^\infty \int_0^\infty \esp[ \map(\mci(u\bsQ)+t,u\shift^t\bsQ)\ind{u|\bsQ_{-t}|>1}] \alpha u^{-\alpha-1} \rmd u \, \rmd t \\
  & = \candidate \int_{-\infty}^\infty \int_1^\infty \esp[ \map(t,u\shift^{t-\mci(u\bsQ)} \bsQ)
    \ind{u|\bsQ_{\mci(u\bsQ)-t}|>1}] \alpha u^{-\alpha-1} \rmd u \, \rmd t   \\
  & = \candidate \int_{-\infty}^\infty \esp[ \map(t,Y\shift^{t-\mci(Y\bsQ)} \bsQ) \ind{Y|\bsQ_{\mci(Y\bsQ)-t}|>1}]  \, \rmd t  \; ,
\end{align*}
with $Y$ a random variable with a Pareto distribution with index $\alpha$, independent of
$\bsQ$. This shows that the distribution of $\mci(\bsY)$ is absolutely continuous \wrt\ Lebesgue's
measure with  density $f_\mci$ given by 
\begin{align}
  \label{eq:density-mci}
  f_{\mci}(t) = \candidate \pr(Y|\bsQ_{\mci(Y\bsQ)-t}|>1)] \; , 
\end{align}
and a regular version of the conditional distribution of $\bsY$ given $\mci(\bsY)$ is
\begin{align}
  \label{eq:conditionally=t}
  \esp[\map(\bsY)\mid \mci(\bsY)=t]  
  = \frac{\esp[ \map(Y\shift^{t-\mci(Y\bsQ)}\bsQ) \ind{Y|\bsQ_{\mci(Y\bsQ)-t}|>1}]}{\pr(Y|\bsQ_{\mci(Y\bsQ)-t}|>1)} \; , 
\end{align}
with the usual convention $\tfrac00=0$ and $\map$ a bounded or \nonnegative\ measurable map on
$\spaceD$.  Assume furthermore that 
\begin{align}
\label{eq:condition-continuite}
  \pr(Y|\bsQ_{\mci(Y\bsQ)}|>1)  = 1 \; .
\end{align}
Then {$f_\mci$ is left-continuous at 0 and} $\candidate = f_\mci(0)$ and
\Cref{eq:conditionally=t} yields, for $t=0$ and a \shiftinvariant\ measurable map $\shinvmap$ on
$\spaceD$,
\begin{align*}
  \esp[\shinvmap(\bsY)\mid \mci(\bsY)=0]    = \esp[ \shinvmap(Y\bsQ) ] \; .
\end{align*}
Thus we can formally extend the discrete time results to anchoring maps satisfying
\cref{eq:condition-continuite}, in particular \cref{eq:conditionally=t} yields
\begin{align}
  \label{eq:candidate-conditional}
  \candidate = \frac1{\esp[\exc(\bsY)\mid \mci(\bsY)=0]} \; .
\end{align}
{These formulae are useful when computing conditional distributions is easier than
  computing expectations under a changed of measure. See \Cref{sec:shot-noise} for such an example.}

\begin{example}
  \label{xmpl:infargmax-general}
  Let $\mci_0$ be the $\inf\arg\max$ functional, that is
  \begin{align*}
    \mci_0(\bsy) = \inf\{t\in\Rset: \bsy^*\in\{|y_t|,|y_{t^-}|\}\} \; .
  \end{align*}
  If $\pr(\bsY\in\spaceD_0)=1$, then $\tailmeasure(\spaceD_0^c)=0$ by
  \Cref{theo:equivalences-dissipative}, thus
  $\tailmeasure(\{\bsy\in\spaceD:\mci_0(\bsy)\notin\Rset\})=0$.  The map $\mci_0$ is an anchoring
  map and is $0$-homogeneous. The density $f_{\mci_0}$ of $\mci_0(\bsY)$ is then given by
  \begin{align*}
        f_{\mci_0}(t) = \candidate \pr(Y|\bsQ_{\mci_0(\bsQ)-t}|>1)] = \candidate \esp[|\bsQ_{\mci_0(\bsQ)-t}|^\alpha]   \; .
  \end{align*}
  Since for each $t\in\Rset$ the map $\bsy\mapsto\norm{\bsy_{\mci_0(\bsy)-t}}^\alpha$ is \shiftinvariant\ and
  $\alpha$-homogeneous,  \Cref{eq:Q-Theta} yields
  \begin{align}
    \label{eq:density-infargmax}
    f_{\mci_0}(t) = \esp\left[\frac{\norm{\bsTheta_{\mci_0(\bsTheta)-t}}^\alpha}{\int_{-\infty}^\infty \norm{\bsTheta_s}^\alpha \rmd s} \right] \; .
  \end{align}
  Condition \Cref{eq:condition-continuite} holds if $\bsY$ is almost surely continuous or if
  $\norm{\bsY}$ reaches its maximum by an upward jump.  Examples of tail processes with these
  properties will be given in \Cref{xmpl:subgaussian-maxstable} and
  \Cref{sec:shot-noise}. 
\end{example}

\begin{example}
  \label{xmpl:first-exceedance}
  Consider now the time of the  first exceedance over 1,  $\mci_1(\bsy) = \inf\{t\in\Rset:|y_t|>1\}$. If
  $\pr(\bsY\in\spaceD_0)=1$, then $\pr(\mci_1(\bsY)\in\Rset)=1$. For this map,
  \Cref{eq:condition-continuite} holds when $\norm{\bsY}$ exceeds 1 by an upward jump, but may not
  hold for continuous processes. See again \Cref{xmpl:subgaussian-maxstable}
  and \Cref{sec:shot-noise}.
\end{example}

\subsection{Identities}
\label{sec:identities}
In discrete time, the expectation of certain functionals of $\bsQ$ can be expressed in terms of the
forward spectral tail process only. This is important since the forward spectral tail process is
often easier to compute than the backward spectral tail process and the sequence $\bsQ$ which is
obtained by a change of measure or by conditioning.  For $\alpha$-homogeneous measurable functions
satisfying certain additional conditions, it may be proved that
\begin{align}
\label{eq:identite-Q-Theta-alpha-homogene}
  \candidate \esp[\homap(\bsQ)] = \esp[ \homap(\{\bsTheta_j,j\geq0\}) - \homap(\{\bsTheta_j,j\geq1\})] \; . 
\end{align}
See \cite[Section~3.3]{planinic:soulier:2018} for precise conditions. In particular, it always holds that
\begin{align}
    \label{eq:identiteforward-extremalindex}
  \candidate 
  & = \esp\left[\sup_{j\geq0}|\bsTheta|_j^\alpha-\sup_{j\geq1}|\bsTheta_{j}|^\alpha \right] \; , \\
  \candidate \esp\left[ \left(\sum_{j\in\Zset} |\bsQ_j| \right)^\alpha\right] 
  & = \esp\left[\left(\sum_{j=0}^\infty |\bsTheta_j| \right)^\alpha - \left(\sum_{j=1}^\infty |\bsTheta_j| \right)^\alpha \right] \; . 
    \label{eq:identite-l1alpha}
\end{align}
Both terms in \Cref{eq:identite-l1alpha} are finite when $\alpha\leq1$ and are simultaneously finite
or infinite if $\alpha>1$ (the difference inside the expectation in the right-hand side being set
equal to $\infty$ if the series is not summable in the latter case). Such identities are important
since the two different expressions appear as limits of the same statistics studied by different
tools.  In discrete time, these identities are obtained by a method of telescopic sums; in
continuous time this technique is not available and must be replaced by some form of
differentiation. Thus our only result is related to the identity \Cref{eq:identite-l1alpha}. Its
usefulness will be illustrated in \Cref{sec:illustration}.  It would be of interest to obtain a
formula extending \Cref{eq:identiteforward-extremalindex}, that is an expression of the candidate
extremal index in terms of the forward spectral tail process only.

\begin{lemma}
  \label{lem:identities}
  Let $\bsTheta$ be a $d$-dimensional spectral tail process such that $\pr(\bsTheta\in\spaceD_0)=1$
  and let $\bsQ$ be the sequence defined by \Cref{eq:loi-Q} or \Cref{eq:Q-Theta}. Then
    \begin{align}
      \label{eq:Q-Theta-forward}
      \candidate \esp \left[ \left( \int_{-\infty}^\infty |\bsQ_s| \rmd s \right)^\alpha \right] 
      & = \alpha \esp \left[ \left( \int_{0}^\infty |\bsTheta_s|\rmd s \right)^{\alpha-1} \right] \; .
    \end{align}
    If $\alpha\leq1$, both terms in \Cref{eq:Q-Theta-forward} are finite, and if $\alpha>1$ they are
    simultaneously finite or infinite. In the former case, for $d=1$, 
    \begin{align}
      \label{eq:Q-Theta-forward-noabs}
      \candidate \esp \left[ \left( \int_{-\infty}^\infty Q_s \rmd s \right)_+^\alpha
      \right] & = \alpha \esp \left[ \Theta_0 \left( \int_{0}^\infty \Theta_s\rmd s
                \right)_+^{\alpha-1} \right] \; .
    \end{align}
    In the case $0<\alpha\leq1$, we use the convention $x_+^{\alpha-1}=0$ if $x\leq0$.
\end{lemma}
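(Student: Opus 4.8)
The plan is to combine the representation \eqref{eq:Q-Theta}, a pointwise ``fundamental theorem of calculus'' identity, and the time change formula \eqref{eq:TCF-Theta}. Since $\pr(\bsTheta\in\spaceD_0)=1$, all the equivalent conditions of \Cref{theo:equivalences-dissipative} hold, so $\candidate\in(0,\infty)$ and $\bsQ$ is well defined. Write $S_\alpha(\bsy)=\lpnorm[\alpha]{\bsy}[\alpha]$. First I would apply \eqref{eq:Q-Theta} to the non-negative, $\alpha$-homogeneous, \shiftinvariant\ map $\shinvhomap(\bsy)=(\int_{-\infty}^\infty|\bsy_s|\rmd s)^\alpha$, which gives $\candidate\esp[\shinvhomap(\bsQ)]=\esp[\shinvhomap(\bsTheta)/S_\alpha(\bsTheta)]$ and turns the left-hand side of \eqref{eq:Q-Theta-forward} into an expectation depending on $\bsTheta$ alone.

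Next comes the pointwise identity. For a fixed realization set $h(t)=\int_t^\infty|\bsTheta_s|\rmd s$; this is non-increasing and locally absolutely continuous, with $h(+\infty)=0$ and $h(-\infty)=\int_{-\infty}^\infty|\bsTheta_s|\rmd s$. The change-of-variables formula for the monotone function $h$, applied with $\psi(y)=\alpha y^{\alpha-1}$, gives
\begin{align*}
  \Big(\int_{-\infty}^\infty|\bsTheta_s|\rmd s\Big)^\alpha = \alpha\int_{-\infty}^\infty|\bsTheta_t|\Big(\int_t^\infty|\bsTheta_s|\rmd s\Big)^{\alpha-1}\rmd t
\end{align*}
in $[0,\infty]$. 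Dividing by $S_\alpha(\bsTheta)$, taking expectations and using Tonelli (all terms non-negative), the quantity to evaluate is $\int_{-\infty}^\infty\esp[|\bsTheta_t|S_\alpha(\bsTheta)^{-1}(\int_t^\infty|\bsTheta_s|\rmd s)^{\alpha-1}]\rmd t$. For each fixed $t$ I would recognise the integrand as $\esp[\map(\shift^{-t}\bsTheta)\ind{|\bsTheta_t|\ne0}]$ for the fixed map $\map(\bsy)=|\bsy_0|S_\alpha(\bsy)^{-1}(\int_0^\infty|\bsy_v|\rmd v)^{\alpha-1}$, using that $\int_t^\infty|\bsTheta_s|\rmd s=\int_0^\infty|(\shift^{-t}\bsTheta)_v|\rmd v$. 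This is the right-hand side of \eqref{eq:TCF-Theta} with parameter $-t$, so it equals $\esp[\map(|\bsTheta_{-t}|^{-1}\bsTheta)|\bsTheta_{-t}|^\alpha]$ which, since $S_\alpha$ is \shiftinvariant\ and $|\bsTheta_0|=1$ almost surely, reduces to $\esp[|\bsTheta_{-t}|^\alpha S_\alpha(\bsTheta)^{-1}(\int_0^\infty|\bsTheta_v|\rmd v)^{\alpha-1}]$. Integrating over $t$ and using $\int_{-\infty}^\infty|\bsTheta_{-t}|^\alpha\rmd t=S_\alpha(\bsTheta)$ cancels the $S_\alpha$ factor and yields $\alpha\esp[(\int_0^\infty|\bsTheta_v|\rmd v)^{\alpha-1}]$, which is \eqref{eq:Q-Theta-forward}.

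The finiteness dichotomy is cleanest on the $\bsQ$-side. When $\alpha\le1$, $\bsQ^*=1$ forces $|\bsQ_s|\le|\bsQ_s|^\alpha$, hence $\int_{-\infty}^\infty|\bsQ_s|\rmd s\le S_\alpha(\bsQ)$; since $\esp[S_\alpha(\bsQ)]=\candidate^{-1}$ by \eqref{eq:candidate-Q}, concavity of $x\mapsto x^\alpha$ (Jensen) gives $\candidate\esp[(\int|\bsQ_s|\rmd s)^\alpha]\le\candidate\esp[S_\alpha(\bsQ)^\alpha]\le\candidate^{1-\alpha}<\infty$, so both sides of \eqref{eq:Q-Theta-forward} are finite. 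For $\alpha>1$ the whole chain above was derived as an identity in $[0,\infty]$, so the two sides are simultaneously finite or infinite.

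For \eqref{eq:Q-Theta-forward-noabs} ($d=1$, $0<\alpha\le1$) I would repeat the scheme with $\shinvhomap(\bsy)=(\int_{-\infty}^\infty\bsy_s\rmd s)_+^\alpha$ and $\map(\bsy)=\bsy_0 S_\alpha(\bsy)^{-1}(\int_0^\infty\bsy_v\rmd v)_+^{\alpha-1}$; the factor $|\bsTheta_0|=1$ is then replaced by the signed value $\Theta_0$, producing the $\Theta_0$ on the right-hand side. The main obstacle is the signed pointwise identity $(\int_{-\infty}^\infty\Theta_s\rmd s)_+^\alpha=\alpha\int_{-\infty}^\infty\Theta_t(\int_t^\infty\Theta_s\rmd s)_+^{\alpha-1}\rmd t$: here $m(t)=\int_t^\infty\Theta_s\rmd s$ is no longer monotone and $x\mapsto x_+^\alpha$ fails to be Lipschitz at $0$ when $\alpha<1$, so the change-of-variables shortcut is unavailable and the integrand blows up at the sign changes of $m$. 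I would circumvent this by approximating $x_+^\alpha$ by the $C^1$, globally Lipschitz functions $\phi_\epsilon(x)=(x_++\epsilon)^\alpha-\epsilon^\alpha$, for which $\phi_\epsilon\circ m$ is absolutely continuous and the ordinary fundamental theorem of calculus applies, and then letting $\epsilon\downarrow0$. The exchange of this limit with the expectation and the $t$-integral, together with the splitting of $\Theta_t$ into positive and negative parts required by \eqref{eq:TCF-Theta} (which is stated for non-negative maps), is controlled by the finiteness already established for the absolute version, using that $\int_{-\infty}^\infty|\Theta_s|\rmd s<\infty$ almost surely when $\alpha\le1$ (as $\bsTheta\in\spaceD_0\cap\spaceD_\alpha$ makes $|\Theta_s|\le|\Theta_s|^\alpha$ outside a set of finite Lebesgue measure on which $\bsTheta$ is bounded).
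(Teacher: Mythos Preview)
Your proof is correct and follows essentially the same route as the paper: apply \eqref{eq:Q-Theta}, rewrite $\bigl(\int_{-\infty}^\infty|\bsTheta_s|\,\rmd s\bigr)^\alpha$ as $\alpha\int_{-\infty}^\infty|\bsTheta_t|\bigl(\int_t^\infty|\bsTheta_s|\,\rmd s\bigr)^{\alpha-1}\rmd t$ via the fundamental theorem of calculus, then use the time change formula \eqref{eq:TCF-Theta} and integrate the resulting factor $|\bsTheta_{-t}|^\alpha$ over $t$ to cancel $S_\alpha(\bsTheta)$. The only cosmetic differences are that the paper truncates to $[-t,t]$ and passes to the limit by monotone convergence where you invoke Tonelli directly, and for \eqref{eq:Q-Theta-forward-noabs} the paper simply replaces monotone by dominated convergence without your $\phi_\epsilon$-smoothing of $x_+^\alpha$; your treatment of that step is more careful but not a different idea.
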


\begin{proof}
  To prove \Cref{eq:Q-Theta-forward}, we can assume that $\bsTheta$ is a sequence of \nonnegative\
  random variables. Starting from \Cref{eq:Q-Theta}, and applying the monotone convergence theorem,
  we obtain
  \begin{align*}
    \candidate \esp \left[ \left( \int_{-\infty}^\infty Q_s \rmd s \right)^\alpha \right] 
    & =  \esp \left[ \frac{\left(\int_{-\infty}^\infty \Theta_t\right)_+^\alpha  }{\lpnorm[\alpha]{\bsTheta}[\alpha]} \right] 
      = \lim_{t\to\infty} \esp \left[ \frac{\left(\int_{-t}^\infty \Theta_t\right)_+^\alpha
      -\left(\int_{t}^\infty \Theta_t\right)_+^\alpha } {\lpnorm[\alpha]{\bsTheta}[\alpha]} \right]  \\
    & = \alpha \lim_{t\to\infty} \int_{-t}^t  \esp \left[ \frac{\left(\int_s^\infty \Theta_u\rmd u\right)^{\alpha-1}\Theta_s}
      {\lpnorm[\alpha]{\bsTheta}[\alpha]} \right]  \rmd s 
      = \alpha \int_{-\infty}^\infty  \esp \left[ \frac{\left(\int_s^\infty \Theta_u\rmd u\right)^{\alpha-1}\Theta_s  } 
      {\lpnorm[\alpha]{\bsTheta}[\alpha]} \right]  \rmd s \; .
  \end{align*}
  Applying the time change formula \Cref{eq:TCF-Theta} yields \Cref{eq:Q-Theta-forward} and that
  both terms are simultaneously finite or infinite. If $\alpha<1$, then \Cref{eq:candidate-Q} yields
  \begin{align*}
    \candidate \esp \left[ \left( \int_{-\infty}^\infty Q_s \rmd s \right)^\alpha \right]
    \leq \candidate \esp \left[ \int_{-\infty}^\infty Q_s^\alpha  \rmd s  \right] =1 \; .
  \end{align*}
  Thus both terms are finite.  If now $\bsTheta$ is not \nonnegative, assuming that both terms in
  \Cref{eq:identite-Q-Theta-alpha-homogene} are finite, we obtain \Cref{eq:Q-Theta-forward-noabs} by
  applying the dominated convergence theorem instead of monotone convergence.
\end{proof}

\section{Regular variation in $\spaceD(\Rset)$}
\label{sec:regvarinD}
Let ${\bsX}$ be a stationary process indexed by $\Rset$, with values in $\Rset^d$. We say that
$\bsX$ is finite dimensional regularly varying if there exists a sequence $a_n$ and for all $k\geq1$,
$s_1\leq \cdots \leq s_k\in\Rset$ there exists a non-zero measure $\nu_{s_1,\dots,s_k}$ on
$\Rset^{dk}\setminus\{\bszero\}$ such that
\begin{align}  
  \label{eq:fidi-rv}
  n\pr\left(\left(\frac{\bsX_{s_1}}{a_n},\dots,\frac{\bsX_{s_k}}{a_n}\right) \in \cdot \right)  \convvague 
 \nu_{s_1,\dots,s_k} \; , 
\end{align}
as $n\to\infty$, that is,
\begin{align*}
  \lim_{n\to\infty}
  n\pr\left(\left(\frac{\bsX_{s_1}}{a_n},\dots,\frac{\bsX_{s_k}}{a_n}\right) \in A \right) 
  = \nu_{s_1,\dots,s_k} (A) < \infty \; , 
\end{align*}
for all Borel sets $A$ separated from $\bszero$ in $\Rset^{dk}$ (\ie\ included in the complement of
a neighborhood (for the usual topology) of $\bszero$) which are continuity sets of
$\nu_{s_1,\dots,s_k}$.  The measure $\nu_{s_1,\dots,s_k}$ is called the exponent measure of
$(X_{s_1},\dots,X_{s_k})$ and there exists $\alpha$ such that $\nu_{s_1,\dots,s_k}$ is
$\alpha$-homogeneous, \ie\ $\nu_{s_1,\dots,s_k}(tA)=t^{-\alpha} \nu_{s_1,\dots,s_k}(A)$ for all
$t>0$, $k\geq1$ and $s_1,\dots,s_k$.

Although these measures are not finite and Kolmogorov extension theorem cannot be applied,
\cite{owada:samorodnitsky:2012} proved that there exists an $\alpha$-homogeneous, \shiftinvariant\
measure $\tailmeasure$ on $\Rset^{\Rset^d}$ endowed with the product topology, called the tail
measure, such that for all $k\geq1$, $s_1\leq \cdots \leq s_k\in\Rset$, $\nu_{s_1,\dots,s_k}$ is the
projection of $\tailmeasure$. The result of \cite{owada:samorodnitsky:2012} says nothing about the
support of $\tailmeasure$.
 
Let $\norm\cdot$ denote an arbitrary norm on $\Rset^d$.  We can and will henceforth assume that
the norming sequence $\{a_n\}$ is chosen such that
\begin{align*}
  \lim_{n\to\infty} n\pr(\norm{\bsX_0}>a_n) = 1 \; .
\end{align*}
With this choice, $\tailmeasure(\{\bsy\in\Rset^{\Rset^d}:\norm{\bsy_0}>1\})=1$.

According to \cite[Theorem~2.1]{basrak:segers:2009}, finite dimensional regular variation of the
process $\bsX$ is equivalent to the following conditions.
\begin{enumerate}[(i)]
\item $\norm{\bsX_0}$ is regularly varying with tail index $\alpha>0$;
\item there exists a process ${\bsY}$ such that for all $k\in\Nset^*$ and $s_1<\dots<s_k \in \Rset$, 
  \begin{align*}
    \lim_{x\to\infty} \law\left(\frac{\bsX_{s_1}}{x},\dots,\frac{\bsX_{s_k}}{x} \mid
      \norm{\bsX_0}>x\right)  = \law(\bsY_{s_1},\dots,\bsY_{s_k})  \; . 
  \end{align*}
\end{enumerate} 
The process $\bsY$ is called the tail process.  As shown in \cite{owada:samorodnitsky:2012}, the
distribution of the tail process (seen as a random element in $\Rset^{\Rset^d}$) is the tail measure
restricted to the set $\{\bsy\in\Rset^{\Rset^d}:\norm{\bsy_0}>1\}$, which is a probability measure
with the choice of norming constant $a_n$ defined above.  It follows from this definition that
$\norm{\bsY_0}$ is a Pareto random variable with tail index $\alpha$ and
$\bsTheta=\norm{\bsY_0}^{-1} \bsY$ is independent of $\bsY_0$.  These properties were proved in
\cite{basrak:segers:2009} in the case of processes indexed by $\Zset$, but so far as finite
dimensional distributions only are considered, they remain valid for processes indexed by $\Rset$.
However, this definition says nothing about the path properties of the process $\bsY$.

In order to obtain more information on the support of $\tailmeasure$ or the path properties of
$\bsY$, and make the link with the corresponding objects introduced in \Cref{sec:representation},
the mode of convergence must be strengthened.

Recall from \Cref{sec:representation} the definition of the boundedness $\mcb_0$, the class of
subsets $A$ separated from $\bszero$ in $\spaceD$ for the $J_1$ topology, \ie\ sets for which there
exist $a\leq b$ and $\epsilon>0$ such that $\inf_{\bsy\in A} \bsy_{a,b}^*>\epsilon$. Recall also that a
Borel measure $\mu$ on $\spaceD$ is $\mcb_0$-boundedly finite if $\mu(A)<\infty$ for all measurable
sets $A\in\mcb_0$.  A sequence $\{\mu_n,n\geq1\}$ of $\mcb_0$-boundedly finite Borel measures on
$\spaceD$ is said to converge $\mcb_0$-vaguely to a $\mcb_0$-boundedly finite Borel measure $\mu$,
denoted $\mu_n\convvague\mu$, if $\lim_{n\to\infty} \mu_n(A) = \mu(A)$ for all $\mu$-continuity
measurable sets $A\in\mcb_0$.

\cite{hult:lindskog:2005} introduced the notion of regular variation of stochastic processes indexed
by $[0,1]$, \ie\ random elements in $\spaceD([0,1],\Rset^d)$. Since $\spaceD(\Rset,\Rset^d)$ endowed
with the $J_1$ topology is a Polish space, the following extension is natural.
\begin{definition}
  \label{hypo:regvar-in-D}
  A $\spaceD$-valued stationary stochastic process $\bsX$ is said to be regularly varying in
  $\spaceD$ if there exists a non-zero $\mcb_0$-boundedly finite measure $\tailmeasure$ such that
  \begin{align}
    \label{eq:regvar-in-D}
    \frac{\pr(\bsX\in x\cdot)}{\pr(\norm{\bsX_0}>x)} \convvague \tailmeasure \; .
  \end{align}
\end{definition}
This definition entails that the limiting measure $\tailmeasure$ is necessarily
$\alpha$-homogeneous, \ie\ $\tailmeasure(t\cdot A) = t^{-\alpha} \tailmeasure(A)$ for all Borel sets
$A\subset\spaceD$. See \cite[Remark~3]{hult:lindskog:2005}.

Our first result is a necessary and sufficient condition for regular variation in $\spaceD$ which
adapts \cite[Theorem~10]{hult:lindskog:2005} to $\spaceD(\Rset)$. See \Cref{sec:J1} for the
definition of the moduli of continuity  $w'$ and $w''$.
\begin{theorem}
  \label{theo:rv-in-D-equivalence}
  Let $\bsX$ be a stationary, stochastically continuous $\spaceD$-valued process.  The following
  statements are equivalent.
  \begin{enumerate}[(i)]
  \item \label{item:Xrv-inD} $\bsX$ is regularly varying in $\spaceD$.
  \item \label{item:fidi+tightness} \Cref{eq:fidi-rv} holds for all $k\geq1$ and
    $(s_1,\dots,s_k)\in\Rset^k$ and for all $a<b$ and $\epsilon>0$, 
    \begin{align}
      \label{eq:tightness}
      \lim_{\delta\to0} \limsup_{x\to\infty} \frac{ \pr \left( w'(\bsX,a,b,\delta) > x \epsilon \right)}{\pr(\norm{\bsX_0}>x)} = 0 \; .
    \end{align}
  \end{enumerate}
  When these conditions hold, the tail measure of $\bsX$ is supported on $\spaceD$, its tail process
  has almost surely \cadlag\ paths and  conditionally on $\norm{\bsX_0}>x$, $x^{-1}\bsX$ converges
  weakly to~$\bsY$, as $x\to\infty$, on $\spaceD$ endowed with the $J_1$ topology.
\end{theorem}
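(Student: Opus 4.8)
The plan is to establish the two implications separately, as signalled in the text. The direct implication \Cref{item:Xrv-inD}$\Rightarrow$\Cref{item:fidi+tightness} I would only sketch, following \cite[Theorem~10]{hult:lindskog:2005}: the finite-dimensional convergence \eqref{eq:fidi-rv} comes from applying the $\mcb_0$-vague convergence \eqref{eq:regvar-in-D} to the evaluation maps $\bsy\mapsto(\bsy_{s_1},\dots,\bsy_{s_k})$, which are continuous at $\tailmeasure$-almost every $\bsy$ because stochastic continuity of $\bsX$ forces $\tailmeasure$ to charge no function discontinuous at a prescribed time; and the tightness bound \eqref{eq:tightness} follows by noting that $\{\bsy:w'(\bsy,a,b,\delta)>\epsilon\}$ lies in $\mcb_0$, while $w'(\bsy,a,b,\delta)\to0$ as $\delta\to0$ for every $\bsy\in\spaceD$, so that the $\tailmeasure$-mass of these sets vanishes as $\delta\to0$.

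The substance is the converse \Cref{item:fidi+tightness}$\Rightarrow$\Cref{item:Xrv-inD}, where the results of \Cref{sec:representation} let me construct the limit measure by hand rather than through a pure compactness argument. First, \eqref{eq:fidi-rv} and \cite[Theorem~2.1]{basrak:segers:2009} yield a process $\bsY$, a priori only a random element of $\Rset^{\Rset^d}$, whose finite-dimensional laws are the weak limits of $\law(x^{-1}\bsX\mid\norm{\bsX_0}>x)$. I would then upgrade $\bsY$ to a $\spaceD$-valued element: on each interval $[a,b]\ni0$, condition \eqref{eq:tightness} supplies the $J_1$ modulus control required by the tightness criterion, so the conditional laws $\law(x^{-1}\bsX\mid\norm{\bsX_0}>x)$ converge weakly in $\spaceD([a,b])$, which forces the limit---hence $\bsY$ restricted to $[a,b]$---to be \cadlag. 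Patching these convergences across intervals exhausting $\Rset$ gives $\bsY\in\spaceD$ almost surely and, at the same time, the final assertion of the theorem.

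Next I would verify the hypotheses of \Cref{coro:equivalence-Y-nu} for this $\bsY$. The normalization $\pr(\norm{\bsY_0}>1)=1$ and the time change formula \eqref{eq:TCF-Y} are inherited from the regular variation of the finite-dimensional distributions and from stationarity, exactly as in discrete time. The only genuinely new point is the boundedness condition \eqref{eq:condition-bnddlfnt}, which I would deduce from \eqref{eq:tightness}: the modulus control prevents the occupation time $\int_a^b\ind{\norm{\bsY_{t-s}}>1}\rmd t$ from being too small too often after the exceedance at the origin, equivalently it keeps the limiting ratio $\limsup_{x}\pr(\bsX_{a,b}^*>x)/\pr(\norm{\bsX_0}>x)$ finite, and the latter is precisely the quantity that \eqref{eq:condition-bnddlfnt} computes for the limiting process $\bsY$. \Cref{coro:equivalence-Y-nu} then furnishes a unique \shiftinvariant\ tail measure $\tailmeasure$, supported on $\spaceD$, whose tail process is $\bsY$.

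It remains to identify this $\tailmeasure$ as the limit in \eqref{eq:regvar-in-D}. For a $\tailmeasure$-continuity set $A\in\mcb_0$, separated from $\bszero$ by some $[a,b]$ and $\epsilon>0$, I would localize the normalized measures to the finite-mass region $\{\bsy\in\spaceD:\bsy_{a,b}^*>\epsilon\}$, where the weak convergence in $\spaceD([a,b])$ established above, together with the homogeneity and \shiftinvariance\ of the constructed $\tailmeasure$, pins down $\lim_{x}\pr(\bsX\in xA)/\pr(\norm{\bsX_0}>x)=\tailmeasure(A)$. I expect the main obstacle to be exactly this last upgrade from finite-dimensional to $J_1$ convergence over the non-compact index set $\Rset$: one must rule out mass escaping through nearly coincident large jumps---which is where the second modulus $w''$, and not only $w'$, enters---and one must reassemble the convergences over the exhausting intervals into a single $\mcb_0$-vague statement without losing control at the moving endpoints.
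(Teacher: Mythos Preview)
Your overall architecture matches the paper up to the point where you obtain the conditional weak convergence of $x^{-1}\bsX$ to $\bsY$ in $\spaceD$; this is indeed the crux, and the paper does exactly that via \Cref{eq:tightness} and \Cref{theo:weak-convergence-in-DR}. From there, however, the paper does \emph{not} invoke \Cref{coro:equivalence-Y-nu}. Instead, for a bounded continuous $\map$ with $\map(\bsy)=0$ when $\bsy_{a,b}^*\leq2\epsilon$, it writes
\[
\esp[\map(x^{-1}\bsX)\ind{\exc_{a,b}(\epsilon^{-1}x^{-1}\bsX)>\eta}]
=\int_a^b \esp\!\left[\frac{\map(x^{-1}\shift^t\bsX)\ind{\norm{\bsX_0}>\epsilon x}\ind{\exc_{a,b}>\eta}}{\exc_{a,b}(\epsilon^{-1}x^{-1}\shift^t\bsX)}\right]\rmd t,
\]
using stationarity to shift the exceedance to time $0$, and then passes to the limit directly via the conditional weak convergence. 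The complementary piece $\{\exc_{a,b}\leq\eta\}$ is killed by the modulus bound. This produces the limit measure explicitly as $\mu(\map)=\epsilon^{-\alpha}\int_a^b\esp[\map(\epsilon\shift^t\bsY)/\exc_{a,b}(\shift^t\bsY)]\rmd t$, and vague convergence follows by localisation.

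Your route through \Cref{coro:equivalence-Y-nu} is a genuine alternative, but it does not shorten the argument: the identification step you describe last (``localize to $\{\bsy_{a,b}^*>\epsilon\}$ and use homogeneity and shift-invariance'') is precisely the exceedance-functional computation above, and you will not avoid it. More importantly, your justification of \eqref{eq:condition-bnddlfnt} is circular as written: you say it ``is precisely the quantity that \eqref{eq:condition-bnddlfnt} computes for the limiting process $\bsY$'', but the identity following \Cref{coro:equivalence-Y-nu} in the paper, namely $\int_a^b\esp[\exc_{a,b}(\shift^s\bsY)^{-1}]\rmd s=\tailmeasure(\{\bsy_{a,b}^*>1\})$, \emph{uses} the existence of $\tailmeasure$ on $\spaceD$ and hence cannot be invoked to verify a hypothesis needed to construct it. A non-circular verification is possible --- truncate at $\eta^{-1}$, pass to the limit via conditional weak convergence, use stationarity to turn the $s$-integral into $\esp[(\exc_{a,b}^{-1}\wedge\eta^{-1})\exc_{a,b}]\leq\pr(\bsX_{a,b}^*>x)$, and bound by the ratio whose finiteness the paper establishes first --- but this is again essentially the paper's exceedance argument. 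In short: your plan works, but the detour through \Cref{coro:equivalence-Y-nu} does not buy you anything over the paper's direct construction, and the two steps you leave vague are exactly where the work lies.
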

In view of \Cref{theo:weak-convergence-in-DR}, \Cref{eq:tightness} can be equivalently replaced by 
\begin{align}
  \label{eq:tightness-wsecond}
  \lim_{\delta\to0} \limsup_{x\to\infty} \frac{ \pr \left( w''(\bsX,a,b,\delta) > x \epsilon \right)}{\pr(\norm{\bsX_0}>x)} = 0 \; .
\end{align}
Another way to state \cref{theo:rv-in-D-equivalence} is to say that a stochastically continuous,
stationary random element $\bsX$ in $\spaceD$ is regularly varying if and only if it is regularly
varying in $\spaceD(I)$ in the sense of \cite{hult:lindskog:2006}.

\begin{proof}[Proof of \Cref{item:Xrv-inD}$\implies$\Cref{item:fidi+tightness}]
  The proof is essentially the same as the proof of the implication
  (ii)$\implies$(i) of \cite[Theorem~10]{hult:lindskog:2005}, replacing
   $\mathcal{M}_0$-convergence  and $\spaceD([0,1])$ used therein by vague convergence and $\spaceD(\Rset)$.
\end{proof}

\begin{proof}[Proof of \Cref{item:fidi+tightness}$\implies$\Cref{item:Xrv-inD}]
  Define the measure $\mu_x$ on $\spaceD$ by 
  \begin{align*}
    \mu_{x} = \frac{\esp[ \delta_{x^{-1}\bsX} \ind{\bsX\ne\bszero}]} {\pr(\norm{\bsX_0}>x)} \; .
  \end{align*}
  For each $n,k\in\Nset^*$, let $\spaceD_{n,k}$ be the space of functions $f\in\spaceD$ such that
  $\sup_{-n\leq t < n} \norm{f(t)} > k^{-1}$.  We first prove that
  $\limsup_{x\to\infty}\mu_x(\spaceD_{n,k})<\infty$ for each fixed $n,k\in\Nset^*$. Indeed, for
  $\delta>0$,  
  \begin{align*}
    \mu_x(\spaceD_{n,k})  
    & = \frac{\pr(k\bsX_{-n,n}^*>x)}{\pr(\norm{\bsX_{0}}>x)} \\
    & \leq  \frac{\pr(2kw''(\bsX,-n,n,\delta)>x)}{\pr(\norm{\bsX_0}>x)} +
      \frac{\pr(k\bsX_{-n,n}^*>x,2kw''(\bsX,-n,n,\delta)\leq x)}{\pr(\norm{\bsX_0}> x)} \; .
  \end{align*}
  If $k\bsX_{-n,n}^*>x$ and $2kw''(\bsX,-n,n,\delta)\leq x$, then for every sequence
  $(t_0,\dots,t_k)$ such that $a=t_0<\cdots<t_k=b$ and $t_{i}-t_{i-1}\leq\delta$ for $i=1,\dots,k$,
  it necessarily holds that $2k\max_{0\leq i  \leq k} |X_{t_i}|>x$. Thus
  \begin{align*}
    \mu_x(\spaceD_{n,k}) & \leq  \frac{\pr(2kw''(\bsX,-n,n,\delta)>x)}{\pr(\norm{\bsX_0}>x)} 
                           + \frac{\pr(2k\max_{0 \leq i \leq  k} \norm{\bsX_{t_i}}>x)}{\pr(\norm{\bsX_0}>x)}
  \end{align*}
  Under the assumptions of the theorem, both terms tend to zero by letting $x\to\infty$, then
  $\delta\to0$. This proves our first claim, from which it ensues that the measure $\mu_x$ is
  $\mcb_0$-boundedly finite on $\spaceD$ and we can define the finite measure $\mu_{n,k,x}$ on
  $\spaceD_{n,k}$ by
  \begin{align*}
    \mu_{n,k,x} = \frac{\esp[ \delta_{x^{-1}\bsX} \ind{\bsX_{-n,n}^*>k^{-1}x}]} {\pr(\norm{\bsX_0}>x)} \; .
  \end{align*}
  Since
  $\pr \left( w'(\bsX,a,b,\delta) > x \epsilon ; \norm{\bsX_0}>x\right) \leq \pr \left(
    w'(\bsX,a,b,\delta) > x \epsilon \right)$, the assumption \Cref{eq:tightness} implies
  \begin{align}
      \label{eq:tightness-conditional}
      \lim_{\delta\to0} \limsup_{x\to\infty} \pr \left( w'(\bsX,a,b,\delta) > x \epsilon \mid \norm{\bsX_0}>x\right) = 0 \; .
  \end{align}
  By \Cref{theo:weak-convergence-in-DR}, the finite dimensional weak
  convergence of $x^{-1}\bsX$ conditionally on $\norm{\bsX_0}>x$ (at all but countably many points
  of $\Rset$) and \Cref{eq:tightness-conditional} yield the weak convergence in $\spaceD$ of
  $x^{-1}\bsX$ to $\bsY$, conditionally on $\norm{\bsX_0}>x$.

  Let $\map$ be a bounded continuous map on $\spaceD$ (\wrt\ the $J_1$ topology) with support separated
  from the null map in $\spaceD$, \ie\ there exists $a<b$ such that $\map(\bsy)=0$ if
  $\bsy^*_{a,b}\leq 2\epsilon$ (see \cref{eq:bafz-in-D}), or equivalently,
  $\map = \map\ind{\exc_{a,b}(2\epsilon^{-1}\cdot)>0}$.  Then, for $\eta>0$,
  \begin{align*} 
    \esp [\map(x^{-1} \bsX) 
    & \ind{\exc_{a,b}(\epsilon^{-1} x^{-1} \bsX)>x\eta}]] \\
    & = \int_a^b  \esp \left[ \frac{\map(x^{-1} \bsX)\ind{\norm{\bsX_t}>\epsilon x} 
      \ind{\exc_{a,b}(\epsilon^{-1}x^{-1}\bsX)>x\eta}} {\exc_{a,b}(\epsilon^{-1}x^{-1}\bsX)} \right] \rmd t \\
    & = \int_a^b  \esp \left[ \frac{\map(x^{-1} \shift^t\bsX)\ind{\norm{\bsX_0}>\epsilon x} 
      \ind{\exc_{a,b}(\shift^t\epsilon^{-1}x^{-1}\bsX)>x\eta}} {\exc_{a,b}(\epsilon^{-1}x^{-1}\shift^t\bsX)} \right] \rmd t \; .
  \end{align*}
  The last line was obtained by stationarity of $\bsX$. By the regular variation of $\bsX_0$, the
  weak convergence in $\spaceD$ stated above and  dominated convergence, we now obtain
  \begin{align*}
    \lim_{x\to\infty} \frac{ \esp [\map(x^{-1} \bsX) \ind{\exc_{a,b}(\epsilon^{-1} x^{-1} \bsX)>x\eta}]]}{\pr(\norm{\bsX_0}>x)}
    & = \epsilon^{-\alpha} \int_a^b \esp \left[ \frac{\map(\epsilon \shift^t\bsY)
      \ind{\exc_{a,b}(\shift^t\bsY)>\eta}} {\exc_{a,b}(\shift^t\bsY)} \right] \rmd t \; .
  \end{align*}  As $\eta\to0$, 
  Since $\map = \map\ind{\exc(\epsilon^{-1}\cdot)>0}$, we have by monotone convergence
  \begin{align}
    \label{eq:lim-mu}
    \lim_{\eta\to0}    \epsilon^{-\alpha} \int_a^b \esp \left[ \frac{\map(\epsilon \shift^t\bsY)
    \ind{\exc_{a,b}(\shift^t\bsY)>\eta}} {\exc_{a,b}(\shift^t\bsY)} \right] \rmd t 
    = \epsilon^{-\alpha} \int_a^b \esp \left[ \frac{\map(\epsilon \shift^t\bsY)} {\exc_{a,b}(\shift^t\bsY)} \right] \rmd t \; .
  \end{align}
  The latter quantity is finite by the first part of the proof and we denote it by $\mu(\map)$.

  If $f\in\spaceD$ is such that $w'(f,a,b,\eta) \leq \epsilon/2$ and
  $\exc_{a,b}((2\epsilon)^{-1}f) > 0$, then there exists $t\in[a,b)$ such that $f(t)>2\epsilon$ and
  an interval $[u,v)$ such that $t\in[u,v)$, $v-u \geq \eta$ and
  $\sup_{[u\leq s,s'<v)}|f(s)-f(s')|\leq \epsilon$. Consequently, $f(s)>\epsilon$ for all
  $s\in[u,v)$ and $\exc_{a,b}(\epsilon^{-1}f)\geq \eta$. This yields
  \begin{align*}
    \esp[\map(x^{-1} \bsX)
    & \ind{\exc_{a,b}(\epsilon^{-1}x^{-1}\bsX) \leq \eta}] \\
    & \leq \constant\ \pr(\bsX_{a,b}^* >2\epsilon x;\exc(\epsilon^{-1}x^{-1}\bsX) \leq \eta)  \\
    & \leq \constant\ \pr(w'(\bsX,a,b,\eta)>\epsilon x) \; .
  \end{align*}
  Here and throughout, $\constant$ denotes a numerical constant which depends on none of the
  variable parameters around it.  Thus \Cref{eq:tightness} implies that
  \begin{align}
    \label{eq:triangular-nu}
    \lim_{\eta\to0} \limsup_{x\to\infty} 
    \esp[\map(x^{-1} \bsX) \ind{\exc_{a,b}(\epsilon^{-1}x^{-1}\bsX) \leq \eta}] = 0 \; .
  \end{align}
  By the triangular argument \cite[Theorem~3.2]{billingsley:1999},
  \Cref{eq:lim-mu,eq:triangular-nu}, we obtain
  \begin{align*}
    \lim_{x\to\infty} \frac{\esp[\map(x^{-1} \bsX)}{\pr(\norm{\bsX_0}>x)} = \mu(\map) \; .
  \end{align*}
  This also proves that $\mu(\map)$ does not depend on the particular choice of $a,b,\epsilon$. Thus,
  taking $\epsilon=k^{-1}$, $a=-n$ and $b=n$, we  define a finite measure $\mu_{n,k}$ on
  $\spaceD_{n,k}$ by
  \begin{align*}
    \mu_{n,k}(\map) = \mu(\map) 
    = k^{\alpha} \int_{-n}^n \esp \left[ \frac{\map(k^{-1} \shift^t\bsY)} {\exc_{-n,n}(\shift^t\bsY)} \right] \rmd t \; .
  \end{align*}
  We have proved that $\mu_{n,k,x}\convweak\mu_{n,k}$ on $\spaceD_{n,k}$ for all $n,k\geq1$. This implies that
  $\mu_x\convvague\mu$ on $\spaceD\setminus\{\bszero\}$ by \cite[Lemma~4.6]{kallenberg:2017}.
\end{proof}

\subsection{The anticlustering condition}
\label{sec:anticlustering}
Let $\spaceD_0(\Rset,\Rset^d)$, hereafter abbreviated to $\spaceD_0$, be the space of
$\Rset^d$-valued \cadlag\ functions which tend to zero at $\pm\infty$. Let $\mch$ be the set of
strictly increasing continuous maps on $\Rset$ and $d_{\infty}$ be the distance defined on
$\spaceD_0$ by
\begin{align*}
  d_{\infty}(f,g) = \inf_{u \in \mch} \supnorm[\infty]{f\circ u -g} \vee \supnorm[\infty]{u-\id}\; .
\end{align*}
We denote the topology induced by the metric $d_\infty$ by $J_1^0$. Obviously,
$d_{J_1}(f,g) \leq d_{\infty}(f,g) \leq \supnorm[\infty]{f-g}$, thus a sequence converging \wrt\
$d_\infty$ converges \wrt\ $d_{J_1}$ and an open set for $d_{\infty}$ is also open for $d_{J_1}$.
{The topology $J_1^0$ induced by $d_{\infty}$ on $\spaceD_0$ is Polish and the associated Borel
  $\sigma$-field is the product $\sigma$-field.}

We now introduce an assumption which ensures that the tail process tends to zero at $\infty$. This
assumption is related to condition (2.8) of \cite{davis:hsing:1995}, see also
\cite[Condition~4.1]{basrak:segers:2009}. To avoid repetitions, we define a scaling function as a
\nondecreasing\ unbounded function defined on $[0,\infty)$. 
\begin{hypothesis}
  \label{hypo:anticlustering}
  There exist scaling functions  $a$ and $r:\Rset_+\to\Rset_+$ such that for all $x>0$, 
  \begin{align}
    \label{eq:anticlustering}
    \lim_{t\to\infty} \limsup_{T\to\infty} \pr\left(\sup_{t\leq |s| \leq r_T} \norm{\bsX_s} > a_T x \mid \norm{\bsX_0}>a_T\right) = 0 \; .
  \end{align}
\end{hypothesis}

For $m>0$, we say that a stochastic process $\bsX$ is $m$-dependent if for all $t\in\Rset$,
$\{\bsX_s,s \geq t+ m\}$ is independent of  $\{\bsX_s,s\leq t\}$.
\begin{lemma}
  \label{lem:AC-mdep}
  Let $\bsX$ be an $m$-dependent $\spaceD$-valued stationary stochastic process, regularly varying
  in $\spaceD$. Then \Cref{eq:anticlustering} holds for all scaling functions $a$ and $r$ such that
  $\lim_{T\to\infty} r_T \pr(\norm{\bsX_0}>a_T)=0$.
\end{lemma}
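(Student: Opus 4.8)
The plan is to use $m$-dependence to decouple the far-away behaviour of the path from the conditioning event $\{\norm{\bsX_0}>a_T\}$, and then to bound the resulting unconditional probability by a union bound over unit intervals controlled by the regular variation of $\bsX$ in $\spaceD$. First I would fix $x>0$ and reduce to one-sided suprema: for $t>m$,
\[
\left\{\sup_{t\le|s|\le r_T}\norm{\bsX_s}>a_Tx\right\}\subseteq\left\{\sup_{t\le s\le r_T}\norm{\bsX_s}>a_Tx\right\}\cup\left\{\sup_{t\le s\le r_T}\norm{\bsX_{-s}}>a_Tx\right\}.
\]
The forward event lies in $\sigma(\bsX_s,s\ge t)\subseteq\sigma(\bsX_s,s\ge m)$, which by $m$-dependence (splitting at time $0$) is independent of $\bsX_0\in\sigma(\bsX_s,s\le 0)$; the backward event lies in $\sigma(\bsX_s,s\le -t)\subseteq\sigma(\bsX_s,s\le -m)$, which by $m$-dependence (splitting at time $-m$) is independent of $\bsX_0\in\sigma(\bsX_s,s\ge 0)$. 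Hence, for $t>m$, a union bound followed by these two factorisations, division by $\pr(\norm{\bsX_0}>a_T)$, and stationarity (to move each interval to start at $0$) gives
\[
\pr\Bigl(\sup_{t\le|s|\le r_T}\norm{\bsX_s}>a_Tx\ \Big|\ \norm{\bsX_0}>a_T\Bigr)\le 2\,\pr\Bigl(\sup_{0\le s\le r_T}\norm{\bsX_s}>a_Tx\Bigr),
\]
a bound that no longer depends on $t$.

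Next I would cover $[0,r_T]$ by the $\lceil r_T\rceil$ intervals $[i,i+1]$, $0\le i\le\lceil r_T\rceil-1$, and use a union bound together with stationarity to obtain $\pr(\sup_{0\le s\le r_T}\norm{\bsX_s}>a_Tx)\le\lceil r_T\rceil\,\pr(\bsX_{0,1}^*>a_Tx)$. The regular variation of $\bsX$ in $\spaceD$ then controls the single-block probability. Since $\{\bsy:\bsy_{0,1}^*>x\}\in\mcb_0$, the $\mcb_0$-vague convergence $\pr(a_T^{-1}\bsX\in\cdot)/\pr(\norm{\bsX_0}>a_T)\convvague\tailmeasure$ yields a finite constant $c$ with
\[
\limsup_{T\to\infty}\frac{\pr(\bsX_{0,1}^*>a_Tx)}{\pr(\norm{\bsX_0}>a_T)}\le c<\infty ,
\]
the finiteness coming from the fact that $\tailmeasure$ is $\mcb_0$-boundedly finite.

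Finally, combining these estimates gives, for every fixed $t>m$,
\[
\limsup_{T\to\infty}\pr\Bigl(\sup_{t\le|s|\le r_T}\norm{\bsX_s}>a_Tx\ \Big|\ \norm{\bsX_0}>a_T\Bigr)\le 2c\,\limsup_{T\to\infty}\lceil r_T\rceil\,\pr(\norm{\bsX_0}>a_T)=0,
\]
because $\lceil r_T\rceil\,\pr(\norm{\bsX_0}>a_T)\le (r_T+1)\pr(\norm{\bsX_0}>a_T)\to 0$ by hypothesis (using that $a$ is a scaling function, so $a_T\to\infty$ and $\pr(\norm{\bsX_0}>a_T)\to0$). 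Since the inner $\limsup$ already vanishes for each $t>m$, letting $t\to\infty$ is immediate and the claim follows.

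The only genuinely delicate point is the regular-variation step: the map $\bsy\mapsto\bsy_{0,1}^*$ is not $J_1$-continuous (a jump located near an endpoint of $[0,1]$ can be shifted across it under an arbitrarily small $J_1$-perturbation), so $\{\bsy:\bsy_{0,1}^*>x\}$ need not be a $\tailmeasure$-continuity set for the given $x$. I would circumvent this by monotonicity: the non-increasing function $u\mapsto\tailmeasure(\{\bsy:\bsy_{0,1}^*>u\})$ is continuous off a countable set, and one checks that at its continuity points (equivalently, for generic endpoints of the block, since $\tailmeasure$ charges the set of functions jumping at a fixed time only for countably many times) the corresponding superlevel sets are $\tailmeasure$-continuity sets. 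Picking such a threshold $x'\in(0,x)$ and using $\{\bsy_{0,1}^*>x\}\subseteq\{\bsy_{0,1}^*>x'\}$ then produces the finite constant $c=\tailmeasure(\{\bsy:\bsy_{0,1}^*>x'\})$. Everything else — the union bounds, the two factorisations, and the use of stationarity — is routine.
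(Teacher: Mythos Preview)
Your proof is correct and follows essentially the same route as the paper: decouple via $m$-dependence, cover by unit blocks, and invoke regular variation in $\spaceD$ together with $r_T\pr(\norm{\bsX_0}>a_T)\to0$. The paper is slightly more direct in that it asserts outright that the two-sided event $\{\sup_{t\le|s|\le r_T}\norm{\bsX_s}>a_Tx\}$ is independent of $\bsX_0$ (which does follow from the $m$-dependence definition after a short argument), avoiding your factor of $2$; on the other hand, your treatment of the $\tailmeasure$-continuity of $\{\bsy_{0,1}^*>x\}$ is more careful than the paper's, which simply states the limit without comment.
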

\begin{proof}
  For $t>m$, we have by $m$-dependence
  \begin{align*}
    \pr\left(\sup_{t\leq |s| \leq r_T} \norm{\bsX_s} > a_T x \mid \norm{\bsX_0}>a_T\right) 
    & = \pr\left(\sup_{t\leq |s| \leq r_T} \norm{\bsX_s} > a_T x \right) \\
    & \leq r_T \pr(\norm{\bsX_0}>a_T) \frac{\pr\left(\sup_{0\leq |s| \leq 1} \norm{\bsX_s} > a_T x \right)}{\pr(\norm{\bsX_0}>a_T)} \; .
  \end{align*}
  By regular variation in $\spaceD$, the fraction in the right-hand side converges to
  $x^{-\alpha} \tailmeasure(\{\bsy\in\spaceD:\bsy_{0,1}^*>1\})$ as $T$ tends to $\infty$. Thus
  \Cref{eq:anticlustering} holds for every sequence $r_T$ such that 
  $\lim_{T\to\infty} r_T \pr(\norm{\bsX_0}>a_T)=0$ as claimed.
\end{proof}

\begin{lemma}
  \label{lem:weakconv-anticlustering}
  Let $\bsX$ be a $\spaceD$-valued stationary process, regularly varying in the sense of
  \Cref{hypo:regvar-in-D}.  If \Cref{hypo:anticlustering} holds, then $\pr(\bsY\in\spaceD_0)=1$ and
  conditionally on $\norm{\bsX_0}>a_Tx$, $(xa_T)^{-1}\bsX\1{[-r_T,r_T)} \convweak \bsY$ in
  $\spaceD_0$ endowed with the $J_1^0$ topology.
\end{lemma}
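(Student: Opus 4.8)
The plan is to derive both conclusions from two inputs that are already available: the conditional $J_1$-weak convergence supplied by \Cref{theo:rv-in-D-equivalence}, and the uniform tail control supplied by \Cref{hypo:anticlustering}. Write $\bsX^{(T)} = (xa_T)^{-1}\bsX$ for the process conditioned on $\norm{\bsX_0}>xa_T$, and $\tilde\bsX^{(T)} = \bsX^{(T)}\1{[-r_T,r_T)}$ for its truncation. Since $x>0$ is fixed and $xa_T$ is again a scaling function, and since $\norm{\bsX_0}$ is regularly varying (so conditioning on $\norm{\bsX_0}>xa_T$ rather than on $\norm{\bsX_0}>a_T$ rescales probabilities only by the asymptotically constant factor $x^{-\alpha}$), both \Cref{theo:rv-in-D-equivalence} and the estimate \Cref{eq:anticlustering} persist with $a_T$ replaced by $xa_T$; I would therefore reduce to $x=1$ and condition on $\norm{\bsX_0}>a_T$. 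As $\bsX$ is regularly varying in $\spaceD$, \Cref{theo:rv-in-D-equivalence} then gives $\bsX^{(T)}\convweak\bsY$ in $(\spaceD,J_1)$, with $\bsY$ \cadlag.

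Next I would prove $\pr(\bsY\in\spaceD_0)=1$ by a portmanteau argument. For finite $M<N$ the set $\{\bsy\in\spaceD:\sup_{M<|s|<N}\norm{\bsy_s}>\epsilon\}$ is open for $J_1$ (a strict exceedance at an interior continuity point persists under small $J_1$ perturbations, and such a point exists whenever the supremum exceeds $\epsilon$). Applying portmanteau to $\bsX^{(T)}\convweak\bsY$ and then, for $T$ large enough that $N<r_T$, bounding the supremum over $M<|s|<N$ by that over $M<|s|<r_T$, I obtain
\begin{align*}
  \pr\left(\sup_{M<|s|<N}\norm{\bsY_s}>\epsilon\right)
  \le \limsup_{T\to\infty}\pr\left(\sup_{M<|s|<r_T}\norm{\bsX_s}>\epsilon a_T \,\Big|\, \norm{\bsX_0}>a_T\right)\; .
\end{align*}
Letting $N\to\infty$, then $M\to\infty$, the right-hand side vanishes by \Cref{eq:anticlustering}; hence $\pr(\limsup_{|s|\to\infty}\norm{\bsY_s}\ge\epsilon)=0$ for every $\epsilon>0$, and letting $\epsilon\downarrow0$ along a sequence gives $\bsY\in\spaceD_0$ almost surely.

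For the weak convergence I would first remove the truncation in $J_1$, then upgrade the topology. Since $\tilde\bsX^{(T)}$ and $\bsX^{(T)}$ coincide on $[-r_T,r_T)$ with $r_T\to\infty$, and the $J_1$ topology on $\spaceD(\Rset)$ is local, being generated by the restrictions to compact intervals (see \Cref{sec:J1}), the $J_1$-distance between $\tilde\bsX^{(T)}$ and $\bsX^{(T)}$ tends to $0$ deterministically; by the converging-together lemma, $\tilde\bsX^{(T)}\convweak\bsY$ in $(\spaceD,J_1)$ as well. For the passage to $J_1^0$ I would use the characterization of $d_\infty$-relative compactness in $\spaceD_0$: a family is relatively compact for $J_1^0$ iff it is relatively compact for $J_1$ on every compact interval and its members vanish uniformly at $\pm\infty$. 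The first property follows from the $J_1$-tightness just obtained; the second is exactly the anticlustering estimate, because for $M<r_T$ the truncation yields $\sup_{|s|>M}\norm{\tilde\bsX^{(T)}_s}=\sup_{M<|s|<r_T}\norm{\bsX^{(T)}_s}$, so
\begin{align*}
  \lim_{M\to\infty}\limsup_{T\to\infty}\pr\left(\sup_{|s|>M}\norm{\tilde\bsX^{(T)}_s}>\epsilon\right)
  = \lim_{M\to\infty}\limsup_{T\to\infty}\pr\left(\sup_{M<|s|<r_T}\norm{\bsX_s}>\epsilon a_T\,\Big|\,\norm{\bsX_0}>a_T\right)=0\; .
\end{align*}
Thus $\{\tilde\bsX^{(T)}\}$ is tight in $(\spaceD_0,J_1^0)$. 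Any weak limit point is supported on $\spaceD_0$, and since $J_1^0$ is finer than $J_1$, it is also a $J_1$-limit point, hence equal to the law of $\bsY$. Tightness together with a unique limit point yields $\tilde\bsX^{(T)}\convweak\bsY$ in $(\spaceD_0,J_1^0)$, which is the assertion (for general $x$ by the reduction above).

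The main obstacle is the upgrade from $J_1$ to the finer $J_1^0$ topology. It rests entirely on the compactness characterization of $(\spaceD_0,d_\infty)$ and on recognizing \Cref{eq:anticlustering} as precisely the uniform-decay-at-infinity input that this characterization requires. Some care is needed with the single time change underlying $d_\infty$ near the cut-off points $\pm r_T$, and with the artificial jump to $0$ that truncation introduces there; this jump is harmless because $r_T\to\infty$ pushes it out of every fixed compact interval, so it does not affect the local $J_1$ behaviour nor, in combination with the uniform tail decay, the $d_\infty$ limit.
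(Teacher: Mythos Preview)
Your argument for $\pr(\bsY\in\spaceD_0)=1$ is essentially the paper's: a portmanteau bound on $\pr(\sup_{t\le|s|\le t'}\norm{\bsY_s}>\epsilon)$ followed by the anticlustering estimate. No issue there.

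For the weak convergence in $(\spaceD_0,J_1^0)$, your route is genuinely different from the paper's. The paper never invokes tightness or a compactness criterion in $(\spaceD_0,d_\infty)$. Instead it works directly with bounded $d_\infty$-Lipschitz test functions $H$ (Dudley's characterization of weak convergence) and runs a three-term triangular argument: for fixed $t$, $\esp[H(a_T^{-1}\bsX\1{[-t,t)})]\to\esp[H(\bsY\1{[-t,t)})]$ by regular variation in $\spaceD$; the difference between $\bsX\1{[-t,t)}$ and $\bsX\1{[-r_T,r_T)}$ is controlled in uniform norm, hence in $d_\infty$, by \Cref{eq:anticlustering}; and $\esp[H(\bsY\1{[-t,t)})]\to\esp[H(\bsY)]$ because $\bsY\in\spaceD_0$ and $H$ is $d_\infty$-Lipschitz. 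This uses only the inequality $d_\infty\le\supnorm[\infty]{\cdot}$ and nothing about compact subsets of $\spaceD_0$.

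Your approach is sound in outline, and your step $d_{J_1}(\tilde\bsX^{(T)},\bsX^{(T)})\le\rme^{-r_T}\to0$ is correct. But the whole weight of your proof rests on the compactness criterion you state for $(\spaceD_0,d_\infty)$: relatively compact in $J_1$ on every compact interval plus uniform decay at $\pm\infty$. This is true, but it is not in the paper and is not entirely trivial---one has to choose the truncation level at continuity points of the limit so that the local time changes fix the endpoints, and then extend by the identity to get a single global time change. You flag this yourself as the main obstacle, which is fair; just be aware that the paper sidesteps it completely. What you gain is a cleaner conceptual picture (tightness plus identification of the limit); what the paper gains is that nothing beyond $d_\infty\le\supnorm[\infty]{\cdot}$ is needed.
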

\begin{proof}
  We first prove that $\pr(\bsY\in\spaceD_0)=1$. By
  \Cref{theo:rv-in-D-equivalence,hypo:anticlustering}, we have, for $\epsilon>0$ and large enough
  $t\leq t'$, 
  \begin{align*}
    \pr\left(\sup_{t\leq |s|\leq t'} \norm{\bsY_t}>\epsilon\right)
    & = \lim_{T\to\infty}
      \pr\left(\sup_{t\leq |s|\leq t'} \norm{\bsX_t}>\epsilon a_T\mid \norm{\bsX_0}>a_T\right) \\
    & \leq \limsup_{T\to\infty}
      \pr\left(\sup_{t\leq |s|\leq r_T} \norm{\bsX_t}>\epsilon a_T\mid \norm{\bsX_0}>a_T\right)  \leq \epsilon \; .
  \end{align*}
  This proves the first claim.

  To prove the stated weak convergence, we apply \cite[Theorem~11.3.3]{dudley:2002}.  Let $\map$ be a
  bounded Lipschitz function \wrt\ the metric $d_{\infty}$ on $\spaceD_0$. Since the metric
  $d_{\infty}$ is dominated by the uniform norm, we have
  \begin{multline*}
    \esp[ |\map(a_T^{-1}\bsX\1{[-t,t)}) - \map(a_T^{-1}\bsX\1{[-r_T,r_T)})|\mid \norm{\bsX_0}>xa_T] \\
     \leq \constant\ \epsilon + \pr\left( \sup_{t \leq |s| \leq r_T} \norm{\bsX_s}>\epsilon a_T\mid \norm{\bsX_0}>xa_T\right) \; .
  \end{multline*}
  Thus, \Cref{hypo:anticlustering} yields
  \begin{align}
    \label{eq:triangularargument1}
    \lim_{t\to\infty} \limsup_{T\to\infty} \esp[ |\map(a_T^{-1}\bsX\1{[-t,t)}) - \map(a_T^{-1}\bsX\1{[-r_T,r_T)})|\mid \norm{\bsX_0}>xa_T] = 0 \; .
  \end{align}
  By the convergence (\ref{eq:regvar-in-D}), we have, for each $t>0$, 
  \begin{align}
    \label{eq:triangularargument2}
    \lim_{T\to\infty} \esp[ \map(a_T^{-1}\bsX\1{[-t,t)}) |\mid \norm{\bsX_0}>xa_T] = \esp[\map(\bsY\1{[-t,t)})]  \; .
  \end{align}
  Since $\bsY$ tends to zero at $\infty$ and $\map$ is Lipschitz, we have
  \begin{align}
    \label{eq:triangularargument3}
    \lim_{t\to\infty} \esp[\map(\bsY\1{[-t,t)})] = \esp[\map(\bsY)] \; . 
  \end{align}
  The convergences (\ref{eq:triangularargument1}), (\ref{eq:triangularargument2}) and
  (\ref{eq:triangularargument3}) conclude the proof.
\end{proof}

{\Cref{lem:weakconv-anticlustering} shows that \cref{hypo:anticlustering} implies that the tail
measure is supported on $\spaceD_0$. Thus the tail measure admits a mixed moving average
representation and the candidate extremal index~$\candidate$ is positive. This rules out extremal
long memory in the sense of $\candidate=0$. This will be our working assumption from here on.}

\subsection{The cluster measure}
\label{sec:cluster-measure}
Recall that we have defined $\candidate =\esp[\exc^{-1}(\bsY)]<\infty$ by
\Cref{lem:expinvexcfini}. Throughout this section, we will assume that $\pr(\bsY\in\spaceD_0)=1$,
which implies $\candidate>0$ by \Cref{theo:equivalences-dissipative}.  Let $\bsQ$ be be a random
element on $\spaceD_0$ whose distribution is given by \Cref{eq:loi-Q} and define the boundedly
finite measure $\tailmeasurestar$ on $\spaceD_0$ by
\begin{align}
  \label{eq:tailmeasurestar-Q}
  \tailmeasurestar = \candidate \int_0^\infty  \esp [ \delta_{r \bsQ} ]  \alpha r^{-\alpha-1} \rmd r \; .
\end{align}
This and \Cref{eq:dissipativerepresentation} yield
\begin{align}
  \label{eq:tailmeasure-nu-nustar}
  \tailmeasure   = \int_{-\infty}^\infty  \tailmeasurestar\circ\shift^t \, \rmd t  \; .
\end{align}

\paragraph{The space $\spaceDtilde_0$} 
In order to state our result on the convergence of the point process of exceedances, we need to
introduce the space $\spaceDtilde_0$ which is the quotient of the space $\spaceD_0$ by the relation
of shift-equivalence. We say that two functions $g,f$ defined on $\Rset$ are shift-equivalent if
there exists $t\in\Rset$ such that $f=\shift^tg$, \ie\ $f(x) = g(x-t)$ for all $x\in\Rset$. This is
an equivalence relation and the space $\spaceDtilde_0$ is the set of equivalence classes for this
relation. We endow it with the quotient topology which is metrizable with the metric
$\dtilde_\infty$ defined by
\begin{align*}
  \dtilde_\infty(\tilde{f},\tilde{g}) = \inf_{f\in\tilde{f},g\in\tilde{g}} d_\infty (f,g) \; .
\end{align*}
The set $\spaceDtilde_0$ endowed with this topology inherits the Polishness property. A map $\tilde{\shinvmap}$
on $\spaceDtilde_0$ is uniquely associated to a shift-invariant map $\shinvmap$ on $\spaceD_0$ by the
relation $\shinvmap(f) = \tilde{\shinvmap}(\tilde{f})$ for all $\tilde{f}\in\spaceDtilde_0$ and $f\in\tilde{f}$.

We now define vague convergence of boundedly finite measures on $\spaceDtilde_0$. Let $\tilde\mcb_0$
be the class of subsets $\tilde{A}$ of $\spaceDtilde_0$ such that $\tilde\bsy\in\tilde{A}$ implies
that there exist $\bsy\in\tilde\bsy$ with $\bsy^*>\epsilon$. We say that $\nu$ is
$\tilde\mcb_0$-boundedly finite on $\spaceDtilde_0$ if $\nu(A)<\infty$ for all measurable sets
$A\in\tilde\mcb_0$ and $\nu(\{\tilde\bszero\})=0$. Vague convergence on $\spaceDtilde_0$ is defined
\wrt\ $\tilde\mcb_0$: we say that a sequence of $\tilde\mcb_0$-boundedly finite measures $\nu_n$
converges vaguely to $\nu$ in $(\spaceDtilde_0,\tilde\mcb_0)$, denoted $\nu_n\convvague\nu$ if
$\lim_{n\to\infty}\nu_n(A)=\nu(A)$ for all $\nu$-continuity sets  $A\in\mcb_0$.

Any \shiftinvariant\ map $\shinvmap$ on $\spaceD_0$ can be seen as a map on $\spaceDtilde_0$ and
conversely any map on $\spaceDtilde_0$ can be seen a \shiftinvariant\ map $\shinvmap$ on
$\spaceD_0$, and it is Lipschitz \wrt\ $d_\infty$ \ifft\ it is Lipschitz \wrt\
$\tilde{d}_\infty$. Thus a necessary and sufficient condition for vague convergence of a sequence of
boundedly finite measures $\{\nu_n,n\geq1\}$ on $\spaceDtilde_0$ to a measure $\nu$ is that
$\lim_{n\to\infty} \nu_n(\shinvmap) = \nu(\shinvmap)$, for all \shiftinvariant\ maps $\shinvmap$ in
$\spaceD_0$, with support in $\mcb_0$ and Lipschitz \wrt\ the metric $d_\infty$.

For functions $a_T$ and $r_T$, we define the measure $\tailmeasurestar_{T,r_T}$ on $\spaceD_0$ by
\begin{align}
  \label{eq:def-clustermeasure-rT}
  \tailmeasurestar_{T,r_T} = \frac{\esp \left[ \delta_{a_T^{-1}\bsX_{0,r_T}} \right]}{r_T\pr(\norm{\bsX_0}>a_T)} \; . 
\end{align}

\begin{lemma}
  \label{lem:convtailmeasurestar}
  Let $\bsX$ be a stationary $\spaceD$-valued stochastic process, regularly varying in
  $\spaceD$. Let $a_T$, $r_T$ be scaling functions such that \Cref{eq:anticlustering} holds. Then
  $\tailmeasurestar_{T,r_T} \convvague \tailmeasurestar$  in $(\spaceDtilde_0,\tilde\mcb_0)$.
\end{lemma}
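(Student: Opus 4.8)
The plan is to use the characterization of vague convergence on $\spaceDtilde_0$ recalled just before the statement: it suffices to show that $\tailmeasurestar_{T,r_T}(\shinvmap)\to\tailmeasurestar(\shinvmap)$ for every \shiftinvariant\ map $\shinvmap$ on $\spaceD_0$ that is Lipschitz \wrt\ $d_\infty$ and supported in $\mcb_0$, i.e. such that $\shinvmap(\bsy)=0$ whenever $\bsy^*\leq\epsilon_0$ for some $\epsilon_0>0$; fix such a $\shinvmap$, bounded by some constant $M$. Since for each $\bsy\in\spaceD_0$ only countably many levels carry positive occupation time, the set $\{t\in\Rset:\norm{\bsy_t}=\epsilon\}$ has positive Lebesgue measure for at most countably many $\epsilon$, so by Fubini's theorem the level $\epsilon$ fails to be almost surely uncharged by the occupation measure of $\bsY$ only on a Lebesgue-null set of $\epsilon$. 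I therefore fix $\epsilon\in(0,\epsilon_0]$ for which $\bsy\mapsto\exc(\epsilon^{-1}\bsy)=\int_{-\infty}^\infty\ind{\norm{\bsy_t}>\epsilon}\rmd t$ is almost surely continuous at $\bsY$ for the $J_1^0$ topology.

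First I would insert the occupation time at level $\epsilon$: on the support of $\shinvmap$ one has $\bsy^*>\epsilon$, hence $\exc(\epsilon^{-1}\bsy)\in(0,\infty)$ and $\shinvmap(\bsy)=\shinvmap(\bsy)\exc(\epsilon^{-1}\bsy)/\exc(\epsilon^{-1}\bsy)$. Applying this to $\bsy=a_T^{-1}\bsX_{0,r_T}$, using Fubini and then the stationarity of $\bsX$ to move the exceedance time to the origin (exactly as in the proof of \Cref{lem:expinvexcfini}), and using the \shiftinvariance\ of $\shinvmap$ and of $\exc$, I obtain, with $c_T=\pr(\norm{\bsX_0}>a_T)$,
\begin{align*}
  \tailmeasurestar_{T,r_T}(\shinvmap)
  = \frac1{r_T}\int_0^{r_T}\frac1{c_T}\esp\left[\frac{\shinvmap(a_T^{-1}\bsX_{-s,r_T-s})\ind{\norm{\bsX_0}>\epsilon a_T}}{\exc(\epsilon^{-1}a_T^{-1}\bsX_{-s,r_T-s})}\right]\rmd s \; .
\end{align*}
Substituting $u=s/r_T$, I then analyse the inner expectation for fixed $u\in(0,1)$. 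Conditioning on $\{\norm{\bsX_0}>\epsilon a_T\}$ and using regular variation of $\norm{\bsX_0}$, the prefactor $c_T^{-1}\pr(\norm{\bsX_0}>\epsilon a_T)\to\epsilon^{-\alpha}$. Under \Cref{hypo:anticlustering}, \Cref{lem:weakconv-anticlustering} gives $\bsY\in\spaceD_0$ almost surely and that, conditionally on $\norm{\bsX_0}>\epsilon a_T$, the rescaled path $a_T^{-1}\bsX\1{[-r_T,r_T)}$ converges weakly in $(\spaceD_0,J_1^0)$ to $\epsilon\bsY$. For $u\in(0,1)$ the window $[-ur_T,(1-u)r_T)$ grows to $\Rset$ on both sides, so anticlustering lets me replace $\bsX_{-ur_T,(1-u)r_T}$ by $\bsX_{-t_0,t_0}$ up to an error that vanishes after $T\to\infty$ then $t_0\to\infty$; on the fixed window the continuous mapping theorem applies to the bounded, almost surely continuous functional $\bsy\mapsto\shinvmap(\bsy)/\exc(\epsilon^{-1}\bsy)$. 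Hence each fixed-$u$ term tends to the $u$-independent limit $\epsilon^{-\alpha}\esp[\shinvmap(\epsilon\bsY)/\exc(\bsY)]$, and a change of variables using the homogeneity of $\tailmeasure$ together with the representations \Cref{eq:tailmeasurestar-Q} and \Cref{eq:loi-Q} identifies this with $\tailmeasurestar(\shinvmap)=\epsilon^{-\alpha}\esp[\shinvmap(\epsilon\bsY)/\exc(\bsY)]$.

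The main obstacle will be upgrading this bulk, fixed-$u$ convergence to convergence of the full integral $\int_0^1(\cdots)\rmd u$, since the denominator $\exc(\epsilon^{-1}\cdot)$ is unbounded and the endpoints $u\approx0,1$ correspond to one-sided windows where the limit may fail. I expect both difficulties to be resolved through the occupation-time identity behind \Cref{lem:expinvexcfini}: summing the integrand over $s$ and applying Fubini bounds $\tailmeasurestar_{T,r_T}(\shinvmap)$ by $M$ times a quantity of the form $(r_Tc_T)^{-1}\tailmeasure(\{\bsy:\sup_{0\le t\le r_T}\norm{\bsy_t}>\epsilon\})$, which is bounded uniformly in $T$ by regular variation in $\spaceD$; this supplies the domination in $u$, and the boundary band $u\in[0,\delta]\cup[1-\delta,1]$, of length $2\delta$, is controlled by the same bound and made negligible as $\delta\to0$. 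The remaining technical care is the choice of the uncharged level $\epsilon$ (and of continuity points for the truncation window $\pm t_0$, which is harmless since $\bsY\in\spaceD_0$), guaranteeing the functionals are almost surely $J_1^0$-continuous so that \Cref{lem:weakconv-anticlustering} and the continuous mapping theorem may be invoked.
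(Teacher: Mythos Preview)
Your overall skeleton---insert $\exc(\epsilon^{-1}\cdot)/\exc(\epsilon^{-1}\cdot)$, apply Fubini, shift by stationarity, substitute $u=s/r_T$, then invoke \Cref{lem:weakconv-anticlustering}---is exactly the paper's, and the identification of the limit as $\epsilon^{-\alpha}\esp[\shinvmap(\epsilon\bsY)/\exc(\bsY)]=\tailmeasurestar(\shinvmap)$ is correct.

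The genuine gap is your handling of the unbounded functional $\bsy\mapsto\shinvmap(\bsy)/\exc(\epsilon^{-1}\bsy)$. You call it ``bounded'' when applying the continuous mapping theorem, but it is not: $\exc(\epsilon^{-1}\bsy)$ can be arbitrarily small even when $\bsy^*>\epsilon_0$, so weak convergence of the conditional law alone does not give convergence of the conditional \emph{expectation}. You notice this in your obstacle paragraph, but your proposed fix does not close the hole. The occupation-time identity from \Cref{lem:expinvexcfini} bounds the \emph{whole} integral $\int_0^1 g_T(u)\,\rmd u$ uniformly in $T$; it does not give a pointwise-in-$u$ integrable majorant, so it does not justify dominated convergence, nor does it make the boundary bands $[0,\delta]\cup[1-\delta,1]$ contribute $O(\delta)$ (a uniform bound on the total integral says nothing about the mass of a sub-band).

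The paper resolves this with a single extra idea: an $\eta$-truncation on the occupation time. One splits $\shinvmap=\shinvmap\ind{\exc(\epsilon^{-1}\cdot)>\eta}+\shinvmap\ind{\exc(\epsilon^{-1}\cdot)\le\eta}$. On the first piece the functional $\shinvmap/\exc$ is bounded by $M/\eta$, so the continuous mapping and dominated convergence steps go through verbatim, yielding $\tailmeasurestar(\shinvmap\ind{\exc>\eta})$; one then sends $\eta\to0$ by monotone convergence. The second piece is the real work: if $\bsy^*_{0,r_T}>\epsilon_0$ but $\exc_{0,r_T}(\epsilon^{-1}\bsy)\le\eta$, then $w'(\bsy,0,r_T,\eta)$ must be large, so this term is bounded by a constant times $r_T\pr(w'(\bsX,0,1,\eta)>\epsilon a_T)$, which vanishes as $T\to\infty$ then $\eta\to0$ by the tightness part of regular variation in $\spaceD$ (\Cref{eq:tightness}). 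This modulus-of-continuity argument is the missing ingredient in your proposal; without it, neither the fixed-$u$ convergence of expectations nor the passage to the integral is justified.
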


\begin{proof}
  We must prove that for all $\epsilon>0$ and \shiftinvariant\ bounded maps $\shinvmap$ defined on
  $\spaceD_0$, continuous \wrt\ the $J_1^0$ topology an such that $\shinvmap(\bsy)=0$ if
  $\bsy^*\leq 2\epsilon$, it holds that
   \begin{align}
    \label{eq:conv-tailmeasurestar}
    \lim_{T\to\infty}    \tailmeasurestar_{T,r_T}(\shinvmap) = \tailmeasurestar(\shinvmap) \; . 
  \end{align}
  Write 
  \begin{align*}
    \tailmeasurestar_{T}(\shinvmap) 
     = \frac{\esp[\shinvmap(a_T^{-1} \bsX_{0,r_T})\ind{\exc(\epsilon^{-1}a_T\bsX_{0,r_T})>\eta}]}{r_T\pr(\norm{\bsX_0}>a_T)} 
       + \frac{\esp[\shinvmap(a_T^{-1} \bsX_{0,r_T})\ind{\exc(\epsilon^{-1}a_T^{-1}\bsX_{0,r_T}) \leq \eta}]}{r_T\pr(\norm{\bsX_0}>a_T)} \; .
  \end{align*}
  As argued in the proof of \Cref{theo:rv-in-D-equivalence}, if $f\in\spaceD$ is such that
  $w'(f,a,b,\eta) \leq \epsilon/2$ and $\exc_{a,b}((2\epsilon)^{-1}f) > 0$, then
  $\exc_{a,b}(\epsilon^{-1}f)\geq \eta$. Thus
  \begin{align*}
    \esp[\shinvmap(a_T^{-1} \bsX_{0,r_T}) 
    & \ind{\exc(\epsilon^{-1}a_T^{-1}\bsX_{0,r_T}) \leq \eta}] \\
    & \leq \constant\ \pr((\bsX_{0,r_T})^* >2\epsilon a_T;\exc(\epsilon^{-1}a_T^{-1}\bsX_{0,r_T}) \leq \eta)  \\
    & \leq \constant\ \pr(w'(\bsX,0,r_T,\eta)>\epsilon a_T) \\
    & \leq \constant\ r_T  \pr(w'(\bsX,0,1,\eta)>\epsilon a_T) \; .
  \end{align*}
  This yields, by \cite[Theorem~16.13]{billingsley:1999}, 
  \begin{multline}
    \label{eq:triangular-argument-nustar}
    \lim_{\eta\to0} \limsup_{T\to\infty} \frac{\esp[\shinvmap(a_T^{-1} \bsX_{0,r_T})\ind{\exc(\epsilon^{-1}a_T^{-1}\bsX_{0,r_T}) \leq \eta}]}
    {r_T\pr(\norm{\bsX_0}>a_T)} \\
    \leq     \lim_{\eta\to0} \limsup_{T\to\infty} \constant\ \frac{ \pr(w'(\bsX,0,1,\eta)>\epsilon a_T)} {\pr(\norm{\bsX_0}>a_T)} = 0  \; .  
  \end{multline}
  The other term is dealt with by dominated convergence arguments and the convergence in $\spaceD$
  of $x^{-}\bsX$ conditionally on $\norm{\bsX_0}>x$. We first write
  \begin{align*}
    & \frac{\esp[\shinvmap(a_T^{-1} \bsX_{0,r_T})\ind{\exc(\epsilon^{-1}a_T\bsX_{0,r_T})>\eta}]}{r_T\pr(\norm{\bsX_0}>a_T)} \\
    & = \frac{1}{r_T\pr(\norm{\bsX_0}>a_T)}
      \int_0^{r_T} \esp \left[ \frac{\shinvmap(a_T^{-1} \bsX_{0,r_T}) \ind{\norm{\bsX_s}>\epsilon a_T} \ind{\exc(\epsilon^{-1}a_T\bsX_{0,r_T})>\eta}}
      {\exc(\epsilon^{-1} a_T^{-1} \bsX_{0,r_T})}\right] \rmd t  \\
    & = \frac{\pr(\norm{\bsX_0}>a_T)}{\pr(\norm{\bsX_0}>a_T)} \int_0^1 g_T(s) \rmd s  \;, 
  \end{align*}
  with $g_T$ defined by 
  \begin{align*}
    g_T(s) & =     \esp \left[ \frac{\shinvmap(a_T^{-1} \bsX\ind{[-r_Ts,(1-s)r_T)}) \ind{\exc(\epsilon^{-1}a_T\bsX\1{[-r_Ts,r_Ts]})>\eta}}  
             {\exc(\epsilon^{-1} a_T^{-1} \bsX\ind{[-r_Ts,(1-s)r_T)})}  \mid \norm{\bsX_0}>\epsilon a_T\right] \; .
  \end{align*}  
  Since $\shinvmap$ is bounded and $g_T=O(\eta^{-1})$, we have by \Cref{lem:weakconv-anticlustering} and
  dominated convergence, for each $s\in[0,1]$,
  \begin{align*}
    \lim_{T\to\infty} g_T(s) 
    & \to \epsilon^{-\alpha}  \esp\left[\frac{\shinvmap(\epsilon\bsY)\ind{\exc(\bsY)>\eta}}{\exc(\bsY)} \right]
      = \tailmeasurestar(\shinvmap\ind{\exc>\eta}) \; .
  \end{align*}
  By dominated convergence again, we obtain
  \begin{align*}
    \lim_{T\to\infty} \frac{\esp[\shinvmap(a_T^{-1} \bsX_{0,r_T})\ind{\exc(\epsilon^{-1}a_T\bsX_{0,r_T})>\eta}]}
    {r_T\pr(\norm{\bsX_0}>a_T)} =  \tailmeasurestar(\shinvmap\ind{\exc>\eta}) \; .
  \end{align*}
  Furthermore, $\lim_{\eta\to0} \tailmeasurestar(\shinvmap\ind{\exc>\eta}) = \tailmeasurestar(\shinvmap)$. This
  convergence and \Cref{eq:triangular-argument-nustar} yield \Cref{eq:conv-tailmeasurestar}.
\end{proof}

The previous result states that if \Cref{eq:anticlustering} holds with some functions $a_T$ and
$r_T$, then $\tailmeasurestar_{T,r_T} \convvague \tailmeasurestar$. We also know that
\Cref{eq:anticlustering} implies that the tail process converges almost surely to zero. There is no
converse of this result. However, if we know that the tail process converges to zero, then for any
given function $a_T$, we can prove the existence of a function $r_T$ such that
$\tailmeasurestar_{T,r_T} \convvague \tailmeasurestar$.

\begin{lemma}
  \label{lem:tailtozero-implies-convergencetoclustermeasure}
  Assume that $\bsX$ is regularly varying in $\spaceD$ with tail process $\bsY$ such that
  $\pr(\bsY\in\spaceD_0)=1$. Then for each scaling function $a_T$, there
  exists a scaling function  $r_T$ such that
  \begin{align}
    \label{eq:rtprattozero}
    \lim_{T\to\infty} r_T\pr(\norm{\bsX_0}>a_T)=0 \ , 
  \end{align}
  and $\tailmeasurestar_{T,r_T}\convvague\tailmeasurestar$ in $(\spaceDtilde_0,\tilde\mcb_0)$.
\end{lemma}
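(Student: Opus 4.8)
The plan is to construct the horizon function $r_T$ so that the anticlustering condition \Cref{eq:anticlustering} of \Cref{hypo:anticlustering} holds for the given $a_T$ and this $r_T$; once that is secured, \Cref{lem:convtailmeasurestar} immediately delivers $\tailmeasurestar_{T,r_T}\convvague\tailmeasurestar$, while \Cref{eq:rtprattozero} will be built directly into the construction. First I would record the two ingredients. Since $\pr(\bsY\in\spaceD_0)=1$, for every $x>0$ the quantity $\Psi(t,x):=\pr(\sup_{|s|\ge t}\norm{\bsY_s}>x)$ tends to $0$ as $t\to\infty$. On the other hand, by \Cref{theo:rv-in-D-equivalence}, conditionally on $\norm{\bsX_0}>a_T$ the rescaled process $a_T^{-1}\bsX$ converges weakly in $(\spaceD,J_1)$ to $\bsY$; hence, writing
\[
  P_T(t,\rho,x):=\pr\Bigl(\sup_{t\le|s|\le\rho}\norm{\bsX_s}>a_Tx \,\Bigm|\, \norm{\bsX_0}>a_T\Bigr),\qquad \Psi(t,\rho,x):=\pr\Bigl(\sup_{t\le|s|\le\rho}\norm{\bsY_s}>x\Bigr),
\]
one has $\lim_{T\to\infty}P_T(t,\rho,x)=\Psi(t,\rho,x)$ for all fixed $t\le\rho$ and $x>0$ lying outside the (at most countable) sets of values for which the relevant event fails to be a continuity set of the law of $\bsY$. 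I will use throughout that $P_T$ is non-increasing in $t$, non-increasing in $x$, and non-decreasing in $\rho$, and that $\Psi(t,\rho,x)\le\Psi(t,x)$.

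The construction proceeds by a diagonal block argument. I would fix a sequence $x_m\downarrow0$ of continuity values and an increasing sequence $\rho_j\uparrow\infty$ of continuity values. For each $j$, since $\pr(\norm{\bsX_0}>a_T)\to0$ and the finitely many limits above hold, I can choose $T_j$, with $T_j\uparrow\infty$, so large that for all $T\ge T_j$,
\[
  \rho_j\,\pr(\norm{\bsX_0}>a_T)\le \tfrac1j,\qquad P_T(t,\rho_j,x_m)\le\Psi(t,x_m)+\tfrac1j \ \text{ for all integers } t,m\in\{1,\dots,j\}.
\]
I then set $r_T=\rho_j$ for $T_j\le T<T_{j+1}$, so that $r_T\uparrow\infty$; the first inequality, holding on each block, gives \Cref{eq:rtprattozero} at once.

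It remains to verify \Cref{eq:anticlustering}. Fix an integer $t_0$ and an index $m$; for $T$ large its block index $j(T)$ satisfies $j(T)\ge\max(t_0,m)$ and $T\ge T_{j(T)}$, so by the choice above $P_T(t_0,r_T,x_m)=P_T(t_0,\rho_{j(T)},x_m)\le\Psi(t_0,x_m)+1/j(T)$, whence $\limsup_{T\to\infty}P_T(t_0,r_T,x_m)\le\Psi(t_0,x_m)\to0$ as $t_0\to\infty$. Using that $P_T$ is non-increasing in $t$ I pass from integer $t_0$ to arbitrary real $t$, and using that it is non-increasing in $x$ (bounding $P_T(t,r_T,x)\le P_T(t,r_T,x_m)$ for any $x_m\le x$) I pass from the sequence $x_m$ to an arbitrary $x>0$; this yields \Cref{eq:anticlustering} for every $x>0$. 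The main obstacle is exactly this replacement of the fixed horizon $\rho$ by the growing horizon $r_T$: the conditional weak convergence controls $P_T(t,\rho,x)$ only for fixed $\rho$, and it is the block construction combined with the monotonicity of $P_T$ in $\rho$ that lets the slowly growing $r_T$ inherit the bound $\Psi(t,x)$. With \Cref{eq:anticlustering} established for the given $a_T$ and the constructed $r_T$, \Cref{lem:convtailmeasurestar} gives $\tailmeasurestar_{T,r_T}\convvague\tailmeasurestar$ in $(\spaceDtilde_0,\tilde\mcb_0)$, which completes the proof.
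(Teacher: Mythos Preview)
Your approach is correct and takes a genuinely different route from the paper. You reduce everything to verifying the anticlustering condition \eqref{eq:anticlustering} for a diagonally constructed $r_T$ and then invoke \Cref{lem:convtailmeasurestar}; the paper instead bypasses anticlustering altogether. It fixes the window at an integer $m$, uses regular variation in $\spaceD$ directly to get $\tailmeasurestar_{T,m}\convvague\tildetailmeasurestar_m$ with $\tildetailmeasurestar_m(\map):=m^{-1}\tailmeasure(\map(\cdot_{0,m}))$, then appeals to the mixed moving average representation \eqref{eq:dissipativerepresentation} via $\bsQ$ to prove $\tildetailmeasurestar_m\convvague\tailmeasurestar$, and finally extracts $r_T$ from metrizability of vague convergence; the tail condition \eqref{eq:rtprattozero} is recovered a posteriori by a short contradiction. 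Your argument is more elementary in that it avoids the structural machinery of \Cref{sec:mma-representations}, and it actually delivers a bit more: an $r_T$ for which the full anticlustering hypothesis holds, not just convergence of the empirical cluster measure. The paper's route, on the other hand, yields the reusable intermediate convergence $\tildetailmeasurestar_m\convvague\tailmeasurestar$, which it exploits again later (e.g.\ in the max-stable case). One small technical point in your construction: the values $t\in\{1,\dots,j\}$ must also be chosen so that $\bsY$ is a.s.\ continuous at $\pm t$, otherwise $P_T(t,\rho_j,x_m)\to\Psi(t,\rho_j,x_m)$ need not hold; replacing the integers by an increasing sequence of such continuity points fixes this without altering the argument.
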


\begin{proof} 
  Fix $m\geq1$.  For a map  $\map$ on $\spaceD$, let $\map_m$ be the map defined on $\spaceD$ by
  $\map_m(\bsy) = \map(\bsy_{0,m})$. Define a measure $\tildetailmeasurestar_m$ on $\spaceD_0$ by
  \begin{align}
    \label{eq:def-tildemeasurestar}
    \tildetailmeasurestar_m(\map) = \frac1m\tailmeasure(\map_m) \; .
  \end{align}
  Let $\epsilon>0$ and $\map$ be a bounded Lipschitz continuous map on $\spaceD_0$ such that
  $h(\bsy)=0$ if $\bsy^*\leq 2\epsilon$.  Regular variation in $\spaceD$ implies that
  \begin{align*}
    \lim_{T\to\infty} \frac{\esp[\map(a_T^{-1}\bsX_{0,m})]} {m\pr(\norm{\bsX_0}>a_T)} 
    & = \frac1m \tailmeasure(\map_m) = \tildetailmeasurestar_m(\map) \; .
  \end{align*}
  The latter quantity is finite since the support of $\map_m(\bsy) \leq \constant\ind{\bsy_{0,m}^*>1}$.
  Applying \Cref{eq:dissipativerepresentation}, we obtain
  \begin{align*}
    \tildetailmeasurestar_m(\map) 
    & = \frac1m \tailmeasure(\map_m)  
      = \frac1m \int_{-\infty}^\infty \int_0^\infty \esp[\map(r\bsQ_{-t,m-t})] \alpha r^{-\alpha-1}  \rmd r \, \rmd t \\
    & = \int_0^1 \int_0^\infty \esp[\map(r\bsQ_{-mt,m(1-t)})] \alpha r^{-\alpha-1}  \rmd r \, \rmd t + R_1 + R_2 \; , 
  \end{align*}
  with $R_1$ and $R_2$ the integrals over $(-\infty,0)$ and $(T,\infty)$, respectively.  Since $\map$
  is Lipschitz continuous and bounded and $\bsQ\in\spaceD_0$, we have by the dominated convergence
  theorem, 
  \begin{align*}
    \lim_{m\to\infty} \int_0^1  \int_0^\infty \esp[\map(r\bsQ_{-mt,m(1-t)})] \alpha r^{-\alpha-1}  \rmd r \rmd t 
    =   \int_0^\infty \esp[\map(r\bsQ)] \alpha r^{-\alpha-1}  \rmd r = \tailmeasurestar(\map) \; .
  \end{align*}
  We next prove that $R_1$ and $R_2$ tend to zero. Note that 
  \begin{align*}
    \int_0^\infty |\map(r\bsy)| \alpha r^{-\alpha-1} \rmd r \leq \constant\ \int_0^\infty \ind{u\bsy_{0,m}^*>\epsilon} \alpha u^{-\alpha-1} \rmd u 
    = \constant\ \epsilon^{-\alpha} (\bsy_{0,m}^*)^\alpha \; .
  \end{align*}
  This yields, by subadditivity of the maximum,
  \begin{align*}
    R_1 & \leq  \frac1m \int_{-\infty}^0 \esp[|\map_\alpha(\bsQ_{-s,m-s})|] \rmd s  
          \leq \frac{\constant}m \int_0^{\infty} \esp[(\bsQ_{s,m+s}^*)^\alpha] \rmd s  \\
        & \leq \frac{\constant}m \sum_{i=1}^m \int_0^{\infty} \esp[(\bsQ_{i-1+s,i+s}^*)^\alpha] \rmd s  
         = \frac{\constant}m \sum_{i=1}^m \int_{i-1}^\infty \esp[(\bsQ_{s,s+1}^*)^\alpha] \rmd s  \; .
  \end{align*}
  Note that
  $\int_{-\infty}^{\infty} \esp[(\bsQ_{s,s+1}^*)^\alpha] \rmd s =
  \tailmeasure(\{\bsy\in\spaceD:\bsy_{0,1}^*>1\})<\infty$, thus
  \begin{align*}
    \lim_{i\to\infty} \int_i^\infty \esp[(\bsQ_{s,s+1}^*)^\alpha] \rmd s =0 \; .
  \end{align*}
  By Cesaro's theorem, this yields $\lim_{m\to\infty} R_1=0$. The proof for $R_2$ is along the same
  lines.

  We have thus proved that $\tailmeasurestar_{T,m}\convvague\tildetailmeasurestar_m$ and
  $\tildetailmeasurestar_m\convvague\tailmeasurestar$. Since vague convergence is metrizable, this
  implies that there exists a sequence $r_T$ such that
  $\tailmeasurestar_{T,r_T}\convvague\tailmeasurestar$. (See for instance \cite[p.395, comment after
  Proposition~11.3.2]{dudley:2002}.)

  Assume that the function $r_T$ does not satisfy \Cref{eq:rtprattozero}. Then, along a subsequence,
  we would have $r_T\pr(\norm{\bsX_0}>a_T)\to c>0$ and thus 
  \begin{align*}
    \lim_{T\to\infty} \pr(\bsX_{0,r_T}^*>a_Tx) = c \tailmeasurestar(\{\bsy^*>w\}) = c \candidate x^{-\alpha} \; .
  \end{align*}
  This is a contradiction, since the left-hand side must be less than 1, and the right-hand side can
  be arbitrarily large. Thus the scaling function $r_T$ must satisfy \Cref{eq:rtprattozero}. 
\end{proof}

The previous  results allow to prove convergence of the measure $\tailmeasurestar_{T,r_T}$ when
the process $\bsX$ admits a suitable sequence of approximations.
\begin{lemma}
  \label{lem:mdep-approx-clustermeasure}
  Let $\bsX$ be a stationary process, regularly varying in $\spaceD$ with tail process $\bsY$ such
  that $\pr(\bsY\in\spaceD_0)=1$. Assume that there exists a sequence of
  stationary $m$-dependent processes $\bsX^{(m)}$, regularly varying in $\spaceD$, such that
  $(\bsX,\bsX^{(m)})$ is stationary and 
  \begin{align}
    \label{eq:condition-m-approx}
    \lim_{m\to\infty} \limsup_{x\to\infty}
    \frac {\pr\left(\sup_{0 \leq s \leq 1} \norm{\bsX_s-\bsX_s^{(m)}}>x\right)}{\pr(\norm{\bsX_0}>x)} = 0 \; .
  \end{align}  
  Let $\tailmeasurestar$ and $\tailmeasurestar_m$ be the cluster measures of $\bsX$ and
  $\bsX^{(m)}$, respectively.  Then $\tailmeasurestar_{m}\convvague\tailmeasurestar$ and
  $\tailmeasurestar_{T,r_T}\convvague\tailmeasurestar$ in $(\spaceDtilde_0,\tilde\mcb_0)$ for all
  scaling functions $a_T$, $r_T$ such that \Cref{eq:rtprattozero} holds. 
\end{lemma}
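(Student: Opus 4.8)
The plan is to run an approximation (triangular) argument through the $m$-dependent processes $\bsX^{(m)}$, for which the conclusion is already available. By the criterion for vague convergence on $(\spaceDtilde_0,\tilde\mcb_0)$ recalled before \Cref{lem:convtailmeasurestar}, it suffices to show that $\lim_{T\to\infty}\tailmeasurestar_{T,r_T}(\shinvmap)=\tailmeasurestar(\shinvmap)$ and $\lim_{m\to\infty}\tailmeasurestar_m(\shinvmap)=\tailmeasurestar(\shinvmap)$ for every \shiftinvariant, bounded, $d_\infty$-Lipschitz map $\shinvmap$ on $\spaceD_0$ with $\shinvmap(\bsy)=0$ whenever $\bsy^*\le2\epsilon$. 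Write $\tailmeasurestar_{m,T,r_T}=\esp[\delta_{a_T^{-1}\bsX^{(m)}_{0,r_T}}]/(r_T\pr(\norm{\bsX^{(m)}_0}>a_T))$ for the empirical cluster measure of $\bsX^{(m)}$. For fixed $m$, since $\bsX^{(m)}$ is $m$-dependent and regularly varying in $\spaceD$, \Cref{lem:AC-mdep} shows it satisfies \Cref{eq:anticlustering} with $a_T$ and any $r_T$ for which $\lim_{T\to\infty}r_T\pr(\norm{\bsX_0^{(m)}}>a_T)=0$, so \Cref{lem:convtailmeasurestar} gives
\[
  \lim_{T\to\infty}\tailmeasurestar_{m,T,r_T}(\shinvmap)=\tailmeasurestar_m(\shinvmap);
\]
call this fact (A). (Regular variation and $m$-dependence force, through \Cref{lem:weakconv-anticlustering}, the tail process of $\bsX^{(m)}$ into $\spaceD_0$, so $\tailmeasurestar_m$ is a well-defined boundedly finite cluster measure.)

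The core estimate, call it (B), is $\lim_{m\to\infty}\limsup_{T\to\infty}\bigl|\tailmeasurestar_{T,r_T}(\shinvmap)-\tailmeasurestar_{m,T,r_T}(\shinvmap)\bigr|=0$. Two ingredients enter. First, a tail comparison: writing $p_T=\pr(\norm{\bsX_0}>a_T)$ and $p_T^{(m)}=\pr(\norm{\bsX_0^{(m)}}>a_T)$, the elementary bounds $\norm{\bsX_0^{(m)}}\le\norm{\bsX_0}+\norm{\bsX_0-\bsX_0^{(m)}}$ (and symmetrically), the regular variation of $\norm{\bsX_0}$, and \Cref{eq:condition-m-approx} yield $\lim_{m\to\infty}\limsup_{T\to\infty}|p_T/p_T^{(m)}-1|=0$. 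Second, since $d_\infty$ is dominated by the uniform norm and both truncated paths vanish off $[0,r_T)$, one has $d_\infty(a_T^{-1}\bsX_{0,r_T},a_T^{-1}\bsX^{(m)}_{0,r_T})\le a_T^{-1}\Delta_T$ with $\Delta_T=\sup_{0\le s<r_T}\norm{\bsX_s-\bsX^{(m)}_s}$. Splitting the numerator difference on $\{\Delta_T\le\delta a_T\}$ and its complement: on the first event the Lipschitz property of $\shinvmap$ bounds the difference by $\constant\,\delta\,\ind{\bsX_{0,r_T}^*>(2\epsilon-\delta)a_T}$ (both paths are simultaneously below $2\epsilon a_T$ or both above $(2\epsilon-\delta)a_T$, so $\shinvmap$ vanishes on both unless the latter occurs), while on the complement the difference is at most $\constant\,\ind{\Delta_T>\delta a_T}$. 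Dividing by $r_T p_T$, covering $[0,r_T)$ by unit intervals and using stationarity of $\bsX$ (resp. of the pair $(\bsX,\bsX^{(m)})$), the two contributions are bounded by $\constant\,\delta\,\pr(\bsX_{0,1}^*>(2\epsilon-\delta)a_T)/p_T$ and $\constant\,\pr(\sup_{0\le s<1}\norm{\bsX_s-\bsX^{(m)}_s}>\delta a_T)/p_T$. By regular variation the first factor stays bounded, so letting $T\to\infty$, then $m\to\infty$ (which kills the second term via \Cref{eq:condition-m-approx}), then $\delta\to0$ proves (B); the mismatch between the normalizations $p_T$ and $p_T^{(m)}$ is absorbed using the tail comparison together with the uniform boundedness of $\tailmeasurestar_{m,T,r_T}(\shinvmap)$ (which follows from $|\shinvmap|\le\constant\,\ind{\bsy^*>2\epsilon}$ and the same covering argument).

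To assemble the pieces and identify the limit with the genuine cluster measure $\tailmeasurestar$, I invoke \Cref{lem:tailtozero-implies-convergencetoclustermeasure}: since $\pr(\bsY\in\spaceD_0)=1$, there is a scaling function $r_T^0$ satisfying \Cref{eq:rtprattozero} with $\tailmeasurestar_{T,r_T^0}\convvague\tailmeasurestar$. Applying (A) and (B) along $r_T^0$ in the decomposition
\[
  |\tailmeasurestar_m(\shinvmap)-\tailmeasurestar(\shinvmap)|\le|\tailmeasurestar_m(\shinvmap)-\tailmeasurestar_{m,T,r_T^0}(\shinvmap)|+|\tailmeasurestar_{m,T,r_T^0}(\shinvmap)-\tailmeasurestar_{T,r_T^0}(\shinvmap)|+|\tailmeasurestar_{T,r_T^0}(\shinvmap)-\tailmeasurestar(\shinvmap)|,
\]
and letting $T\to\infty$ (the first and third terms vanish by (A) and by \Cref{lem:tailtozero-implies-convergencetoclustermeasure}) then $m\to\infty$ (the middle term vanishes by (B)) proves the first claim $\tailmeasurestar_m\convvague\tailmeasurestar$. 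For the second claim, fix the given $r_T$, for which (A) holds because \Cref{eq:rtprattozero} and the tail comparison give $\lim_{T\to\infty}r_T p_T^{(m)}=0$ for all large $m$, and use the analogous decomposition
\[
  |\tailmeasurestar_{T,r_T}(\shinvmap)-\tailmeasurestar(\shinvmap)|\le|\tailmeasurestar_{T,r_T}(\shinvmap)-\tailmeasurestar_{m,T,r_T}(\shinvmap)|+|\tailmeasurestar_{m,T,r_T}(\shinvmap)-\tailmeasurestar_m(\shinvmap)|+|\tailmeasurestar_m(\shinvmap)-\tailmeasurestar(\shinvmap)|;
\]
taking $\limsup_{T\to\infty}$ removes the middle term by (A), and then $m\to\infty$ removes the first by (B) and the last by the first claim.

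The main obstacle is the estimate (B). It is the only place where the $J_1^0$ geometry (Lipschitzness against $d_\infty$, itself dominated by the uniform norm), the truncation to the window $[0,r_T)$, the stationarity/blocking reduction to unit intervals, and the approximation hypothesis \Cref{eq:condition-m-approx} must be combined; the delicate point within it is the simultaneous matching of the two normalizing tails $p_T$ and $p_T^{(m)}$, handled by the tail comparison above.
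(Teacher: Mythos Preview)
Your proof is correct and follows the same triangular strategy as the paper: establish (A) via \Cref{lem:AC-mdep} and \Cref{lem:convtailmeasurestar}, the core estimate (B) via the Lipschitz property and \Cref{eq:condition-m-approx}, and use \Cref{lem:tailtozero-implies-convergencetoclustermeasure} to produce the auxiliary sequence $r_T^0$ that pins down the limit as $\tailmeasurestar$. The only notable variation is that you bound the ``small'' term in (B) by a direct covering of $[0,r_T)$ by unit intervals (yielding a bound in terms of $\pr(\bsX_{0,1}^*>c\,a_T)/p_T$, finite by regular variation in $\spaceD$), which works uniformly for all admissible $r_T$; the paper instead writes this term as $\constant\,\eta\,\tailmeasurestar_{T,r_T}(\{\bsy^*>\epsilon\})$, uses the known convergence along $r_T^0$ to handle it there, and then must invert the roles of $\bsX$ and $\bsX^{(m)}$ to treat the general $r_T$---your route avoids that inversion.
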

Because of stationarity, the interval $[0,1]$ in\cref{eq:condition-m-approx} can be equivalently
replaced by any compact interval $[a,b]$. Note that by
\Cref{lem:tailtozero-implies-convergencetoclustermeasure}, we already know that there exists at
least one sequence $r_T$ such that $\tailmeasurestar_{T,r_T}\convvague\tailmeasurestar$. The goal of
this lemma is twofold: to prove that $\tailmeasurestar_m\convvague\tailmeasurestar$ and that the
former convergence holds for all scaling functions $r_T$ such that
$ \lim_{T\to\infty} r_T\pr(|\bsX_0|>a_T)=0$ for any scaling function $a_T$.
\begin{proof}[Proof of \Cref{lem:mdep-approx-clustermeasure}]
  Note first that \Cref{eq:condition-m-approx} implies that
  \begin{align}
    \lim_{m\to\infty} \limsup_{T\to\infty} \left| \frac{\pr(|\bsX_0^{(m)}|>Tx)}{\pr(\norm{\bsX_0}>T)} - x^{-\alpha} \right| = 0 \; .
    \label{eq:equivalence-scaling}
  \end{align}
  See \cite[Proposition~5.2.5]{kulik:soulier:2020}. This implies that there exists $m_0\geq1$ such
  that for all $m\geq m$,
  \begin{align}
    \label{eq:uniform-bound}
    \frac12 \leq  \limsup_{T\to\infty}  \frac{\pr(|\bsX_0^{(m)}|>T)}{\pr(\norm{\bsX_0}>T)} \leq 2 \; .
  \end{align}
  As a consequence, if $r_T$ is a scaling function such that
  \begin{align}
    \label{rT-tail-aT}
    \lim_{T\to\infty} r_T\pr(|\bsX_0|>a_T)=0 \; , 
  \end{align}
  then it also holds that
  \begin{align}
    \label{rT-tail-aT-m}
    \lim_{T\to\infty} r_T\pr(|\bsX_0^{(m)}|>a_T)=0 \; , 
  \end{align}
  for all $m\geq m_0$.  Define now the measure $\tailmeasurestar_{T,r_T,m}$ by
  \begin{align*}
    \tailmeasurestar_{T,r_T,m} = \frac{\esp\left[\delta_{a_T^{-1}\bsX_{0,r_T}^{(m)}}\right]}{r_T\pr(|\bsX_0^{(m)}|>a_T)} \; .
  \end{align*}
  By \Cref{lem:AC-mdep} and the previous considerations, the process $\bsX^{(m)}$ being
  $m$-dependent, it satisfies condition \Cref{eq:anticlustering} for all sequences $r_T$ such that
  \Cref{rT-tail-aT} for $m\geq m_0$.  Thus $\tailmeasurestar_{T,r_T,m}\convvague\tailmeasurestar_m$
  for such sequences.
 
  Let $\shinvmap$ be a \shiftinvariant\ map on $\spaceD_0$, Lipschitz \wrt\ the metric $d_\infty$ and such
  that $\shinvmap(\bsy) = 0$ if $\bsy^*\leq 2\epsilon$ for $\epsilon>0$ depending on $\shinvmap$.  Fix
  $\eta<\epsilon$. Then $|\bsx-\bsy|\leq \eta$ and $\norm{\bsx}\wedge\norm{\bsy}\leq \epsilon$ imply
  $\shinvmap(\bsx)=\shinvmap(\bsy)=0$.  Thus,
  \begin{align}
    \left| \tailmeasurestar_{T,r_T}(\shinvmap) - \tailmeasurestar_{T,r_T,m}(\shinvmap) \right| 
    & \leq  \frac{\esp[|\shinvmap(\bsX_0/a_T)-\shinvmap(\bsX^{(m)}_0)|]}{r_T\pr(\norm{\bsX_0}>a_T)} 
      + \constant\ \left(\frac{\pr(\norm{\bsX_0}>a_T)}{\pr(|\bsX_0^{(m)}|>a_T)}-1\right) \nonumber \\
    & \leq  \constant\, \eta\, \tailmeasurestar_{T,r_T}(\{\bsy^*>\epsilon\}) 
      + \frac{\pr \left( \sup_{0 \leq s \leq r_T} \norm{\bsX_s-\bsX_s^{(m)}}>a_T\eta\right)} {r_T\pr(\norm{\bsX_0}>a_T)} 
      \nonumber  \\
    & \phantom{\leq  \constant\, \eta\, \tailmeasurestar_{T,r_T}(\{\bsy^*>\epsilon\}) }
      + \constant\ \left|\frac{\pr(\norm{\bsX_0}>a_T)}{\pr(|\bsX_0^{(m)}|>a_T)}-1\right| \nonumber \\
    & \leq \constant\, \eta\, \tailmeasurestar_{T,r_T}(\{\bsy^*>\epsilon\}) 
      + \frac{\pr \left( \sup_{0 \leq s \leq 1} \norm{\bsX_s-\bsX_s^{(m)}}>a_T\eta\right)} {\pr(\norm{\bsX_0}>a_T)}  \nonumber \\
    & \phantom{\leq  \constant\, \eta\, \tailmeasurestar_{T,r_T}(\{\bsy^*>\epsilon\}) }
      + \constant\ \left|\frac{\pr(\norm{\bsX_0}>a_T)}{\pr(|\bsX_0^{(m)}|>a_T)}-1\right| \; .
        \label{eq:borne-argutri}
  \end{align}
  By \Cref{lem:tailtozero-implies-convergencetoclustermeasure}, there exists a sequence $r_T^0$ such
  that \Cref{rT-tail-aT} holds and $\tailmeasurestar_{T,r_T^0}\convvague \tailmeasurestar$.  Thus,
  for this sequence applying \Cref{eq:condition-m-approx} and \Cref{eq:equivalence-scaling},
  we obtain
  \begin{align*}
    \lim_{m\to\infty}    \limsup_{T\to\infty}    \left| \tailmeasurestar_{T,r_T^0}(\shinvmap) - \tailmeasurestar_{T,r_T^0,m}(\shinvmap) \right|  
    \leq \constant\, \eta\, \tailmeasurestar(\{\bsy^*>\epsilon\})  \; .
  \end{align*}
  Since $\eta$ is arbitrary, the right-hand side is actually 0.  By \Cref{eq:equivalence-scaling},
  we we can also choose $r_T^0$ satisfying \Cref{rT-tail-aT-m} for $m$ large enough, thus
  $\tailmeasurestar_{T,r_T^0,m}\convvague\tailmeasurestar_m$. By
  \Cref{lem:triangular-argument-vague}, this proves that
  $\tailmeasurestar_m\convvague\tailmeasurestar$.  

  By inverting the roles of $\bsX$ and $\bsX^{(m)}$ in the derivations that lead to
  \Cref{eq:borne-argutri}, we obtain
  \begin{multline*}
    \left| \tailmeasurestar_{T,r_T}(\shinvmap) - \tailmeasurestar_{T,r_T,m}(\shinvmap) \right| \\
    \leq \constant\, \eta\, \tailmeasurestar_{T,r_T,m}(\{\bsy^*>\epsilon\}) + \frac{\pr \left(
        \sup_{0 \leq s \leq 1} \norm{\bsX_s-\bsX_s^{(m)}}>a_T\eta\right)}
    {\pr(|\bsX_0^{(m)}|>a_T)}  \\
    + \constant\ \left|\frac{\pr(|\bsX_0^{(m)}|>a_T)}{\pr(\norm{\bsX_0}>a_T)}-1\right| \; .
  \end{multline*}
  Since $\tailmeasurestar_{T,r_T,m}\convvague\tailmeasurestar_m$ for all sequences $r_T$ satisfying
  \Cref{rT-tail-aT} and $m$ large enoug, this yields
  \begin{align*}
    \lim_{m\to\infty}    \limsup_{T\to\infty}    \left| \tailmeasurestar_{T,r_T}(\shinvmap) - \tailmeasurestar_{T,r_T,m}(\shinvmap) \right|  
    \leq \constant\, \eta\, \tailmeasurestar_m(\{\bsy^*>\epsilon\})  \; .
  \end{align*}
  Since $\eta$ is arbitrary, the right-hand side is actually 0.  By
  \Cref{lem:triangular-argument-vague} again, this proves that
  $\tailmeasurestar_{T,r_T}\convvague\tailmeasurestar$ in $(\spaceDtilde_0,\tilde\mcb_0)$.
\end{proof}

\subsection{The point process of clusters}
\label{sec:convpp}
We now define the functional point process of clusters on $[0,\infty)\times \unzerospaceDtilde$. For
$i\in\Nset^*$, set $\bsX_{T,i}=a_T^{-1}\bsX\1{[(i-1)r_T,ir_T)}$, identified with its equivalence
class in $\spaceDtilde_0$. Set also $m_T = Tr_T^{-1}$ and 
\begin{align}
  \label{eq:def-ppcluster}
  N_T = \sum_{i=1}^\infty \delta_{\frac{i}{m_T},\bsX_{T,i}} \; .
\end{align}
This point process is related to the excursion random measure $\zeta_T$ of \cite{hsing:leadbetter:1998} defined by
\begin{align*}
  \zeta_T = \int_0^T \delta_{\frac{t}T,\frac{X_t}{a_T}} \rmd  t \; . 
\end{align*}
Indeed, for a function $f$ defined on $[0,\infty)\times\Rset$, let the map 
$K_f$ be (formally) defined on $[0,\infty)\times\spaceDtilde_0$ by
\begin{align*}
  K_f(t,\bsy) = \int_{-\infty}^\infty f(t,\bsy_s) \, \rmd s \; . 
\end{align*}
This yields 
\begin{align*}
  N_T(K_f) = \sum_{i=1}^\infty \int_{(i-1)r_T}^{ir_T} f\left(\frac{i}{m_T},\frac{X_s}{a_T}\right) \rmd s 
\end{align*}
If $f$ vanishes for large $t$ and for $|x|\leq\epsilon$ and is Lipschitz continuous \wrt\ $(t,x)$, then
$N_T(K_f) -\zeta_T(f) \convprob 0$.  Thus the convergence of $N_T$ implies that of $\zeta_T$. The
random measure $N_T$ contains more information than $\zeta_T$. See \cite{basrak:planinic:soulier:2018}
for the corresponding discussion in discrete time.

The convergence of $N_T$ will be established under the following unprimitive assumption which
validates a blocking method. It is a classical assumption in extreme value theory for stochastic
processes. See e.g. Condition $\mca(a_n)$ in \cite{davis:hsing:1995} for time series. It is implied
by condition $\Delta(u_T)$ of \cite{hsing:leadbetter:1998} in continuous time.
\begin{hypothesis}
  \label{hypo:mixing-laplace}
  There exist scaling functions $a$ and $r:\Rset_+\to\Rset_+$ such that for all bounded
  continuous maps $f:\Rset_+\times\unzerospaceDtilde\to\Rset_+$ such that $f(t,\bsy)=0$ if $|t|> A$
  or $\bsy^*\leq \epsilon$ for some $A$ and $\epsilon>0$ (depending on $f$),
  \begin{align}
   \label{eq:laplace-blocks}
    \limsup_{T\to\infty} \left| \esp\left[\rme^{-N_T(f)}\right] - \prod_{i=1}^\infty \esp\left[\rme^{-f(ir_T/T,\bsX_{T,i})}\right] \right| = 0 \; .
  \end{align}
\end{hypothesis}

Under this assumption, we extend \cite[Theorem~3.6]{basrak:planinic:soulier:2018} to the continuous
time framework and  \cite[Theorem~4.1]{hsing:leadbetter:1998}.
\begin{theorem}
  \label{theo:equivalence-ppconv}
  Let $\bsX$ be a stationary $\spaceD$-valued stochastic process, regularly varying in $\spaceD$,
  with cluster measure $\tailmeasurestar$. Assume that there exist scaling functions $a_T$ and $r_T$ such
  that $\lim_{T\to\infty} T\pr(\norm{\bsX_0}>a_T)=1$ and \Cref{eq:anticlustering} holds.  Let $N_T$
  be the point process of cluster defined in \Cref{eq:def-ppcluster} with the same functions $a$ and
  $r$. The following statements are equivalent:
  \begin{itemize}
  \item \Cref{hypo:mixing-laplace}  holds with the same functions $a$ and $r$;
  \item $N_T \convweak N$ with $N$ a PPP on $\Rset_+\times\unzerospaceDtilde$ with mean measure $\leb\otimes\tailmeasurestar$.
  \end{itemize}
\end{theorem}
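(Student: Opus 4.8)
The plan is to reduce the whole equivalence to the convergence of Laplace functionals and to isolate a single unconditional limit that makes both implications immediate. Recall that weak convergence $N_T \convweak N$ of point processes on $\Rset_+\times\unzerospaceDtilde$ (with the boundedness inherited from $\tilde\mcb_0$) is equivalent to $\esp[\rme^{-N_T(f)}]\to\esp[\rme^{-N(f)}]$ for every $f$ in the convergence-determining class of \Cref{hypo:mixing-laplace}, namely nonnegative continuous maps vanishing for $|t|>A$ and for $\bsy^*\leq\epsilon$ (see \cite[Theorem~4.11 and Lemma~4.6]{kallenberg:2017}). For such $f$ the mean measure $\leb\otimes\tailmeasurestar$ is finite on the support of $f$, so $N$ is a well-defined PPP and its Laplace functional is
\[
\esp[\rme^{-N(f)}] = \exp\left(-\int_0^\infty\int_{\unzerospaceDtilde}\left(1-\rme^{-f(t,\bsy)}\right)\tailmeasurestar(\rmd\bsy)\,\rmd t\right).
\]
The assumption \Cref{eq:laplace-blocks} says precisely that $\esp[\rme^{-N_T(f)}]$ and the block product $\prod_{i\geq1}\esp[\rme^{-f(ir_T/T,\bsX_{T,i})}]$ have the same asymptotics. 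Hence everything follows from one claim, to be proved \emph{without} reference to \Cref{hypo:mixing-laplace}:
\[
\prod_{i=1}^\infty \esp\left[\rme^{-f(ir_T/T,\bsX_{T,i})}\right] \longrightarrow \esp[\rme^{-N(f)}] \; .
\]
Granting this, the forward implication reads $\esp[\rme^{-N_T(f)}]=\prod_i(\cdots)+o(1)\to\esp[\rme^{-N(f)}]$, giving $N_T\convweak N$; and the reverse implication subtracts the two limits to recover \Cref{eq:laplace-blocks}.

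To prove the claim, I first note that $m_T=Tr_T^{-1}\to\infty$: since $T\pr(\norm{\bsX_0}>a_T)\to1$, if $r_T\pr(\norm{\bsX_0}>a_T)$ did not tend to $0$ then, along a subsequence, $\pr(\bsX_{0,r_T}^*>a_Tx)=r_T\pr(\norm{\bsX_0}>a_T)\,\tailmeasurestar_{T,r_T}(\{\bsy^*>x\})$ would converge to $c\,\candidate x^{-\alpha}$ with $c>0$, which is impossible for small $x$; thus $r_T\pr(\norm{\bsX_0}>a_T)\to0$ and $r_T/T\to0$. By stationarity, each $\bsX_{T,i}=a_T^{-1}\bsX\1{[(i-1)r_T,ir_T)}$ has, as an element of $\spaceDtilde_0$, the same law as $\bsX_{T,1}$, so with $p_{T,i}=\esp[1-\rme^{-f(ir_T/T,\bsX_{T,i})}]$ the definition of $\tailmeasurestar_{T,r_T}$ gives $p_{T,i}=r_T\pr(\norm{\bsX_0}>a_T)\,\tailmeasurestar_{T,r_T}(1-\rme^{-f(ir_T/T,\cdot)})$. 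Since $1-\rme^{-f(t,\bsy)}\leq\1{\{\bsy^*>\epsilon\}}$, we get $p_{T,i}\leq r_T\pr(\norm{\bsX_0}>a_T)\,\tailmeasurestar_{T,r_T}(\{\bsy^*>\epsilon\})$, which tends to $0$ uniformly in $i$ because the last factor converges to the finite value $\tailmeasurestar(\{\bsy^*>\epsilon\})$. As the product is effectively finite (only $i\leq Am_T$ contribute) and $\sum_i p_{T,i}$ will be shown bounded, the elementary bound $|\log(1-p)+p|\leq p^2$ yields $\prod_i(1-p_{T,i})=\exp(-\sum_i p_{T,i}+o(1))$.

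It remains to identify $\lim_T\sum_i p_{T,i}$. I would introduce the measure on $\Rset_+\times\spaceDtilde_0$
\[
\Pi_T = \left(r_T\pr(\norm{\bsX_0}>a_T)\sum_{i\geq1}\delta_{ir_T/T}\right)\otimes\tailmeasurestar_{T,r_T} \; ,
\]
so that $\sum_i p_{T,i}=\Pi_T(1-\rme^{-f})$. The first factor is a Riemann-type discretization of Lebesgue measure: its mass on $[0,A]$ equals $r_T\pr(\norm{\bsX_0}>a_T)\lfloor Am_T\rfloor\to A$ and it converges vaguely to $\leb$ on $\Rset_+$ because $m_T\to\infty$ and $T\pr(\norm{\bsX_0}>a_T)\to1$; the second factor satisfies $\tailmeasurestar_{T,r_T}\convvague\tailmeasurestar$ by \Cref{lem:convtailmeasurestar}. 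I would then deduce $\Pi_T\convvague\leb\otimes\tailmeasurestar$ and, since $1-\rme^{-f}$ is continuous with support in the bounded set $[0,A]\times\{\bsy^*>\epsilon\}$, conclude $\sum_i p_{T,i}\to\int_0^\infty\int_{\unzerospaceDtilde}(1-\rme^{-f(t,\bsy)})\tailmeasurestar(\rmd\bsy)\,\rmd t$, which is exactly the exponent in $\esp[\rme^{-N(f)}]$. This also confirms the boundedness of $\sum_i p_{T,i}$ used above.

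The main obstacle is this last step, namely the vague convergence of the \emph{product} measures $\Pi_T$, equivalently the simultaneous passage to the limit in the time-discretization and in the cluster-measure convergence. The difficulty is that $1-\rme^{-f}$ need not be uniformly continuous in $t$ over the non-compact path space $\spaceDtilde_0$, so one cannot simply replace the Riemann sum $\Pi_T(1-\rme^{-f})=r_T\pr(\norm{\bsX_0}>a_T)\sum_i\tailmeasurestar_{T,r_T}(1-\rme^{-f(ir_T/T,\cdot)})$ by the single integral $\tailmeasurestar_{T,r_T}(\int_0^\infty(1-\rme^{-f(t,\cdot)})\rmd t)$. I would resolve this by localizing to a bounded set $B=[0,A']\times\{\bsy^*>\epsilon'\}$ with $A'>A$ and $\epsilon'<\epsilon$ chosen so that $f$ vanishes in a neighborhood of $\partial B$ and $\{\bsy^*=\epsilon'\}$ is $\tailmeasurestar$-null; on $B$ both factors are finite with converging total mass (the time factor by the Riemann-sum count above, the space factor by \Cref{lem:convtailmeasurestar}), so their normalized restrictions converge weakly and the product of weakly convergent finite measures converges weakly, whence $\Pi_T|_B(1-\rme^{-f})\to(\leb\otimes\tailmeasurestar)|_B(1-\rme^{-f})$. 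Since $f$ is supported in $B$, this is the desired limit and completes the proof of both implications.
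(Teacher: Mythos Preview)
Your argument is correct and is, in essence, the same proof the paper gives. The paper phrases the computation in the language of null arrays: it introduces independent copies $\bsX_{T,i}^\dag$ of $\bsX_{T,1}$, observes that $\{\delta_{i/m_T,\bsX_{T,i}^\dag}\}$ is a null array, and invokes a packaged criterion (Kallenberg, \cite[Theorem~7.1.6]{kulik:soulier:2020}) stating that $\sum_i\delta_{i/m_T,\bsX_{T,i}^\dag}\convweak N$ is equivalent to the vague convergence of the summed intensities $\nu_T=\sum_i\delta_{i/m_T}\otimes\esp[\delta_{\bsX_{T,1}}]$ to $\leb\otimes\tailmeasurestar$; the mixing condition \Cref{eq:laplace-blocks} then plays exactly the role of matching $N_T$ with the independent version. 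Your $\Pi_T$ is this $\nu_T$, your ``$\max_i p_{T,i}\to0$'' is the null-array condition, and your $\log(1-p)\approx-p$ step unpacks the Poisson limit for the independent sum that the paper obtains by citation. The paper's terse ``boils down to $\sum_i\delta_{i/m_T}\convvague\leb$'' hides precisely the product-measure step you isolate; your localization via a continuity level $\epsilon'$ and normalization to probability measures is a clean way to justify it and is the only place where you add detail beyond the paper's sketch.
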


A PPP $N$ with mean measure $\leb\otimes\tailmeasurestar$ has the representation
\begin{align}
  \label{eq:limit-PPE}
  N = \sum_{i=1}^\infty \delta_{T_i,P_i\bsQ^{(i)}} \; , 
\end{align}
where $\sum_{i=1}^\infty \delta_{T_i,P_i}$ is a PPP on $\Rset_+\times(0,\infty)$ with mean measure 
$\candidate\leb\otimes\nu_\alpha$ and $\bsQ^{(i)}$ are \iid\ copies of a process $\bsQ$ whose distribution is
given by \Cref{eq:loi-Q} and
\begin{align}
  \label{eq:rappel-candidate}
  \candidate & = \esp \left[ \frac1{\exc(\bsY)} \right] \; .
\end{align}
.

\begin{proof}[Proof of \Cref{theo:equivalence-ppconv}]
  The proof is essentially the same as the proof of
  \cite[Theorem~3.6]{basrak:planinic:soulier:2018}. See also
  \cite[Theorem~7.3.1]{kulik:soulier:2020}. We briefly sketch it. Let $\bsX_{T,i}^\dag$, $i\geq1$,
  be independent random elements in $\spaceD_0$ with the same distribution as $\bsX_{T,1}$. Define
  $\xi_{T,i}= \delta_{\frac{i}{m_T},\bsX_{T,i}}$, $i\geq1$. Then $\{\xi_{T,i},i\geq1\}$ is a null
  array of point processes in the sense of \cite[Section~4.3]{kallenberg:2017}. By stationarity, the
  mean measure of $\xi_{T,i}$ is $\delta_{\frac{i}{m_T}}\otimes \tailmeasurestar_{T}$. Denote
  $\nu_T = \sum_{i=1}^\infty \delta_{\frac{i}{m_T}}\otimes \tailmeasurestar_{T}$. By
  \cite[Theorem~7.1.6]{kulik:soulier:2020}, the stated convergence is equivalent to the convergence
  $\nu_T\convvague\leb\otimes\tailmeasurestar$. This boils down to proving that
  $\sum_{i=1}^\infty \delta_{\frac{i}{m_T}}\convvague\leb$ which is trivial.
\end{proof}

A real-valued stationary process $\bsX$ is said to have extremal index $\theta$ if for every
$\tau>0$ and sequence $u_T$ such that $\lim_{T\to\infty} T\pr(X_0>u_T) = \tau$, it holds that
$\lim_{T\to\infty} \pr(\sup_{0\leq s \leq T} X_s \leq u_T) = \rme^{-\theta\tau}$.  For a stochastic
process indexed by $\Zset$, the extremal index, if it exists, must be in $[0,1]$. For a continuous
time process it can be in $[0,\infty]$. Under \Cref{hypo:regvar-in-D,hypo:mixing-laplace}, we show
that it exists in $(0,\infty)$.

\begin{corollary}
  \label{coro:extremalindex}
  Let $\bsX$ be a stationary $\spaceD$-valued process, regularly varying in $\spaceD$ with tail
  index $\alpha>0$. If \Cref{hypo:anticlustering,hypo:mixing-laplace} hold with a scaling function
  $a_T$ such that $\lim_{T\to\infty} T\pr(\norm{\bsX_0}>a_T)=1$, then
  \begin{align*}
    \lim_{T\to\infty} \pr\left( \sup_{0\leq t \leq T} \norm{\bsX_s} \leq a_Tx \right) = \rme^{-\candidate x^{-\alpha}} \; .
  \end{align*}
\end{corollary}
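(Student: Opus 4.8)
The plan is to deduce the statement from the point-process convergence $N_T\convweak N$ of \Cref{theo:equivalence-ppconv}, reading the exceedance probability of $\sup_{0\leq t\leq T}\norm{\bsX_t}$ over the level $a_Tx$ as a void probability of the point process of clusters. First I would check that the hypotheses of \Cref{theo:equivalence-ppconv} are precisely those assumed here: regular variation in $\spaceD$, \Cref{hypo:anticlustering} (which produces the cluster measure $\tailmeasurestar$ and, via \Cref{lem:weakconv-anticlustering} and \Cref{theo:equivalences-dissipative}, guarantees $\candidate>0$ and a mixed moving average representation with $\pr(\bsQ^*=1)=1$), and \Cref{hypo:mixing-laplace}. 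Hence $N_T\convweak N$, where $N$ is a PPP on $\Rset_+\times\unzerospaceDtilde$ with mean measure $\leb\otimes\tailmeasurestar$.

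Next I would introduce the set $A_x=[0,1]\times\{\tilde\bsy\in\unzerospaceDtilde:\bsy^*>x\}$. The functional $\bsy\mapsto\bsy^*$ is \shiftinvariant\ and continuous for the $J_1^0$ topology, so it descends to a continuous functional on $\spaceDtilde_0$, and $A_x$ is a well-defined bounded set (bounded in time and separated from $\bszero$). Recalling from \Cref{eq:def-ppcluster} that $N_T=\sum_{i\geq1}\delta_{i/m_T,\bsX_{T,i}}$ with $m_T=Tr_T^{-1}$ and $\bsX_{T,i}=a_T^{-1}\bsX\1{[(i-1)r_T,ir_T)}$, one has $N_T(A_x)=\#\{1\leq i\leq m_T:\bsX_{T,i}^*>x\}$, and since $\bigcup_{i=1}^{m_T}[(i-1)r_T,ir_T)=[0,m_Tr_T)$,
\[
  \{N_T(A_x)=0\}=\Big\{\sup_{0\leq t<m_Tr_T}\norm{\bsX_t}\leq a_Tx\Big\}.
\]
Replacing $m_Tr_T$ by $T$ is harmless: the leftover interval has length at most $r_T$, and $r_T\pr(\norm{\bsX_0}>a_T)=(r_T/T)\,T\pr(\norm{\bsX_0}>a_T)\to0$, so by stationarity, a union bound over unit intervals, and regular variation in $\spaceD$ (as in \Cref{lem:AC-mdep}), $\pr(\sup_{m_Tr_T\leq t\leq T}\norm{\bsX_t}>a_Tx)\to0$. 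Thus it suffices to compute $\lim_{T\to\infty}\pr(N_T(A_x)=0)$.

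To pass to the limit I would use that $A_x$ is a continuity set of $N$. Its topological boundary is contained in $(\{0,1\}\times\{\bsy^*\geq x\})\cup([0,1]\times\{\bsy^*=x\})$, whose mean measure vanishes because $\leb(\{0,1\})=0$ and, from the representation \Cref{eq:tailmeasurestar-Q} together with $\pr(\bsQ^*=1)=1$,
\[
  \tailmeasurestar(\{\bsy^*>x\})=\candidate\int_0^\infty\pr(r\bsQ^*>x)\,\alpha r^{-\alpha-1}\rmd r
  =\candidate\int_x^\infty\alpha r^{-\alpha-1}\rmd r=\candidate x^{-\alpha},
\]
which is continuous in $x$ and hence forces $\tailmeasurestar(\{\bsy^*=x\})=0$. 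Therefore $N(\partial A_x)=0$ almost surely, so $N_T(A_x)\convdistr N(A_x)$ and in particular
\[
  \pr(N_T(A_x)=0)\longrightarrow\pr(N(A_x)=0)=\exp\{-(\leb\otimes\tailmeasurestar)(A_x)\}
  =\exp\{-\tailmeasurestar(\{\bsy^*>x\})\}=\rme^{-\candidate x^{-\alpha}},
\]
which is the claimed limit.

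The main obstacle is the middle step: justifying that void probabilities converge, i.e. that $A_x$ is a bounded continuity set for the limiting PPP and that the weak convergence of $N_T$ transfers to $\pr(N_T(A_x)=0)\to\pr(N(A_x)=0)$, via the continuous-mapping and portmanteau arguments for vague convergence of boundedly finite point processes \cite{kallenberg:2017}, together with the elementary but necessary control of the endpoint interval $[m_Tr_T,T]$. Everything else is a direct computation with the cluster measure.
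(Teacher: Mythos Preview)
Your proof is correct and follows the same route as the paper: identify $\pr(\sup_{0\leq t\leq T}\norm{\bsX_t}\leq a_Tx)$ with the void probability $\pr(N_T([0,1]\times\{\bsy^*>x\})=0)$, invoke $N_T\convweak N$ from \Cref{theo:equivalence-ppconv}, and evaluate $\tailmeasurestar(\{\bsy^*>x\})=\candidate x^{-\alpha}$. The paper's proof is in fact terser than yours, writing the first identification as an equality and leaving the continuity-set verification and the $m_Tr_T$ versus $T$ endpoint adjustment implicit; your additional care on those points is justified but not a departure in method.
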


\begin{proof}
  The point process convergence of \Cref{theo:equivalence-ppconv} yields:
  \begin{align*}
    \pr\left( \sup_{0\leq t \leq T} \norm{\bsX_s} \leq a_Tx \right) 
    & = \pr(N_T([0,1]\times\{\bsy^*>x\})=0) \\
    & \to  \pr(N([0,1]\times\{\bsy^*>x\})=0) \\
    & = \rme^{-\tailmeasurestar(\{\bsy^*>x\})} = \rme^{-\candidate x^{-\alpha}} \; .
  \end{align*}
\end{proof}

We now provide conditions which ensure \Cref{hypo:anticlustering} or \Cref{hypo:mixing-laplace}. For
the definition of $\beta$-mixing, see \cite{bradley:2005}.
\begin{lemma}
  \label{lem:beta-implies-laplace-block}
  Let $\bsX$ be stationary $\mathcal{D}$-valued stochastic process, $\beta$-mixing with rate
  $\beta_t$ and assume that there exist sequences $\{r_T\}$ and $\{\ell_T\}$ such that
  \begin{align*}
    \lim_{T\to\infty} \frac{r_T}{T} =    \lim_{T\to\infty} \frac{\ell_T}{r_T} =   \lim_{T\to\infty} \frac{T\beta_{\ell_T}}{r_T} = 0 \; .
  \end{align*}
  Then \Cref{hypo:mixing-laplace} holds with $a_T$ such that $T\pr(\norm{\bsX_0}>a_T)\to1$.
\end{lemma}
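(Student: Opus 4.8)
The plan is to establish the factorization in \Cref{eq:laplace-blocks} by the big-block/small-block method, decoupling the (dependent) blocks by means of a $\beta$-mixing coupling. Fix a bounded continuous $f$ with $f(t,\bsy)=0$ whenever $t>A$ or $\bsy^*\le\epsilon$. Since $f(t,\cdot)=0$ for $t>A$, only the blocks with $ir_T/T\le A$ contribute, so both $\esp[\rme^{-N_T(f)}]$ and $\prod_{i\ge1}\esp[\rme^{-f(ir_T/T,\bsX_{T,i})}]$ are finite products over $i\le k_T:=\lfloor AT/r_T\rfloor$, with $k_T\sim AT/r_T\to\infty$. The difficulty is that the blocks $\bsX_{T,i}=a_T^{-1}\bsX\1{[(i-1)r_T,ir_T)}$ are adjacent, so $\beta$-mixing cannot be applied to them directly. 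I would therefore introduce the truncated blocks $\bsX_{T,i}^\flat=a_T^{-1}\bsX\1{[(i-1)r_T,ir_T-\ell_T)}$, which are measurable with respect to time intervals separated by gaps of length $\ell_T$, and decompose $\esp[\rme^{-N_T(f)}]-\prod_i\esp[\rme^{-f(ir_T/T,\bsX_{T,i})}]$ into three differences: (a) replacing each $\bsX_{T,i}$ by $\bsX_{T,i}^\flat$ inside the joint expectation; (b) factorizing the truncated joint expectation into the product of the truncated marginals; (c) restoring the full blocks in the product.

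For step (b), each factor $\rme^{-f(ir_T/T,\bsX_{T,i}^\flat)}$ lies in $[0,1]$ and is a functional of $\bsX$ on an interval, consecutive intervals being separated by a gap of length $\ell_T$. The classical coupling bound for $\beta$-mixing (absolutely regular) processes then gives that the joint expectation of the product differs from the product of the marginals by at most $(k_T-1)\beta_{\ell_T}\le\constant\,(T/r_T)\beta_{\ell_T}\to0$. This is the only step where the mixing enters, and the hypothesis $T\beta_{\ell_T}/r_T\to0$ is used here precisely because the number of contributing blocks is of order $T/r_T$.

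The heart of the proof is steps (a) and (c), which are both dominated by the single quantity $\Sigma_T:=\sum_{i\le k_T}\esp\bigl|\rme^{-f(ir_T/T,\bsX_{T,i})}-\rme^{-f(ir_T/T,\bsX_{T,i}^\flat)}\bigr|$, via the elementary inequality $|\prod a_i-\prod b_i|\le\sum|a_i-b_i|$ valid for numbers in $[0,1]$ (applied inside the expectation for (a) and to the marginals for (c)). To show $\Sigma_T\to0$ I would exploit that $\bsX_{T,i}$ and $\bsX_{T,i}^\flat$ differ only on the seam $[ir_T-\ell_T,ir_T)$. Fix $\delta\in(0,\epsilon)$ and split each summand according to whether $\sup_{ir_T-\ell_T\le s<ir_T}\norm{\bsX_s}$ exceeds $\delta a_T$. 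The contribution of the exceptional event is bounded, by stationarity and subadditivity, by $\pr(\bsX_{0,\ell_T}^*>\delta a_T)\le\constant\,\ell_T\pr(\norm{\bsX_0}>a_T)$ per block, hence by $\constant\,\ell_T/r_T\to0$ after summation (for each fixed $\delta$). On the complementary event $\dtilde_\infty(\bsX_{T,i},\bsX_{T,i}^\flat)\le\delta$; taking $f$ Lipschitz with constant $L$ with respect to $\dtilde_\infty$, the corresponding summand is at most $L\delta\,\ind{\bsX_{T,i}^*>\epsilon}$, and $\sum_{i\le k_T}\pr(\bsX_{0,r_T}^*>\epsilon a_T)=O(1)$ by regular variation, so this part is at most $\constant\,L\delta$ uniformly in $T$. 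Thus $\limsup_T\Sigma_T\le\constant\,L\delta$, and letting $\delta\to0$ gives $\Sigma_T\to0$. It suffices to treat Lipschitz $f$, which form a convergence-determining class; a general bounded continuous $f$ is handled by the same two-scale scheme with $L\delta$ replaced by the modulus of continuity of $f$ on $\{\epsilon\le\bsy^*\le M\}$ after first discarding the contribution of $\{\bsX_{T,i}^*>M\}$, which is $O(M^{-\alpha})$.

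Combining the three estimates gives $\limsup_T\bigl|\esp[\rme^{-N_T(f)}]-\prod_i\esp[\rme^{-f(ir_T/T,\bsX_{T,i})}]\bigr|=0$, which is \Cref{hypo:mixing-laplace}. The main obstacle is the seam-removal in (a) and (c): contrary to the discrete-time blocking schemes, where the small blocks are simply deleted from the point process, here the blocks are forced to be adjacent, so removing the seam genuinely changes the value of $f$, and it is the two-scale $(\delta,M)$ argument — combining the rarity of exceedances in windows of length $\ell_T=o(r_T)$ with the local uniform continuity of $f$ — that makes this replacement asymptotically harmless. I would stress that, unlike in \Cref{theo:equivalence-ppconv}, neither \Cref{hypo:anticlustering} nor the convergence of the cluster measure is available or required here: \Cref{eq:laplace-blocks} is a pure asymptotic-independence property of the blocks.
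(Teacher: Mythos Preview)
Your big-block/small-block scheme with $\beta$-mixing decoupling is exactly what the paper's one-line proof defers to (the discrete-time analogue in \cite[Lemma~6.2]{basrak:planinic:soulier:2018}). One caveat worth making explicit: both of your seam bounds --- $\pr(\bsX_{0,\ell_T}^*>\delta a_T)\le\constant\,\ell_T\pr(\norm{\bsX_0}>a_T)$ and $k_T\,\pr(\bsX_{0,r_T}^*>\epsilon a_T)=O(1)$ --- rely on $T\pr(\bsX_{0,1}^*>\delta a_T)=O(1)$, i.e.\ on regular variation in $\spaceD$, which the lemma does not list among its hypotheses but is the standing assumption of the section; in discrete time the union bound over the $\ell_T$ indices of the small block gives this for free, whereas in continuous time the unit-interval supremum genuinely needs this additional input.
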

\begin{proof}
  The proof is similar to the proof in the case of discrete time processes, see
  \cite[Lemma~6.2]{basrak:planinic:soulier:2018}.  
\end{proof}

\begin{lemma}
  \label{lem:mdep}
  If $\bsX$ is an $m$-dependent regularly varying stationary $\spaceD$-valued stochastic process,
  then \Cref{hypo:anticlustering,hypo:mixing-laplace} hold for all scaling functions $a_T$ and $r_T$ such
  that \Cref{eq:rtprattozero} holds.
\end{lemma}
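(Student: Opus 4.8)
The statement splits into the anticlustering assumption \Cref{hypo:anticlustering} and the mixing assumption \Cref{hypo:mixing-laplace}. The first is immediate: condition \Cref{eq:rtprattozero} is exactly the hypothesis $\lim_{T\to\infty} r_T\pr(\norm{\bsX_0}>a_T)=0$ of \Cref{lem:AC-mdep}, which already yields \Cref{eq:anticlustering}. All the work therefore goes into verifying the factorization \Cref{eq:laplace-blocks}.

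I would fix $f$ vanishing for $t>A$ or $\bsy^*\le\epsilon$ and write $W_i=f(ir_T/T,\bsX_{T,i})$, $V_i=\rme^{-W_i}\in(0,1]$, noting $V_i=1$ unless the block is active, i.e.\ unless $A_i:=\{\sup_{(i-1)r_T\le s<ir_T}\norm{\bsX_s}>\epsilon a_T\}$ occurs, and that only indices $i\le A m_T$ contribute. With $u_T=\pr(\norm{\bsX_0}>a_T)$, regular variation in $\spaceD$ together with \Cref{lem:convtailmeasurestar} gives $p_T:=\pr(A_i)\asymp r_Tu_T$ and $q_T:=\pr(\sup_{0\le s<m}\norm{\bsX_s}>\epsilon a_T)=O(u_T)$, so $p_T\to0$, $q_T\to0$ and $m_Tp_T\asymp Tu_T$. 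For $T$ large enough that $r_T>m$, $m$-dependence makes $\bsX_{T,i}$ and $\bsX_{T,j}$ independent whenever $|i-j|\ge2$, so $\{V_i\}$ is $1$-dependent; splitting a block at distance $m$ from its right end and using $m$-dependence and stationarity yields the key bound $\pr(A_i\cap A_{i+1})\le p_T^2+q_T$.

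The plan is to treat two regimes. When $\limsup_{T}Tu_T<\infty$ along a subsequence, I would use the telescoping identity for $1$-dependent arrays, $|\esp[\prod_iV_i]-\prod_i\esp[V_i]|\le\sum_i|\mathrm{Cov}(V_i,V_{i+1})|$, and estimate each covariance by decoupling rather than by a lossy per-block difference: letting $\hat V_i$ be $V_i$ recomputed from the block truncated at distance $m$ from its right end, $m$-dependence makes $\hat V_i$ independent of $V_{i+1}$, so $\mathrm{Cov}(V_i,V_{i+1})=\mathrm{Cov}(V_i-\hat V_i,\,V_{i+1}-1)$; since $|V_i-\hat V_i|\le\ind{A_i}$ and $|V_{i+1}-1|\le\ind{A_{i+1}}$, this gives $|\mathrm{Cov}(V_i,V_{i+1})|\le\pr(A_i\cap A_{i+1})+p_T^2\le2p_T^2+q_T$. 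Summing over the $O(m_T)$ relevant indices gives a bound of order $m_T(p_T^2+q_T)\asymp (Tu_T)(r_Tu_T)+(Tu_T)/r_T$, which tends to $0$ because $r_Tu_T\to0$ and $r_T\to\infty$. In the complementary regime $Tu_T\to\infty$, both sides of \Cref{eq:laplace-blocks} tend to $0$: one has $\prod_i\esp[V_i]\to0$ since $-\log\prod_i\esp[V_i]\ge\sum_i\esp[1-V_i]\asymp Tu_T\to\infty$, while $\esp[\rme^{-N_T(f)}]\to0$ follows from Chebyshev's inequality, because $\esp[N_T(f)]\to\infty$ (by \Cref{lem:convtailmeasurestar}, in the non-trivial case $\tailmeasurestar(f)>0$) whereas $\mathrm{Var}(N_T(f))\le\sum_i\esp[W_i^2]+2\sum_i\esp[W_iW_{i+1}]=O(Tu_T)=o(\esp[N_T(f)]^2)$, the covariance sum again being controlled by $\pr(A_i\cap A_{i+1})\le p_T^2+q_T$. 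Passing to subsequences then yields $\limsup_T|\,\esp[\rme^{-N_T(f)}]-\prod_i\esp[V_i]\,|=0$.

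The main obstacle is the covariance estimate in the bounded regime. A naive coupling that replaces each block by an independent truncated copy and bounds the error blockwise fails, because it charges an $O(\epsilon)$ error on every one of the $\asymp Tu_T$ active blocks; the gain comes precisely from pairing $V_i-\hat V_i$ with $V_{i+1}-1$, so that one pays only when two adjacent blocks are simultaneously active, an event whose probability $p_T^2+q_T$ is of genuinely smaller order thanks to $m$-dependence (isolating the width-$m$ overlap) together with regular variation. Handling the two-regime dichotomy uniformly over \emph{all} admissible scalings $(a_T,r_T)$ satisfying \Cref{eq:rtprattozero} — rather than only the customary normalisation $Tu_T\to1$ — is the second point demanding care.
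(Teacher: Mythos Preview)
The paper's argument is a two-liner and avoids all the work you propose: an $m$-dependent process is $\beta$-mixing with $\beta_t=0$ for every $t>m$, so \Cref{lem:beta-implies-laplace-block} applies with any constant $\ell_T>m$, its hypotheses reducing to $r_T\to\infty$ and $r_T/T\to0$. The big-block/small-block machinery is already packaged in that lemma, so no direct covariance estimate or regime split is needed.

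Your direct route contains a genuine gap. The claimed inequality
\[
\Bigl|\esp\Bigl[\prod_iV_i\Bigr]-\prod_i\esp[V_i]\Bigr|\le\sum_i|\mathrm{Cov}(V_i,V_{i+1})|
\]
is \emph{false} for general $1$-dependent $[0,1]$-valued arrays: take $V_1,V_3$ independent Bernoulli$(\tfrac12)$ and $V_2=\ind{V_1=V_3}$; this triple is $1$-dependent, both pairwise covariances vanish, yet $\esp[V_1V_2V_3]-\esp[V_1]\esp[V_2]\esp[V_3]=\tfrac18$. The standard telescope gives instead $\sum_k|\mathrm{Cov}(V_1\cdots V_{k-1},V_k)|$, and it is \emph{this} quantity your decoupling idea actually controls: since $V_1\cdots V_{k-2}\hat V_{k-1}$ depends only on $\bsX$ restricted to $[0,(k-1)r_T-m)$, it is independent of $V_k$ by $m$-dependence, whence
\[
\mathrm{Cov}(V_1\cdots V_{k-1},V_k)=\mathrm{Cov}\bigl(V_1\cdots V_{k-2}(V_{k-1}-\hat V_{k-1}),\,V_k-1\bigr),
\]
and the prefactor $V_1\cdots V_{k-2}\in[0,1]$ drops out to give the same bound $2p_T^2+q_T$. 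So the strategy is salvageable, but not via the inequality you wrote down.
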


\begin{proof}
  We already know that \Cref{hypo:anticlustering} holds by \Cref{lem:AC-mdep}.  Since $m$-dependent
  sequences are $\beta$-mixing with arbitrarily fast rates, Assumption \Cref{hypo:mixing-laplace}
  holds by \Cref{lem:beta-implies-laplace-block}.
\end{proof}

We now consider processes which admit a sequence of tail equivalent approximations.
\begin{theorem}
  \label{theo:ppconv-approx}
  Let $\bsX$ be a stationary process, regularly varying in $\spaceD$ with tail process $\bsY$ such
  that $\pr(\bsY\in\spaceD_0)=1$ and cluster measure $\tailmeasurestar$.  Assume that there exists a
  sequence of $m$-dependent stationary processes $\bsX^{(m)}$, regularly varying in $\spaceD$, such
  that $(\bsX,\bsX^{(m)})$ is stationary for every $m$ and \Cref{eq:condition-m-approx} holds.  Then
  the point process of clusters $N_T$ converges weakly to a Poisson point process $N$ on
  $[0,\infty)\times \spaceDtilde\setminus\{\bszero\}$ with mean measure
  $\leb\otimes\tailmeasurestar$.
\end{theorem}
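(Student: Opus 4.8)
The plan is to run a triangular argument on Laplace functionals, transferring the Poisson convergence from the $m$-dependent approximations $\bsX^{(m)}$ to $\bsX$. Write $\bsX_{T,i}^{(m)}=a_T^{-1}\bsX^{(m)}\1{[(i-1)r_T,ir_T)}$ and $N_T^{(m)}=\sum_{i\geq1}\delta_{i/m_T,\bsX_{T,i}^{(m)}}$ for the point process of clusters of $\bsX^{(m)}$, built with the same scaling $a_T$ (chosen so that $T\pr(\norm{\bsX_0}>a_T)\to1$) and the same block length $r_T$ as $N_T$, so that both are driven by the jointly stationary pair $(\bsX,\bsX^{(m)})$. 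It suffices to show $\esp[\rme^{-N_T(f)}]\to\esp[\rme^{-N(f)}]$ for every bounded, $\dtilde_\infty$-Lipschitz map $f:\Rset_+\times\unzerospaceDtilde\to\Rset_+$ vanishing for $|t|>A$ or $\bsy^*\leq\epsilon$, since such functions are convergence determining for weak convergence of point processes in the boundedness $\tilde\mcb_0$ (\cite[Section~4]{kallenberg:2017}). I would split
\begin{align*}
  |\esp[\rme^{-N_T(f)}]-\esp[\rme^{-N(f)}]|
  \leq |\esp[\rme^{-N_T(f)}]-\esp[\rme^{-N_T^{(m)}(f)}]|
  + |\esp[\rme^{-N_T^{(m)}(f)}]-\esp[\rme^{-N^{(m)}(f)}]|
  + |\esp[\rme^{-N^{(m)}(f)}]-\esp[\rme^{-N(f)}]|
\end{align*}
and control the three pieces under $\lim_{m\to\infty}\limsup_{T\to\infty}$.

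The second and third pieces are handled with the results already established. Since $\bsX^{(m)}$ is $m$-dependent and regularly varying, it satisfies \Cref{hypo:anticlustering,hypo:mixing-laplace} by \Cref{lem:mdep}, so the argument of \Cref{theo:equivalence-ppconv} applies to $\bsX^{(m)}$ and gives $N_T^{(m)}\convweak N^{(m)}$ as $T\to\infty$; because $N_T^{(m)}$ is normalised with the scaling $a_T$ of $\bsX$, its mean measure converges to $\leb\otimes(c_m\tailmeasurestar_m)$, where $c_m=\lim_{T\to\infty}T\pr(\norm{\bsX_0^{(m)}}>a_T)$. Here I use $\tailmeasurestar_{T,r_T,m}\convvague\tailmeasurestar_m$ from \Cref{lem:mdep-approx-clustermeasure} together with $\pr(|\bsX_0^{(m)}|>a_T)/\pr(\norm{\bsX_0}>a_T)\to c_m$, and \Cref{eq:equivalence-scaling} forces $c_m\to1$ as $m\to\infty$. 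Thus the middle piece vanishes as $T\to\infty$ for each fixed $m$. For the third piece, \Cref{lem:mdep-approx-clustermeasure} yields $\tailmeasurestar_m\convvague\tailmeasurestar$, hence $c_m\tailmeasurestar_m\convvague\tailmeasurestar$, and vague convergence of the mean measures of Poisson processes forces $N^{(m)}\convweak N$, so this piece tends to $0$ as $m\to\infty$.

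The heart of the proof, and the main obstacle, is the first piece, where I would exploit the approximation condition \Cref{eq:condition-m-approx}. Using $|\rme^{-a}-\rme^{-b}|\leq|a-b|$ and the Lipschitz property of $f$ (with constant $L$, noting $\dtilde_\infty\leq d_\infty\leq\supnorm[\infty]{\cdot}$),
\begin{align*}
  |\esp[\rme^{-N_T(f)}]-\esp[\rme^{-N_T^{(m)}(f)}]|
  \leq \esp[|N_T(f)-N_T^{(m)}(f)|]
  \leq \sum_{1\leq i\leq Am_T}\esp\bigl[|f(\tfrac{i}{m_T},\bsX_{T,i})-f(\tfrac{i}{m_T},\bsX_{T,i}^{(m)})|\bigr] .
\end{align*}
Fix $\eta<\epsilon$ and split each term according to whether $d_\infty(\bsX_{T,i},\bsX_{T,i}^{(m)})\leq\eta$, exactly as in the derivation of \Cref{eq:borne-argutri}: on the first event the difference is either $0$ (both arguments below $\epsilon$, so both values vanish) or at most $L\eta$, in which case $\max(\bsX_{T,i}^*,(\bsX_{T,i}^{(m)})^*)>\epsilon$; on the second event it is at most $2\supnorm[\infty]{f}$. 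By stationarity each of the $\sim Am_T$ blocks contributes equally. The first part sums to a constant multiple of $AL\eta\,m_Tr_T\pr(\norm{\bsX_0}>a_T)\,\tailmeasurestar(\{\bsy^*>\epsilon\})$, using the asymptotics $\pr(\bsX_{0,r_T}^*>\epsilon a_T)\sim r_T\pr(\norm{\bsX_0}>a_T)\,\tailmeasurestar(\{\bsy^*>\epsilon\})$ from \Cref{lem:convtailmeasurestar}, together with $m_Tr_T=T$ and $T\pr(\norm{\bsX_0}>a_T)\to1$; this is $O(\eta)$. The second part sums to a constant multiple of
\begin{align*}
  A\,\supnorm[\infty]{f}\, m_Tr_T\,\pr\Bigl(\sup_{0\leq s\leq1}\norm{\bsX_s-\bsX_s^{(m)}}>\eta a_T\Bigr),
\end{align*}
after a union bound over unit subintervals of $[0,r_T)$ and stationarity of $(\bsX,\bsX^{(m)})$; writing $m_Tr_T\pr(\cdots)=T\pr(\norm{\bsX_0}>a_T)\cdot\frac{\pr(\norm{\bsX_0}>\eta a_T)}{\pr(\norm{\bsX_0}>a_T)}\cdot\frac{\pr(\sup_{0\le s\le1}\norm{\bsX_s-\bsX_s^{(m)}}>\eta a_T)}{\pr(\norm{\bsX_0}>\eta a_T)}$ and invoking \Cref{eq:condition-m-approx} (with threshold $\eta a_T$) together with $\pr(\norm{\bsX_0}>\eta a_T)/\pr(\norm{\bsX_0}>a_T)\to\eta^{-\alpha}$ shows this part vanishes under $\lim_m\limsup_T$. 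Letting $\eta\to0$ removes the $O(\eta)$ remainder, so the first piece also satisfies $\lim_m\limsup_T=0$. Combining the three bounds gives $\esp[\rme^{-N_T(f)}]\to\esp[\rme^{-N(f)}]$, which is the claimed convergence. The delicate points to watch are the mismatch between the scaling of $\bsX$ and of $\bsX^{(m)}$ (absorbed into the constant $c_m\to1$) and the bookkeeping of the $O(m_T)$ blocks against the per-block tail decay, which is where \Cref{eq:condition-m-approx} does the essential work.
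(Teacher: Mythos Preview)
Your argument is correct and follows the same triangular scheme as the paper: $N_T^{(m)}\convweak N^{(m)}$ by $m$-dependence, $N^{(m)}\convweak N$ via $\tailmeasurestar_m\convvague\tailmeasurestar$, and a closeness estimate between $N_T$ and $N_T^{(m)}$. The paper packages the last step as convergence in probability of $N_T(\shinvmap)-N_T^{(m)}(\shinvmap)$ and appeals to the abstract triangular argument \Cref{prop:triangular-argument-weak-vague}, whereas you bound $\esp[|N_T(f)-N_T^{(m)}(f)|]$ and work directly on Laplace functionals; the per-block splitting and the use of \Cref{eq:condition-m-approx} are the same in substance. Your handling of the normalisation mismatch through $c_m\to1$ is in fact more explicit than the paper's.

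One citation to fix: when you invoke $\pr(\bsX_{0,r_T}^*>\epsilon a_T)\sim r_T\pr(\norm{\bsX_0}>a_T)\,\tailmeasurestar(\{\bsy^*>\epsilon\})$, you cite \Cref{lem:convtailmeasurestar}, but that lemma assumes \Cref{hypo:anticlustering} for $\bsX$, which is not among the hypotheses here. The conclusion you need, $\tailmeasurestar_{T,r_T}\convvague\tailmeasurestar$ for any $r_T$ with $r_T\pr(\norm{\bsX_0}>a_T)\to0$, is precisely what \Cref{lem:mdep-approx-clustermeasure} supplies under the present assumptions, and you already cite that lemma elsewhere; just use it here as well.
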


\begin{proof}
  Let $N_T^{(m)}$ be the point process of clusters of the process $\bsX^{(m)}$. Since $\bsX^{(m)}$
  is regularly varying in $\spaceD$ and $m$-dependent, $N_T^{(m)}\convweak N^{(m)}$ with $N^{(m)}$ a
  Poisson point process with mean measure $\leb\otimes\tailmeasurestar_m$. Since
  $\tailmeasurestar_m\convvague\tailmeasurestar$, it also holds that $N^{(m)}\convweak N$. If we
  moreover prove that for all $\epsilon>0$ and Lipschitz (\wrt\ the $d_{J_1}$ distance) continuous
  maps $\map$ such that $\map(\bsy)=0$ if $\bsy^*\leq\epsilon$ and for all $\eta>0$,
  \begin{align}
    \label{eq:triangularargument-pp}
    \lim_{m\to\infty} \limsup_{T\to\infty} \pr(|N_T(\map)-N_T^{(m)}(\map)| > \eta) = 0 \; , 
  \end{align}
  then by \Cref{prop:triangular-argument-weak-vague}, we will have proved that $N_T \convweak N$. 

  We now prove \Cref{eq:triangularargument-pp}.  {For simplicity, we drop the time component which
    brings no difficulty but only additional notational complexity. This means that we replace $N$
    by $N([0,1]\times\cdot)$.}  Let $\epsilon>0$ and $\shinvmap$ be a \shiftinvariant\ map on
  $\spaceD_0$, Lipschitz continuous \wrt\ the metric $d_{J_1}$ and such that $\shinvmap(\bsy)=0$ if
  $\bsy^*\leq2\epsilon$. Let $B_\epsilon^c$ denote the complement of the ball centered at 0 with
  radius $\epsilon$ \wrt\ the metric $d_{J_1}$. Then, for $\delta<\epsilon$,
  \begin{align*}
    \pr(|N_T(\shinvmap) - N_{T,m}(\shinvmap)| > \eta) 
    & \leq \pr \left(\sum_{i=1}^{[T/r_T]} |\shinvmap(\bsX_{T,i}) - \shinvmap(\bsX_{T,i}^{(m)})| >\eta \right)  \\
    & \leq \frac{T}{r_T} \pr(d_{J_1}(\bsX_{T,1},\bsX_{T,1}^{(m)}) > \delta) + 
      \pr\left( K \epsilon N_T^{(m)}(B_\epsilon^c) > \eta \right)   \\
    & \leq \frac{T}{r_T} \pr \left( \sup_{0 \leq s \leq r_T} \norm{\bsX_s-\bsX_s^{(m)}}  > a_T\delta \right) + 
      \pr\left( K \delta N_T^{(m)}(B_\epsilon^c) > \eta \right)   \\
    & \leq T \pr \left( \sup_{0 \leq s \leq 1} \norm{\bsX_s-\bsX_s^{(m)}}  > a_T\delta \right) + 
      \pr\left( K \delta N_T^{(m)}(B_\epsilon^c) > \eta \right) \; .
  \end{align*}
  By \Cref{eq:condition-m-approx}, the first term in the last equation vanishes when $n$, then $m$
  tend to $\infty$.  The weak convergence of $N_T^{(m)}$ and Markov inequality yield
  \begin{align*}
    \lim_{T\to\infty} \pr\left( K \delta N_T^{(m)}(B_\epsilon^c) > \eta \right) 
    =  \pr\left( K \delta N^{(m)}(B_\epsilon^c) > \eta \right) \leq K\delta\eta^{-1}\tailmeasurestar_m(B_\epsilon^c)   \; .
  \end{align*}
  Since $\tailmeasurestar_m\convvague\tailmeasurestar$, we obtain
  \begin{align*}
    \lim_{m\to\infty} \lim_{T\to\infty} \pr\left( K \delta N_T^{(m)}(B_\epsilon^c) > \eta \right) \leq \constant\ \delta \; .
  \end{align*}
  Since $\delta$ is arbitrary, this proves \Cref{eq:triangularargument-pp} and concludes the proof
  of \Cref{theo:ppconv-approx}.
\end{proof}

We conclude this section with a comparison between the candidate extremal index and the true one if
it exists.
\begin{lemma}
  \label{lem:nullextremalindex}
  Let $\bsX$ be a $\spaceD$-valued regularly varying process and $\candidate$ be defined as in
  \Cref{eq:rappel-candidate}.  If $\candidate=0$ then $\bsX$ admits an extremal index which is equal
  to $0$. If $\bsX$ admits an extremal index $\theta$, then $\theta\leq \candidate$.
\end{lemma}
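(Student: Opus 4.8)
The plan is to establish a single uniform upper bound, namely that for every $\tau>0$ and every sequence $u_T$ with $T\pr(\norm{\bsX_0}>u_T)\to\tau$,
\begin{align}
  \limsup_{T\to\infty}\pr\Bigl(\sup_{0\le t\le T}\norm{\bsX_t}>u_T\Bigr)\le\tau\,\candidate \; , \label{plan:star}
\end{align}
and to read off both assertions from it. Granting \eqref{plan:star}: if $\candidate=0$ then the right-hand side vanishes, so $\pr(\sup_{0\le t\le T}\norm{\bsX_t}\le u_T)\to1=\rme^{-0\cdot\tau}$ for \emph{every} admissible $u_T$ and every $\tau$, which is precisely the statement that the extremal index exists and equals $0$. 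If instead an extremal index $\theta$ exists, then $\pr(\sup_{0\le t\le T}\norm{\bsX_t}>u_T)\to1-\rme^{-\theta\tau}$, so \eqref{plan:star} gives $1-\rme^{-\theta\tau}\le\tau\candidate$ for all $\tau>0$; dividing by $\tau$ and letting $\tau\downarrow0$ yields $\theta\le\candidate$.

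To prove \eqref{plan:star} I would fix a scaling function $a_T$ with $T\pr(\norm{\bsX_0}>a_T)\to1$ and a block length $r>0$, cover $[0,T]$ by $\lceil T/r\rceil$ translates of an interval of length $r$, and apply a union bound together with stationarity:
\begin{align*}
  \pr\Bigl(\sup_{0\le t\le T}\norm{\bsX_t}>u_T\Bigr)\le\Bigl\lceil\tfrac{T}{r}\Bigr\rceil\,\pr\Bigl(\sup_{0\le t\le r}\norm{\bsX_t}>u_T\Bigr) \; .
\end{align*}
Factoring the right-hand side as $\tfrac{\lceil T/r\rceil}{T}\,[\,T\pr(\norm{\bsX_0}>a_T)\,]\,\tfrac{\pr(\bsX_{0,r}^*>u_T)}{\pr(\norm{\bsX_0}>a_T)}$, I would then invoke regular variation in $\spaceD$ (\Cref{eq:regvar-in-D}). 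Since $u_T/a_T\to\tau^{-1/\alpha}=:x$ and the set $\{\bsy\in\spaceD:\bsy_{0,r}^*>x\}$ belongs to $\mcb_0$, the last factor converges to $x^{-\alpha}\tailmeasure(\{\bsy\in\spaceD:\bsy_{0,r}^*>1\})$, a finite number. Using $\lceil T/r\rceil/T\to 1/r$ and $T\pr(\norm{\bsX_0}>a_T)\to1$, this gives, for each fixed $r>0$,
\begin{align*}
  \limsup_{T\to\infty}\pr\Bigl(\sup_{0\le t\le T}\norm{\bsX_t}>u_T\Bigr)\le\frac{x^{-\alpha}}{r}\,\tailmeasure\bigl(\{\bsy\in\spaceD:\bsy_{0,r}^*>1\}\bigr)=\frac{\tau}{r}\,\tailmeasure\bigl(\{\bsy\in\spaceD:\bsy_{0,r}^*>1\}\bigr) \; .
\end{align*}
Letting $r\to\infty$ and invoking the genuine-limit identity $\candidate=\lim_{r\to\infty}r^{-1}\tailmeasure(\{\bsy:\bsy_{0,r}^*>1\})$ of \Cref{lem:expinvexcfini} then produces \eqref{plan:star}.

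The step I expect to be delicate is the convergence of the block ratio $\pr(\bsX_{0,r}^*>u_T)/\pr(\norm{\bsX_0}>a_T)$ to $x^{-\alpha}\tailmeasure(\{\bsy_{0,r}^*>1\})$. The obstruction is that the supremum over a \emph{compact} interval is not $J_1$-continuous at a function that has a jump at one of the interval's endpoints (a deformation can move such a jump into or out of the window), so the superlevel set $\{\bsy:\bsy_{0,r}^*>x\}$ need not be a $\tailmeasure$-continuity set, and the tail measure may well charge functions jumping at a fixed time. I would handle this exactly as the $J_1$-modulus argument in the proof of \Cref{theo:rv-in-D-equivalence}: replace the exact supremum by its comparison with a finite maximum controlled by $w'(\bsX,0,r,\delta)$, whose contribution is asymptotically negligible by the tightness built into regular variation in $\spaceD$, and choose the level $x$ outside the at most countable set where $\tailmeasure(\{\bsy_{0,r}^*=x\})>0$. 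A convenient point is that enlarging the block by any fixed amount only changes $\tailmeasure(\{\bsy_{0,r}^*>1\})$ by a bounded quantity and therefore does not affect the normalized limit $r^{-1}\tailmeasure(\{\bsy_{0,r}^*>1\})\to\candidate$, so it suffices to obtain the displayed inequality up to such harmless enlargements before sending $r\to\infty$.
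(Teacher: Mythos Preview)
Your proof is correct and takes a different route from the paper's. The paper works directly at scale $T$: it splits $\pr(\bsX_{0,T}^*>a_Tx)$ according to whether the occupation time $\exc_{0,T}(\bsX/(a_Tx))$ exceeds a threshold $\eta$, rewrites the main piece via Fubini and stationarity as an integral of conditional expectations of $1/\exc_{-s,T-s}$, bounds it by $\esp[\exc_{-A,A}^{-1}(\bsY)\wedge\eta^{-1}]$, and lets $A\to\infty$, $\eta\to0$ to reach $\candidate x^{-\alpha}$ straight from the definition $\candidate=\esp[\exc^{-1}(\bsY)]$; the short-occupation piece is dispatched by the $w'$-modulus bound. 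Your argument instead freezes a block length $r$, applies a union bound, identifies the block contribution via regular variation in $\spaceD$, and only then sends $r\to\infty$ using the alternative characterization $\candidate=\lim_{r}r^{-1}\tailmeasure(\{\bsy_{0,r}^*>1\})$ from \Cref{lem:expinvexcfini}. The step you flag as delicate is genuine but easy to settle since only an upper bound is needed: for any $c\in(0,1)$ and any $\delta$-mesh partition $(t_i)$ of $[0,r]$ one has $\{\bsX_{0,r}^*>u_T,\ \max_i|\bsX_{t_i}|\le(1-c)u_T\}\subset\{w''(\bsX,0,r,\delta)>cu_T\}$, so finite-dimensional regular variation plus the tightness half of \Cref{theo:rv-in-D-equivalence} yield $\limsup_T\pr(\bsX_{0,r}^*>u_T)/\pr(\norm{\bsX_0}>a_T)\le((1-c)x)^{-\alpha}\tailmeasure(\{\bsy_{0,r}^*>1\})$, and sending $c\to0$ gives the sharp constant with no interval enlargement. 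Your route is the more elementary one; the paper's stays closer to the occupation-time definition of $\candidate$.
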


\begin{proof}
  For $u<v$ and a measurable function $f$, write $\exc_{u,v}(f) = \int_{u}^v \ind{|f(s)|>1} \rmd s$.
  Fix $\eta>0$.  By stationarity, we have
  \begin{align*}
    \pr(\bsX_{0,T}^*>a_Tx;  & \ \exc_{0,T}(\bsX/(a_Tx))>\eta) \\
    & = \int_0^T \esp \left[ \frac{\ind{|X_0|>a_Tx}\ind{\exc_{-s,T-s}(\bsX/(a_Tx))>\eta}} {\exc_{-s,T-s}(\bsX/(a_Tx))}  \right] \rmd s \\
    & = T\pr(|X_0|>a_Tx)
      \int_0^1 \esp \left[ \frac{\ind{\exc_{-sT,T(1-s)}(\bsX/(a_Tx))>\eta}} {\exc_{-Ts,T(1-s)}(\bsX/(a_Tx))}  \mid |X_0|>a_Tx\right] \rmd s \; .
  \end{align*}
  Fix $A>0$. Then for $T>2A$, 
  \begin{multline*}
    \int_0^1 \esp \left[ \frac{\ind{\exc_{-sT,T(1-s)}(\bsX/(a_Tx))>\eta}}
      {\exc_{-Ts,T(1-s)}(\bsX/(a_Tx))}  \mid |X_0|>a_Tx\right] \rmd s \\
    \leq \frac{2A}{\eta T} + \left(1-\frac{2A}T\right) \esp \left[ \frac{1}
      {\exc_{-A,A}(\bsX/(a_Tx))} \wedge \frac1\eta \mid |X_0|>a_Tx\right] \; .
  \end{multline*}
  This yields by dominated convergence 
  \begin{align*}
    \limsup_{T\to\infty}   \int_0^1 \esp
    & \left[ \frac{\ind{\exc_{-sT,T(1-s)}(\bsX/(a_Tx))>\eta}} {\exc_{-Ts,T(1-s)}(\bsX/(a_Tx))}  \mid |X_0|>a_Tx\right] \rmd s
      \leq \esp \left[ \frac1{\exc_{-A,A}(\bsY)} \wedge\frac1\eta\right]
  \end{align*}
  Arguing as in the proof of \Cref{lem:convtailmeasurestar},  we have, by \Cref{theo:rv-in-D-equivalence}
  \begin{align*}
    \pr(\bsX_{0,T}^*>a_Tx;\exc_{0,T}(\bsX/(a_Tx))\leq \eta)  \leq T \pr(w'(\bsX,0,1,\eta)>a_Tx/2) \to 0 \; .
  \end{align*}
  Since $A$ and $\eta$ are  arbitrary, we obtain by definition of $\candidate$,
  \begin{align*}
   \limsup_{T\to\infty} \pr(\bsX_{0,T}^*>a_Tx) \leq \candidate x^{-\alpha} \; .
  \end{align*}
  If $\candidate=0$, then $\pr(\exc(\bsY)=\infty)$, whence
  \begin{align*}
    \lim_{T\to\infty}\pr(\bsX_{0,T}^*>a_Tx) = 0 \; .
  \end{align*}
  By definition, this proves that the extremal index $\theta$ exists and $\theta=0$. If the extremal
  index exists, then we have proved that
  \begin{align*}
    1 - \rme^{-\theta x^{-\alpha}} = \lim_{T\to\infty} \pr(\bsX_{0,T}^*>a_Tx) \leq  \candidate x^{-\alpha} \; .
  \end{align*}
  Multiplying both sides by $x^\alpha$ and letting $x\to0$ proves that $\theta\leq\candidate$. 
\end{proof}

\subsection{Illustrations}
\label{sec:illustration}
We now describe informally the type of results that can be obtained with the tools of
\Cref{sec:representation,sec:regvarinD}, which highlight the practical usefulness of the identities
of \Cref{sec:identities} and the measure $\tailmeasurestar$ introduced in
\Cref{sec:cluster-measure}.  A rigororous investigation of these problems is beyond the scope of
this paper.  See \cite[Chapters~8-10]{kulik:soulier:2020} for a review of the corresponding results
in discrete time. We consider univariate processes for simplicity. 

\subsubsection*{Convergence to $\alpha$-stable processes}
The convergence of the point process of clusters can be used to prove limit theorems such as
convergence of the partial sum process. For $\alpha\in(0,1)$, under
\Cref{hypo:anticlustering,hypo:mixing-laplace}, it can be proved by a truncation and continuous
mapping argument applied to the point process of clusters that
\begin{align*}
  a_T^{-1} \int_0^{Tt} X_s \rmd s - c_T \fidi \Lambda(t) \; , 
\end{align*}
(where $\fidi$ means weak convergence of finite dimensional distributions) with $c_T$ a suitable
centering, $\Lambda$ a L\'evy stable process such that,
\begin{align*}
  \log\esp[\rme^{\rmi z \Lambda(1)}] = -\sigma^\alpha |z|^\alpha \{1 - \rmi \beta \mathrm{sign}(z) \tan(\pi\alpha/2)\} \; , 
\end{align*}
with
\begin{align*}
  \sigma^\alpha & = \alpha \Gamma(1-\alpha) \cos(\pi\alpha/2)\esp \left[ \left|\int_0^\infty \Theta_s\rmd s\right|^{\alpha-1} \right] \; , \\
  \beta & = \frac{\esp \left[ \Theta_0\left(\int_0^\infty \Theta_s\rmd s\right)^{<\alpha-1>} \right]}
          {\esp \left[ \left|\int_0^\infty \Theta_s\rmd s\right|^{\alpha-1} \right]}
\end{align*}
with $x^{<a>} = \mathrm{sign}(x) |x|^a$ for all $x\ne0$ and $a\ne0$, and assuming that
$\sigma\ne0$. If $\alpha<1$, we have seen in \Cref{sec:identities} that the above quantities are
always finite.  If $1 < \alpha < 2$, the same convergence can be proved to hold under an extra
assumption which guarantees that $\sigma$ and $\beta$ are still well defined and a negligibility
assumption is needed to handle the small jumps of the L\'evy process. For $\alpha=1$, an additional
centering term appears in the limiting stable law. The convergence can be proved in the $J_1$ or
$M_1$ topology under some additional conditions.  Since this convergence is obtained by means of the
point process of clusters, the expressions of $\sigma^\alpha$ and $\beta$ are obtained first in
terms of the sequence $\bsQ$ and then translated in terms of the forward spectral tail process using
\Cref{lem:identities}.  See \cite{basrak:planinic:soulier:2018} for exhaustive results in discrete
time.

\subsubsection*{Estimation of cluster functionals}
Many extreme value statistical problems involved so-called cluster functionals, introduced in
discrete time by \cite{drees:rootzen:2010} without the formalism of the tail process. See
\cite[Chapter~10]{kulik:soulier:2020} for a complete presentation of such results in discrete time.

In the language of \Cref{sec:convpp}, quantities of interest are often of the form
$\tailmeasurestar(K)$ with $K$ a functional on $\spaceD_0$. For instance, the extreme value index
$\gamma=\alpha^{-1}$ can be expressed as $\gamma = \tailmeasurestar(K_{\log_+})$ with
$\log_+(x) = \log(x\vee1)$ and $K_{\log}(\bsy) = \int_{-\infty}^\infty \log_+(|y_s|) \rmd s$. For
$\bsy\in\spaceD_0$, the integral is over a finite interval hence well defined and finite. By
\Cref{eq:tailmeasurestar-Q} and \Cref{eq:candidate-Q}, we have
\begin{align*}
  \tailmeasurestar(K_{\log_+})
  & = \candidate \int_{-\infty}^\infty \int_0^\infty \esp[\log_+(r|Q_s|)]  \alpha r^{-\alpha-1} \rmd r \rmd s \\
  & = \candidate \int_{-\infty}^\infty \esp[|Q_s|^\alpha]  \rmd s \int_0^\infty \log_+(r)\alpha r^{-\alpha-1} \rmd r  = \gamma \; .
\end{align*}
Another example is given by the relation $\candidate = \tailmeasurestar(K_e)$ which is a direct
consequence of \Cref{eq:tailmeasurestar-Q} with $K_e(\bsy) = \ind{\bsy^*>1}$. The assumptions which
are needed to obtain results for estimators of $\candidate$ also imply that $\candidate=\theta$, the
true extremal index.

A natural estimator of $\tailmeasurestar(K)$ given the observation of a path $X_s,s\in[0,T]$ is
obtained by $\widehat{\tailmeasurestar}_{T}(K)$ with 
\begin{align*}
  \widetilde{\tailmeasure_T^*} = \frac1{T\pr(|X_0|>u_T)} \sum_{i=1}^{K_T} \delta_{\bsX_{T,i}} \; , 
\end{align*}
with $\bsX_{T,i} = (X_s/u_T)_{(i-1)r_T\leq s \leq ir_T}$, $r_T$ and $u_T$ are increasing functions such
that $T/r_T\to0$, $r_T\pr(|X_0|>u_T)\to0$, $T\pr(|X_0|>u_T)\to\infty$ and $K_T = [T/r_T]$. Such
estimators are called block estimators. The random measure $\widetilde{\tailmeasure_T^*} $ can be
called an empirical cluster measure, since it is expected to converge weakly to $\tailmeasurestar$.

Actually, $\widetilde{\tailmeasure_T^*}$ is not a feasible estimator, since the factor in the
denominator depends on the marginal distribution. There are different ways to deal with this issue
which we will not discuss here. Under mixing conditions such as $\beta$-mixing and under condition
which guarantee the existence of the limiting variance, we expect to prove a central limit theorem
of the form
\begin{align*}
  \sqrt{T\pr(|X_0|>u_T)} \{  \widetilde{\tailmeasure_T^*} - \tailmeasurestar(K)\} \convdistr N(0,\tailmeasurestar(K^2)) \; .
\end{align*}
We can compute $\tailmeasurestar(K^2)$ for the examples cited above. This is straightforward for the
estimator of the extremal index since $K_e^2=K_e$, thus $\tailmeasurestar(K_e^2)=\candidate$.  The
computations are harder for the estimator of the extreme value index.  For $\bsy\in\spaceD_0$,
define $\ell_s(\bsy) =\log_+(y_s)$.  Under assumptions which guarantee that the variance is finite,
we have, using \Cref{eq:tailmeasure-nu-nustar} and the shift-invariance of $\tailmeasure$,
\begin{align*}
  \tailmeasurestar(K_{\log}^2)
  & = \int_{-\infty}^\infty \int_{-\infty}^\infty \tailmeasurestar(\ell_s(\bsy)\ell_t(\bsy)) \rmd s \rmd t
    = \int_{-\infty}^\infty \tailmeasure(\ell_0(\bsy)\ell_t(\bsy)) \rmd t \; .
\end{align*}
Since $\ell_0(\bsy)=0$ if $|y_0|\leq1$, we obtain, using the definition of the tail process in terms
of the tail measure, 
\begin{align*}
  \tailmeasurestar(K_{\log}^2)
  & = \int_{-\infty}^\infty \esp[\log_+(|Y_0|) \log_+(|Y_t|)] \rmd t
    = \gamma \int_{-\infty}^\infty \esp[(|Y_{t}|\wedge1)^\alpha] \rmd t \; . 
\end{align*}
The last identity is obtained by a repeated application of the time change formula \Cref{eq:TCF-Y}
and the identity $\log_+(y) = \int_1^\infty \ind{y>u} u^{-1} \rmd u$. Conditions are needed to ensure that the integral is finite. 

\section{Examples}
\label{sec:examples}

In this section we will study several classes of stochastic processes. For each, we will prove
regular variation in $\spaceD$, check the conditions of \Cref{theo:ppconv-approx} and thus obtain
their extremal index. The novelty of our results is the regular variation in $\spaceD$ and the
convergence of the point process of clusters. We start with max- and sum-stable processes whose
distributions are entirely determined by the tail measure. {The link between max- and
  sum-stable processes and their spectral representations was brilliantly highlighted in
  \cite{kabluchko:2009}.}  Next, we consider functionally weighted sums of \iid\ regularly varying
random variables.

The list of models is limited to keep the paper at a reasonable length. Other interesting examples
include L\'evy driven (mixed) moving average processes: \cite[Section~4]{hult:lindskog:2005},
\cite[Section~4]{fasen:2006}, \cite{fasen:kluppelberg:2007} for which we expect that regular
variation in $\spaceD$ holds and \Cref{theo:ppconv-approx} can be applied. An important class of
models already studied in discrete time consists of regularly varying functions of Markov
processes. It is proved in \cite{kulik:soulier:wintenberger:2019} that \Cref{hypo:anticlustering}
holds for functions of geometrically ergodic Markov chains whose kernel satisfies a suitable drift
condition. Since these chains are also $\beta$-mixing with geometric rate, they fulfill the
conditions of \Cref{theo:equivalence-ppconv}. Extending these results for continuous time Markov
processes would provide a wealth of useful examples.

\subsection{Max-stable processes}
\label{sec:maxstable}

Let $\bseta$ be a real-valued max-stable process with $\alpha$-Fr\'echet marginal distributions,
\ie\ for all $k\geq1$, $t_1,\dots,t_k\in\Rset$ and $x_1,\dots,x_k>0$,
$\vee_{i=1}^kx_i^{-1}\eta(t_i)$ has an $\alpha$-Fr\'echet distribution.  The finite dimensional
distributions of $\bseta$ are regularly varying and are completely determined by their exponent
measures. Hence the tail measure $\tailmeasure$ in the sense of \cite{owada:samorodnitsky:2012}
entirely determines the distribution of $\bseta$. This means that there is a one-to-one
correspondance max-stable processes and tail measures.

If we assume moreover that $\tailmeasure$ is a tail measure in the sense of
\Cref{def:tailmeasureonD} with spectral process $\bsZ$, then we can define a max-stable process
$\tilde\bseta$ by
\begin{align}
  \label{eq:maxstable-associe-tailmeasure}
  \tilde\eta_t = \bigvee_{i=1}^\infty P_i Z^{(i)}_t \; , t \in \Rset \; .
\end{align}
with $\sum_{i=1}^\infty \delta_{P_i}$ a Poisson point process on $(0,\infty)$ with mean measure
$\nualpha$ and $\bsZ^{(i)}$, $i\geq1$ \iid\ copies of $\bsZ$. Then $\tilde\bseta$ has the same
distribution as $\bseta$, since it has the same finite dimensional distributions, expressed in terms
of the spectral process by
\begin{align}
  \label{eq:carac-maxstable-fidi}
  -\log \pr\left( \bigvee_{i=1}^k \frac{\eta_{t_i}}{x_i} \leq  1 \right)  = \esp \left[ \bigvee_{i=1}^k \frac{Z_{t_i}^\alpha}{x_i^\alpha} \right] \; , 
\end{align}
for $k,t_1,\dots,t_k,x_1,\dots,x_k$ as above.  There is an important literature on spectral
representations of max-stable processes, starting with \cite{vatan:1985} and \cite{dehaan:1984},
continuing more recently with \cite{kabluchko:2009} and \cite{wang:stoev:2010} among others.  The
functional context was studied in \cite{gine:hahn:vatan:1990} who considered max-stable (and max-id)
processes with continuous paths.

A spectral process is not unique in distribution. Any process $\bsZ'$ such that
\begin{align*}
  \esp[\homap(\bsZ')] = \esp[\homap(\bsZ)]
\end{align*}
for all $\alpha$-homogeneous maps $\homap$ is a valid spectral process.  The distribution of a
max-stable process can be equivalently characterized by finite dimensional distributions as in
\cref{eq:carac-maxstable-fidi} or by homogeneous functionals. For notational convenience, we will
prefer the latter hereafter.

We now prove that a max-stable process in $\spaceD$ with the representation
\cref{eq:maxstable-associe-tailmeasure} is regularly varying in~$\spaceD$.

\begin{theorem}
  \label{theo:maxstable}
  Let $\alpha>0$, $\tailmeasure$ be a tail measure on $\spaceD$ with tail index $\alpha$ and $\bsZ$
  be a spectral process for $\tailmeasure$. Let $\bseta$ be a $\spaceD$-valued process. The
  following statements are equivalent:
  \begin{enumerate}[(i)]
  \item \label{item:maxstableD} $\bseta$ is a max-stable process with $\alpha$-Fr\'echet
    marginal distributions, regularly varying in $\spaceD(\Rset,\Rset)$, with tail measure $\tailmeasure$;
  \item \label{item:poissonrepresentation} $\bseta$ admits the representation
    \Cref{eq:maxstable-associe-tailmeasure}.
  \end{enumerate}
\end{theorem}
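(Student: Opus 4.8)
The plan is to prove both implications through the single structural fact that the process $\tilde\bseta$ defined in \eqref{eq:maxstable-associe-tailmeasure} is a well-defined $\spaceD$-valued max-stable process with $\alpha$-Fr\'echet margins that is regularly varying in $\spaceD$ with tail measure $\tailmeasure$. Granting this, \Cref{item:poissonrepresentation}$\implies$\Cref{item:maxstableD} is immediate. For the converse, a max-stable process with $\alpha$-Fr\'echet margins is determined in law by its finite dimensional distributions \eqref{eq:carac-maxstable-fidi}, and these are determined by the finite dimensional projections of the tail measure. By \Cref{theo:rv-in-D-equivalence}, regular variation of $\bseta$ in $\spaceD$ forces those projections to be the exponent measures of $\bseta$; so if $\bseta$ has tail measure $\tailmeasure$ it shares the finite dimensional distributions of $\tilde\bseta$, and since the Borel $\sigma$-field of $(\spaceD,J_1)$ is generated by the coordinate maps, $\bseta\eqdistr\tilde\bseta$, which is \Cref{item:maxstableD}$\implies$\Cref{item:poissonrepresentation}.

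First I would check that $\tilde\bseta$ takes values in $\spaceD$. By the marking theorem $\sum_i\delta_{P_i\bsZ^{(i)}}$ is a Poisson point process and, for every $a<b$, the intensity of the points with $P_i(\bsZ^{(i)})^*_{a,b}>y$ equals $y^{-\alpha}\esp[(\bsZ^*_{a,b})^\alpha]$, which is finite by \eqref{eq:local-boundedness-Z}. Hence on each compact interval only finitely many points exceed any positive level, the pointwise supremum is finite, and $\tilde\bseta$ restricted to $[a,b]$ is a uniform limit of maxima of finitely many \cadlag\ functions, hence \cadlag. Max-stability, the $\alpha$-Fr\'echet margins, and the finite dimensional law \eqref{eq:carac-maxstable-fidi} then follow from the Poisson structure exactly as in the classical spectral representation; in particular $\pr(\eta_0>x)=1-\rme^{-x^{-\alpha}}\sim x^{-\alpha}$.

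For regular variation in $\spaceD$ I would verify the two conditions of \Cref{theo:rv-in-D-equivalence}. Finite dimensional regular variation \eqref{eq:fidi-rv} is the standard regular variation of multivariate $\alpha$-Fr\'echet vectors, with exponent measures equal to the projections of $\tailmeasure$. The substantive point is the tightness condition \eqref{eq:tightness}, i.e. $\lim_{\delta\to0}\limsup_{x\to\infty}x^\alpha\pr(w'(\tilde\bseta,a,b,\delta)>x\epsilon)=0$, which I would reduce to a single spectral function. Fix $\gamma<\epsilon/4$ and split the points into the big ones, $B=\{i:P_i(\bsZ^{(i)})^*_{a,b}>\gamma x\}$, and the rest. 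Because the $\bsZ^{(i)}$ are nonnegative, the small points perturb the path by at most $\gamma x$ in supremum norm on $[a,b]$, so $w'(\tilde\bseta,a,b,\delta)\le w'(\bigvee_{i\in B}P_i\bsZ^{(i)},a,b,\delta)+2\gamma x$; thus on $\{w'(\tilde\bseta,a,b,\delta)>x\epsilon\}$ the big block already has modulus exceeding $x\epsilon/2$. The cardinality of $B$ is Poisson with mean $(\gamma x)^{-\alpha}\esp[(\bsZ^*_{a,b})^\alpha]$, so $\pr(|B|\ge2)=O(x^{-2\alpha})$ is negligible after multiplication by $x^\alpha$, and on $\{|B|=1\}$ the block modulus is $P_{i^*}w'(\bsZ^{(i^*)},a,b,\delta)$ by the $1$-homogeneity of $w'$.

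Consequently, using the intensity $\nualpha\otimes\pr_{\bsZ}$ to count points with $P_iw'(\bsZ^{(i)},a,b,\delta)>x\epsilon/2$,
\begin{align*}
  \limsup_{x\to\infty} x^\alpha\,\pr\left(w'(\tilde\bseta,a,b,\delta)>x\epsilon\right)
  \le \left(\frac{\epsilon}{2}\right)^{-\alpha}\esp\left[w'(\bsZ,a,b,\delta)^\alpha\right] \; .
\end{align*}
Since $\bsZ\in\spaceD$ almost surely, $w'(\bsZ,a,b,\delta)\to0$ as $\delta\to0$, while $w'(\bsZ,a,b,\delta)\le 2\,\bsZ^*_{a,b}$ has finite $\alpha$-th moment by \eqref{eq:local-boundedness-Z}; dominated convergence gives $\esp[w'(\bsZ,a,b,\delta)^\alpha]\to0$, which is \eqref{eq:tightness}. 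The main obstacle is precisely this tightness step: the Skorohod modulus $w'$ is not additive under pointwise maxima, and the argument works only because the oscillation of a maximum is bounded by the maximum of the oscillations on a common refinement of partitions (so a maximum of finitely many \cadlag\ functions still has vanishing modulus) and because the Poisson intensity renders two or more large excursions on a fixed compact interval asymptotically negligible, reducing everything to the single spectral function controlled by \eqref{eq:local-boundedness-Z}.
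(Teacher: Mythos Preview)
Your argument is correct in substance but takes a different route from the paper. The paper proves the nontrivial direction \Cref{item:poissonrepresentation}$\implies$\Cref{item:maxstableD} by establishing the vague convergence \eqref{eq:regvar-in-D} directly: it performs a change of measure to a process $\bsW$ with $\bsW_{a,b}^*=1$ a.s., writes $\bseta\eqdistr c_\alpha^{1/\alpha}\bigvee_i P_i\bsW^{(i)}$, and shows that for any bounded Lipschitz $\map$ with support separated from $\bszero$ the single term $c_\alpha^{1/\alpha}P_1\bsW^{(1)}$ already gives the limit $\tailmeasure(\map)$, while the $d_{J_1}$-distance between $P_1\bsW^{(1)}$ and $\bigvee_iP_i\bsW^{(i)}$ is controlled by $P_2$, which has tail index $2\alpha$ and is therefore negligible at scale $x^\alpha$. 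You instead verify the criterion of \Cref{theo:rv-in-D-equivalence} (finite-dimensional regular variation plus the modulus condition \eqref{eq:tightness}), applying the same one-big-jump principle to $w'(\tilde\bseta,a,b,\delta)$ rather than to a test functional. Both arguments rest on $\pr(|B|\ge2)=O(x^{-2\alpha})$ and on the moment bound \eqref{eq:local-boundedness-Z}; your reduction to $\esp[w'(\bsZ,a,b,\delta)^\alpha]\to0$ is a clean way to package the oscillation control, whereas the paper's approach avoids the modulus altogether and works directly at the level of the vague limit.

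One technical caveat: you invoke \Cref{theo:rv-in-D-equivalence} as a black box, but that theorem is stated for stationary and stochastically continuous processes, neither of which is part of the hypotheses of \Cref{theo:maxstable} (the tail measure there is not assumed shift-invariant). Your tightness and finite-dimensional arguments are valid regardless, and the implication ``\eqref{eq:fidi-rv} $+$ \eqref{eq:tightness} $\Rightarrow$ regular variation in $\spaceD$'' does hold without stationarity via the original Hult--Lindskog argument on each compact interval; but as written you are citing a result whose stated hypotheses are not met. Either note that the proof of the relevant direction of \Cref{theo:rv-in-D-equivalence} goes through verbatim without stationarity, or appeal directly to the $\spaceD([a,b])$ criterion.
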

\begin{proof}
  Since the tail measure determines the distribution of $\bseta$, the implication
  \cref{item:maxstableD}$\implies$\cref{item:poissonrepresentation} is straightforward, since the
  process in the representation \cref{eq:maxstable-associe-tailmeasure} has tail measure
  $\tailmeasure$, thus has the same distribution as $\bseta$.  We now prove the converse
  implication. If \cref{item:poissonrepresentation} holds, then $\eta_0$ has an $\alpha$-Fr\'echet
  distribution, and we must prove that for all $\epsilon>0$, $a>0$ and bounded Lipschitz continuous
  maps $\map$ on $\spaceD$ such that $\map(\bsy)=0$ if $\bsy_{-a,a}^*\leq \epsilon$,
  \begin{align}
    \label{eq:whatwemustprove}
    \lim_{T\to\infty}  T^\alpha \esp[\map(T^{-1} \bseta)] =  \tailmeasure(\map) \; . 
  \end{align}
  Let $\bsW$ be a process whose distribution is characterized by
  \begin{align*}
    \esp[\map(\bsW)] = \frac{\esp [ \map((\bsZ_{a,b}^*)^{-1}\bsZ)(\bsZ_{a,b}^*)^\alpha]}{\esp[(\bsZ_{a,b}^*)^\alpha]} \; .
  \end{align*}
  for all \nonnegative\ measurable maps $\map$.  Let $\bsW^{(i)}$ be \iid\ copies of $\bsW$.  Then,
  $\bseta$ has the same distribution as
  \begin{align*}
    c_\alpha^{1/\alpha} \sum_{i=1}^\infty P_i \bsW^{(i)} \; , 
  \end{align*} 
  with $c_\alpha=\esp[\sup_{a\leq s \leq b} |Z(s)|^\alpha]$.  Since $P_1$ has a Fr\'echet distribution and $\sup_{-a\leq t\leq a} |W_t| = 1$,
  we have
  \begin{align*}
    \lim_{T\to\infty}    T^\alpha  \esp[\map(c_\alpha^{1/\alpha}T^{-1}P_1\bsW)] 
    & = c_\alpha \int_0^\infty \esp[\map(u\bsW)] \alpha u^{-\alpha-1} \rmd u \\
    & = \int_0^\infty \esp[\map(u\bsZ)] \alpha u^{-\alpha-1} \rmd u = \tailmeasure(\map) \; .
  \end{align*}
  By definition of the metric $d_{J_1}$, for every $f,g\in\spaceD(\Rset,\Rset_+)$ and $a>0$, we have
  \begin{align*}
    d_{J_1}(f,f\vee g) \leq \sup_{-a\leq s \leq a} |f(s)-f\vee g(s)| + \rme^{-a}
    \leq \sup_{-a\leq s \leq a} g(s) + \rme^{-a} \; .   
  \end{align*}
  Since $\map$ is bounded, Lipschitz continuous \wrt\ the $J_1$ metric and has support separated from
  $\bszero$, and $\bsW_{a,b}^*=1$, we have, for all $\beta<\epsilon/2$,
  \begin{align*}
    T^\alpha |\esp[\map(T^{-1}P_1\bsW^{(1)})] & - \esp[\map(T^{-1}\bseta)]| \\
    & \leq \constant\ T^\alpha \pr(d_{J_1} (P_1\bsW^{(1)},\vee_{i=1}^\infty P_i\bsW^{(i)})>T\beta) + \constant\ T^\alpha \pr(P_1>T\epsilon/2)  \\
    & \leq \constant\ T^\alpha \pr(P_2+\rme^{-a} > T\beta) + \constant\ \beta T^\alpha \pr(P_1>T\epsilon/2)  \; .
  \end{align*}
  This yields, for all $\beta< \epsilon/2$,
  \begin{align*}
    \limsup_{T\to\infty} T^\alpha |\esp[\map(T^{-1}P_1\bsW^{(1)})] & - \esp[\map(T^{-1}\bseta)]| \leq \constant \beta \; .
  \end{align*}
  Since $\beta$ is arbitrary, the $\limsup$ is actually zero. Altogether, we have proved \Cref{eq:whatwemustprove}.
\end{proof}
Applying the results of \Cref{sec:representation}, we obtain an explicit mixed moving
maximum representation of max-stable processes generated by a dissipative flow. We also recover the
result of \cite[Theorem~2.1 and~2.3]{debicki:hashorva:2020} and give a new expression for the
extremal index.

\begin{corollary}
  \label{theo:M3representation-maxstable}
  Let $\tailmeasure$ be a \shiftinvariant\ tail measure on $\spaceD$ with tail process $\bsY$.  Let
  $\bseta$ be the associated max-stable process defined by \Cref{eq:maxstable-associe-tailmeasure}.
  Then $\bseta$ admits an extremal index $\theta$ equal to $\candidate$,~\ie\
  \begin{align*}
    \theta =  \candidate = \esp \left[ \frac1{\exc(\bsY)} \right] <  \infty \; .
  \end{align*}
  The extremal index is positive \ifft\ $\tailmeasure(\spaceD_0)>0$.  The process $\bseta$ admits a
  mixed moving maxima representation \ifft\ $\tailmeasure(\spaceD_0^c)=0$. In the latter case,
  $\candidate>0$ and
  \begin{align}
    \label{eq:M3representation}
    \eta_t = \bigvee_{i=1}^\infty P_i Q_{t-T_i}^{(i)}
  \end{align}
  with $\{T_i,P_i,Q_i\}$ the points of a Poisson point process on
  $\Rset\times(0,\infty)\times \spaceD$ with mean measure $\leb\otimes\nualpha\otimes\pr_{\bsQ}$ and
  with $\bsQ$ a random element in $\spaceD$ whose distribution is given in \Cref{eq:loi-Q}. 
\end{corollary}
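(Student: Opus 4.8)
The plan is to treat the three assertions separately, using that for a max-stable process every functional of interest has an explicit Fréchet law, so that no point-process limit theorem is needed.

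For the extremal index I would read off the law of the running supremum from the Poisson representation. By \Cref{eq:maxstable-associe-tailmeasure}, $\bseta=\bigvee_i P_i\bsZ^{(i)}$ is the pointwise maximum of the atoms of the point process $N=\sum_i\delta_{P_i\bsZ^{(i)}}$, which is Poisson with mean measure $\tailmeasure$ by \Cref{eq:spectral-representation}. Interchanging the supremum over $[0,T]$ with the maximum over the atoms, $\sup_{0\leq s\leq T}\eta_s=\bigvee_i(P_i\bsZ^{(i)})_{0,T}^*$, so the event $\{\sup_{0\leq s\leq T}\eta_s\leq y\}$ is the void event $\{N(\{\bsy:\bsy_{0,T}^*>y\})=0\}$; since $\{\bsy:\bsy_{0,T}^*>y\}=y\{\bsy:\bsy_{0,T}^*>1\}$ is separated from $\bszero$, the void probability and the $\alpha$-homogeneity of $\tailmeasure$ give
\begin{align*}
  \pr\left(\sup_{0\leq s\leq T}\eta_s\leq y\right)=\exp\left(-\tailmeasure(\{\bsy:\bsy_{0,T}^*>y\})\right)=\exp\left(-y^{-\alpha}\tailmeasure(\{\bsy:\bsy_{0,T}^*>1\})\right)\; .
\end{align*}
As $\tailmeasure(\{\norm{\bsy_0}>1\})=1$, the margin $\eta_0$ is standard $\alpha$-Fréchet, so any $u_T$ with $T\pr(\eta_0>u_T)\to\tau$ satisfies $Tu_T^{-\alpha}\to\tau$, whence
\begin{align*}
  \pr\left(\sup_{0\leq s\leq T}\eta_s\leq u_T\right)=\exp\left(-(Tu_T^{-\alpha})\,\frac1T\,\tailmeasure(\{\bsy:\bsy_{0,T}^*>1\})\right)\; .
\end{align*}
Applying \Cref{eq:eureka} of \Cref{lem:expinvexcfini}, namely $\frac1T\tailmeasure(\{\bsy_{0,T}^*>1\})\to\candidate$, the right-hand side converges to $\rme^{-\candidate\tau}$, so the extremal index exists and equals $\candidate$, which is finite again by \Cref{lem:expinvexcfini}.

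The positivity assertion is then immediate from \Cref{eq:conservative-nulextremalindex}: since $\candidate=0\iff\tailmeasure(\spaceD_0)=0$ with both quantities \nonnegative, taking contrapositives and using $\theta=\candidate$ gives $\theta>0\iff\tailmeasure(\spaceD_0)>0$. For the moving maxima representation I would observe that an M3 representation of $\bseta$ encodes exactly a mixed moving average representation \Cref{eq:dissipativerepresentation} of its tail measure, and conversely; by \Cref{theo:equivalences-dissipative} the existence of such a representation is equivalent to $\tailmeasure$ being supported on $\spaceD_0$, that is $\tailmeasure(\spaceD_0^c)=0$, and in that case the same theorem supplies $\candidate>0$ and the law \Cref{eq:loi-Q} of $\bsQ$. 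To produce the explicit formula I would use the Poisson reading of \Cref{eq:dissipativerepresentation} recorded after \Cref{coro:equivalence-nu-Q}: $\tailmeasure$ is the mean measure of $\sum_i\delta_{P_i\shift^{T_i}\bsQ^{(i)}}$, where $\sum_i\delta_{T_i,P_i,\bsQ^{(i)}}$ is a Poisson point process with mean measure $\candidate\leb\otimes\nualpha\otimes\pr_{\bsQ}$, so that the associated max-stable process is $\eta_t=\bigvee_iP_i(\shift^{T_i}\bsQ^{(i)})_t=\bigvee_iP_iQ^{(i)}_{t-T_i}$, which is \Cref{eq:M3representation}.

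The main obstacle is the running-supremum identity in the first step: one must justify the interchange of the supremum over $[0,T]$ with the maximum over the atoms and verify that $\{\sup_{0\leq s\leq T}\eta_s\leq y\}$ coincides with the void event $\{N(\{\bsy:\bsy_{0,T}^*>y\})=0\}$ on \cadlag\ paths, the defining set being separated from $\bszero$ so that its $\tailmeasure$-measure is finite by \Cref{eq:local-boundedness-Z}. Once this is in place, all three assertions reduce to bookkeeping over results already established in \Cref{sec:representation}.
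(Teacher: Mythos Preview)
Your argument is correct and in fact cleaner than the paper's for the extremal-index part. Both proofs start from the void-probability identity $-\log\pr(\sup_{0\le s\le T}\eta_s\le y)=\tailmeasure(\{\bsy_{0,T}^*>y\})$, but diverge from there. The paper first decomposes $\tailmeasure=\tailmeasure_D+\tailmeasure_C$ (dissipative plus conservative), disposes of the conservative piece by invoking that its extremal index is zero, and then, on the dissipative piece, rewrites $T^{-1}\tailmeasure(\{\bsy_{0,T}^*>1\})$ via the $\bsQ$-representation as $\tildetailmeasurestar_T(\ind{\bsy^*>1})$ and appeals to the convergence $\tildetailmeasurestar_T\convvague\tailmeasurestar$ established inside the proof of \Cref{lem:tailtozero-implies-convergencetoclustermeasure}. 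You bypass all of this by applying \Cref{eq:eureka} directly, which holds for \emph{any} shift-invariant tail measure on $\spaceD$ and gives $T^{-1}\tailmeasure(\{\bsy_{0,T}^*>1\})\to\candidate$ without any decomposition. This is shorter and stays entirely within \Cref{sec:representation}; the paper's route has the minor advantage of illustrating how $\tildetailmeasurestar_T$ and the $\bsQ$-machinery work in a concrete case.

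For the remaining assertions (positivity via \cref{eq:conservative-nulextremalindex}, the M3 representation via \Cref{theo:equivalences-dissipative} and the Poisson reading of \cref{eq:dissipativerepresentation}) your treatment matches what the paper leaves implicit. The interchange of $\sup_{[0,T]}$ with the countable maximum that you flag as an obstacle is harmless: $\sup_t\bigvee_i a_i(t)=\bigvee_i\sup_t a_i(t)$ holds for any family of functions, and the set $\{\bsy_{0,T}^*>y\}$ is in $\mcb_0$ so the void probability is well defined.
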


\begin{proof}
  We only prove the statement on the extremal index for completeness, slightly simplifying the
  argument in the proof of \cite[Theorem~2.1]{debicki:hashorva:2020}. Without loss of generality, we
  can assume that $\tailmeasure_C=0$ since $\bseta=\bseta_D\vee\bseta_C$ and $\bseta_C$ has zero
  extremal index. Then,
  \begin{align*}
    -\log   \pr\left(\sup_{0 \leq s \leq T} \eta_s \leq T^{1/\alpha} x\right) 
    & = \tailmeasure(\{\bsy\in\spaceD:\bsy_{0,T}^*>T^{1/\alpha}x\}) \\
    & = \candidate x^{-\alpha} T^{-1} \int_{-\infty}^\infty \esp \left[ \sup_{-s\leq u \leq T-s} Q_u^\alpha \right] \rmd s 
      = x^{-\alpha} \tildetailmeasurestar_T(\shinvmap)  \; ,
  \end{align*}
  with $\tildetailmeasurestar_T$ defined in \Cref{eq:def-tildemeasurestar} and
  $\shinvmap(\bsy) = \ind{\bsy^*>1}$. Since we have obtained in the proof of
  \Cref{lem:tailtozero-implies-convergencetoclustermeasure} that
  $\tildetailmeasurestar_T\convvague\tailmeasurestar$ and $\tailmeasurestar(\shinvmap) = \candidate$, we
  obtain
  \begin{align*}
    \lim_{T\to\infty} 
    -\log   \pr\left(\sup_{0 \leq s \leq T} \eta_s \leq T^{1/\alpha} x\right) = \candidate x^{-\alpha} \; .
  \end{align*}
  This proves that $\candidate$ is the true extremal index of the process $\bseta$.
\end{proof}

The representation \cref{eq:M3representation} is a probabilistic interpretation of
\cref{eq:dissipativerepresentation} in the case of \nonnegative\ max-stable processes. The class of
processes generated by a conservative flow is much wider and much more complex, see for instance
\cite{dombry:kabluchko:2016}, \cite{kabluchko:schlather:2010}, \cite{wang:roy:stoev:2013}.

\begin{example}
  \label{xmpl:subgaussian-maxstable}
  Let $\bsW$ be a Gaussian process with continuous paths, stationary increments and such that
  $\pr(W_0=0)=1$. Let $\sigma_t^2 = \var(W_t)$. For $\alpha>0$ define the process $\bsZ$ by
  $Z_t=\rme^{W_t-\alpha\sigma_t^2/2}$, $t\in\Rset$. Let $\tailmeasure$ be the tail measure on
  $\spaceD$ defined by $\tailmeasure = \int_0^\infty \esp[\delta_{u\bsZ}]\alpha u^{-\alpha-1}\rmd u$
  and $\bseta$ the associated max-stable process. Then $\bseta$ is stationary and has almost surely
  continuous paths. Cf. \cite{kabluchko:schlather:dehaan:2009}. Assume that
  \begin{align*}
    \pr\left( \lim_{|t|\to\infty} W_t-\alpha\sigma_t^2/2 = -\infty \right) = 1 \; .
  \end{align*}
  Then the extremal index $\theta$ of $\bseta$  is positive and given by
  \begin{align*}
    \theta & = \candidate = \esp \left[ \frac{\sup_{t\in\Rset} \rme^{\alpha W_t-\alpha^2\sigma^2_t/2}}
             {\int_{-\infty}^\infty \rme^{\alpha W_t-\alpha^2\sigma^2_t/2} \rmd t}\right] 
            = \esp \left[ \frac1 {\int_{-\infty}^\infty \ind{\alpha W_t-\alpha^2\sigma^2_t/2>-E}  \rmd t}\right] \; ,
  \end{align*}
  with $E$ a random variable with a standard exponential distribution, independent of $\bsW$.
  The first expression for $\theta$ was obtained by  \cite{debicki:hashorva:2020}. 

  Since $\bsW$ has continuous paths, we can also apply the results of \Cref{sec:anchoring-maps} and
  \Cref{xmpl:infargmax-general}. Let $\widehat{\bsW}$ be the process defined by
  $\widehat{W}_t=W_t-\alpha\sigma_t^2/2$ and $\mci_0$ be the infargmax functional. By
  homogeneity of $\mci_0$ and the monotonicity of the exponential,
  $\mci_0(\bsY) = \mci_0(\widehat{\bsW})$ and by \Cref{eq:density-infargmax}, $\mci_0(\widehat{\bsW})$
  admits a continuous density $q_0$ \wrt\ Lebesgue's measure on $\Rset$ given by
  \begin{align*}
    q_0(t) = \esp \left[ \frac{\rme^{\alpha \widehat{W}_{\mci_0(\widehat\bsW)-t}}}
    {\int_{-\infty}^\infty \rme^{\alpha\widehat{W}_s}\rmd t } \right] \; , \ \ t\in\Rset \; .
  \end{align*}
  In this case, condition \Cref{eq:condition-continuite} holds by continuity and the representation
  \Cref{eq:candidate-conditional} hold.

  Consider now the first exceedance map $\mci_1$. Then the continuity of the sample paths imply that
  $\pr(Y|\bsQ_{\mci_1(Y\bsQ)}|>1)=0$, thus \Cref{eq:condition-continuite} does not hold.
\end{example}

\subsection{Sum-stable processes}
\label{sec:sumstable}
Let $\bszeta$ be an $\alpha$-stable process ($0<\alpha<2$). This means that its finite dimensional
distributions are $\alpha$-stable, hence regularly varying with tail index $\alpha$ and
characterized by their exponent measures. Consequently, the distribution of $\bszeta$ is
characterized by its tail measure (in the sense of \cite{owada:samorodnitsky:2012}) on $\Rset^\Rset$
endowed with the product topology.

If we assume further that $\bszeta$ is regularly varying on $\spaceD$ then its tail measure is a
tail measure on $\spaceD(\Rset,\Rset)$ in the sense of \Cref{def:tailmeasureonD}.  If $\bszeta$ is
stationary, by \Cref{theo:Y-determines-nu}, it admits a spectral process $\bsZ$ which satisfies
$\esp[ \sup_{a \leq s \leq b}|Z(s)|^\alpha]<\infty$ for all $a \leq b$. Then,
$\sum_{i=1}^\infty \delta_{P_i\bsZ^{(i)}}$ is a PPP on $\spaceD$ with mean measure
$\tailmeasure$. If $\alpha<1$, the series $\sum_{i=1}^\infty P_i$ is summable, thus the series
\begin{align}
  \label{eq:series-representation}
   \sum_{i=1}^\infty P_i \bsZ^{(i)} \; , 
\end{align}
is almost surely locally uniformly convergent.  If $\alpha\in[1,2)$, we will assume furthermore that
$\tailmeasure$, or equivalently $\bsZ$, is symmetric (\ie\ $\tailmeasure(A) = \tailmeasure(-A)$ for
all Borel measurable subset $A$ of~$\spaceD$.  This allows to avoid the issue of centering and to
use the maximal inequality recalled in \Cref{sec:maximal}. Then the series in
\Cref{eq:series-representation} is pointwise almost surely convergent, and defines a symmetric
$\alpha$-stable process. In both cases, the series in \Cref{eq:series-representation} defines an
$\alpha$-stable process with the same finite-dimensional distributions as $\bszeta$. Cf.
\cite[Theorem~1.4.2]{samorodnitsky:taqqu:1994}.

\begin{theorem}
  \label{theo:sum-stable-regvarD}
  Let $\alpha\in(0,2)$, $\tailmeasure$ be a \shiftinvariant\ tail measure on $\spaceD$ with tail
  index $\alpha$, $\bsY$ be the associated tail process and $\bsZ$ be a spectral process for
  $\tailmeasure$. If $\alpha\in[1,2)$, assume furthermore that $\tailmeasure$ (or equivalently
  $\bsY$ or $\bsZ$) is symmetric. Let $\bszeta$ be a $\spaceD$-valued process. The following
  statements are equivalent:
  \begin{enumerate}[(i)]
  \item \label{item:rvimpliesseries} $\bszeta$ is a stationary $\alpha$-stable process, regularly
    varying in $\spaceD(\Rset)$, with tail measure $\tailmeasure$;
  \item \label{item:seriesimpliesrv} $\bszeta$ admits the representation \Cref{eq:series-representation} with $\bsZ^{(i)}$,
    $i\geq1$, \iid\ copies of $\bsZ$.
  \end{enumerate}
\end{theorem}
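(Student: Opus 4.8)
The plan is to follow the proof of \Cref{theo:maxstable} almost verbatim, replacing the pointwise maximum by the sum \Cref{eq:series-representation}; the only genuinely new ingredient is the control of the contribution of the small points of the underlying Poisson point process, where the additive (rather than max) structure forces a cancellation argument. Two facts will be used throughout: the law of a $\spaceD$-valued random element is determined by its finite dimensional distributions (the Borel $\sigma$-field of the $J_1$ topology is generated by the evaluations $\bsy\mapsto\bsy_t$), and the finite dimensional distributions of a stationary $\alpha$-stable process are determined by their exponent measures, which are the finite dimensional projections of $\tailmeasure$. For \Cref{item:rvimpliesseries}$\implies$\Cref{item:seriesimpliesrv} these suffice: as recalled before the statement the series \Cref{eq:series-representation} converges and defines a $\spaceD$-valued $\alpha$-stable process whose exponent measures are the projections of $\tailmeasure$ (the mean measure of $\sum_i\delta_{P_i\bsZ^{(i)}}$); if $\bszeta$ is $\alpha$-stable with tail measure $\tailmeasure$ it has the same exponent measures, hence the same finite dimensional distributions, hence the same law on $\spaceD$, so it admits the representation.

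\emph{Converse} \Cref{item:seriesimpliesrv}$\implies$\Cref{item:rvimpliesseries}. I would first record the structural facts. Stationarity follows from \shiftinvariance\ of $\tailmeasure$, since $\shift^t\bszeta=\sum_iP_i\shift^t\bsZ^{(i)}$ is the sum over a Poisson point process with mean measure $\tailmeasure\circ\shift^{-t}=\tailmeasure$; that $\bszeta$ is $\alpha$-stable is recalled before the statement. The substance is regular variation in $\spaceD$, \ie\ for every bounded Lipschitz (\wrt\ $d_{J_1}$) map $\map$ with $\map(\bsy)=0$ whenever $\bsy_{-a,a}^*\leq\epsilon$,
\begin{align*}
  \lim_{T\to\infty} T^\alpha\esp[\map(T^{-1}\bszeta)] = \tailmeasure(\map) \; .
\end{align*}
Exactly as in \Cref{theo:maxstable}, I would introduce the tilted shape $\bsW$ with $\bsW_{-a,a}^*=1$, defined by $\esp[\map(\bsW)]=\esp[\map((\bsZ_{-a,a}^*)^{-1}\bsZ)(\bsZ_{-a,a}^*)^\alpha]/c$, where $c=\esp[(\bsZ_{-a,a}^*)^\alpha]\in(0,\infty)$ by \Cref{eq:local-boundedness-Z}. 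The point processes $\sum_i\delta_{P_i\bsZ^{(i)}}$ and $\sum_i\delta_{c^{1/\alpha}P_i\bsW^{(i)}}$ have the same mean measure $\tailmeasure$, so $\bszeta\eqdistr c^{1/\alpha}\sum_iP_i\bsW^{(i)}$ with $\bsW^{(i)}$ \iid\ copies of $\bsW$ and $P_1>P_2>\cdots$ the ordered points; the single-term limit
\begin{align*}
  \lim_{T\to\infty} T^\alpha\esp[\map(c^{1/\alpha}T^{-1}P_1\bsW^{(1)})] = \int_0^\infty\esp[\map(u\bsZ)]\alpha u^{-\alpha-1}\rmd u = \tailmeasure(\map)
\end{align*}
is then computed verbatim as in \Cref{theo:maxstable}.

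It remains to show the remainder is negligible. Using that $d_{J_1}$ is dominated by the uniform norm on $[-a,a]$ up to the tail term $\rme^{-a}$, and that $\map$ is bounded and Lipschitz with support separated from $\bszero$, I would bound
\begin{align*}
  T^\alpha\left|\esp[\map(T^{-1}\bszeta)]-\esp[\map(c^{1/\alpha}T^{-1}P_1\bsW^{(1)})]\right|
\end{align*}
by $T^\alpha\pr(P_2>\beta T)=O(T^{-\alpha})$ (the event of two or more points above $\beta T$, negligible after multiplication by $T^\alpha$) plus, on the complementary event of at most one large point, a term governed by $\sup_{-a\leq t\leq a}|c^{1/\alpha}\sum_{i:P_i\leq\beta T}P_i\bsW^{(i)}_t|$. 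The whole error thus reduces to proving, for every $\eta>0$,
\begin{align}
  \label{eq:sumstable-remainder-plan}
  \lim_{\beta\to0}\limsup_{T\to\infty} T^\alpha\,\pr\Big(\sup_{-a\leq t\leq a}\Big|\sum_{i:P_i\leq\beta T}P_i\bsW^{(i)}_t\Big|>T\eta\Big) = 0 \; .
\end{align}

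The proof of \Cref{eq:sumstable-remainder-plan} is the main obstacle, and it is where the dichotomy in $\alpha$ and the symmetry hypothesis enter. For $0<\alpha<1$ a first moment bound suffices: by the Poisson mean measure and $\bsW_{-a,a}^*=1$ one has $\esp[\sup_{-a\leq t\leq a}|\sum_{i:P_i\leq\beta T}P_i\bsW^{(i)}_t|]\leq\int_0^{\beta T}u\,\alpha u^{-\alpha-1}\rmd u=\tfrac{\alpha}{1-\alpha}(\beta T)^{1-\alpha}$, so Markov's inequality gives $T^\alpha\pr(\cdots>T\eta)\leq\constant\,\eta^{-1}\beta^{1-\alpha}\to0$ as $\beta\to0$. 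For $1\leq\alpha<2$ this bound diverges and cancellation becomes indispensable: conditionally on $\{P_i\}$ the summands $P_i\bsW^{(i)}$ are independent symmetric $\spaceD$-valued elements, so the maximal inequality of \Cref{sec:maximal} bounds $\sup_{-a\leq t\leq a}|\sum_iP_i\bsW^{(i)}_t|$ by a multiple of the corresponding tail of the symmetric $\alpha$-stable partial sum, whose supremum over $[-a,a]$ has a regularly varying tail of order $\alpha$ by \Cref{eq:local-boundedness-Z}; this yields \Cref{eq:sumstable-remainder-plan} upon letting $\beta\to0$. Combining the single-term limit, the negligibility \Cref{eq:sumstable-remainder-plan}, and a triangular argument as in the proof of \Cref{theo:rv-in-D-equivalence} gives $T^\alpha\esp[\map(T^{-1}\bszeta)]\to\tailmeasure(\map)$, that is regular variation in $\spaceD$ with tail measure $\tailmeasure$, completing \Cref{item:seriesimpliesrv}$\implies$\Cref{item:rvimpliesseries}.
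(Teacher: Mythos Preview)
Your overall architecture mirrors the paper's proof exactly: the implication \Cref{item:rvimpliesseries}$\Rightarrow$\Cref{item:seriesimpliesrv} via equality of exponent measures, the tilt to $\bsW$ with $\bsW_{-a,a}^*=1$, the single-term limit, and the reduction of the remainder to a sup bound on $c_\alpha^{1/\alpha}\sum_{j\ge2}P_j\bsW^{(j)}$ through the Lipschitz property of $\map$ and the domination $d_{J_1}\le\sup_{[-a,a]}+\rme^{-a}$. All of this is correct and essentially identical to the paper.

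The divergence is in how you handle the remainder, and here there is a genuine gap. Your $\beta$--truncation and dichotomy in $\alpha$ are unnecessary, and your argument for $1\le\alpha<2$ does not go through as written. You claim that ``the maximal inequality of \Cref{sec:maximal}'' controls $\sup_{[-a,a]}\bigl|\sum_{P_i\le\beta T}P_i\bsW^{(i)}\bigr|$ by a tail that is regularly varying of order $\alpha$ and then vanishes as $\beta\to0$; but a tail of order $\alpha$ multiplied by $T^\alpha$ yields a \emph{constant}, not something that tends to $0$ with $\beta$, and nothing in \Cref{sec:maximal} gives a bound that improves with the truncation level. The vague reference to a ``maximal inequality'' conditionally on $\{P_i\}$ does not correspond to any statement in that appendix.

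The paper's route is both simpler and uniform in $\alpha\in(0,2)$: forget the $\beta$--truncation entirely and apply the bound \Cref{eq:borne-P2} from the proof of \Cref{lem:inegalite-maximale-02}, which gives
\[
\pr\Bigl(\sup_{a\le s\le b}\Bigl|\sum_{j\ge2}P_j\bsW^{(j)}(s)\Bigr|>x\Bigr)\le\constant\,x^{-2\alpha}\,,
\]
so that $x^\alpha\pr\bigl(d_{J_1}(\bszeta,\bszeta^{(1)})>x\eta\bigr)=O(x^{-\alpha})\to0$. This single estimate replaces your entire \Cref{eq:sumstable-remainder-plan} and the case split; the symmetry assumption for $\alpha\ge1$ is used \emph{inside} the proof of \Cref{eq:borne-P2}, not at the level of your argument. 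Once you swap in this bound, your proof becomes complete and coincides with the paper's.
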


\begin{proof}
  We have already proved the implication \Cref{item:rvimpliesseries} $\implies$
  \Cref{item:seriesimpliesrv}. We prove the converse. We must prove that for all $\epsilon>0$,
  $a>0$ and bounded Lipschitz continuous maps $\map$ on $\spaceD$ such that $\map(\bsy)=0$ if
  $\bsy_{-a,a}^*\leq \epsilon$,
  \begin{align}
    \lim_{x\to\infty}  \frac{\esp[\map(x^{-1} \bszeta)]}{\pr(|\zeta_0|>x)} =  \tailmeasure(\map) \; . 
  \end{align}
  As in the proof of \Cref{theo:maxstable}, let $\bsW$ be a process whose distribution is given by
  \begin{align*}
    \esp[\map(\bsW)] = \frac{\esp [ \map((\bsZ_{a,b}^*)^{-1}\bsZ)(\bsZ_{a,b}^*)^\alpha]}{\esp[(\bsZ_{a,b}^*)^\alpha]} \; .
  \end{align*}
  Let $\bsW^{(i)}$ be \iid\ copies of $\bsW$.  Then $\bszeta$ has the same distribution as
  \begin{align*}
    c_\alpha^{1/\alpha} \sum_{i=1}^\infty P_i \bsW^{(i)} \; , 
  \end{align*} 
  with $c_\alpha=\esp[\sup_{a\leq s \leq b} |Z(s)|^\alpha]$.  Since $P_1$ has a Fr\'echet distribution and $\sup_{-a\leq t\leq a} |W_t| = 1$,
  we have
  \begin{align*}
    \lim_{x\to\infty}    x^\alpha  \esp[\map(c_\alpha^{1/\alpha}x^{-1}P_1\bsW)] 
    & = c_\alpha \int_0^\infty \esp[\map(u\bsW)] \alpha u^{-\alpha-1} \rmd u \\
    & = \int_0^\infty \esp[\map(u\bsZ)] \alpha u^{-\alpha-1} \rmd u = \tailmeasure(\map) \; .
  \end{align*}
  Write $\bszeta^{(1)} = c_\alpha^{1/\alpha} P_1 \bsW^{(1)}$.
  Since $\map$ is Lipshitz-continuous \wrt\ $d_{J_1}$ with support separated from $\bszero$, and
  $\bsW_{-a,a}^*=1$ almost surely, we have, for every $\eta\in(0,\epsilon/2)$,
  \begin{align*}
    \left| \esp[\map(x^{-1}\bszeta)] - \esp[\map(x^{-1}\bszeta^{(1)})] \right| 
    & \leq \constant\ \eta \pr(P_1>a_T\epsilon/2) + \constant \pr(d_{J_1}(\bszeta,\bszeta^{(1)})>x\eta) \; .
  \end{align*}
  Since the $J_1$ metric is bounded by the uniform metric on any compact interval, we obtain, for
  any two functions $f,g\in\spaceD$, 
  \begin{align*}
    d_{J_1}(f,g)  \leq \sup_{-a\leq s \leq a} |f(s)-g(s)| + \rme^{-a} \; . 
  \end{align*}
  Thus, applying the bound \Cref{eq:borne-P2} in the proof of \Cref{lem:inegalite-maximale-02}, we
  obtain
  \begin{align*}
    \limsup_{x\to\infty} x^\alpha \pr(d_{J_1}(\bszeta,\bszeta^{(1)})>x\eta) 
    \leq \limsup_{x\to\infty} x^\alpha \pr \left(\sup_{a\leq s\leq b} \left|\sum_{j=2}^\infty P_j W^{(j)}(s)\right| +\rme^{-a} > x\eta \right) = 0 \; . 
  \end{align*}
  This proves that
  \begin{align*}
    \limsup_{x\to\infty} x^\alpha \left| \esp[\map(x^{-1}\bszeta)] - \esp[\map(x^{-1} \bszeta^{(1)})] \right|  \leq \constant\ \eta \; .
  \end{align*}
  Since $\eta $ is arbitrary, we have proved that \Cref{item:seriesimpliesrv} $\implies$ \Cref{item:rvimpliesseries}.
\end{proof}

If $\tailmeasure(\spaceD_0^c)=0$, we can apply \Cref{theo:ppconv-approx} and obtain the convergence
of the point process of clusters $N_T$ defined in \Cref{eq:def-ppcluster}.
\begin{theorem}
  \label{thep:convpp-mma}
  Let $\bszeta$ be $\spaceD$ valued $\alpha$-stable process, regularly varying in $\spaceD$ with
  tail measure $\tailmeasure$ such that $\tailmeasure(\spaceD_0^c)=0$. Then $\candidate>0$ and
  $\bszeta$ can be expressed as
  \begin{align}
    \label{eq:mma-sum-stable}
    \zeta_t = \sum_{i=1}^\infty P_i Q^{(i)}(t-T_i) \; , 
  \end{align}
  where $\sum_{i=1}^\infty \delta_{T_i,P_i,\bsQ^{(i)}}$ is a PPP on
  $\Rset\times(0,\infty)\times\spaceD_0$ with mean measure
  $\candidate\leb\otimes\nualpha\otimes\pr_{\bsQ}$. Furthermore, the process $\bsQ$
  satisfies~\Cref{eq:Q-local-finite} and the point process of clusters $N_T$ converges to a Poisson
  point process with mean measure $\leb\otimes\tailmeasurestar$ for all sequences $r_T$ such that
  $r_T/T\to0$.
\end{theorem}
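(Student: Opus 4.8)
The plan is to handle the three assertions—positivity of $\candidate$, the pathwise representation \eqref{eq:mma-sum-stable}, and the convergence of $N_T$—in turn, reducing each to the structural results of \Cref{sec:representation} and to \Cref{theo:sum-stable-regvarD,theo:ppconv-approx}. First I would dispatch the positivity of $\candidate$ together with the representation of the tail measure. Since $\tailmeasure(\spaceD_0^c)=0$ and $\tailmeasure\neq0$, we have $\tailmeasure(\spaceD_0)>0$, so \eqref{eq:conservative-nulextremalindex} yields $\candidate>0$; moreover condition \Cref{item:nu-supported-on-D0} of \Cref{theo:equivalences-dissipative} holds, whence \Cref{item:dissipative} provides the mixed moving average representation \eqref{eq:dissipativerepresentation} with a kernel $\bsQ$ of law \eqref{eq:loi-Q} satisfying $\pr(\bsQ^*=1)=1$, and the bounded finiteness of $\tailmeasure$ is exactly \eqref{eq:Q-local-finite}.

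Next I would upgrade this to the pathwise statement \eqref{eq:mma-sum-stable}. By \Cref{theo:sum-stable-regvarD}, $\bszeta$ has the law of the $\alpha$-stable sum of the function-valued points of a PPP on $\spaceD_0$ with mean measure $\tailmeasure$. But \eqref{eq:dissipativerepresentation} says precisely that $\tailmeasure$ is also the mean measure of $\sum_i\delta_{P_i\shift^{T_i}\bsQ^{(i)}}$, the image of a PPP $\sum_i\delta_{T_i,P_i,\bsQ^{(i)}}$ on $\Rset\times(0,\infty)\times\spaceD_0$ with mean measure $\candidate\leb\otimes\nualpha\otimes\pr_{\bsQ}$. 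A Poisson process is determined in law by its mean measure, so these two point processes on $\spaceD_0$ agree in distribution; summing their points—absolutely and locally uniformly if $\alpha<1$, pointwise almost surely under the symmetry assumption if $\alpha\in[1,2)$, exactly as in \Cref{theo:sum-stable-regvarD}—identifies $\bszeta$ in law with $\sum_iP_i\shift^{T_i}\bsQ^{(i)}$, which is \eqref{eq:mma-sum-stable}.

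For the convergence of $N_T$ I would verify the hypotheses of \Cref{theo:ppconv-approx}. The requirement $\pr(\bsY\in\spaceD_0)=1$ follows from $\tailmeasure(\spaceD_0^c)=0$ by \Cref{lem:nunull}. To produce $m$-dependent approximations I would truncate the kernel: since \Cref{theo:equivalences-dissipative} also gives $\pr(\bsQ\in\spaceD_0\cap\spaceD_\alpha)=1$, I set $\bsQ^{(m)}=\bsQ\1{[-m/2,m/2)}$ and define $\zeta_t^{(m)}=\sum_iP_iQ^{(m),(i)}(t-T_i)$ over the same marked PPP. Then $(\bszeta,\bszeta^{(m)})$ is jointly stationary, $\bszeta^{(m)}$ is $m$-dependent because its value at time $s$ uses only the points with $T_i\in[s-m/2,s+m/2)$, and $\bszeta^{(m)}$ is regularly varying in $\spaceD$ (its tail measure is the MMA measure generated by the compactly supported, finite-energy kernel $\bsQ^{(m)}$, and regular variation follows by repeating the converse argument of \Cref{theo:sum-stable-regvarD}).

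The main obstacle is the approximation condition \eqref{eq:condition-m-approx}. The residual $\bszeta-\bszeta^{(m)}=\sum_iP_i\shift^{T_i}(\bsQ-\bsQ^{(m)})^{(i)}$ is again an $\alpha$-stable series, and I would control the tail of $\sup_{0\le s\le1}\norm{\zeta_s-\zeta_s^{(m)}}$ by the maximal inequality of \Cref{sec:maximal} (as in the bound \eqref{eq:borne-P2} inside the proof of \Cref{lem:inegalite-maximale-02}), obtaining a bound of the form
\[
  \limsup_{x\to\infty}\frac{\pr\!\left(\sup_{0\le s\le1}\norm{\zeta_s-\zeta_s^{(m)}}>x\right)}{\pr(\norm{\zeta_0}>x)}
  \le \constant\,\candidate\int_{-\infty}^\infty\esp\!\left[\left((\bsQ-\bsQ^{(m)})_{-t,1-t}^*\right)^\alpha\right]\rmd t \; .
\]
Because $(\bsQ-\bsQ^{(m)})_{-t,1-t}^*$ vanishes unless the interval $[-t,1-t)$ meets $\{|s|\ge m/2\}$, the right-hand side is at most $\constant\,\candidate\int_{\{|t|\ge m/2-1\}}\esp[(\bsQ_{-t,1-t}^*)^\alpha]\rmd t$, the tail of a convergent integral (whose value over all of $\Rset$ equals $\constant\,\tailmeasure(\{\bsy_{0,1}^*>1\})<\infty$ by \eqref{eq:Q-local-finite}), hence tends to $0$ as $m\to\infty$. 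This yields \eqref{eq:condition-m-approx}, and \Cref{theo:ppconv-approx} then delivers $N_T\convweak N$ with mean measure $\leb\otimes\tailmeasurestar$ for every $r_T$ with $r_T/T\to0$. The delicate points in this last step are precisely the passage, via the maximal inequality for stable processes, from the uniform tail of $\bszeta-\bszeta^{(m)}$ on $[0,1]$ to a residual-energy integral, and keeping the normalizations consistent between $\bszeta$ and its residual.
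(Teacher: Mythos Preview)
Your proposal is correct and follows essentially the same route as the paper. The paper's proof only spells out the verification of \eqref{eq:condition-m-approx}: it truncates the kernel via $Q_t^{(m)}=Q_t\ind{|t|\le m}$, defines $\bszeta^{(m)}$ accordingly over the same marked PPP, and then applies \eqref{eq:inegalite-maximale-01} of \Cref{lem:inegalite-maximale-02} to the residual, bounding the resulting energy integral by the tail $\int_{-\infty}^\infty\esp[\sup_{0\le s\le1}|Q(t+s)|^\alpha\ind{|t+s|>m}]\,\rmd t$, which vanishes as $m\to\infty$ by \eqref{eq:Q-local-finite}; this is exactly your fourth paragraph (your half-interval truncation $[-m/2,m/2)$ is a cosmetic improvement yielding genuine $m$-dependence rather than $2m$-dependence). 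Your first two paragraphs supply the details for $\candidate>0$ and the representation \eqref{eq:mma-sum-stable}, which the paper treats as immediate consequences of \Cref{theo:equivalences-dissipative} and \Cref{theo:sum-stable-regvarD} and does not write out.
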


\begin{proof}
  Define 
  \begin{align*}
    Q_t^{(m)} = Q_t \ind{|t|\leq m}  \; .
  \end{align*}
  Let $\bsQ^{(m,i)}$, $i\geq1$, be \iid\ copies of $\bsQ^{(m)}$ and  define $\bszeta^{(m)}$ by
  \begin{align*}
    \zeta_t^{(m)} & = \sum_{i=1}^\infty P_i Q^{(m,i)}_{t-T_i} \; .    
  \end{align*}
  Then $(\bszeta,\bszeta^{(m)})$ is stationary, $\bszeta^{(m)}$ is $m$-dependent and regularly
  varying in $\spaceD$. We prove that the condition \Cref{eq:condition-m-approx} of
  \Cref{theo:ppconv-approx} holds. Applying \Cref{eq:inegalite-maximale-01} of
  \Cref{lem:inegalite-maximale-02} (with $\bsZ$ expressed in terms of $\bsQ$ as in
  \Cref{coro:equivalence-nu-Q}), we obtain
  \begin{align*}
    \lim_{m\to\infty} \limsup_{T\to\infty}
    &   T\pr\left(\sup_{0 \leq s \leq 1} \norm{\zeta_s-\zeta_s^{(m)}}>a_T\epsilon\right) \\
    & \leq \constant\ \lim_{m\to\infty} \limsup_{T\to\infty}
      T^{-1}  \int_{-\infty}^\infty \esp[\sup_{0\leq s \leq T} |Q(t+s)|^\alpha\ind{|t+s|>m}] \rmd t \\
    & \leq \constant\ \lim_{m\to\infty} \int_{-\infty}^\infty \esp[\sup_{0\leq s \leq 1} |Q(t+s)|^\alpha\ind{|t+s|>m}] \rmd t = 0 \; .
  \end{align*}
  Thus \Cref{eq:condition-m-approx} holds.
\end{proof}

Thus we recover part of \cite[Theorem~2.2]{samorodnitsky:2004:maxima} which obtained the extremal
index of a general stable process and \cite[Corollary~5.3]{rootzen:1978} which obtained the extremal
index of moving average \wrt\ a L\'evy stable process.
\begin{corollary}
  \label{coro:extremal-index-max-stable}
  Let $\bszeta$ be $\spaceD$ valued $\alpha$-stable process, regularly varying in $\spaceD$ with
  tail measure~$\tailmeasure$.  Then $\bszeta$ admits an extremal index $\theta = \candidate$, \ie\
  \begin{align*}
    \lim_{T\to\infty} T\pr\left(\max_{0\leq s \leq T} |\zeta_s| \leq a_Tx\right) 
    =  \lim_{T\to\infty} T\pr\left(\max_{0\leq s \leq T} \eta_s \leq a_Tx\right) = \rme^{-\candidate x^{-\alpha}} \; .
  \end{align*}
\end{corollary}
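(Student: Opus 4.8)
The equality involving the max-stable process $\bseta$ is exactly \Cref{theo:M3representation-maxstable}, so I would concentrate on proving $\lim_{T\to\infty}\pr(\sup_{0\le s\le T}\norm{\zeta_s}\le a_Tx)=\rme^{-\candidate x^{-\alpha}}$ for the $\alpha$-stable process $\bszeta$ (reading the factor $T$ on the left-hand side of the displayed identity as a typographical slip). The plan is to reduce to the dissipative case exactly as in the proof of \Cref{theo:M3representation-maxstable}. Since $\spaceD_0$ is \shiftinvariant\ and homogeneous, I would split $\tailmeasure=\tailmeasure_D+\tailmeasure_C$ with $\tailmeasure_D=\tailmeasure|_{\spaceD_0}$ and $\tailmeasure_C=\tailmeasure|_{\spaceD_0^c}$; both are \shiftinvariant, homogeneous and $\mcb_0$-boundedly finite. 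Restricting the Poisson representation $\sum_i\delta_{P_i,\bsZ^{(i)}}$ of $\bszeta$ to the two independent point configurations carried by $\{\bsZ^{(i)}\in\spaceD_0\}$ and $\{\bsZ^{(i)}\in\spaceD_0^c\}$ and invoking \Cref{theo:sum-stable-regvarD} produces two independent stationary $\alpha$-stable processes $\bszeta_D,\bszeta_C$, regularly varying in $\spaceD$ with tail measures $\tailmeasure_D,\tailmeasure_C$, and with $\bszeta=\bszeta_D+\bszeta_C$.

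Next I would show the conservative part is extremally negligible. As $\tailmeasure_C(\spaceD_0)=0$, \Cref{eq:conservative-nulextremalindex} gives candidate index $0$ for $\bszeta_C$, and \Cref{lem:nullextremalindex} then yields that $\bszeta_C$ has extremal index $0$, \ie\ $\pr(\sup_{0\le s\le T}\norm{\zeta_{C,s}}>a_T^{C}\epsilon)\to0$ for its own quantile scaling $a_T^{C}$ and every $\epsilon>0$. Since $\norm{\zeta_{C,0}}$ is regularly varying of the same index $\alpha$ as $\norm{\zeta_0}$, the scalings are comparable ($a_T^{C}\asymp a_T$), so $a_T^{-1}\sup_{0\le s\le T}\norm{\zeta_{C,s}}\convprob0$. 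The pointwise bound $\bigl|\sup_{0\le s\le T}\norm{\zeta_s}-\sup_{0\le s\le T}\norm{\zeta_{D,s}}\bigr|\le\sup_{0\le s\le T}\norm{\zeta_{C,s}}$ then shows that the normalized maxima of $\bszeta$ and of $\bszeta_D$ share the same weak limit, reducing the problem to $\bszeta_D$.

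For the purely dissipative process $\bszeta_D$ I would apply \Cref{thep:convpp-mma}, which gives the weak convergence $N_T\convweak N$ of its point process of clusters to a PPP with mean measure $\leb\otimes\tailmeasurestar_D$, where by \Cref{eq:tailmeasurestar-Q} and $\pr(\bsQ^*=1)=1$ one has $\tailmeasurestar_D(\{\bsy^*>y\})=\candidate_D\,y^{-\alpha}$ with $\candidate_D=\esp[\exc^{-1}(\bsY_D)]$. Reading this convergence through the (boundary-continuous) functional $N\mapsto N([0,1]\times\{\bsy^*>y\})$, exactly as in \Cref{coro:extremalindex}, gives $\pr(\sup_{0\le s\le T}\norm{\zeta_{D,s}}\le b_Ty)\to\rme^{-\candidate_D y^{-\alpha}}$ for the quantile scaling $b_T$ of $\norm{\zeta_{D,0}}$. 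It then remains to reconcile the normalizations: writing $p=\pr(\bsY\in\spaceD_0)=\tailmeasure(\spaceD_0\cap\{\norm{\bsy_0}>1\})$, the tails satisfy $b_T\sim p^{1/\alpha}a_T$, while the mixture decomposition of $\bsY$ (conservative realizations contributing $0$ to $\esp[\exc^{-1}(\bsY)]$) gives $\candidate=p\,\candidate_D$; substituting $y=p^{-1/\alpha}x$ turns the exponent into $\candidate_D\,p\,x^{-\alpha}=\candidate x^{-\alpha}$, which together with the previous paragraph establishes the claim.

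The routine part is the limit itself, which is just the continuous-mapping reading of the point process convergence already in hand through \Cref{thep:convpp-mma,coro:extremalindex}. The main obstacle is the decomposition machinery: justifying that restricting the Poisson representation to $\spaceD_0$ and $\spaceD_0^c$ yields two genuinely independent $\alpha$-stable summands each regularly varying in $\spaceD$ (so that \Cref{theo:sum-stable-regvarD,lem:nullextremalindex} apply to the pieces), together with the bookkeeping of the two constants $p^{1/\alpha}$ and $p$ that must cancel to return the \emph{full} candidate extremal index $\candidate$.
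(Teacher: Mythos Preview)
Your proposal is correct and follows essentially the same route as the paper's proof. The paper likewise splits into dissipative and conservative parts, uses \Cref{lem:nullextremalindex} (via \cref{eq:conservative-nulextremalindex}) to show the conservative piece has extremal index $0$ and is therefore negligible after normalization, and then applies \Cref{thep:convpp-mma} to the dissipative piece; the only cosmetic difference is that the paper performs the decomposition by defining rescaled spectral processes $\bsZ_0=p^{-1/\alpha}\bsZ\ind{\bsZ\in\spaceD_0}$ and $\bsZ_1=(1-p)^{-1/\alpha}\bsZ\ind{\bsZ\notin\spaceD_0}$ (so that each is a properly normalized spectral process), whereas you obtain the same two summands by Poisson thinning and carry the factor $p$ separately---the bookkeeping $b_T\sim p^{1/\alpha}a_T$ and $\candidate=p\,\candidate_D$ you identify is exactly the reconciliation the paper performs.
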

\begin{proof}
  As noted in \Cref{eq:conservative-nulextremalindex}, if $\nu(\spaceD_0)=0$, then $\candidate=0$,
  hence $\theta=0$ by \Cref{lem:nullextremalindex}.  Assume that $\candidate>0$, which is equivalent
  to $\tailmeasure(\spaceD_0)>0$. If $\tailmeasure(\spaceD_0^c)=0$, then we can apply
  \Cref{thep:convpp-mma} to conclude. Otherwise, define
  $p=\esp[|\bsZ(0)|^\alpha\ind{\bsZ\in\spaceD_0}]$, $\bsZ_0=p^{-1/\alpha}\bsZ\ind{\bsZ\in\spaceD_0}$
  and $\bsZ_1=(1-p)^{-1/\alpha}\bsZ\ind{\bsZ\notin\spaceD_0}$. Let $\bsZ_0^{(i)}$, $\bsZ_1^{(i)}$,
  $i\geq1$ be \iid\ copies of $\bsZ_0$ and $\bsZ_1$. Define $\bszeta_0$ and $\bszeta_1$ by
  \Cref{eq:series-representation} with $\bsZ_0$ and $\bsZ_1$ respectively. By the previous part of
  the proof we have $T^{-1/\alpha} \sup_{0 \leq s \leq T} |\zeta_1(s)| \convprob 0$ and by
  \Cref{thep:convpp-mma}, we have
  \begin{align*}
    \pr \left( \sup_{0 \leq s \leq T} |\zeta(s)| \leq  T^{1/\alpha}x \right) \sim
    \pr \left( p\sup_{0 \leq s \leq T} |\zeta_0(s)| > T^{1/\alpha}x \right) \to \rme^{-p^{-1} \candidate_0x^{-\alpha}} \; ,
  \end{align*}
  with
  $\candidate_0 = \lim_{T\to\infty}\esp[\sup_{0\leq s \leq T}|Z_0(s)|^\alpha] =
  p\lim_{T\to\infty}\esp[\sup_{0\leq s \leq T}|Z(s)|^\alpha] = p\candidate$.
\end{proof}

\subsubsection*{Integral representations}
Stable processes can also be defined by their integral representations. For completeness, we briefly
rewrite (without proof) our results using these representations. Let $M$ be a $\alpha$-stable random
measure with independent increments and control measure $m$ on a measurable space
$(\Eset,\mce)$. For simplicity, following \cite{samorodnitsky:2004:maxima}, we assume that $M$
(hence $m$) is symmetric. This means that for every measurable function $f$
such that $\int_{\Eset}|f(x|^\alpha m(\rmd x)<\infty$,
\begin{align*}
  \log \esp[  \rme^{\rmi z M(f)}] = -C_\alpha |z|^\alpha \int_{\Eset} |f(x)|^\alpha m(\rmd x) \;  \; .
\end{align*}
with $C_\alpha = \int_0^\infty \sin(x) x^{-\alpha} \rmd x$.  Let $f:\Eset\times\Rset\to\Rset$ be a
measurable function, such that $t\mapsto f(x,t)$ is \cadlag\ for all $x\in\Eset$ and for all
$a\leq b$,
\begin{align*}
  \int_E  \sup_{a \leq s  \leq b} |f(x,s)|^\alpha m(\rmd x) < \infty \; .
\end{align*}
By \cite[Theorem~10.2.3]{samorodnitsky:taqqu:1994}, this is a necessary condition for local
boundedness of the stable process $\bszeta$ defined by
\begin{align*}
  \zeta_t = \int_{\Eset} f(x,t) M(\rmd x) \; ,  \ \ t\in\Rset \; .
\end{align*}
The tail measure $\tailmeasure$ is given by
\begin{align}
  \label{eq:tailmeasure-stable-general}
  \tailmeasure = \frac{1} {\int_{\Eset} |f(x,0)|^\alpha m(\rmd x)}
  \int_{\Eset} \int_{0}^\infty \delta_{uf(x,\cdot)} \alpha u^{-\alpha-1} \rmd  u \; m(\rmd x) \; .
\end{align}
See \cite[Section~3]{owada:samorodnitsky:2012}. If $\tailmeasure(\spaceD_0^c)=0$, then $\bsX$ admits
a mixed moving average representation. This means that there exist a measured space $(\Fset,\mcf,\mu)$
and a measurable function $g:\Fset\times\Rset\to\Rset$ such that
\begin{align}
  \label{eq:condition-f-mma}
  \int_{\Fset}  \int_{-\infty}^\infty \sup_{a \leq s \leq b} |g(w,t+s)|^\alpha \mu(\rmd w) \rmd t < \infty \; ,
\end{align}
for all $a\leq b$. The latter condition implies that $\lim_{|t|\to\infty} g(w,t)=0$ for $\mu$-almost
all $w$. Then $\bszeta$ can be defined as 
\begin{align*}
  \zeta_t = \int_{\Fset}\int_{-\infty}^\infty g(w,t-s) \Lambda (\rmd w\rmd s) \; .
\end{align*}
where $\Lambda$ is an $\alpha$-stable random measure with independent increments and control
measure $\mu\otimes\leb$ on $\Fset\times\Rset$.

Define $g^* = \sup_{t\in\Rset} |g(\cdot,t)|$. Then, the extremal index is given by
\begin{align*}
  \candidate
    = \frac{ \int_{\Eset} (g^*(x))^\alpha \mu(\rmd x)} {\int_{\Eset}\int_{-\infty}^\infty |g(x,t)|^\alpha \mu(\rmd x)\rmd t} \; .
\end{align*}
See \cite[Theorem~22(i)]{samorodnitsky:2004:maxima}. A mixed-moving average representation of the
tail measure is given by
\begin{align*}
  \tailmeasure
  & = \candidate \int_{-\infty}^\infty \int_0^\infty \esp\left[ \delta_{uh(W,\cdot-t)} \right] \alpha u^{-\alpha-1} \rmd u  \, \rmd t \; ,
\end{align*}
with $W$ a random variable whose distribution admits a density proportional to $(g^*)^\alpha$ \wrt\
Lebesgue's measure and $h(w,t) = (g^*)^{-1}(w)g(w,t)$.

\subsection{Functional weighted sums}
\label{sec:functional-ma}
Let $\{f_k\in\Zset\}$ be a sequence of random element in $\spaceD(\Rset,\Rset)$ and let
$\{V_k,k\in\Zset\}$ be a sequence of \iid\ random variables, regularly varying with index $\alpha$
and extremal skewness~$p_Z$, independent of the previous sequence. We (formally) define the process
$\bsX$ by
\begin{align}
  \label{eq:functional-linear}
  X_t = \sum_{k\in\Zset} f_k(t) V_k \; , \ \ t \in\Rset \; .
\end{align}
If $\alpha\leq1$ or $\alpha>1$ and $\esp[V_0]=0$ and if there exists $\beta \in (0,\min(\alpha,1))$
such that for every $t\in\Rset$,
\begin{align*}
  \pr \left(  \sum_{k\in\Zset} |f_k(t)|^\beta < \infty \right) = 1 \; , 
\end{align*}
then the series $\sum_{k\in\Zset} f_k(t) V_k$ is almost surely convergent; see
\cite[Section~3]{hult:samorodnitsky:2008} and \cite[Lemma~A.3]{mikosch:samorodnitsky:2000}. If furthermore
\begin{align}
  \label{eq:moment-beta}
  \sum_{k\in\Zset} \esp[ |f_k(t)|^\beta] < \infty  \; , 
\end{align}
then $\bsX$ is finite dimensional regularly varying.  Before stating and proving rigorous results,
we give some heuristics. The main argument to obtain the extremal behavior of the process $\bsX$ is
the so-called ``single large jump principle'': $X_t$ is large \ifft\ there is a single jump $V_k$
which is extremely large and it is chosen ``at random'' among the sequence $\{V_j,j\in\Zset\}$. Thus
we expect that for a continuous map $\map$ on $\spaceD$ endowed with the $J_1$ topology with support
separated from~$\bszero$,
\begin{align}
  \frac{\esp[\map(x^{-1}\bsX)]}{\pr(|V_0|>x)}
  & \sim \sum_{k\in\Zset} \frac{\esp[\map(x^{-1}f_k V_k)] }{\pr(|V_0|>x)} \nonumber \\
  & \to \sum_{k\in\Zset} \int_0^\infty \esp[\map(u\epsilon_0f_k)] \alpha u^{-\alpha-1} \rmd u \; ,
    \label{eq:convtotailmeasure-H}
\end{align}
with $\epsilon_0$ independent of $\{f_k\}$ and $\pr(\epsilon_0=1)=1-\pr(\epsilon_0=-1)=p_Z$.  Since
the tail measure is normalized by the condition $\tailmeasure(\{\bsy\in\spaceD:\norm{y_0}>1\})=1$,
\cref{eq:convtotailmeasure-H} yields
\begin{align*}
  \tailmeasure(\map) & = \frac{ \sum_{k\in\Zset} \int_0^\infty \esp[\map(u\epsilon_0f_k)] \alpha u^{-\alpha-1} \rmd u}
                    {\sum_{k\in\Zset} \esp[|f_k(0)|^\alpha] } \\
                  & = \frac{ \sum_{k\in\Zset} \int_0^\infty \esp[|f_k(0)|^\alpha \map(u|f_k(0)|^{-1}\epsilon_0f_k)] \alpha u^{-\alpha-1} \rmd u}
                    {\sum_{k\in\Zset} \esp[|f_k(0)|^\alpha] } \; .
\end{align*}
Thus we expect the tail measure to be given by
\begin{align}
  \label{eq:def-tailmeasure-functional-weighted-sum}
  \tailmeasure = \int_0^\infty \esp[\delta_{u|f_N(0)|^{-1}\epsilon_0f_N}]  \alpha u^{-\alpha-1} \rmd u \; .
\end{align}
with an integer valued random variable $N$, independent of $\epsilon_0$ and such that
\begin{align*}
  \esp[\map(f_j,j\in\Zset) \ind{ N=k}] = \frac{  \esp[\map(f_j,j\in\Zset) |f_k(0)|^\alpha]}{\sum_{j\in\Zset} \esp[|f_j(0)|^\alpha]} \; ,  \ \ k \in \Zset \; .
\end{align*}
In particular, 
\begin{align*}
  \pr(N=k) = \frac{\esp[|f_k(0)|^\alpha]}{\sum_{j\in\Zset} \esp[|f_j(0)|^\alpha]} \; ,  \ \ k \in \Zset \; .
\end{align*}
We derive from \cref{eq:def-tailmeasure-functional-weighted-sum} the tail process
\begin{align}
   \label{eq:tailprocess-with-N}
  Y_t = \frac{f_N(t)}{|f_N(0)|}Y\epsilon_0 \; , \ \ t \in \Rset \; , 
\end{align}
with $Y$ a random variable with a Pareto distribution with tail index $\alpha>0$, independent of
$\epsilon_0$, $N$ and $\{f_k,k\in\Zset\}$. In discrete time, the tail process $\bsY$ of a time
series of the form \Cref{eq:functional-linear} was obtained in
\cite[Section~8]{meinguet:segers:2010}:

In order to prove rigorously \Cref{eq:def-tailmeasure-functional-weighted-sum} we must make
assumptions that allow the use of a truncation argument.

\begin{proposition}
  \label{prop:functionalconvergencewithadhoctruncationcondition}
  Let $\{V_k,k\in\Zset\}$ be a sequence of \iid\ random variables, regularly varying with index
  $\alpha>0$ and extremal skewness~$p_Z$ and such that $\esp[V_0]=0$ if $\alpha>1$.  Assume that
  $\{f_k,k\in\Zset\}$ is a sequence of random functions, independent of $\{V_k,k\in\Zset\}$, such
  that $\pr(f_k\in\spaceD)=1$, $f_k$ is stochastically continuous for all $k\in\Zset$ and there
  exists $\beta\in(0,\min(\alpha,1))$ such that \Cref{eq:moment-beta} holds.  Assume that the
  process $\bsX$ defined in \Cref{eq:functional-linear} is stochastically continuous and that for
  all $a<b$ and $x>0$,
  \begin{gather}
    \label{eq:summability-supf_k}
    \sum_{k\in\Zset} \esp \left[ \sup_{a \leq s  \leq b} |f_k(s)|^\alpha \right]  < \infty \; , \\
    \label{eq:truncation-argument-functional-weighted-sum}
    \lim_{m\to\infty} \limsup_{T\to\infty} \left( \sup_{a\leq s  \leq b} \left| \sum_{|k|>m}f_k(s)V_k\right| > a_T x \right)  = 0 \; ,
  \end{gather}
  with $a_T$ such that $\lim_{T\to\infty} T\pr(|X_0|>a_T)=1$.  Then $\pr(\bsX\in\spaceD)=1$, $\bsX$
  is regularly varying in $\spaceD$ with tail measure given by
  \Cref{eq:def-tailmeasure-functional-weighted-sum}.

  Assume moreover that $\{\shift^tf_k,k\in\Zset\} \eqdistr \{f_k,k\in\Zset\}$ for all $t\in\Rset$
  and $\pr(f_k\in\spaceD_0)=1$ for all $k\in\Zset$ and let $N_T$ be the point process of clusters of
  $\bsX$ as defined in \Cref{eq:def-ppcluster}. Then  $\bsX$ is stationary, $N_T\convweak N$, where $N$ is a Poisson point
  process on $\Rset\times\spaceDtilde_0$ with mean measure $\leb\otimes\tailmeasurestar$ and
  $\tailmeasurestar$ defined by
  \begin{align*}
    \tailmeasurestar = \int_0^\infty \esp \left[ \frac{\delta_{uf_N}}
    {\int_{-\infty}^\infty |f_N(t)|^\alpha\rmd t} \right] \alpha u ^{-\alpha-1} \rmd u  \; .
  \end{align*}
  The extremal index $\candidate$ of $\bsX$ exists and is given by
  \begin{align*}
    \candidate & = \esp \left[ \frac{\sup_{t\in\Rset} |f_N(t)|^\alpha}{\int_{-\infty}^\infty |f_N(t)|^\alpha \rmd t } \right] \; .
  \end{align*}
\end{proposition}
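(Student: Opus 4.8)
The plan is to establish regular variation in $\spaceD$ through the single-large-jump heuristic sketched before the statement, rigorised by a truncation argument, and then to feed the resulting tail measure into \Cref{theo:ppconv-approx}. First I would check that $\pr(\bsX\in\spaceD)=1$: writing $\bsX^{(m)}=\sum_{|k|\leq m}f_kV_k$, the summability condition \eqref{eq:summability-supf_k} together with $\esp[|V_0|^\beta]<\infty$ (valid since $\beta<\alpha$) and subadditivity of $t\mapsto t^\beta$ give $\esp[\sup_{a\leq s\leq b}|\bsX_s-\bsX_s^{(m)}|^\beta]\leq\sum_{|k|>m}\esp[\sup_{a\leq s\leq b}|f_k(s)|^\beta]\,\esp[|V_0|^\beta]\to0$, so the partial sums converge to $\bsX$ uniformly on compacts in probability; since uniform-on-compacts limits of \cadlag\ functions are \cadlag, $\bsX\in\spaceD$ almost surely. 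The analytic core is the single-large-jump asymptotics for the finite sum, which I would obtain by conditioning on $\mathcal{G}=\sigma(f_k,k\in\Zset)$: given $\mathcal{G}$, $\bsX^{(m)}$ is a finite linear combination of the independent regularly varying variables $V_k$ with fixed $\spaceD$-valued coefficients, so the standard principle that the tail of a sum of independent regularly varying terms is the sum of their tails yields, for every bounded $J_1$-Lipschitz $\map$ vanishing on $\{\bsy:\bsy_{-a,a}^*\leq\epsilon\}$,
\[
\lim_{x\to\infty}\frac{\esp[\map(x^{-1}\bsX^{(m)})\mid\mathcal{G}]}{\pr(|V_0|>x)}=\sum_{|k|\leq m}\int_0^\infty\esp_{\epsilon_0}[\map(u\epsilon_0 f_k)]\,\nualpha(\rmd u),
\]
where $\esp_{\epsilon_0}$ averages over the sign $\epsilon_0$ with $\pr(\epsilon_0=1)=p_Z$. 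These conditional expectations are dominated uniformly in $x$ by a constant times $\sum_{|k|\leq m}\sup_{-a\leq s\leq a}|f_k(s)|^\alpha$, which is integrable by \eqref{eq:summability-supf_k}, so dominated convergence removes the conditioning.

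Next I would pass to the full sum. The bound $d_{J_1}(x^{-1}\bsX,x^{-1}\bsX^{(m)})\leq x^{-1}\sup_{-a\leq s\leq a}|\bsX_s-\bsX_s^{(m)}|+\rme^{-a}$ and the $J_1$-Lipschitz property of $\map$ control $|\map(x^{-1}\bsX)-\map(x^{-1}\bsX^{(m)})|$, while the truncation hypothesis \eqref{eq:truncation-argument-functional-weighted-sum} makes the tail of $\sup_{-a\leq s\leq a}|\bsX_s-\bsX_s^{(m)}|$ negligible relative to $\pr(|V_0|>x)$. A triangular argument exactly as in the proof of \Cref{theo:rv-in-D-equivalence} (let $x\to\infty$, then $m\to\infty$, with $a$ chosen large to absorb $\rme^{-a}$) then gives $\lim_{x\to\infty}\pr(|V_0|>x)^{-1}\esp[\map(x^{-1}\bsX)]=\sum_{k\in\Zset}\int_0^\infty\esp[\map(u\epsilon_0 f_k)]\,\nualpha(\rmd u)$. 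Taking $\map$ depending only on $\bsy_0$ shows $\pr(|X_0|>x)\sim(\sum_k\esp[|f_k(0)|^\alpha])\,\pr(|V_0|>x)$; dividing by this constant, performing the change of variables $u\mapsto u|f_N(0)|^{-1}$ in each term and rewriting the sum through the index $N$ yields exactly \eqref{eq:def-tailmeasure-functional-weighted-sum}, hence regular variation in $\spaceD$. The tail process \eqref{eq:tailprocess-with-N} follows by restricting $\tailmeasure$ to $\{|\bsy_0|>1\}$.

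For the second part, stationarity of $\bsX$ is immediate from $\{\shift^t f_k\}\eqdistr\{f_k\}$ and the independence of the \iid\ sequence $\{V_k\}$, and $\pr(\bsY\in\spaceD_0)=1$ follows from \eqref{eq:tailprocess-with-N} since $\pr(f_k\in\spaceD_0)=1$ for every $k$. To invoke \Cref{theo:ppconv-approx} I would build $m$-dependent approximations by truncating the temporal support of each summand: writing $c_k$ for a shift-equivariant anchor of $f_k$ (for instance $c_k=\mci_0(f_k)$, finite since $f_k\in\spaceD_0$) and $\bsX^{(m)}_t=\sum_k f_k(t)\ind{|t-c_k|\leq m}V_k$, each $V_k$ then influences $\bsX^{(m)}$ only on a time window of length $2m$, and shift-equivariance of the anchor together with $\{\shift^t f_k\}\eqdistr\{f_k\}$ makes $\bsX^{(m)}$ stationary with $(\bsX,\bsX^{(m)})$ stationary. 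Condition \eqref{eq:condition-m-approx} is then checked exactly as in the proof of \Cref{thep:convpp-mma}, bounding the tail of $\sup_{0\leq s\leq1}|\bsX_s-\bsX_s^{(m)}|$ through the maximal inequality and the fact that $\sum_k\esp[\sup_s|f_k(s)|^\alpha\ind{|s-c_k|>m}]\to0$ by \eqref{eq:summability-supf_k} and $f_k\in\spaceD_0$. \Cref{theo:ppconv-approx} then delivers $N_T\convweak N$ with mean measure $\leb\otimes\tailmeasurestar$. The explicit form of $\tailmeasurestar$ is read off from $\tailmeasure$ by the relation $\tailmeasure=\int_{-\infty}^\infty\tailmeasurestar\circ\shift^t\,\rmd t$ (a change of variables in $t$ using $\{\shift^t f_k\}\eqdistr\{f_k\}$), and the extremal index is obtained from $\candidate=\tailmeasurestar(\{\bsy^*>1\})=\esp\bigl[(f_N^*)^\alpha/\int_{-\infty}^\infty|f_N(t)|^\alpha\,\rmd t\bigr]$ by a direct evaluation.

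The \emph{main obstacle} is the construction and control of the $m$-dependent approximation. Unlike the mixed-moving-average situation of \Cref{thep:convpp-mma}, where disjoint time regions automatically involve independent Poisson points, here the entire family $\{f_k\}$ is shared across time, so genuine $2m$-dependence of $\bsX^{(m)}$ requires the family $\{f_k\}$ to be localised with independent pieces over disjoint blocks—precisely the structure present in the motivating examples, in particular the shot-noise process of \Cref{sec:shot-noise}, where the $f_k$ arise from a Poisson arrival process. Making this localisation explicit and verifying that truncation around the anchors $c_k$ simultaneously preserves stationarity and yields \eqref{eq:condition-m-approx} is the delicate point; by contrast, the single-large-jump asymptotics of the first part are routine once the truncation hypothesis \eqref{eq:truncation-argument-functional-weighted-sum} is granted.
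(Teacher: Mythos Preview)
Your approach for the first part (regular variation in $\spaceD$) is essentially identical to the paper's: truncate to $\bsX^{(m)}=\sum_{|k|\leq m}f_kV_k$, establish the single-large-jump limit for the finite sum by conditioning on the weights and invoking regular variation of the \iid\ summands, then pass to the full sum by a triangular argument using the Lipschitz property of $\map$, the bound $d_{J_1}(f,g)\leq\sup_{[-a,a]}|f-g|+\rme^{-a}$, and the truncation hypothesis \eqref{eq:truncation-argument-functional-weighted-sum}. Two minor discrepancies: the paper obtains $\bsX\in\spaceD$ directly from \eqref{eq:truncation-argument-functional-weighted-sum} (locally uniform convergence in probability, hence a.s.\ along a subsequence) rather than via a $\beta$-moment bound on $\sup_{[a,b]}|f_k|$, which is not among the hypotheses; and the uniform-in-$x$ domination needed for dominated convergence comes from Potter's bounds and thus involves an $(\alpha+\epsilon)$-moment of $\sup_{[a,b]}|f_k|$ (as the paper writes), not the $\alpha$-moment you state.

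For the second part the paper's proof is literally one sentence: ``the convergence of the point process of clusters and the expression of the extremal index follow from \Cref{theo:ppconv-approx}.'' It neither constructs the $m$-dependent approximations required by that theorem nor checks that the index-truncation $\bsX^{(m)}=\sum_{|k|\leq m}f_kV_k$ is $m$-dependent (in general it is not, since all times share the same $V_k$'s and the same random $f_k$'s). You go further than the paper by proposing an anchor-based time-truncation and correctly identifying that genuine $m$-dependence fails without additional localisation structure on $\{f_k\}$. This is exactly the structure the paper supplies only afterwards, in the concrete examples (functional moving average, shot noise), where the Poisson arrival mechanism makes a time-window truncation $m$-dependent. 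Your diagnosis of the obstacle is accurate; the paper's proof of the general proposition is terse on this point and effectively defers the verification of the hypotheses of \Cref{theo:ppconv-approx} to the examples.
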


\begin{proof}
  For a positive integer $m$, define
  \begin{align}
    \label{eq:functional-linear-truncated}
    X_t^{(m)} = \sum_{|k|\leq m} f_k(t) V_k \;  , \ \ \hat{X}_t^{(m)} = \sum_{|k|> m} f_k(t) V_k \; , \ \ t \in\Rset \; .
  \end{align}
  Then $\bsX=\bsX^{(m)}+\hat\bsX^{(m)}$. The process $\bsX^{(m)}$ is almost surely \cadlag\ as a
  finite sum of almost surely \cadlag\ functions and stochastically continuous.  Condition
  \Cref{eq:truncation-argument-functional-weighted-sum} implies that $\bsX^{(m)}$ converges in
  probability (hence almost surely along a subsequence) locally uniformly to $\bsX$, thus $\bsX$ is
  also almost surely \cadlag. Define $c_m$ as
  \begin{align*}
    c_m = \lim_{T\to\infty} T \pr(|X_0^{(m)}|>a_T) = \frac{\sum_{|k\leq m}\esp[|f_k(0)|^\alpha]}{\sum_{k\in\Zset} \esp[|f_k(0)|^\alpha]} \; .
  \end{align*}
  We will prove that for all $a<b\in\Rset$, $\epsilon>0$ and Lipschitz continuous maps $\map$ \wrt\
  $d_{J_1}$ (defined in (\ref{eq:def-local-J1})) such that $\map(\bsy)=0$ if
  $\bsy_{a,b}^*\leq\epsilon$, it holds that
  \begin{align}
    \label{eq:convergence-num}
    &    \lim_{T\to\infty}   T\esp[\map(a_T^{-1}\bsX^{(m)})] = c_m \sum_{|k|\leq m} \int_0^\infty \esp[\map(u\epsilon_0f_k)] \alpha u^{-\alpha-1} \rmd u \; , \\
    &    \lim_{m\to\infty} \limsup_{T\to\infty} T\esp[|\map(a_T^{-1} \bsX)-\map(a_T^{-1} \bsX^{(m)})|] = 0 \; .
      \label{eq:triangular-argument-map-H}
  \end{align}
  In view of \Cref{eq:summability-supf_k}, the series
  $ \sum_{k\in\Zset} \int_0^\infty \esp[|\map(u\epsilon_0f_k)|] \alpha u^{-\alpha-1} \rmd u$ is summable
  and
  \begin{align*}
    \lim_{m\to\infty}    \sum_{|k|\leq m} \int_0^\infty \esp[\map(u\epsilon_0f_k)] \alpha u^{-\alpha-1} \rmd u = 
    \sum_{k\in\Zset} \int_0^\infty \esp[\map(u\epsilon_0f_k)] \alpha u^{-\alpha-1} \rmd u \; .
  \end{align*}
  Thus \Cref{eq:convergence-num} and \Cref{eq:triangular-argument-map-H} imply the convergence
  \Cref{eq:convtotailmeasure-H}.  Because of the Lipschitz property of $\map$, we have, for every
  $\eta\in(0,\epsilon/2)$,
  \begin{align*}
    \esp[|\map(a_T^{-1} \bsX)-\map(a_T^{-1} \bsX^{(m)})|] \leq \constant\ \eta \pr(\bsX_{a,b}^*>a_T\epsilon/2) 
    + \constant\ \pr(d_{J_1}(\bsX,\bsX^{(m)})>a_T\eta) \; .
  \end{align*}
  By definition of the metric $d_{J_1}$ and since the $J_1$ metric on an interval is bounded by the
  uniform metric, we have, for all functions $f,g\in\spaceD$ and $t>0$,
  \begin{align*}
    d_{J_1}(f,g)  \leq \sup_{-t\leq s\leq t} |f(s)-g(s)| + \rme^{-t} \; . 
  \end{align*}
  Therefore,  
  \begin{multline*}
    \limsup_{T\to\infty} T \esp[|\map(a_T^{-1} \bsX) - \map(a_T^{-1} \bsX^{(m)})|] \\
    \leq \constant\ \eta + \constant\ \limsup_{T\to\infty} T \pr\left( \sup_{-t\leq s \leq t} |X_s -
      X_s^{(m)}| + \rme^{-t} > a_T\eta\right) \; .
  \end{multline*}
  Thus \Cref{eq:truncation-argument-functional-weighted-sum} implies that
  \Cref{eq:triangular-argument-map-H} holds and there only remains to prove
  \Cref{eq:convergence-num}.

  \begin{enumerate}[$\bullet$,wide=0pt]
  \item The regular variation of $V_0$ implies that for all $\eta>0$ and bounded continuous
    functions $g$ on $\Rset$ such that $g(x)=0$ if $|x|\leq \eta$,
    \begin{align*}
      \lim_{x\to\infty} \frac{ \esp[g(x^{-1}V_0)]}{\pr(|V_0|>x)} = \int_{0}^\infty \esp[g(u\epsilon_0)] \alpha u^{-\alpha-1} \rmd u \; ,
    \end{align*}
    with $\epsilon_0$ as above.
  \item For every $f\in\spaceD$, the map $u\mapsto uf$ is continuous on $\Rset$. Thus, for every map
    $\map$ on $\spaceD$, continuous  \wrt\ the $J_1$ topology, the 
      map $\map:u\mapsto \map(uf)$ is continuous on $\Rset$.
    \item If $\map$ is moreover bounded and there exist $a<b$ and $\eta>0$ such that $\map(\bsy)=0$
      if~$\bsy^*_{a,b}\leq\eta$, then for every function $f\in\spaceD$ (which is necessarily locally
      bounded), the map $u\mapsto \map(uf)$ is bounded, continuous with support separated from zero: if
      $|u|\leq \epsilon (f_{a,b}^*)^{-1}$, then $\map(uf)=0$. Consequently,
    \begin{align*}
      \lim_{x\to\infty} \frac{\esp[\map(x^{-1}fZ)]}{\pr(|V_0|>x)} = \int_{0}^\infty \esp[\map(uf\epsilon_0)] \alpha u^{-\alpha-1} \rmd u \; ,
    \end{align*}
  \item If $f$ is a $\spaceD$-valued random map, independent of $Z$, then, conditionally on $f$, we have almost
    surely,
    \begin{align*}
      \lim_{x\to\infty}      \frac{\esp[\map(x^{-1}fZ)\mid f]}{\pr(|V_0|>x)} 
      & = \int_0^\infty \esp[\map(uf\epsilon_0)\mid f] \alpha u^{-\alpha-1} \rmd u \; .
    \end{align*}
    Since $\map(x^{-1}fZ)=0$ if $f_{a,b}^*Z\leq x\epsilon$, By Potter's bound
    (cf. \cite[Proposition~1.4.2]{kulik:soulier:2020}), we have, for $x\geq1$, 
    \begin{align*}
      \frac{\esp[\map(x^{-1}fV_0)\mid f]}{\pr(|V_0|>x)} \leq \constant \frac{\pr(f_{a,b}^*|V_0|>x\epsilon)}{\pr(|V_0|>x)}
      \leq \constant (f_{a,b}^*\vee1)^{\alpha+\epsilon} \; .
    \end{align*}
    Thus, by the dominated convergence theorem,  if $\esp[(f_{a,b}^*)^{\alpha+\epsilon}]<\infty$, we obtain
    \begin{align*}
      \lim_{x\to\infty}      \frac{\esp[\map(x^{-1}fZ)]}{\pr(|V_0|>x)} 
      & = \int_0^\infty \esp[\map(uf\epsilon_0)] \alpha u^{-\alpha-1} \rmd u \; .
    \end{align*}
  \item Consider now \iid\ random variables $V_1,\dots,V_k$. If $g:\Rset^k\to\Rset$ is continuous and
    bounded with support separated from zero, regular variation yields (cf. \cite[Proposition~2.1.1]{kulik:soulier:2020})
    \begin{align}
      \label{eq:mutlrv-indep}
      \lim_{x\to\infty} \frac{\esp[g(x^{-1}(V_1,\dots,V_k))}{\pr(|V_0|>x)} 
      =  \sum_{i=1}^k \int_0^\infty \esp[g_i(u\epsilon_0)] \alpha u^{-\alpha-1} \rmd u \; ,
    \end{align}
    with $g_i(u) = g(0,\dots,u,\dots,0)$ with the only nonzero component in the $i$-th
    position. Each function $g_i$ is bounded, continuous with support separated from zero so each
    integral in \Cref{eq:mutlrv-indep} is well defined and finite.
  \item Since the functions $f_i$ have no common discontinuities, the map
    $(u_1,\dots,u_k)\mapsto \sum_{i=1}^k u_i f_i$ is continuous \wrt\ the $J_1$ topology. Thus,
    defining $\bsX = \sum_{i=1}^k f_i V_i$, we have, for a bounded continuous (\wrt\ the $J_1$
    topology) map $\map$ and $a<b$, $\epsilon>0$ such that $\map(\bsy)=0$ if $\bsy_{a,b}^*\leq\epsilon$,
    applying \Cref{eq:mutlrv-indep} with $g(u_1,\dots,u_k) = \map(u_1f_1+\cdots+u_kf_k)$, we obtain by
    the same arguments as in the case $k=1$,
    \begin{align*}
      \lim_{x\to\infty} \frac{\esp[\map(x^{-1}(f_1V_1+\cdots+f_kV_k))}{\pr(|V_0|>x)} 
      = \sum_{i=1}^k  \int_0^\infty\esp[\map(f_is\epsilon_0)] \alpha s^{-\alpha-1} \rmd s \; .
    \end{align*}
    Thus \Cref{eq:convergence-num} holds and this proves that $\bsX$ is regularly varying in
    $\spaceD$ with tail measure given by \Cref{eq:def-tailmeasure-functional-weighted-sum}. 
  \end{enumerate}
  The congergence of the point process of clusters and the expression of the extremal index follow
  from \Cref{theo:ppconv-approx}. 
\end{proof}
Obtaining the bound \Cref{eq:truncation-argument-functional-weighted-sum} may be a hard task. We
will pursue the investigation on two examples.

\subsubsection{Functional moving average}
We now consider  the case 
\begin{align*}
  f_k(t) = f(t-T_k) \; , \ \ k \in\Zset \; ,  \ \ t \in \Rset 
\end{align*}
where $\{T_k,k \in\Zset\}$ are the points of a unit rate homogeneous Poisson point process on
$\Rset$ and $f\in\spaceD_0$ is a deterministic function such that
\begin{align*}
  \int_{-\infty}^\infty |f(t)|^\beta \rmd t < \infty \; . 
\end{align*}
with $\beta\in(0,\min(\alpha,1))$. Since $f$ is bounded, this implies that
$ \int_{-\infty}^\infty |f(t)|^q \rmd t < \infty$ for all $q\geq\beta$. The tail process $\bsY$ is
given by
\begin{align*}
  Y_t = \frac{f(t-T)}{|f(T)|}  Y\epsilon_0
\end{align*}
with $T$ a random variable with density $\lpnorm[\alpha]{f}[-\alpha] |\tilde{f}|^\alpha$ \wrt\
Lebesgue's measure on~$\Rset$, with $\tilde{f}(t)=f(-t)$. The condition
\Cref{eq:truncation-argument-functional-weighted-sum} becomes
\begin{align}
  \label{eq:truncation-argument-functional-weighted-sum-deterministic-shape}
  \lim_{m\to\infty} \limsup_{T\to\infty}    T \pr \left( \sup_{a\leq s  \leq b} \left| \sum_{|k|>m} f(s-T_k) V_k \right| > a_T x\right) = 0 \; , 
\end{align}
for all $x>0$.  Instead of
\Cref{eq:truncation-argument-functional-weighted-sum-deterministic-shape}, a different truncation
may used. If for all $x>0$, 
\begin{align}
  \label{eq:truncation-argument-functional-weighted-sum-deterministic-shape-mdep}
  \lim_{m\to\infty} \limsup_{T\to\infty}    T \pr \left( \sup_{a\leq s  \leq b}
  \left| \sum_{k\in\Zset} f(s-T_k)\ind{|s-T_k|>m} V_k \right| > a_T x\right) = 0 \; , 
\end{align}
then the process $\bsX$  can be approximated by the sequences of processes $\tilde{\bsX}^{(m)}$ defined by
\begin{align*}
  \tilde{X}^{(m)}_t = \sum_{|k| \in \Zset} f(s-T_k)\ind{|s-T_k|\leq m} V_k \; .
\end{align*}
The process $\tilde{\bsX}^{(m)}$ is $m$-dependent by the independent increment property of the Poisson
process. If either \Cref{eq:truncation-argument-functional-weighted-sum-deterministic-shape} or
\Cref{eq:truncation-argument-functional-weighted-sum-deterministic-shape-mdep} holds, then $\bsX$ is
regularly varying in $\spaceD$ and its tail measure is given by
\begin{align*}
  \tailmeasure 
  & = \lpnorm[\alpha]{f}[-\alpha] \int_{-\infty}^\infty \int_0^\infty \esp[\delta_{\epsilon_0u \shift^{-t} f}] \rmd t  
    \alpha u^{-\alpha-1} \rmd u \; .
\end{align*}
By \Cref{coro:extremalindex}, 
the extremal index is given by
\begin{align*}
  \candidate = \esp\left[ \frac{\sup_{t\in\Rset}|f(t-T)|^\alpha}  {\int_{-\infty}^\infty f(t-T)|^\alpha \rmd t } \right] 
  = \frac{\sup_{t\in\Rset}|f(t)|^\alpha}{\int_{-\infty}^\infty |f(t)|^\alpha \rmd t} \;. 
\end{align*}
Proving \Cref{eq:truncation-argument-functional-weighted-sum-deterministic-shape-mdep} is easy in
the case $\alpha<1$. Indeed, by   \cite[Theorem~3.1]{hult:samorodnitsky:2008}, we obtain 
\begin{align*}
  T \pr & \left( \sup_{a\leq s  \leq b} \left| \sum_{k\in\Zset} f(s-T_k) V_k \right| > a_T x\right)  \\
        & \leq   T \pr  \left(\sum_{k\in\Zset} \sup_{a\leq s  \leq b} |f(s-T_k)| |V_k| > a_T x\right)  \\
        & \to x^{-\alpha} \sum_{k\in\Zset} \esp\left[ \sup_{a\leq s  \leq b} |f(s-T_k)|^\alpha \right]  
          = x^{-\alpha} \int_{-\infty}^\infty  \sup_{a+t\leq s  \leq b+t} |f(s)|^\alpha  \rmd t \; .
\end{align*}
Thus both methods of truncation are suitable. We leave the case $\alpha\geq1$ for future work.

\subsubsection{Shot noise process}
\label{sec:shot-noise}
Let $\{T_k,V_k,\eta_k,k \in\Zset\}$ are the points of a Poisson point process on
$\Rset\times\Rset\times(0,\infty)$ with mean measure $\leb\otimes \pr_Z\otimes\pr_\eta$ where
$\pr_Z$ and $\pr_\eta$ denote the distribution of $V_0$ and $\eta_0$, respectively. We assume that
$\esp[\eta_0]<\infty$. Consider the model defined in \Cref{eq:functional-linear} with
\begin{align*}
  f_k = \1{[T_k,T_k+\eta_k)} \; , \ \  k\in\Zset \; , 
\end{align*}
that is 
\begin{align*}
  X_t = \sum_{j\in\Zset} V_j \ind{T_j\leq t < T_j+\eta_j}  \; .
\end{align*}
This process is also know as the infinite source Poisson process; see
\cite{roueff:samorodnitsky:soulier:2012}. The number of non-zero terms in the sum is almost surely
finite with a Poisson distribution with mean $\esp[\eta_0]$. Thus the assumption $\esp[V_0]=0$ is
not needed in the case $\alpha\geq1$, the sample paths are piecewise constant and \cadlag\ and the
process is stationary.  Furthermore, by standard results on random sums of regularly varying random
variables (see \eg\ \cite[Corollary~3.2]{hult:samorodnitsky:2008}), we have
\begin{align}
\label{eq:tail-IPS}
  \pr(|X_0|>x) \sim \esp[\eta_0] \pr(|V_0|>x)  \; .
\end{align}

For $m>0$, define
\begin{align*}
  X_t^{(m)} = \sum_{j\in\Zset} V_j \ind{T_j\leq t < T_j+\eta_j\wedge m}  \; .
\end{align*}
Then
\begin{align*}
  X_t-X_t^{(m)} =  \sum_{j\in\Zset} V_j \ind{T_j+\eta_j \wedge m \leq t < T_j+\eta_j}  \; .
\end{align*}
Since  $\{(T_j,\eta_j),j\in\Zset\}$ are the point of a marked Poisson point process with
independent \iid\ marks, the process $\bsX-\bsX^{(m)}$ has the same distribution as the process
$\tilde{X}^{(m)}$ defined by 
\begin{align*}
  \tilde{X}_t^{(m)} = \sum_{j\in\Zset} V_j \ind{T_j \leq t < T_j+(\eta_j-m)_+}  \; .
\end{align*}
Thus, for $a_T$ such that $\lim_{T\to\infty} T\pr(|X_0|>a_T)=1$ and $a < b$, we have 
\begin{align*}
  \lim_{T\to\infty} T \pr \left( \sup_{a\leq s \leq b} |X_s-X_s^{(m)}| > a_Tx \right) 
 & = \lim_{T\to\infty} T \pr \left( \sup_{a \leq s \leq b} |\tilde{X}_s^{(m)}| > a_Tx \right)  \; .
\end{align*}
Forgetting the truncation for the moment, we have 
\begin{align*}
  \pr   \left( \sup_{a\leq s \leq b} |X_s| > x \right)  
  & \leq \pr   \left(  \sup_{a\leq s \leq b} \sum_{k\in\Zset} |V_k| \ind{T_k \leq s < T_k+\eta_k} > x \right)  \; .
\end{align*}
For $s\in\Rset$, let $A_s$ be the subset of $\Rset\times\Rset_+$ defined by
$A_s=\{(t,u)\in\Rset\times\Rset_+:t \leq s < t+u\}$. Then
\begin{align*}
  \sum_{k\in\Zset} |V_k| \ind{T_k \leq s < T_k+\eta_k} = \sum_{k\in\Zset} |V_k| \1{A_s}(T_k,\eta_k) 
\end{align*}
Let $\{\tilde{T}_k,k\in\Zset\}$ be the reordering of the points $\{T_k,T_k+\eta_k,k\in\Zset\}$ with
the usual convention $\tilde{T}_0 \leq 0 < \tilde{T}_1$. Since $\{T_k\}$ is a homogoneneous Poisson
point process, the mean measure of the point process $\tilde{N}$ with points $\{\tilde{T}_k\}$ is
$2\mathrm{Leb}$. The map $s\mapsto A_s$ is piecewise constant and the changes happen at the points
$\tilde{T}_k$ (with one point added or removed). Thus
\begin{align*}
  \sup_{a\leq s \leq b} \sum_{k\in\Zset} |V_k| \ind{T_k \leq s < T_k+\eta_k}
  & \leq \sum_{j\in\Zset} \ind{a \leq \tilde{T}_j\leq b}  \sum_{k\in\Zset} \1{A_{\tilde{T}_j}}(T_k,\eta_k) \; |V_k| \\
  & = \sum_{k\in\Zset} \left(\sum_{j\in\Zset} \ind{a \leq \tilde{T}_j\leq b}  \1{A_{\tilde{T}_j}}(T_k,\eta_k) \right) \; |V_k| \\
\end{align*}
Applying the bounds for sums with random coefficients of regularly varying random variables of
\cite{hult:samorodnitsky:2008} (see also \cite[Chapter~4]{kulik:soulier:2020}) yields 
\begin{align*}
  \limsup   \pr   \left( \sup_{a\leq s \leq b} |X_s| > x \right)  \leq C \esp[\eta_0]\pr(|V_0|>x) \; .
\end{align*}
for some constant $C$. Returning to the truncated sum, we obtain 
\begin{align*}
  \lim_{m\to\infty}  \limsup_{T\to\infty} T \pr \left( \sup_{0\leq s \leq 1} |\tilde{X}_s^{(m)}| > a_Tx \right)
  \leq C \lim_{m\to\infty} \esp[(\eta_0-m)_+]=0\; .
\end{align*}
Thus we can apply the truncation argument and this proves that $\bsX$ is regularly varying in
$\spaceD$ and its tail measure is given by
\begin{align*}
  \tailmeasure(\map) & = \frac1{\esp[\eta_0]} \sum_{k\in\Zset}  \int_0^\infty \esp[\ind{T_k\leq0<T_k+\eta_0}
                    \map(u\epsilon_0\1{[T_k,T_k+\eta_0)})] \alpha u^{-\alpha-1} \rmd u \\
                  & = \frac1{\esp[\eta_0]} \int_{-\infty}^\infty   \int_0^\infty \esp[\ind{t\leq0<t+\eta_0}
                    \map(u\epsilon_0\1{[t,t+\eta_0)})] \alpha u^{-\alpha-1} \rmd u \\
                  & = \frac1{\esp[\eta_0]} \int_0^\infty \esp \left[  \int_0^{\eta_0}
                    \map(u\epsilon_0\1{[-t,\eta_0-t)}) \rmd t  \right] \alpha u^{-\alpha-1} \rmd u \\
                  & = \frac1{\esp[\eta_0]}  \int_0^\infty \esp \left[  
                    \map(u\epsilon_0\1{[-\zeta',\zeta)} \right] \alpha u^{-\alpha-1} \rmd u \; .
\end{align*}
with $\zeta,\zeta'$ such that
\begin{align*}
  \pr(\zeta'>s,\zeta>t) = \frac{ \esp[(\eta_0-s-t)_+] }{\esp[\eta_0]}\; .
\end{align*}
\begin{remark}
  \label{rem:palm}
  The law of $(-\zeta',\zeta)$ is the law of the points $(T_0,T_1)$ of a stationary renewal process
  with interarrival times distributed as $\eta_0$ under the Palm
  measure. Cf.~\cite[Section~1.4.1]{baccelli:bremaud:2003}.
\end{remark}
The tail process is given by
\begin{align*}
  \bsY = Y \epsilon_0 \1{[-\zeta',\zeta)}  \; .
\end{align*}
By \Cref{coro:extremalindex}, the extremal index of $\bsX$ is given by
\begin{align*}
  \candidate =  \esp \left[ \frac1{\int_{-\infty}^\infty \ind{Y_s>1} \rmd s } \right] = \esp[ (\zeta+\zeta')^{-1}] = \frac1{\esp[\eta_0]} \; .  
\end{align*}
For this simple process, we can also confirm the findings of \Cref{sec:anchoring-maps}. Since $\bsY$
reaches its maximum by an upward jump, condition \cref{eq:condition-continuite} holds.  Let $\mci_0$
and $\mci_1$ be the infargmax functional and the time of the first exceedance over 1 as in
\Cref{xmpl:infargmax-general,xmpl:first-exceedance}. Here, $\mci_0(\bsY)=\mci_1(\bsY)$ and in view
of \Cref{rem:palm}, the law of $\bsY$ given $\mci_0(\bsY)=\mci_1(\bsY)=0$ is
$Y\epsilon_0\1{[0,\eta)}$. Thus
\begin{align*}
  \esp [ \exc(\bsY) \mid \mci_1(\bsY)=0] = \esp[\eta_0] = \candidate^{-1} \; .
\end{align*}

\begin{appendices}
\appendix
\section{Vague convergence}
\label{app:vague}
Let $\Eset$ be a non-empty set. A boundedness on $\Eset$ is a subset $\mcb$ of $\mathcal{P}(E)$ with the
following properties:
\begin{itemize}
\item a finite union of elements of $\mcb$ is in $\mcb$;
\item a subset of an element of $\mcb$ is in $\mcb$.
\end{itemize}
The elements of $\mcb$ are called bounded sets. In a metric space, the class of metrically bounded
sets is a boundedness. Let now $\Eset$ be a Polish space, endowed with its Borel $\sigma$-field. We
will also need the following class of sets.
\begin{itemize}
\item A sequence $\{U_n,n\in\Nset\}$ of open sets if called a localizing sequence if for all
  $n\geq0$, $U_n\in \mcb$, $\overline{U}_n\subset U_{n+1}$, $\cup_{n\geq0} U_n=\Eset$ and every
  bounded set is included in one of the $U_n$.
\end{itemize}
Such a sequence $\{U_n\}$ is called a localizing sequence for $\Eset$.

A Borel measure $\mu$ is said to be $\mcb$-boundedly finite if $\mu(B)<\infty$ for all Borel sets
$B\in\mcb$. A sequence of $\mcb$-boundedly finite measures $\{\mu_n,n\in\Nset\}$ is said to converge
vaguely to a boundedly finite measure $\mu$, denoted $\mu_n\convvague\mu$, if
$\lim_{n\to\infty}\mu_n(A)=\mu(A)$ for all bound Borel sets $B$ such that $\mu(\partial B)=0$. A
version of the Portmanteau theorem is available, \cite[Theorem~2.7]{basrak:planinic:2019}. Let
$\mcm_\mcb$ be the set of boundedly finite Borel measures on $\Eset$. The topology of vague
convergence is the smallest topology on $\mcm_\mcb$ which makes the maps $\mu\mapsto\mu(f)$
continuous for all continuous functions $f$ with bounded support. Endowed with this topology, the
space $\mcm_\mcb$ is Polish, \cite[Theorem~3.1]{basrak:planinic:2019}.

If there exists a localizing sequence, then vague convergence can be related to weak
convergence. This is a consequence of \cite[Lemma~4.6]{kallenberg:2017}. 
\begin{proposition}
  \label{prop:vague-localized-is-weak}
  Let $\{\nu_n,n\in\Nset\}$ be a sequence of boundedly finite Borel measures on $\Eset$. Let
  $\{U_n,n\in\Nset\}$ be a localizing sequence and $\nu$ a boundedly finite measure such that $U_k$
  is a continuity set of $\nu$ for all $k\geq1$. Then $\nu_n\convvague\nu$ if and only if, for each
  $k\in\Nset$, the restrictions $\mu_n$ to $U_k$ converge weakly to the restriction of $\mu$ to
  $U_k$. 
\end{proposition}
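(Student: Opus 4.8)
The plan is to prove both implications at once by passing through the continuity-set characterizations of the two modes of convergence. Vague convergence $\nu_n\convvague\nu$ means exactly that $\nu_n(B)\to\nu(B)$ for every bounded Borel set $B$ with $\nu(\partial B)=0$ (this is the definition used here; see also the vague Portmanteau theorem \cite[Theorem~2.7]{basrak:planinic:2019}), while weak convergence of finite measures on the Polish space $U_k$ (recall that an open subset of a Polish space is again Polish) is equivalent, by the Portmanteau theorem, to $\nu_n|_{U_k}(A)\to\nu|_{U_k}(A)$ for every Borel $A\subseteq U_k$ with $\nu|_{U_k}(\partial_{U_k}A)=0$, where $\partial_{U_k}$ denotes the boundary computed in the subspace topology of $U_k$. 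Since each $U_k$ lies in $\mcb$, the restrictions $\nu_n|_{U_k}=\nu_n(\,\cdot\cap U_k)$ and $\nu|_{U_k}$ are genuinely finite measures, so this comparison is meaningful; I would take $A=U_k$ itself (empty boundary in $U_k$) to recover convergence of the total masses.

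The single geometric fact driving everything is that $U_k$ is open, so that for any $B\subseteq U_k$ one has $\mathrm{int}_{U_k}B=\mathrm{int}_\Eset B$ and $\overline{B}^{U_k}=\overline{B}^\Eset\cap U_k$, whence $\partial_{U_k}B=\partial_\Eset B\cap U_k$. Moreover $B\subseteq U_k$ forces $\overline{B}^\Eset\subseteq\overline{U_k}$, so the part of the ambient boundary outside $U_k$ satisfies $\partial_\Eset B\setminus U_k\subseteq \overline{U_k}\setminus U_k=\partial_\Eset U_k$. Thus $\partial_{U_k}B\subseteq\partial_\Eset B\subseteq\partial_{U_k}B\cup\partial_\Eset U_k$. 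Combined with the hypothesis $\nu(\partial_\Eset U_k)=0$, this shows that a bounded set contained in $U_k$ is a $\nu$-continuity set \emph{if and only if} it is a $\nu|_{U_k}$-continuity set, and that $\nu(B)=\nu|_{U_k}(B)$ and $\nu_n(B)=\nu_n|_{U_k}(B)$ whenever $B\subseteq U_k$.

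For the forward implication I would fix $k$ and an arbitrary $\nu|_{U_k}$-continuity set $A\subseteq U_k$; the boundary bookkeeping above promotes $A$ to a \emph{bounded} $\nu$-continuity set (using $A\subseteq U_k\in\mcb$ and $\nu(\partial U_k)=0$), so vague convergence gives $\nu_n(A)\to\nu(A)$, which is precisely $\nu_n|_{U_k}(A)\to\nu|_{U_k}(A)$; ranging over all such $A$ and invoking Portmanteau on $U_k$ yields $\nu_n|_{U_k}\convweak\nu|_{U_k}$. For the converse I would take an arbitrary bounded $\nu$-continuity set $B$; by the localizing-sequence axiom every bounded set is contained in some $U_k$, the boundary comparison shows $B$ is then a $\nu|_{U_k}$-continuity set, so weak convergence on $U_k$ gives $\nu_n|_{U_k}(B)\to\nu|_{U_k}(B)$, i.e. $\nu_n(B)\to\nu(B)$; as $B$ was an arbitrary bounded $\nu$-continuity set, this is exactly $\nu_n\convvague\nu$.

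The only delicate point, and the step I expect to require the most care, is the boundary bookkeeping of the second paragraph: verifying $\mathrm{int}_{U_k}B=\mathrm{int}_\Eset B$ and $\overline{B}^{U_k}=\overline{B}^\Eset\cap U_k$ for $B\subseteq U_k$ with $U_k$ open, and correctly locating the discrepancy $\partial_\Eset B\setminus U_k$ inside $\partial_\Eset U_k$ so that the hypothesis $\nu(\partial U_k)=0$ can be used. Everything else is a direct application of the two Portmanteau theorems together with the localizing-sequence axioms ($U_k\in\mcb$ and every bounded set sits inside some $U_k$). This is, in essence, the content of \cite[Lemma~4.6]{kallenberg:2017}, which could alternatively be quoted verbatim to shorten the argument.
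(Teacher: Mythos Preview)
Your argument is correct. The paper does not actually give a proof of this proposition; it simply states the result and attributes it to \cite[Lemma~4.6]{kallenberg:2017}. Your self-contained argument via the Portmanteau characterizations and the boundary decomposition $\partial_\Eset B\subseteq\partial_{U_k}B\cup\partial_\Eset U_k$ is exactly the natural way to unpack that citation, and you have handled the one nontrivial point (the subspace-versus-ambient boundary bookkeeping, together with the use of the hypothesis $\nu(\partial U_k)=0$) carefully and correctly.
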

Another characterization of vague convergence is by means of Lipschitz functions. Let $\Eset$ be a
Polish space endowed with a boundedness $\mcb$. We say that a metric $d$ on a Polish space is
compatible with $\mcb$ if $d$ induces the topology of $\Eset$ and for every $B\in\mcb$ there exists
$\epsilon>0$ such that the $\epsilon$-enlargement of $B$ \wrt\ $d$ is still bounded.  (The
$\epsilon$-enlargement \wrt\ $d$ of a subset $B$ is the set
$\{x\in \Eset:\exists y\ni B, d(x,y)\leq \epsilon\}$.) A real-valued function $f$ on $\Eset$ is said
to be $d$-Lipschitz if there exists a constant $K$ such that $|f(x)-f(y)|\leq K d(x,y)$ for all
$x,y\in\Eset$. The following result is essentially \cite[Lemma~4.1]{kallenberg:2017}. See also
\cite[Theorem~B.1.17]{kulik:soulier:2020}.
\begin{proposition}
  \label{prop:vague-lipschitz}
  Let $\Eset$ be a Polish space endowed with a boundedness $\mcb$ and $d$ be a compatible
  metric. Let $\{\nu,\nu_n,n\geq1\}$ be $\mcb$-boundedly finite Borel measures. Then
  $\nu_n\convvague\nu$ \ifft\ $\lim_{n\to\infty} \nu_n(f) = \nu(f)$ for all bounded $d$-Lipschitz
  functions $f$ with support in~$\mcb$.
\end{proposition}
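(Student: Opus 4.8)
The plan is to prove the two implications separately, the forward one being essentially immediate and the reverse one carrying all the content.

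For the forward implication I would simply observe that every bounded $d$-Lipschitz function is continuous, and that a function whose support lies in $\mcb$ is a bounded continuous function with bounded support. Hence, if $\nu_n\convvague\nu$ in the sense of the definition recalled above (convergence $\nu_n(B)\to\nu(B)$ on all bounded Borel sets $B$ with $\nu(\partial B)=0$), the Portmanteau theorem \cite[Theorem~2.7]{basrak:planinic:2019} already yields $\nu_n(f)\to\nu(f)$ for every continuous bounded $f$ with support in $\mcb$, and a fortiori for every bounded $d$-Lipschitz such~$f$.

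For the reverse implication I would show directly that $\nu_n(B)\to\nu(B)$ for every bounded Borel set $B$ with $\nu(\partial B)=0$, which is exactly the definition of $\nu_n\convvague\nu$. The idea is to sandwich $\1B$ between two families of bounded Lipschitz functions with support in $\mcb$. Using that $x\mapsto d(x,S)$ is $1$-Lipschitz for any set $S$, I would introduce, for small $\delta>0$, an upper approximant $\psi_\delta(x)=(1-\delta^{-1}d(x,\overline B))_+$ and a lower approximant $\phi_\delta(x)=\min\bigl(1,\delta^{-1}d(x,(B^\circ)^c)\bigr)$. Both are $\delta^{-1}$-Lipschitz and bounded by $1$, and one checks $\phi_\delta\le\1B\le\psi_\delta$, with $\phi_\delta\uparrow\1{B^\circ}$ and $\psi_\delta\downarrow\1{\overline B}$ pointwise as $\delta\downarrow0$. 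Applying the hypothesis to each fixed $\delta$ gives $\limsup_n\nu_n(B)\le\nu(\psi_\delta)$ and $\liminf_n\nu_n(B)\ge\nu(\phi_\delta)$; monotone convergence (for $\phi_\delta$) and dominated convergence (for $\psi_\delta$, dominated by a fixed $\psi_{\delta_0}$) then let $\delta\downarrow0$ to produce $\nu(\overline B)$ and $\nu(B^\circ)$, both equal to $\nu(B)$ because $\nu(\partial B)=0$, whence $\nu_n(B)\to\nu(B)$.

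The step that requires the standing hypotheses, and which I expect to be the main point to get exactly right, is verifying that these approximants genuinely have support in $\mcb$, so that the hypothesis is applicable to them. Here the compatibility of $d$ with $\mcb$ is essential: the support of $\psi_\delta$ is contained in the closed $\delta$-enlargement of $\overline B$, and compatibility guarantees that a sufficiently small $\epsilon$-enlargement of a bounded set is again bounded; the same property, applied to $B$ itself, shows that the closure $\overline B$ of a bounded set is bounded (being contained in an enlargement of $B$), so that both $\psi_\delta$ and $\phi_\delta$ are admissible test functions. Once this is secured, $\nu(\psi_\delta)$ and $\nu(\phi_\delta)$ are finite by bounded finiteness of $\nu$, all the limits above are legitimate, and the two inequalities combine to the desired set convergence, completing the reverse implication and hence the proof.
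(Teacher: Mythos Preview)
Your proof is correct. The paper does not provide its own proof of this proposition; it merely states that the result ``is essentially \cite[Lemma~4.1]{kallenberg:2017}'' and refers also to \cite[Theorem~B.1.17]{kulik:soulier:2020}. Your argument is the standard one underlying those references: Portmanteau for the forward direction, and for the converse the sandwich of $\1B$ between Lipschitz approximants built from distance functions, with the compatibility of $d$ and $\mcb$ ensuring the approximants have bounded support. One small clarification you might add: the support of $\phi_\delta$ is contained in $\overline{B^\circ}\subset\overline B$, which is bounded for the reason you gave (closure of a bounded set is contained in a small enlargement), so both test functions are admissible as you claim.
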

As a consequence of metrizability and the characterization of vague convergence by Lipschitz
functions, we obtain the following triangular argument.
\begin{lemma}
  \label{lem:triangular-argument-vague}
  Let $\{\nu_n,\nu_{m,n},m\geq1,n\geq1\}$ be boundedly finite Borel measures on a Polish space
  $\Eset$ endowed with a boundedness $\mcb$.  Assume that for each $m\geq1$,
  $\nu_{n,m}\convvague\nu^{(m)}$ as $n\to\infty$, $\nu^{(m)}\convvague \nu$ as $m\to\infty$ and for
  every non-negative bounded measurable map $f$ with bounded support and Lipshitz \wrt\ an arbitrary
  compatible metric,
  \begin{align*}
    \lim_{m\to\infty} \limsup_{n\to\infty} |\nu_n(f)-\nu_{m,n}(f)| = 0 \; .
  \end{align*}
  Then $\nu_{n}\convvague\nu$ as $n\to\infty$.
\end{lemma}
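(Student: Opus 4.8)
The plan is to reduce the asserted vague convergence $\nu_n\convvague\nu$ to the convergence of integrals against a separating class of test functions, using the Lipschitz characterization of \Cref{prop:vague-lipschitz}, and then to run the classical three–term triangle inequality. So first I would fix once and for all a metric $d$ compatible with the boundedness $\mcb$. By \Cref{prop:vague-lipschitz} it suffices to show that $\nu_n(f)\to\nu(f)$ for every bounded $d$-Lipschitz function $f$ with support in $\mcb$; fix such an $f$.

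Next I would record the two hypothesised convergences in integrated form and arrange to use the third hypothesis, which is stated only for \nonnegative\ test functions. Since $f$ is bounded, $d$-Lipschitz and has bounded support, \Cref{prop:vague-lipschitz} applied to $\nu_{n,m}\convvague\nu^{(m)}$ (for each fixed $m$) gives $\lim_{n\to\infty}\nu_{m,n}(f)=\nu^{(m)}(f)$, and applied to $\nu^{(m)}\convvague\nu$ gives $\lim_{m\to\infty}\nu^{(m)}(f)=\nu(f)$. To access the uniform control I would split $f=f_+-f_-$ with $f_+=f\vee 0$ and $f_-=(-f)\vee 0$. Both $f_\pm$ are \nonnegative, bounded, $d$-Lipschitz (truncation at $0$ does not increase the Lipschitz constant) and vanish outside the bounded set supporting $f$, hence are admissible in the third hypothesis; since $\nu_n(f)=\nu_n(f_+)-\nu_n(f_-)$ and likewise for $\nu_{m,n}$, adding the two resulting bounds yields $\lim_{m\to\infty}\limsup_{n\to\infty}|\nu_n(f)-\nu_{m,n}(f)|=0$.

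Finally I would assemble these through the triangle inequality
\begin{align*}
  |\nu_n(f)-\nu(f)| \leq |\nu_n(f)-\nu_{m,n}(f)| + |\nu_{m,n}(f)-\nu^{(m)}(f)| + |\nu^{(m)}(f)-\nu(f)| \; .
\end{align*}
Taking $\limsup_{n\to\infty}$ annihilates the middle term by the first convergence above, leaving, for every fixed $m$,
\begin{align*}
  \limsup_{n\to\infty}|\nu_n(f)-\nu(f)| \leq \limsup_{n\to\infty}|\nu_n(f)-\nu_{m,n}(f)| + |\nu^{(m)}(f)-\nu(f)| \; .
\end{align*}
Letting $m\to\infty$ sends the first term on the right to $0$ by the decomposition argument and the second to $0$ by the second convergence, whence $\limsup_{n}|\nu_n(f)-\nu(f)|=0$. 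Thus $\nu_n(f)\to\nu(f)$ for every admissible $f$, and \Cref{prop:vague-lipschitz} gives $\nu_n\convvague\nu$.

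The only genuine subtlety, and the step I would watch most carefully, is the reduction to \nonnegative\ test functions: the uniform-in-$n$ control is assumed only for \nonnegative\ $f$, whereas the characterisation of vague convergence tests against signed Lipschitz functions, so the positive/negative part splitting is essential. Everything else is routine bookkeeping with the iterated $\limsup$, which is clean because the left-hand side of the penultimate display does not depend on $m$.
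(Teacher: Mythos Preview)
Your proof is correct and is essentially the approach the paper has in mind: the paper does not spell out a proof but simply states that the lemma follows from metrizability of vague convergence and the Lipschitz characterization in \Cref{prop:vague-lipschitz}, which is precisely what your three-term triangle argument on test functions implements. Your explicit handling of the sign issue via $f=f_+-f_-$ is a careful touch that the paper leaves implicit.
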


A random measure is a random element of $\mcm_\mcb$ endowed with the topology of vague
convergence. A sequence of random measures $N_n,n\in\Nset$ is said to converge weakly to a random
measure $N$, denoted $N_n\convweak N$, if $N_n(f)\convdistr N(f)$ for all bounded continuous functions $f$
with bounded support, and $\convdistr$ denotes weak convergence of real valued random variables.

The following result, \cite[Proposition~4.6]{basrak:planinic:2019}, provides a useful
characterization of vague convergence of weak convergence of random measures. 
\begin{theorem}
  \label{theo:carac-vague}
  Let $\Eset$ be a Polish space endowed with a boundedness $\mcb$ and let $d$ be a compatible
  metric.  Let $\{N_n,n\in\Nset\}$ be random measures in $\mcm_\mcb$. Then the
  following statements are equivalent:
  \begin{enumerate}[(i)]
  \item $N_n\convweak N$ as $n\to\infty$;
  \item $N_n(f)\convdistr N(f)$ as $n\to\infty$ for all bounded, \nonnegative\ $d$-Lipschitz continuous functions with bounded support;
  \item $\lim_{n\to\infty} \esp[\rme^{-N_n(f)}] = \esp[\rme^{-N(f)}]$ for all bounded, \nonnegative\
    $d$-Lipschitz continuous functions with bounded support.
  \end{enumerate}
\end{theorem}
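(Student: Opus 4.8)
The plan is to establish the full equivalence through the cycle of implications (i) $\Rightarrow$ (ii) $\Rightarrow$ (iii) $\Rightarrow$ (i), where the first two arrows are essentially immediate and all the substance lies in the last. For (i) $\Rightarrow$ (ii), fix a bounded \nonnegative\ $d$-Lipschitz $f$ with support in $\mcb$. By the very definition of the vague topology on $\mcm_\mcb$ recalled in \Cref{app:vague} (it is the smallest topology making $\mu\mapsto\mu(f)$ continuous for continuous bounded-support $f$), the evaluation map $\mathrm{ev}_f:\mcm_\mcb\to[0,\infty)$, $\mu\mapsto\mu(f)$, is continuous; consequently the continuous mapping theorem applied to $N_n\convweak N$ gives $N_n(f)=\mathrm{ev}_f(N_n)\convdistr\mathrm{ev}_f(N)=N(f)$. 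For (ii) $\Rightarrow$ (iii) there is nothing to do beyond noting that $x\mapsto\rme^{-x}$ is bounded and continuous on $[0,\infty)$, so $N_n(f)\convdistr N(f)$ forces $\esp[\rme^{-N_n(f)}]\to\esp[\rme^{-N(f)}]$.

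The core is (iii) $\Rightarrow$ (i), which I would carry out in three steps. \emph{Uniqueness of the limit.} Let $\mathcal{F}$ be the class of bounded \nonnegative\ $d$-Lipschitz functions with support in $\mcb$. Since $\mathcal{F}$ is stable under \nonnegative\ linear combinations, for any finite family $f_1,\dots,f_k\in\mathcal{F}$ and scalars $s_1,\dots,s_k\geq0$ one has $\sum_j s_jf_j\in\mathcal{F}$, and by linearity of measures (iii) yields $\esp[\rme^{-\sum_j s_jN_n(f_j)}]=\esp[\rme^{-N_n(\sum_j s_jf_j)}]\to\esp[\rme^{-N(\sum_j s_jf_j)}]=\esp[\rme^{-\sum_j s_jN(f_j)}]$; convergence of multivariate Laplace transforms of \nonnegative\ random vectors then gives $(N_n(f_1),\dots,N_n(f_k))\convdistr(N(f_1),\dots,N(f_k))$. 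Because $\mathcal{F}$ is convergence-determining for the vague topology (\Cref{prop:vague-lipschitz}) and the maps $\mu\mapsto\mu(f)$ generate the Borel $\sigma$-field of $\mcm_\mcb$, these finite-dimensional laws determine the distribution of any random measure, so every subsequential weak limit of $\{N_n\}$ is distributed as $N$. \emph{Tightness.} Fix a localizing sequence $\{U_k\}$ and, for each $k$, a function $g_k\in\mathcal{F}$ with $g_k\geq\1{U_{k}}$ and support in $\bar U_{k+1}$. Taking $f=tg_k$ in (iii) gives, for each fixed $t>0$, $\lim_n\esp[1-\rme^{-tN_n(g_k)}]=\esp[1-\rme^{-tN(g_k)}]$, and the right-hand side tends to $0$ as $t\downarrow0$ because $N(g_k)<\infty$ almost surely, $N$ being boundedly finite. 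This bounds the tails of $N_n(g_k)$ uniformly in $n$, making $\{N_n(g_k)\}_n$ tight in $[0,\infty)$ for every $k$; by Kallenberg's relative-compactness criterion for random measures on a localized space this renders $\{N_n\}$ relatively compact in distribution in $\mcm_\mcb$. \emph{Conclusion.} Every subsequence of $\{N_n\}$ then has a further weakly convergent subsequence whose limit, by the uniqueness step, has the law of $N$; hence $N_n\convweak N$.

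The main obstacle is the tightness step of (iii) $\Rightarrow$ (i): turning convergence of Laplace functionals into genuine relative compactness for the vague topology. The delicate points are the choice of the approximating functions $g_k$ adapted to the localizing sequence and the invocation of the compactness criterion on a space equipped only with a boundedness $\mcb$ rather than a metric whose balls are relatively compact; the continuity at the origin of the limiting Laplace functional is precisely what furnishes the uniform tail control, and everything else is bookkeeping. Since the statement coincides with \cite[Proposition~4.6]{basrak:planinic:2019}, one may alternatively simply appeal to that reference.
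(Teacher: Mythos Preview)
The paper does not prove this theorem; it simply records the statement and attributes it to \cite[Proposition~4.6]{basrak:planinic:2019}. Your final sentence---that one may just appeal to that reference---is precisely what the paper does, so in that sense your proposal already contains the paper's ``proof''.

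Your sketch of an independent argument via the cycle (i)$\Rightarrow$(ii)$\Rightarrow$(iii)$\Rightarrow$(i) is sound in outline and goes well beyond the paper. The only place to be careful is the tightness step: you want to ensure that for each $k$ a function $g_k\in\mathcal{F}$ with $g_k\geq\1{U_k}$ and bounded support actually exists (this uses compatibility of $d$ with $\mcb$, so that a Lipschitz bump separating $U_k$ from the complement of a larger bounded set can be built), and that the relative-compactness criterion you invoke is available in the boundedness framework rather than only in the locally compact one---this is exactly what \cite{basrak:planinic:2019} and \cite[Chapter~4]{kallenberg:2017} supply.
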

Similarly to \Cref{lem:triangular-argument-vague}, we obtain a triangular argument for weak convergence of
random measures.
\begin{proposition}
  \label{prop:triangular-argument-weak-vague}
  Let $\{N_n,N_{m,n},n\geq1,m\geq1\}$ be random measures in $\mcm_\mcb$.

  Assume that $N_{m,n}\convweak N^{(m)}$ as $n\to\infty$, $N^{(m)}\convweak N$ as $m\to\infty$ and
  for all $\eta>0$, 
  \begin{align*}
    \lim_{m\to\infty} \limsup_{n\to\infty} \pr(|N_{n,m}(f)-N_n(f)|>\eta) = 0 \; .
  \end{align*}
  Then $N_n\convweak N$.
\end{proposition}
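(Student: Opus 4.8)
The plan is to reduce everything to the Laplace-functional characterization of weak convergence of random measures given by \Cref{theo:carac-vague}\,(iii): it suffices to prove that $\esp[\rme^{-N_n(f)}]\to\esp[\rme^{-N(f)}]$ for every bounded, \nonnegative, $d$-Lipschitz continuous function $f$ with bounded support, where $d$ is a fixed compatible metric on $\Eset$. Fixing such an $f$, I would insert the intermediate quantities $N_{m,n}$ and $N^{(m)}$ and bound, for arbitrary $m,n\geq1$,
\begin{multline*}
  |\esp[\rme^{-N_n(f)}]-\esp[\rme^{-N(f)}]|
  \leq |\esp[\rme^{-N_n(f)}]-\esp[\rme^{-N_{m,n}(f)}]| \\
  + |\esp[\rme^{-N_{m,n}(f)}]-\esp[\rme^{-N^{(m)}(f)}]|
  + |\esp[\rme^{-N^{(m)}(f)}]-\esp[\rme^{-N(f)}]| \;.
\end{multline*}
The second term tends to $0$ as $n\to\infty$ for each fixed $m$, since $N_{m,n}\convweak N^{(m)}$ and $f$ belongs to the test class of \Cref{theo:carac-vague}; the third term tends to $0$ as $m\to\infty$ because $N^{(m)}\convweak N$.

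The only term needing work is the first one. Here I would use that $x\mapsto\rme^{-x}$ is $1$-Lipschitz and bounded by $1$ on $[0,\infty)$, so that $|\rme^{-a}-\rme^{-b}|\leq\min(1,|a-b|)$ for all $a,b\geq0$. Splitting the expectation according to whether $|N_n(f)-N_{m,n}(f)|$ exceeds $\eta$ gives, for any $\eta>0$,
\begin{align*}
  |\esp[\rme^{-N_n(f)}]-\esp[\rme^{-N_{m,n}(f)}]|
  &\leq \esp[|\rme^{-N_n(f)}-\rme^{-N_{m,n}(f)}|] \\
  &\leq \eta + \pr(|N_n(f)-N_{m,n}(f)|>\eta) \;.
\end{align*}
Taking $\limsup_{n\to\infty}$ and then $\limsup_{m\to\infty}$, and invoking the hypothesis $\lim_{m\to\infty}\limsup_{n\to\infty}\pr(|N_{n,m}(f)-N_n(f)|>\eta)=0$ (the probability being symmetric in its two arguments), shows that the double $\limsup$ of the first term is at most $\eta$.

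Combining the three bounds, fixing $\eta>0$ and letting first $n\to\infty$ and then $m\to\infty$ yields $\limsup_{n\to\infty}|\esp[\rme^{-N_n(f)}]-\esp[\rme^{-N(f)}]|\leq\eta$; since $\eta$ is arbitrary this $\limsup$ equals $0$, that is, $\esp[\rme^{-N_n(f)}]\to\esp[\rme^{-N(f)}]$. As $f$ ranged over an arbitrary element of the test class, \Cref{theo:carac-vague} gives $N_n\convweak N$. The argument is entirely routine, and the only point requiring a moment's care is the order of the iterated limits in the first term, which is precisely why the hypothesis is stated as a $\limsup_{m}\limsup_{n}$; working with the Laplace functionals (rather than with $N_n(f)\convdistr N(f)$ directly) is what makes the splitting clean, since $\rme^{-N(f)}$ is bounded.
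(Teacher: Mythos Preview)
Your proof is correct. The paper actually states this proposition without proof, treating it as a routine consequence of the Laplace-functional characterization in \Cref{theo:carac-vague} (and of the analogous \Cref{lem:triangular-argument-vague}, which is likewise stated without proof). Your argument via the three-term split and the bound $|\rme^{-a}-\rme^{-b}|\leq\min(1,|a-b|)$ is precisely the standard way to establish such triangular results, and the handling of the iterated limits is done correctly: fixing $m$, letting $n\to\infty$, and only then sending $m\to\infty$ exploits exactly the form of the hypothesis. One small remark: your parenthetical ``the probability being symmetric in its two arguments'' is not the real point---the swap $N_{n,m}$ versus $N_{m,n}$ in the statement is just a typographical inconsistency in the paper, not something requiring justification.
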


The fundamental example which covers all the situations of this paper is investigated in
\cite{hult:lindskog:2006}.
\begin{example}
  \label{xmpl:M0-convergence}
  Let $\Eset$ be a Polish space and let $\bszero$ be an element of $ \Eset$. Let
  $\Eset_0=\Eset\setminus\{0\}$. The boundedness $\mcb_0$ on $\Eset_0$ is the class of sets
  separated from $\bszero$: $B\in\mcb_0$ if and only if there exists an open set $U$ of $\Eset$ such
  that $B\subset U$ (the complement of $U$). If $d$ is any metric which induces the topology of
  $\Eset$, then $B\in\mcb_0$ if and only if there exists $\epsilon>0$ such that $x\in B$ implies
  $d(x,\bszero)>\epsilon$. The sequence $U_n = \{x\in \Eset:d(x,\bszero)>n^{-1}\}$, $n\geq1$ is a
  localizing sequence. Also, every bounded set has an $\epsilon$-enlargement \wrt\ $d$ which is
  still bounded. Thus, vague convergence on $\Eset_0$ is characterized by \nonnegative\ bounded
  Lipschitz functions \wrt\ any metric which induces the topology of $\Eset$.
\end{example}

\section{The $J_1$ topology}
\label{sec:J1}
For an $I\subset\Rset$, we define the $J_1$ metric on the space $\spaceD(I)$ of \cadlag\ functions
of $I$, denoted $d_{I}$, as follows. Let $\mcb_I$ be the set of one-to-one strictly increasing
continuous maps on $I$.  Then, for $f,g\in\spaceD(I)$,
\begin{align*}
  d_{I}(f,g) = \inf_{u \in \mcb_I} \supnorm[I]{f\circ u -g} \vee \supnorm[I]{u-\id}\; .
\end{align*}
Oviously, $d_{I}(f,g) \leq \supnorm[I]{f-g}$ and $d_I(0,f)=\supnorm[I]{f}$.  For $I=\Rset$, we write
  $d_\infty$ and $\supnorm[\infty]{\cdot}$.

For fixed $f,g\in\spaceD$, the map $t\mapsto d_{[-t,t]}(f,g)$ is \cadlag\ and continuous at every
$t$ such that $t$ and $-t$ are  continuity points of both $f$ and $g$.

The $J_1$ topology on $\spaceD(\Rset)$ is the topology of $J_1$ convergence on compact subsets of
$\Rset$, induced  by the metric
\begin{align}
  \label{eq:def-local-J1}
  d_{J_1}(f,g) = \int_{0}^\infty \{d_{[-t,t]}(f,g)\wedge1\} \rme^{-t} \rmd t  \; .
\end{align}
The space $\spaceD$ endowed with the $J_1$ topology is Polish and the Borel $\sigma$-field
associated to the $J_1$ topology on $\spaceD$ is the product $\sigma$-field. See
\cite[Section~2]{whitt:1980}. Note that for all $f,g\in\spaceD$,
\begin{align*}
  d_{J_1}(f,g)  \leq \supnorm[\infty]{f-g} \; .
\end{align*}
For any $a>0$, 
\begin{align*}
  d_{J_1}(0,f)=  \int_0^\infty (\supnorm[{[-t,t]}]{f} \wedge1) \rme^{-t}\rmd t \leq \supnorm[{[-a,a]}]{f}+\rme^{-a}
\end{align*}
Thus if $\epsilon>0$, $a>-\log(\epsilon/2)$ and $d_{J_1}(0,f)>\epsilon$, then
$\supnorm[{[-a,a]}]{f}>\epsilon/2$. Since $a$ does not depend on $f$, this proves that a subset $A$
of $\spaceD$ is separated from the null map $\bszero$ if and only if there exists $\epsilon>0$ and
$a>0$ such that
\begin{align*}
  \inf_{f\in A} \supnorm[{[-a,a]}]{f} > \epsilon \; .  
\end{align*}
For $a<b$ and $\eta>0$, $\eta<b-a$, let $\mcp(a,b,\eta)$ be the set of finite increasing sequences
$(t_0,\dots,t_k)$ with $k\geq1$, $t_0=a$, $t_k=b$ and
$\inf_{1 \leq i \leq k} (t_i-t_{i-1}) \geq \eta$.  Define for a function $f\in\spaceD$,
\begin{align*}
  w'(f,a,b,\eta) & = \inf_{(t_0,\dots,t_k)\in \mcp(a,b,\eta)} \sup_{1 \leq i \leq k} \sup_{t_{i-1}\leq s,t < t_i} |f(s)-f(t)|  \; , \\
  w''(f,a,b,\delta) & = \sup_{a \leq s\leq t \leq u \leq b \atop |u-t|\leq\delta} |f(t)-f(s)|\wedge |f(u)-f(t)| \; .
\end{align*}
It generally holds that $w''(f,a,b,\delta)\leq w'(f,a,b,\delta)$
(\cite[Eq.~(12.28)]{billingsley:1999}) but both quantities can be used to characterize relative
compactness in $\spaceD$.
\begin{theorem}
  \label{theo:weak-convergence-in-DR}
  Let $\{\bsX,\bsX_n,n\in\Nset\}$ be a sequence of $\spaceD(R)$-valued stochastic processes. Then
  $\bsX_n\convweak \bsX$ in $\spaceD(\Rset)$ endowed with the $J_1$ topology \ifft\ for all $a<b$
  such that $\pr(\bsX \mbox{ discontinuous at } a)=\pr(\bsX \mbox{ discontinuous at } b)=0$,
  $\bsX_n\fidi\bsX$ (in a dense subset of $[a,b]$) and for all $\epsilon>0$, either (hence both) of
  the following conditions hold:
  \begin{align}
    \label{eq:conditionwprime}
    \lim_{\delta\to0} \limsup_{n\to\infty} \pr(w'(\bsX_n,a,b,\delta) > \epsilon) & = 0 \; , \\
    \label{eq:conditionwseconde}
    \lim_{\delta\to0} \limsup_{n\to\infty} \pr(w''(\bsX_n,a,b,\delta) > \epsilon) & = 0 \; .
  \end{align}
\end{theorem}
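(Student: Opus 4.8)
The plan is to reduce weak convergence in $\spaceD(\Rset)$ to weak convergence of the restricted processes in $\spaceD([a,b])$ over compact intervals, and then to invoke the classical characterization of weak convergence in $\spaceD$ of a compact interval, \cite[Theorem~13.2]{billingsley:1999}. The bridge is the explicit form of the metric $d_{J_1}$ in \eqref{eq:def-local-J1}: since $t\mapsto d_{[-t,t]}(f,g)$ is \cadlag\ and continuous wherever $\pm t$ are continuity points of $f$ and $g$, a sequence $f_n$ converges to $f$ in $(\spaceD(\Rset),d_{J_1})$ \ifft\ $d_{[-T,T]}(f_n,f)\to0$ for every $T$ such that $\pm T$ are continuity points of $f$. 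Following \cite{whitt:1980}, this yields that $\bsX_n\convweak\bsX$ in $\spaceD(\Rset)$ \ifft\ the restriction $\bsX_n|_{[a,b]}\convweak\bsX|_{[a,b]}$ in $\spaceD([a,b])$ for all $a<b$ with $\pr(\bsX\ \mbox{discontinuous at}\ a)=\pr(\bsX\ \mbox{discontinuous at}\ b)=0$. The restriction map $\spaceD(\Rset)\to\spaceD([a,b])$ is continuous at every $f$ continuous at $a$ and $b$, so one implication is immediate from the continuous mapping theorem, while the converse follows from the metric reduction above together with the fact, recorded in \Cref{sec:J1}, that the Borel $\sigma$-field on $\spaceD$ is the product $\sigma$-field.

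For the forward implication, assume $\bsX_n\convweak\bsX$. Fixing $a<b$ that are almost sure continuity points, the continuous-mapping argument gives $\bsX_n|_{[a,b]}\convweak\bsX|_{[a,b]}$. Evaluation $f\mapsto f(t)$ is continuous at functions continuous at $t$, and $\bsX$ has only countably many fixed discontinuities, so for $t$ outside this countable (hence co-dense) set we obtain $\bsX_n\fidi\bsX$ by the continuous mapping theorem. Weak convergence in $\spaceD([a,b])$ forces relative compactness of $\{\bsX_n|_{[a,b]}\}$, and \cite[Theorem~13.2]{billingsley:1999} then yields the modulus conditions; since $w''\leq w'$ pointwise, \eqref{eq:conditionwprime} implies \eqref{eq:conditionwseconde} at once, while the reverse equivalence is part of Billingsley's compactness characterization.

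For the converse, the work is to upgrade the finite-dimensional and modulus information into tightness in $\spaceD(\Rset)$. On each interval $[a,b]$ with continuity-point endpoints, finite-dimensional convergence at a dense set makes each marginal family tight in $\Rset^d$, and combined with \eqref{eq:conditionwprime} (equivalently \eqref{eq:conditionwseconde}) Billingsley's theorem gives relative compactness of $\{\bsX_n|_{[a,b]}\}$ in $\spaceD([a,b])$; the fidi limits identify every subsequential weak limit as $\bsX|_{[a,b]}$, so $\bsX_n|_{[a,b]}\convweak\bsX|_{[a,b]}$. It then remains to glue these interval convergences into convergence on all of $\Rset$, which is exactly the reduction recorded in the first paragraph. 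I expect the main obstacle to be precisely this gluing step: one must verify that relative compactness on every compact interval with continuity-point endpoints implies relative compactness in $(\spaceD(\Rset),d_{J_1})$, and that the almost sure continuity of $\bsX$ at the chosen endpoints lets the interval limits be consistently recognized as restrictions of a single limit law. The delicate point is selecting a cofinal sequence $T_k\uparrow\infty$ of endpoints that are simultaneously continuity points of $\bsX$ (possible since the exceptional set is countable) and controlling $d_{J_1}$ by its restrictions $d_{[-T_k,T_k]}$ uniformly enough to transfer tightness from compacts to $\Rset$; this is where the averaged form of $d_{J_1}$ in \eqref{eq:def-local-J1} does the essential work.
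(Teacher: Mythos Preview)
Your proposal is correct and follows essentially the same route as the paper: reduce weak convergence in $\spaceD(\Rset)$ to weak convergence of restrictions $\bsX_n|_{[a,b]}$ over intervals with continuity-point endpoints via \cite[Theorem~2.8]{whitt:1980}, then invoke \cite[Theorems~13.2 and~13.3]{billingsley:1999} on each compact interval. The paper is terser than you are; in particular, the ``gluing'' step you flag as the main obstacle is not argued from scratch but delegated wholesale to Whitt's Theorem~2.8, which already packages the equivalence between convergence on $\spaceD(\Rset)$ and convergence of restrictions along a cofinal sequence of continuity-point intervals.
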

\begin{proof}
  For $a\leq b$ and $f\in\spaceD$, let $R_{a,b}f$ be the restriction of $f$ to $[a,b]$.  By
  \cite[Theorem~2.8]{whitt:1980}, a sequence of probability measures $P_n$ on $\spaceD(\Rset)$
  converges weakly to $P$ if and only if
  $P_n\circ R_{a_k,b_k}^{-1} \convweak P_n\circ R_{a_k,b_k}^{-1}$ for all $k\in\Nset$ and a sequence
  $\{(a_k,b_k),k\in\Nset\}$ such that $\cup_{k\geq0} [a_k,b_k]=\Rset$. The sequence $a_k$ can be
  chosen \nonincreasing, the sequence $b_k$ can be chosen \nondecreasing\ and the points $a_k,b_k$
  can be chosen as continuity points of $P$, \ie\ $\pr(\bsy \mbox{ not continuous at } t)=0$ for all
  $t\in\{a_k,b_k,k\in\Nset\}$.

  Thus it suffices to prove that $R_{a,b}\bsX_n\convweak R_{a,b} \bsX$ for all continuity points
  $a$, $b$ of $\bsX$. By \cite[Theorem~13.2 and
  Theorem~13.3]{billingsley:1999} 
  this follows from the stated finite dimensional weak convergence and~\Cref{eq:conditionwseconde}
  or \Cref{eq:conditionwprime}.
\end{proof}

\section{A lemma for stable processes}
\label{sec:maximal}
We summarize here and give a self-contained proof of certain arguments used in
\cite{samorodnitsky:2004:maxima} that are needed in \Cref{sec:sumstable}.

\begin{lemma}
  \label{lem:inegalite-maximale-02}
  Let $\alpha\in(0,2)$ and $\{P_i,i\geq1\}$ be the points of a Poisson point process on $(0,\infty)$
  with mean measure $\nualpha$. Let $\{\bsZ,\bsZ_j,j\geq1\}$ be \iid\ separable locally bounded
  stochastic processes, independent of $\{P_i,i\geq1\}$, such that
  $0<\esp[\sup_{a\leq s \leq b}|Z(t)|^\alpha]<\infty$ for all real numbers $a\leq b$.  If
  $\alpha\in[1,2)$, assume furthermore that the distribution of $\bsZ$ is symmetric. Then it is
  possible to define an $\alpha$-stable process $\bsX = \sum_{j=1}^\infty P_j \bsZ_j$ and assume
  that $\bsX$ is separable and locally bounded. Then,
  \begin{align}
    \label{eq:inegalite-maximale-01}
    \lim_{x\to\infty} x^\alpha \pr \left(\sup_{a\leq s \leq b} \left| X(s)\right| > x\right)
    = \esp \left[ \sup_{a\leq s \leq b} |Z(s)|^{\alpha}\right] \; .
  \end{align}
\end{lemma}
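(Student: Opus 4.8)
The plan is to establish the limit in \Cref{eq:inegalite-maximale-01} by reducing it to a tail estimate for a single dominant term in the series, which is the standard ``single large jump'' mechanism for heavy-tailed sums. Writing $M = \sup_{a\leq s\leq b}|Z(s)|$, the hypotheses guarantee $0<\esp[M^\alpha]<\infty$, so $M$ is an almost surely finite, nonnegative random variable with finite $\alpha$-moment. First I would record the elementary fact that for a single scaled term, $\lim_{x\to\infty} x^\alpha\,\pr(P_1 M_1 > x) = \esp[M^\alpha]\int_0^\infty \ind{u>1}\,\alpha u^{-\alpha-1}\rmd u$, using that $P_1$ has tail $\pr(P_1>t)=t^{-\alpha}$ and conditioning on $M_1$; more precisely $x^\alpha\pr(P_1 M_1>x) = \esp[(M_1^\alpha x^\alpha)\wedge x^\alpha \ldots]$, which converges to $\esp[M^\alpha]$ by dominated convergence. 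The target quantity $\sup_{a\leq s\leq b}|X(s)|$ is bounded below by the contribution of the largest point and above by the full sum, so the crux is showing these two bounds have the same asymptotic tail.

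The key steps, in order, are as follows. First I would verify that $\bsX=\sum_j P_j\bsZ_j$ is well defined and locally bounded: for $\alpha<1$ this follows from $\sum_j P_j<\infty$ together with $\esp[M^\alpha]<\infty$ via a Fubini/Borel--Cantelli argument, and for $\alpha\in[1,2)$ from symmetry together with a maximal inequality (the series of symmetric terms converges and its supremum has a finite $\alpha$-weak moment). Second, I would prove the lower bound
\begin{align*}
  \liminf_{x\to\infty} x^\alpha\,\pr\Big(\sup_{a\leq s\leq b}|X(s)|>x\Big) \geq \esp[M^\alpha]
\end{align*}
by observing that $\sup_{a\leq s\leq b}|X(s)| \geq \sup_j P_j M_j - (\text{rest})$ and that the event $\{P_{(1)}M_{(1)}>x(1+\epsilon)\}$, where $P_{(1)}M_{(1)}$ denotes the maximal term, already forces the supremum to exceed $x$ with overwhelming probability once the remaining sum is controlled. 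Third, and most delicate, I would prove the matching upper bound
\begin{align*}
  \limsup_{x\to\infty} x^\alpha\,\pr\Big(\sup_{a\leq s\leq b}|X(s)|>x\Big) \leq \esp[M^\alpha]
\end{align*}
by truncating the Poisson points at a level $\epsilon x$: the points with $P_j>\epsilon x$ are finite in number and contribute, through a union bound and the single-term asymptotic, at most $\esp[M^\alpha]$ as $\epsilon\to 0$, while the truncated remainder $\sum_{P_j\leq \epsilon x} P_j\bsZ_j$ must be shown to have negligible supremum tail.

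The hard part will be the last step: controlling the supremum of the truncated tail sum $\sum_{P_j\leq\epsilon x}P_j\bsZ_j$ uniformly over the interval $[a,b]$. For $\alpha<1$ one can bound the supremum by $\sum_{P_j\leq\epsilon x}P_j M_j$ and estimate its tail directly by a truncated-mean computation, since $\esp[(P\wedge\epsilon x)M]$ grows slowly enough. For $\alpha\in[1,2)$ the absolute bound fails and one genuinely needs the symmetry assumption: I would invoke a maximal inequality for sums of independent symmetric processes (a Lévy-type or Ottaviani-type inequality applied pathwise to the process indexed by $[a,b]$, reducing to the separability hypothesis so that the supremum is a countable supremum) to show that the second moment of the truncated sum, or an appropriate weak-$L^\alpha$ bound, makes $x^\alpha\pr(\sup|\text{truncated}|>\delta x)\to 0$ as $x\to\infty$ and then $\delta,\epsilon\to0$. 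Combining the lower and upper bounds and letting $\epsilon\to0$ then yields the exact constant $\esp[\sup_{a\leq s\leq b}|Z(s)|^\alpha]$, completing the proof. The bound labelled \eqref{eq:borne-P2} referenced in \Cref{sec:sumstable} is presumably the explicit form of this truncated-tail estimate, so I would isolate it as a displayed inequality for reuse.
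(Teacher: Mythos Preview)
Your single-large-jump strategy is the right one and matches the paper's, but for $\alpha\in[1,2)$ two concrete ingredients are missing. First, the paper does not work with $\bsZ$ directly: it performs a change of measure to a process $\bsW$ with law proportional to $(\bsZ_{a,b}^*)^\alpha\,\delta_{(\bsZ_{a,b}^*)^{-1}\bsZ}$, so that $\sup_{a\leq s\leq b}|W(s)|=1$ almost surely, and then represents $\bsX$ as $c_\alpha^{1/\alpha}\sum_j P_j\bsW_j$. This normalization is essential; without it your second-moment route fails outright, because only $\esp[M^\alpha]<\infty$ is assumed and $\esp[M^2]$ may well be infinite. Second, the paper does not truncate by level but orders the points as $P_i=\Gamma_i^{-1/\alpha}$ and isolates the single largest term $P_1\bsW_1$; the bound \eqref{eq:borne-P2} is the $O(x^{-2\alpha})$ tail estimate for the entire remainder $\sum_{j\geq2}P_j\bsW_j$, not a truncation-by-level estimate.

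The more serious gap is the tool you propose for the remainder. L\'evy and Ottaviani inequalities control maxima over the \emph{summation index}, whereas what is needed is control of the supremum over the \emph{time parameter} $s\in[a,b]$; invoking separability only reduces this to a countable supremum, it does not make a partial-sum maximal inequality applicable. The paper's device is an exponential-moment bound taken from \cite{samorodnitsky:2004:maxima} (itself resting on \cite{rosinski:samorodnitsky:1993}): setting $G(y)=\sup_{a\leq s\leq b}\bigl|\sum_j (y+\Gamma_j)^{-1/\alpha}W_j(s)\bigr|$, there exists $r>0$ with $\esp\bigl[\exp\{G(y)\log2/(r+2y^{-1/\alpha})\}\bigr]\leq4$ for all $y>0$, and this hinges on the normalization $\sup|W|=1$. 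Conditioning on $\Gamma_k$ and integrating yields $\pr\bigl(\sup_s|\sum_{j\geq k+1}P_jW_j(s)|>x\bigr)\leq Cx^{-k\alpha}$; with $k=2$ and the Fr\'echet tail of $P_2$ one obtains \eqref{eq:borne-P2}. You should either reproduce this exponential bound or replace your vague reference to ``an appropriate weak-$L^\alpha$ bound'' by a concrete inequality that genuinely handles the time-supremum.
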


\begin{proof}
  The proof in the case $\alpha\in(0,1)$ is straightforward since the sum $\sum_{j=1}^\infty P_j$ is
  almost surely convergent. We only prove the case $1 \leq \alpha <2$.

  Recall that  $\bsZ_{a,b}^*= \sup_{a\leq s\leq b} |Z(s)|$ and write 
  $c_\alpha(a,b)=\esp[(\bsZ_{a,b}^*)^\alpha]$.  Let $\bsW$ be a stochastic process whose distribution is given
  by
  \begin{align*}
    \pr_{\bsW} = \frac{\esp[(\bsZ_{a,b}^*)^\alpha\delta_{(\bsZ_{a,b}^*)^{-1}\bsZ}]}{\esp[(\bsZ_{a,b}^*)^\alpha]} \; .
  \end{align*}
  Let $\bsW^{(i)}$, $i\geq1$ be \iid\ copies of $\bsW$. Then
  $\bsX\eqdistr c_\alpha^{1/\alpha} \sum_{i=1}^\infty P_i W_i$.  See \cite[Section
  3.10]{samorodnitsky:taqqu:1994}.  The interest of replacing the process $\bsZ$ by $\bsW$ is that
  the latter satisfies $\pr(\sup_{a \leq s \leq b} |W(s)|=1)=1$.

  Let the points $P_i,i\geq1$ be numbered in decreasing order. Then $P_i = \Gamma_i^{-1/\alpha}$
  with $\Gamma_i$ the points of a unit rate Poisson point process on $[0,\infty)$.  Since $\Gamma_j$
  has a $\Gamma(j,1)$ distribution, we have, for every $k\geq1$,
  \begin{align*}
    \pr \left(\sup_{a\leq s \leq b} \left| \sum_{j=k+1}^\infty P_j W_j(t)\right| >  x\right)
    & =  \frac1{(k-1)!} \int_0^\infty \pr(G(y)>x) y^{k-1}  \rme^{-y} \rmd y \; ,
  \end{align*}
  with
  $G(y) = \sup_{a\leq s \leq b} \left| \sum_{j=1}^\infty (y+\Gamma_j)^{-1/\alpha}W_j(s)\right| $.
  Since $\pr(\sup_{a\leq s \leq b} |W(s)|=1)=1$, as shown in the proof of \cite[Theorem~2.2, bottom
  of p.814]{samorodnitsky:2004:maxima} (which uses the symmetry assumption and takes its argument
  from the proof of \cite[Lemma~2.2]{rosinski:samorodnitsky:1993}), there exists $r>0$ such that for
  all $y>0$,
  \begin{align*}
    \esp \left[ \exp \left\{\frac{\log2}{r+2y^{-1/\alpha}} G(y)  \right\} \right] \leq 4 \; .
  \end{align*}
  Thus
  \begin{align*}
    \pr \left(\sup_{a\leq s \leq b} \left| \sum_{j=k+1}^\infty P_i W_j(t)\right| > x\right) 
    & = \frac1{(k-1)!}\int_0^\infty   \pr (G(y) > x) y^{k-1} \rme^{-y} \rmd y \\
    & \leq \frac4{(k-1)!} \int_0^\infty   \rme^{-\frac{x\log2}{r+2y^{-1/\alpha}}} y^{k-1}  \rme^{-y} \rmd y  \\
    & = \frac4{(k-1)!} x^{-k\alpha} \int_0^\infty \rme^{-\frac{x\log2}{r+2xy^{-1/\alpha}}} y^{k-1} \rme^{-yx^{-\alpha}} \rmd y  \; .
  \end{align*}
  Note that 
  \begin{align*}
    \rme^{-\frac{x\log2}{r+2xy^{-1/\alpha}}}  \rme^{-yx^{-\alpha}}
    \leq
    \begin{cases}
      \rme^{-\sqrt{y}} & \mbox{ if } y > x^{2\alpha} \; , \\
      \rme^{-\frac{y^{1/\alpha}\log2}{ry^{1/(2\alpha)}+2}} & \mbox{ if } y \leq x^{2\alpha} \; .
    \end{cases}
  \end{align*}
  Therefore, the integral above is uniformly bounded \wrt\ $x$ and there exists a constant (which
  depends  on $k$ and on $\bsZ$, $a$, $b$ through  $r$) such that
  \begin{align}
    \label{eq:bound-k}
    \pr \left(\sup_{a\leq s \leq b} \left| \sum_{j=k+1}^\infty P_j W_j(t)\right| > x\right) 
    & \leq \constant\  x^{-k\alpha} \; .
  \end{align}
  Note that
  \begin{align*}
    \pr \left(c_\alpha^{1/\alpha}\sup_{a\leq s \leq b} \left| \sum_{j=2}^\infty P_j W_j(t)\right| > \epsilon x\right) \\
    \leq \pr \left(c_\alpha^{1/\alpha}P_2  > \epsilon x/2\right)
    &  + \pr \left(c_\alpha^{1/\alpha}\sup_{a\leq s \leq b} \left| \sum_{j=3}^\infty P_j W_j(t)\right| > \epsilon x/2\right) \; .
  \end{align*}
  Since $P_2$ is regularly varying with tail index $2\alpha$, applying \Cref{eq:bound-k} with $k=2$
  yields
  \begin{align}
    \label{eq:borne-P2}
    \pr \left(\sup_{a\leq s \leq b} \left| \sum_{j=2}^\infty P_i W_j(t)\right| > x\right) 
    & \leq \constant\  x^{-2\alpha} \; .
  \end{align}
  To prove \Cref{eq:inegalite-maximale-01}, write
  \begin{align*}
    \pr \left(\sup_{a\leq s \leq b} \left| X(s)\right| > x\right)  
    \leq \pr \left(c_\alpha^{1/\alpha}P_1 > (1-\epsilon)x\right) + \pr
    \left(c_\alpha^{1/\alpha}\sup_{a\leq s \leq b} \left| \sum_{j=2}^\infty P_j W_j(t)\right| >
      \epsilon x\right) \; .
  \end{align*}
  Applying \Cref{eq:borne-P2} yields
  \begin{align*}
    \limsup_{x\to\infty}      x^\alpha \pr \left(\sup_{a\leq s \leq b} \left| X(s)\right| > x\right)
    \leq  (1-\epsilon)^{-\alpha}c_\alpha  \; .
  \end{align*}
  Since $\epsilon$ is arbitrary, the $\limsup$ is actually equal to 1. A lower bound for the
  $\liminf$ is obtained similarly. This proves \Cref{eq:inegalite-maximale-01}.  
\end{proof}

\end{appendices}

\subsubsection*{Acknowledgements} This work has been greatly influenced by many discussions with
Cl\'ement Dombry and Enkelejd Hashorva. I am grateful to Olivier Wintenberger for organizing a
seminar on regular variation for continuous time stochastic processes in 2018-2019. There is no
telling my gratitude for the anonymous referees who thoroughly read the first version of this paper,
made many deep remarks which greatly improved the paper.


\begin{thebibliography}{KSdH09}

\bibitem[AW09]{azais:wschebor:2009}
Jean-Marc Aza\"{\i}s and Mario Wschebor.
\newblock {\em Level sets and extrema of random processes and fields}.
\newblock John Wiley \& Sons, Inc., Hoboken, NJ, 2009.

\bibitem[BB03]{baccelli:bremaud:2003}
Fran{\c{c}}ois Baccelli and Pierre Br{\'e}maud.
\newblock {\em Elements of queueing theory}, volume~26 of {\em Applications of
  Mathematics (New York)}.
\newblock Springer-Verlag, Berlin, second edition, 2003.

\bibitem[Ber92]{berman:1992}
Simeon~M. Berman.
\newblock {\em Sojourns and extremes of stochastic processes}.
\newblock The Wadsworth \& Brooks/Cole Statistics/Probability Series. Wadsworth
  \& Brooks/Cole Advanced Books \& Software, Pacific Grove, CA, 1992.

\bibitem[Bil99]{billingsley:1999}
Patrick Billingsley.
\newblock {\em Convergence of probability measures}.
\newblock Wiley Series in Probability and Statistics: Probability and
  Statistics. John Wiley \& Sons Inc., New York, second edition, 1999.

\bibitem[BP19]{basrak:planinic:2019}
Bojan Basrak and Hrvoje Planini\'c.
\newblock A note on vague convergence of measures.
\newblock {\em Statistics and Probability Letters}, 153:180--186, 2019.

\bibitem[BP21]{basrak:planinic:2021}
Bojan Basrak and Hrvoje Planini\'c.
\newblock Compound poisson approximation for random fields with application to
  sequence alignment.
\newblock {\em Bernoulli}, 27(2):1371--1408, 2021.

\bibitem[BPS18]{basrak:planinic:soulier:2018}
Bojan Basrak, Hrvoje Planini\'{c}, and Philippe Soulier.
\newblock An invariance principle for sums and record times of regularly
  varying stationary sequences.
\newblock {\em Probability Theory and Related Fields}, 172(3-4):869--914, 2018.

\bibitem[Bra05]{bradley:2005}
Richard~C. Bradley.
\newblock Basic properties of strong mixing conditions. {A} survey and some
  open questions.
\newblock {\em Probability Surveys}, 2:107--144, 2005.

\bibitem[BS09]{basrak:segers:2009}
Bojan Basrak and Johan Segers.
\newblock Regularly varying multivariate time series.
\newblock {\em Stochastic Process. Appl.}, 119(4):1055--1080, 2009.

\bibitem[dH84]{dehaan:1984}
Laurens de~Haan.
\newblock A spectral representation for max-stable processes.
\newblock {\em Annals of Probability}, 12(4):1194--1204, 1984.

\bibitem[DH95]{davis:hsing:1995}
Richard~A. Davis and Tailen Hsing.
\newblock Point process and partial sum convergence for weakly dependent random
  variables with infinite variance.
\newblock {\em The Annals of Probability}, 23(2):879--917, 1995.

\bibitem[DH20]{debicki:hashorva:2020}
Krzysztof D\c{e}bicki and Enkelejd Hashorva.
\newblock Approximations of supremum of max-stable processes and {P}ickands
  constants.
\newblock {\em Journal of Theoretical Probability}, 33(1):444--464, 2020.

\bibitem[DHS18]{dombry:hashorva:soulier:2018}
Cl\'{e}ment Dombry, Enkelejd Hashorva, and Philippe Soulier.
\newblock Tail measure and spectral tail process of regularly varying time
  series.
\newblock {\em Annals of Applied Probability}, 28(6):3884--3921, 2018.

\bibitem[DK17]{dombry:kabluchko:2016}
Cl\'ement Dombry and Zakhar Kabluchko.
\newblock Ergodic decompositions of stationary max-stable processes in terms of
  their spectral functions.
\newblock {\em Stochastic Processes and their Applications}, 127(6):1763--1784,
  2017.

\bibitem[DR10]{drees:rootzen:2010}
Holger Drees and Holger Rootz{\'e}n.
\newblock Limit theorems for empirical processes of cluster functionals.
\newblock {\em Annals of Statistics}, 38(4):2145--2186, 2010.

\bibitem[Dud02]{dudley:2002}
Richard~M. Dudley.
\newblock {\em Real analysis and probability}, volume~74 of {\em Cambridge
  Studies in Advanced Mathematics}.
\newblock Cambridge University Press, Cambridge, 2002.
\newblock Revised reprint of the 1989 original.

\bibitem[EM18]{evans:molchanov:2018}
Steven~N. Evans and Ilya Molchanov.
\newblock Polar decomposition of scale-homogeneous measures with application to
  {L}\'{e}vy measures of strictly stable laws.
\newblock {\em Journal of Theoretical Probability}, 31(3):1303--1321, 2018.

\bibitem[Fas05]{fasen:2005}
Vicky Fasen.
\newblock Extremes of regularly varying {L}\'{e}vy-driven mixed moving average
  processes.
\newblock {\em Advances in Applied Probability}, 37(4):993--1014, 2005.

\bibitem[Fas06]{fasen:2006}
Vicky Fasen.
\newblock Extremes of subexponential {L}\'{e}vy driven moving average
  processes.
\newblock {\em Stochastic Processes and their Applications}, 116(7):1066--1087,
  2006.

\bibitem[FK07]{fasen:kluppelberg:2007}
Vicky Fasen and Claudia Kl\"{u}ppelberg.
\newblock Extremes of sup{OU} processes.
\newblock In {\em Stochastic analysis and applications}, volume~2 of {\em Abel
  Symposium}, pages 339--359. Springer, Berlin, 2007.

\bibitem[GHV90]{gine:hahn:vatan:1990}
Evarist Gin\'{e}, Marjorie~G. Hahn, and Pirooz Vatan.
\newblock Max-infinitely divisible and max-stable sample continuous processes.
\newblock {\em Probability Theory and Related Fields}, 87(2):139--165, 1990.

\bibitem[HL98]{hsing:leadbetter:1998}
Tailen Hsing and M.~R. Leadbetter.
\newblock On the excursion random measure of stationary processes.
\newblock {\em The Annals of Probability}, 26(2):710--742, 1998.

\bibitem[HL05]{hult:lindskog:2005}
Henrik Hult and Filip Lindskog.
\newblock Extremal behavior of regularly varying stochastic processes.
\newblock {\em Stochastic Processes and their Applications}, 115(2):249 -- 274,
  2005.

\bibitem[HL06]{hult:lindskog:2006}
Henrik Hult and Filip Lindskog.
\newblock Regular variation for measures on metric spaces.
\newblock {\em Publ. Inst. Math. (Beograd) (N.S.)}, 80(94):121--140, 2006.

\bibitem[HS08]{hult:samorodnitsky:2008}
Henrik Hult and Gennady Samorodnitsky.
\newblock Tail probabilities for infinite series of regularly varying random
  vectors.
\newblock {\em Bernoulli}, 14(3):838--864, 2008.

\bibitem[Kab09]{kabluchko:2009}
Zakhar Kabluchko.
\newblock Spectral representations of sum- and max-stable processes.
\newblock {\em Extremes}, 12(4):401--424, 2009.

\bibitem[Kal17]{kallenberg:2017}
Olav Kallenberg.
\newblock {\em Random Measures, Theory and Applications}, volume~77.
\newblock Springer, 2017.

\bibitem[KS10]{kabluchko:schlather:2010}
Zakhar Kabluchko and Martin Schlather.
\newblock Ergodic properties of max-infinitely divisible processes.
\newblock {\em Stochastic Process. Appl.}, 120(3):281--295, 2010.

\bibitem[KS20]{kulik:soulier:2020}
Rafa{\l} Kulik and Philippe Soulier.
\newblock {\em Heavy-tailed time series}.
\newblock Springer Series in Operation Research and Financial Ingineering.
  Springer-Verlag, 2020.

\bibitem[KSdH09]{kabluchko:schlather:dehaan:2009}
Zakhar Kabluchko, Martin Schlather, and Laurens de~Haan.
\newblock Stationary max-stable fields associated to negative definite
  functions.
\newblock {\em Annals of Probability}, 37(5):2042--2065, 2009.

\bibitem[KSW19]{kulik:soulier:wintenberger:2019}
Rafa{\l} Kulik, Philippe Soulier, and Olivier Wintenberger.
\newblock The tail empirical process of regularly varying functions of
  geometrically ergodic {M}arkov chains.
\newblock {\em Stochastic Processes and their Applications},
  129(11):4209--4238, 2019.

\bibitem[Lea74]{leadbetter:1974}
M.~R. Leadbetter.
\newblock On extreme values in stationary sequences.
\newblock {\em Zeitschrift f\"{u}r Wahrscheinlichkeitstheorie und Verwandte
  Gebiete}, 28:289--303, 1973/74.

\bibitem[LLR83]{leadbetter:lindgren:rootzen:1983}
Ross~M. Leadbetter, Georg Lindgren, and Holger Rootz{\'e}n.
\newblock {\em Extremes and related properties of random sequences and
  processes}.
\newblock Springer Series in Statistics. Springer-Verlag, New York, 1983.

\bibitem[LS16]{lacaux:samorodnitsky:2016}
C\'eline {Lacaux} and Gennady {Samorodnitsky}.
\newblock {Time-changed extremal process as a random sup measure}.
\newblock {\em {Bernoulli}}, 22(4):1979--2000, 2016.

\bibitem[MS00]{mikosch:samorodnitsky:2000}
Thomas Mikosch and Gennady Samorodnitsky.
\newblock The supremum of a negative drift random walk with dependent
  heavy-tailed steps.
\newblock {\em The Annals of Applied Probability}, 10(3):1025--1064, 2000.

\bibitem[MS10]{meinguet:segers:2010}
Thomas Meinguet and Johan Segers.
\newblock Regularly varying time series in {B}anach spaces.
\newblock arXiv:1001.3262, 2010.

\bibitem[OS15]{owada:samorodnitsky:2015}
Takashi {Owada} and Gennady {Samorodnitsky}.
\newblock {Maxima of long memory stationary symmetric \(\alpha\)-stable
  processes, and self-similar processes with stationary max-increments}.
\newblock {\em {Bernoulli}}, 21(3):1575--1599, 2015.

\bibitem[PS18]{planinic:soulier:2018}
Hrvoje Planini\'{c} and Philippe Soulier.
\newblock The tail process revisited.
\newblock {\em Extremes}, 21(4):551--579, 2018.

\bibitem[Res86]{resnick:1986}
Sidney~I. Resnick.
\newblock Point processes, regular variation and weak convergence.
\newblock {\em Advances in Applied Probability}, 18(1):66--138, 1986.

\bibitem[Roo78]{rootzen:1978}
Holger Rootz{\'e}n.
\newblock Extremes of moving averages of stable processes.
\newblock {\em Annals of Probability}, 6(5):847--869, 1978.

\bibitem[Roy17]{roy:2017}
Parthanil Roy.
\newblock Maxima of stable random fields, nonsingular actions and finitely
  generated abelian groups: a survey.
\newblock {\em Indian Journal of Pure and Applied Mathematics}, 48(4):513--540,
  2017.

\bibitem[RS93]{rosinski:samorodnitsky:1993}
Jan Rosi\'{n}ski and Gennady Samorodnitsky.
\newblock Distributions of subadditive functionals of sample paths of
  infinitely divisible processes.
\newblock {\em The Annals of Probability}, 21(2):996--1014, 1993.

\bibitem[RSS12]{roueff:samorodnitsky:soulier:2012}
Fran{\c{c}}ois Roueff, Gennady Samorodnitsky, and Philippe Soulier.
\newblock Function-indexed empirical processes based on an infinite source
  {P}oisson transmission stream.
\newblock {\em Bernoulli}, 18(3):783--802, 2012.

\bibitem[Sam04]{samorodnitsky:2004:maxima}
Gennady Samorodnitsky.
\newblock Maxima of continuous-time stationary stable processes.
\newblock {\em Advances in Applied Probability}, 36(3):805--823, 2004.

\bibitem[SO12]{owada:samorodnitsky:2012}
Gennady Samorodnitsky and Takashi Owada.
\newblock Tail measures of stochastic processes or random fields with regularly
  varying tails.
\newblock Unpublished manuscript, 2012.

\bibitem[ST94]{samorodnitsky:taqqu:1994}
Gennady Samorodnitsky and Murad~S. Taqqu.
\newblock {\em Stable non-{G}aussian random processes}.
\newblock Stochastic Modeling. Chapman \& Hall, New York, 1994.
\newblock Stochastic models with infinite variance.

\bibitem[Vat85]{vatan:1985}
Pirooz Vatan.
\newblock Max-infinite divisibility and max-stability in infinite dimensions.
\newblock In {\em Probability in {B}anach spaces, {V} ({M}edford, {M}ass.,
  1984)}, volume 1153 of {\em Lecture Notes in Math.}, pages 400--425.
  Springer, Berlin, 1985.

\bibitem[Whi80]{whitt:1980}
Ward Whitt.
\newblock Some useful functions for functional limit theorems.
\newblock {\em Mathematics of Operations Research}, 5(1):67--85, 1980.

\bibitem[WRS13]{wang:roy:stoev:2013}
Yizao Wang, Parthanil Roy, and Stilian~A. Stoev.
\newblock Ergodic properties of sum- and max-stable stationary random fields
  via null and positive group actions.
\newblock {\em The Annals of Probability}, 41(1):206--228, 2013.

\bibitem[WS10]{wang:stoev:2010}
Yizao Wang and Stilan Stoev.
\newblock On the structure and representations of max-stable processes.
\newblock {\em Advances in Applied Probability}, 42(3):855--877, 2010.

\end{thebibliography}
\end{document}

Do not go gentle into that good night,
Old age should burn and rave at close of day;
Rage, rage against the dying of the light.

Though wise men at their end know dark is right,
Because their words had forked no lightning they
Do not go gentle into that good night.

Good men, the last wave by, crying how bright
Their frail deeds might have danced in a green bay,
Rage, rage against the dying of the light.

Wild men who caught and sang the sun in flight,
And learn, too late, they grieved it on its way,
Do not go gentle into that good night.

Grave men, near death, who see with blinding sight
Blind eyes could blaze like meteors and be gay,
Rage, rage against the dying of the light.

And you, my father, there on the sad height,
Curse, bless, me now with your fierce tears, I pray.
Do not go gentle into that good night.
Rage, rage against the dying of the light.

Dylan Thomas